\DeclareSymbolFont{cyrletters}{OT2}{wncyr}{m}{n}
\DeclareMathSymbol{\Sha}{\mathalpha}{cyrletters}{"58}
\def\leftnote#1{\vadjust{\setbox1=\vtop{\hsize 35mm\parindent=0pt\bf\baselineskip=9pt\rightskip=4mm plus 4mm#1}\hbox{\kern-25mm\smash{\box1}}}}
\titleformat{\subsubsection}[runin]{\normalfont\bfseries}{\thesubsubsection.}{3pt}{}
\newtheorem{ithm}{Theorem}
\newtheorem{theorem}{Theorem}[section]
\newtheorem{lemma}[theorem]{Lemma}
\newtheorem{corollary}[theorem]{Corollary}
\newtheorem{proposition}[theorem]{Proposition}
\theoremstyle{remark}
\newtheorem{remark}[theorem]{Remark}
\theoremstyle{definition}
\newcommand{\OO}{\mathcal O}
\newcommand{\SSS}{\mathcal S}
\newcommand{\RR}{\mathcal R}
\newcommand{\A}{\mathcal A}
\newcommand{\I}{\mathcal I}
\newcommand{\J}{\mathcal J}
\newcommand{\C}{\mathcal C}
\newcommand{\M}{\mathcal M}
\newcommand{\UU}{\mathcal U}
\newcommand{\ZZ}{\mathcal Z}
\newcommand{\HH}{\mathscr H}
\newcommand{\CC}{\mathscr C}
\newcommand{\oo}{\mathfrak o}
\newcommand{\pp}{\mathfrak p}
\newcommand{\qq}{\mathfrak q}
\newcommand{\LL}{\mathfrak L}
\newcommand{\RRR}{\mathfrak R}
\newcommand{\T}{\mathcal T}
\newcommand{\PP}{\mathcal P}
\newcommand{\Z}{\mathbb Z}
\newcommand{\Q}{\mathbb Q}
\newcommand{\F}{\mathbb F}
\newcommand{\R}{\mathbb R}
\newcommand{\Det}{\mathrm{Det}}
\newcommand{\Gal}{\mathrm{Gal}}
\newcommand{\Tr}{\mathrm{Tr}}
\newcommand{\Hom}{\mathrm{Hom}}
\newcommand{\Div}{\mathrm{Div}}
\newcommand{\Ram}{\mathrm{Ram}}
\newcommand{\Map}{\mathrm{Map}}
\newcommand{\Cl}{\mathrm{Cl}}
\newcommand{\val}{\mathrm{val}}
\begin{document}
\bibliographystyle{plain}
%\title{\textsc{Torsion Galois modules and Stickelberger's theorem}}
\title{\textsc{Gauss sums, Jacobi sums and cyclotomic units related to torsion Galois modules}}
\author{Luca Caputo and St\'ephane Vinatier}
\maketitle
\abstract{Let $G$ be a finite group and let $N/E$ be a tamely ramified $G$-Galois extension of number fields. We show how Stickelberger's factorization of Gauss sums can be used to determine the stable isomorphism class of various arithmetic $\Z[G]$-modules attached to $N/E$. If $\OO_N$ and $\OO_E$ denote the rings of integers of $N$ and $E$ respectively, we get in particular that $\OO_N\otimes_{\OO_E}\OO_N$ defines the trivial class in the class group $\Cl(\Z[G])$ and, if $N/E$ is also assumed to be locally abelian, that the square root of the inverse different (whenever it exists) defines the same class as $\OO_N$. These results are obtained through the study of the Fr\"ohlich representatives of the classes of some torsion modules, which are independently introduced in the setting of cyclotomic number fields. Gauss and Jacobi sums, together with the Hasse-Davenport formula, are involved in this study. These techniques are also applied to recover the stable self-duality of $\OO_N$ (as a $\Z[G]$-module). Finally, when $G$ is the binary tetrahedral group, we use our results in conjunction with Taylor's theorem to find a tame $G$-Galois extension whose square root of the inverse different has nontrivial class in $\Cl(\Z[G])$.
} 

%%%%%%%%%%%%%%%%%%%%%%%%
%%%RESUME SANS SYMBOLES LATEX
%%%%%%%%%%%%%%%%%%%
%Let G be a finite group and let N/E be a tamely ramified G-Galois extension of number fields. We show how Stickelberger's factorization of Gauss sums can be used to determine the stable isomorphism class of various arithmetic Z[G]-modules attached to N/E. If O_N and O_E denote the rings of integers of N and E respectively, we get in particular that the tensor product of two copies of O_N over O_E defines the trivial class in the class group Cl(Z[G]) and, if N/E is also assumed to be locally abelian, that the square root of the inverse different (whenever it exists) defines the same class as O_N. These results are obtained through the study of the Fr\"ohlich representatives of the classes of some torsion modules, which are independently introduced in the setting of cyclotomic number fields. Gauss and Jacobi sums, together with the Hasse-Davenport formula, are involved in this study. These techniques are also applied to recover the stable self-duality of O_N (as a Z[G]-module). Finally, when G is the binary tetrahedral group, we use our results in conjunction with Taylor's theorem to find a tame G-Galois extension whose square root of the inverse different has nontrivial class in Cl(Z[G]).
%%%%%%%%%%%%%%%%%%%%%%%%%

\section{Introduction}

Let $G$ be a finite group and let $N/E$ be a Galois extension of number fields with Galois group $G$. Let $\OO_N$ and $\OO_E$ denote the rings of integers of $N$ and $E$, respectively; then $\OO_N$ is an $\OO_E[G]$-module, and in particular a $\Z[G]$-module, whose structure has long been studied. In the case where $N/E$ is a tame extension, $\OO_N$ is known to be locally free by Noether's theorem and the investigation of its Galois module structure culminated with M. Taylor's theorem \cite{tay2} expressing the class $(\OO_N)$ defined by $\OO_N$ in the class group of locally free $\Z[G]$-modules $\Cl(\Z[G])$ in terms of Artin root numbers (see Theorem \ref{pearl}). 

Other $\OO_E[G]$-modules which appear naturally in our context have also been studied, among which the inverse different $\C_{N/E}$ of the extension and, when it exists, its square root $\A_{N/E}$. The existence of $\A_{N/E}$ is of course equivalent to $\C_{N/E}$ being a square, a condition which can be tested using Hilbert's valuation formula \cite[IV, Proposition 4]{Serre}. In particular, when $N/E$ is tame, $\A_{N/E}$ exists if and only if the inertia group of every prime of $\OO_E$ in $N/E$ has odd order (which is the case for example if the degree $[N:E]$ is odd). In the tame case, both these modules are locally free $\Z[G]$-modules by a result of S. Ullom in \cite{Ullom-cohomology} (applying to any $G$-stable fractional ideal of $N$).

The modules $\OO_N$ and $\C_{N/E}$ are related by duality. For a fractional ideal $I$ of $\OO_N$, the dual of $I$ with respect to the trace $\Tr_{N/E}$ from $N$ to $E$ is the fractional ideal
$$I^\#=\{x\in N\,\mid\, \Tr_{N/E}(xI)\subseteq \OO_E\}\enspace.$$
Then $I^\#$ is $G$-isomorphic to the $\mathcal{O}_E$-dual of $I$, namely $I^\#\cong \mathrm{Hom}_{\OO_E}(I,\OO_E)$, and by definition one has $\C_{N/E}=\OO_N^\#$, namely $\OO_N$ and $\C_{N/E}$ are dual of each other. It can be shown that, for any fractional ideal $I$ of $\OO_N$, one has $I^\#=\C_{N/E} I^{-1}$, which implies that $\A_{N/E}$, when it exists, is the only self-dual fractional ideal of $\OO_E$. 

The duality relation between $\C_{N/E}$ and $\OO_N$ accounts for comparing their $\Z[G]$-module structures. In the tame case, it essentially amounts to comparing their classes $(\C_{N/E})$ and $(\OO_N)$ in $\Cl(\Z[G])$, and A. Fr\"ohlich conjectured that $\OO_N$ is stably self-dual, namely that
\begin{equation}\label{stably-self-dual}
(\OO_N)=(\C_{N/E})\enspace,
\end{equation}
equality which was later proved by M. Taylor \cite{tay1} under slightly stronger hypotheses and by S. Chase \cite{ChaseTors} in full generality. Taylor's proof uses Fr\"ohlich's Hom-description of $\Cl(\mathbb{Z}[G])$, while Chase examines the torsion module 
$$\T_{N/E}=\C_{N/E}/\OO_N\enspace.$$
It is worth noting that both proofs make crucial use of the stable freeness of the Swan modules of cyclic groups, a result due to R. Swan \cite{SwanPRFG}. 

The study of the Galois module structure of $\A_{N/E}$ was initiated by B. Erez, who proved in particular that, when $N/E$ is tamely ramified and of odd degree, the class $(\A_{N/E})$ it defines in $\Cl(\Z[G])$ is trivial, see \cite{Erez2}. His proof follows the same strategy as that of Taylor in \cite{tay2}, using in particular Fr\"ohlich's Hom-description of $\Cl(\mathbb{Z}[G])$. Since Taylor's theorem implies the triviality of $(\OO_N)$ when $N/E$ is of odd degree, we get:
\begin{equation}\label{equality}
\textrm{if $[N:E]$ is odd, then }(\OO_N)=(\A_{N/E})
\end{equation}
and both classes are in fact trivial.

In this paper, we consider a tame $G$-Galois extension $N/E$ such that the square root of the inverse different $\A_{N/E}$ exists. We introduce the torsion module
$$\SSS_{N/E}=\A_{N/E}/\OO_N\enspace,$$  
check it to be $G$-cohomologically trivial, hence to define a class in $\Cl(\Z[G])$, and show that this class is trivial, yielding a new proof of Equality \eqref{equality} without any assumption on the degree of the extension. Nevertheless we need to assume that $N/E$ is locally abelian, namely that the decomposition group of every prime ideal of $\OO_E$ is abelian (note that this condition is automatically satisfied at unramified primes). 
%We hope to be able to remove this technical hypothesis in a near future. 
%Note that 
The proofs of Erez \cite{Erez2} and Taylor \cite{tay2} involve the study of Galois Gauss sums, which may be regarded as objects of analytic nature since they come from the functional equation of Artin $L$-functions. In our work instead only classical Gauss and Jacobi sums are involved, which might seem more satisfactory if one considers that (\ref{equality}) is a purely algebraic statement.

It turns out that the strategy of our proof also applies, without any restriction on the decomposition groups, to the torsion module $\T_{N/E}$ defined above as well as to another torsion module, denoted $\RR_{N/E}$. The module $\RR_{N/E}$, whose definition will be given in Section \ref{section:reduction-to-totally-ramified}, was introduced and studied by Chase in \cite{ChaseTors}. We will prove that both $\T_{N/E}$ and $\RR_{N/E}$ define the trivial class in $\Cl(\Z[G])$. This allows us on the one hand to recover Equality \eqref{stably-self-dual}, and on the other hand to deduce that the $\Z[G]$-module $\OO_N\otimes_{\OO_E}\OO_N$ defines the trivial class in $\Cl(\Z[G])$. This result looked new to us at first sight, but we show in Proposition \ref{ONn} that $(\OO_N\otimes_{\OO_E}\OO_N)=(\OO_N)^{[N:E]}$ in $\Cl(\Z[G])$, hence it is easily deduced from Taylor's theorem.

These results are stated in Theorem \ref{maincor} below. After a reduction to the case of a local totally ramified tame Galois extension with Galois group $\Delta$, we introduce new torsion Galois modules, which define classes $(R)$ and $(S)$ in $\Cl(\Z[\Delta])$. The study of these classes is the core of our work. In Theorem \ref{main} we give explicit expressions of representative morphisms of $(R)$ and $(S)$ in terms of Gauss and Jacobi sums, which through Fr\"ohlich's Hom-description of $\Cl(\Z[\Delta])$ yield the triviality of $(R)$ and $(S)$. Theorem \ref{surprise} will show a surprising consequence of the generalisation of Equality \eqref{equality} to even degree extensions.
\medskip

As suggested by various experts, it seems reasonable to expect that our methods also work in the context of sums of $k$-th roots of codifferents developed by Burns and Chinburg \cite{Burns-Chinburg}.
\medskip

We now explain our strategy more in detail. In Section \ref{section:reduction-to-totally-ramified} we follow Chase's approach: we consider the torsion modules $\T_{N/E}$, $\SSS_{N/E}$ and $\RR_{N/E}\,$, then we
%We now sketch the contents of the next sections. To prove the triviality of the classes of $\SSS_{N/E}$ and $\RR_{N/E}$ in $\Cl(\Z[G])$, in Section \ref{section:reduction-to-totally-ramified} we will 
reduce to the study, for every prime $\PP$ of $N$, of their $\Z[I_\PP]$-module structure, where $I_\PP$ is the inertia group at $\PP$ (note that $I_\PP$ is cyclic of order coprime to the residual characteristic of $\PP$, since $N/E$ is tame). In other words we show that it is sufficient to prove the triviality of the classes  in $\Cl(\Z[\Delta])$ of the local analogues $\T_{K/F}$, $\SSS_{K/F}$ and $\RR_{K/F}$, where $K/F$ is a cyclic totally ramified tame $p$-adic extension with Galois group $\Delta$ of order $e$ (prime to $p$). Up to this point, the only difference with Chase's study of $\T_{N/E}$ is that
%This strategy is similar to that of Chase but 
we need $N/E$ to be locally abelian when dealing with $\SSS_{N/E}$. 

It is interesting to remark that $\T_{N/E}$ and $\SSS_{N/E}$ are particular instances of torsion modules arising from ideals, \textit{i.e.} modules of the form $\OO_N/\I$ or $\I^{-1}/\OO_N$, where $\I$ is a $G$-stable ideal of $\OO_N$. In fact we will perform the reduction step of Section \ref{section:reduction-to-totally-ramified} for general torsion modules arising from ideals (under the assumption that $N/E$ is locally abelian). Working in this more general situation requires no additional effort and allows us to easily recover the cases of $\T_{N/E}$ and $\SSS_{N/E}$ (while $\RR_{N/E}$ needs a somehow separate treatment). Moreover we will get almost for free a new proof of a result of Burns \cite[Theorem 1.1]{BurnsARC} on arithmetically realisable classes in tame locally abelian Galois extensions of number fields (see Subsection \ref{section:arcburns}). Nonetheless, for the sake of simplicity, in this introduction we shall stick with the modules $\T_{N/E}$, $\SSS_{N/E}$ and $\RR_{N/E}$.

%Chase then goes on showing that 
The key ingredient in Chase's proof of the triviality of $\T_{N/E}$ is the link he establishes between the local torsion module $\T_{K/F}$ and the Swan module $\Sigma_\Delta(p)=p\Z[\Delta]+\Tr_\Delta\Z[\Delta]$ (here $\Tr_\Delta=\sum_{\delta\in\Delta}\delta\in\Z[\Delta]$). Consider the torsion module $T(p,\Z[\Delta])=\Z[\Delta]/\Sigma_\Delta(p)$ associated to the Swan module, then
$$\T_{K/F}\cong T(p,\Z[\Delta])^{f_F}$$ 
as $\Z[\Delta]$-modules, where $f_F$ is the inertia degree of $F/\Q_p$.

To deal with $\SSS_{K/F}$ and $\RR_{K/F}$, we need to enlarge the coefficient ring of the group algebra of $\Delta$. Let $\mu_e$ denote the group of $e$th roots of unity in a fixed algebraic closure $\overline\Q$ of $\Q$ and let $\oo$ denote the ring of integers of the cyclotomic field $\Q(\mu_e)$. We introduce new torsion $\Z[\Delta]$-modules $S_\chi(\pp,\oo[\Delta])$ and $R_\chi(\pp,\oo[\Delta])$, which depend on a character $\chi:\Delta\to\mu_e$ and a prime $\pp$ of $\oo$ not dividing $p$. They may be considered as analogues of $T(p,\Z[\Delta])$ in the sense that, for an appropriate choice of $\chi$ and $\pp$ (in particular $\pp\mid p$, the residual characteristic of $K/F$), we have
$$\SSS_{K/F}\cong S_\chi(\pp,\oo[\Delta])^{f_F/f},\quad\RR_{K/F}\cong R_\chi(\pp,\oo[\Delta])^{f_F/f},$$  
as $\Z[\Delta]$-modules, where $f$ is the inertia degree of $\pp$ in $\Q(\mu_e)/\Q$.
%(see Proposition \ref{descent}). 
These constructions are described in Section \ref{section:back-to-global}. Furthermore it is easy to see that both $S_\chi(\pp,\oo[\Delta])$ and $R_\chi(\pp,\oo[\Delta])$ are cohomologically trivial $\Z[\Delta]$-modules, hence they define classes in $\Cl(\Z[\Delta])$ (see Section \ref{ctsec}).  

Now let $\chi:\Delta\to\mu_e$ be any injective character and $\pp$ be a prime of $\oo$ not dividing $e$. Set $R=R_\chi(\pp,\oo[\Delta])$ and $S=S_\chi(\pp,\oo[\Delta])$. The core of this paper is the study of the classes $(R)$ and $(S)$ in $\Cl(\Z[\Delta])$. In Section \ref{section:hom-des}, we find equivariant morphisms $r$ and $s$ from the group of virtual characters of $\Delta$ to the idèles of $\Q(\mu_e)$, representing $(R)$ and $(S)$ respectively in Fr\"ohlich's Hom-description of $\Cl(\Z[\Delta])$. In Section \ref{section:stickelberger}, we note that the contents of the values of $r$ and $s$ are principal ideals: this follows from Stickelberger's theorem, which also gives explicit generators of these ideals in terms of Gauss and Jacobi sums respectively. Dividing $r$ and $s$ by suitably modified generators $c_r$ and $c_s$ of their contents yields morphisms with unit idelic values. With the help of the Hasse-Davenport formula we express the resulting morphisms $rc_r^{-1}$ and $sc_s^{-1}$ as Fr\"ohlich's generalized Determinants of some unit idèles of $\Z[\Delta]$, showing that they lie in the denominator of the Hom-description, namely that $(R)$ and $(S)$ are trivial.

It is worth noting that applying these techniques to $T_\mathbb{Z}=T(p,\Z[\Delta])$ yields a proof of the triviality of $(T_\mathbb{Z})$ in $\Cl(\Z[\Delta])$. In this proof, as in the original proof by Swan, cyclotomic units play a central role, analogous to that of the Gauss and Jacobi sums above, as it will soon be apparent. 
%(see Proposition \ref{swan}).
\medskip

To state our main results, we let $G$ and $J$ denote the Gauss and Jacobi sums defined respectively by 
$$G=\sum_{x\in\oo/\pp}\left(\frac{x}{\pp}\right)^{-1}\xi^{\mathrm{Tr}(x)}\ ,\quad J=\sum_{x\in\oo/\pp}\left(\frac{x}{\pp}\right)^{-1}\left(\frac{1-x}{\pp}\right)^{-1}\enspace,$$
where  $\mathrm{Tr}:\oo/\pp\to \Z/p\Z$ denotes
the residue field trace homomorphism, $\xi$ is a fixed $p$th root of unity in $\overline\Q$ and $\left(\frac{}{\pp}\right)$ is the $e$th power residue symbol.
Then both $G^e$ and $J$ belong to $\oo$. Let $\delta$ be a fixed generator of $\Delta$. In Section \ref{section:stickelberger} we define $m_i,n_i\in\Z$ for $i=0,\,\ldots\,e-1$ such that  
$$G^e=\sum_{i=0}^{e-1}m_i\chi(\delta)^i\ ,\quad J=\sum_{i=0}^{e-1}n_i\chi(\delta)^i\enspace.$$
The corresponding expression for the cyclotomic unit $C$ is:
%$\frac{\chi(\delta)^p-1}{\chi(\delta)-1}$:
$$C=\frac{\chi(\delta)^p-1}{\chi(\delta)-1}=\sum_{i=0}^{p-1}\chi(\delta)^i\enspace.$$

We can now formulate the main result of this paper, in terms of Fr\"ohlich's Hom-description of the class group (see Section \ref{hdcg} for more details).
% and unexplained notation). 

\begin{ithm}\label{main}
The classes $(T_\mathbb{Z})$, $(R)$ and $(S)$ are trivial in $\Cl(\Z[\Delta])$. More precisely they are represented in $\mathrm{Hom}_{\Omega_{\Q}}(R_{\Delta},J(\Q(\mu_e)))$ by the morphisms with $\qq$-components equal to $1$ at places $\qq$ of $\Q(\mu_e)$ such that $\qq\nmid e$, and to 
$$\Det(p^{-1}u_t)\ ,\quad\Det(u_r^{-1})\ ,\quad \Det(u_s^{-1})\enspace,$$
respectively, at prime ideals $\qq$ of $\oo$ such that $\qq\mid e$, where $u_t,u_r,u_s\in\Z[\Delta]$ are defined by
$$u_t=\sum_{i=0}^{p-1}\delta^i\ ,\quad u_r=\sum_{i=0}^{e-1}m_i\delta^i\ ,\quad u_s=\sum_{i=0}^{e-1}n_i\delta^i\enspace,$$
and satisfy $u_t,u_r,u_s\in\Z_q[\Delta]^\times$, for any rational prime $q$ such that $q\mid e$.
\end{ithm}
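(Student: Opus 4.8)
The plan is to realise each of the three finite cohomologically trivial $\Z[\Delta]$-modules $T_\mathbb{Z}$, $R$, $S$ by an explicit representative in Fr\"ohlich's Hom-description
$$\Cl(\Z[\Delta])\;\cong\;\frac{\Hom_{\Omega_\Q}(R_\Delta,\,J(\Q(\mu_e)))}{\Hom_{\Omega_\Q}(R_\Delta,\,\Q(\mu_e)^\times)\cdot\Det(U(\hat\Z[\Delta]))}\,,$$
and then to show that this representative lies in the denominator. Since a finite $\Z[\Delta]$-module $M$ of projective dimension at most $1$ admits a resolution $0\to\Z[\Delta]^n\xrightarrow{\ \theta\ }\Z[\Delta]^n\to M\to 0$ with $\theta$ invertible over $\Q[\Delta]$, the class $(M)$ is represented by the idelic morphism $\psi\mapsto\big(\det\psi(\theta)\big)^{\pm1}_\qq$, and the first step is to read off the local generators of the ideals defining $T_\mathbb{Z}$, $R$, $S$. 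For $T_\mathbb{Z}=\Z[\Delta]/\Sigma_\Delta(p)$ this is immediate: $\Sigma_\Delta(p)$ is locally principal, generated by $1$ at the primes $q\neq p$ and by $\frac1e\Tr_\Delta+p\big(1-\frac1e\Tr_\Delta\big)$ at $p$. For $R=R_\chi(\pp,\oo[\Delta])$ and $S=S_\chi(\pp,\oo[\Delta])$ the analogous computation is to be carried out from their definitions; I expect the local generators above $p$ to be expressible through the $e$th power residue symbol $\left(\frac{\cdot}{\pp}\right)$, which is precisely what brings the Gauss sum $G$ and the Jacobi sum $J$ into the picture, while at the primes dividing neither $p$ nor $e$ the resolutions should split, so that those components already lie in the denominator.

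The second step is Stickelberger's theorem. The value $r(\psi)$ (resp. $s(\psi)$) is an id\`ele of $\Q(\mu_e)$ with a well-defined content, and the Stickelberger prime decomposition of Gauss sums --- together with the Hasse--Davenport relation and the associated factorisation of Jacobi sums --- exhibits these contents as principal ideals, generated respectively by $G^e$ and by $J$, which already lie in $\oo$ by hypothesis. Writing $G^e=\chi(u_r)$ and $J=\chi(u_s)$ with $u_r=\sum m_i\delta^i$, $u_s=\sum n_i\delta^i$ in $\Z[\Delta]$, the morphisms $\Det(u_r)$ and $\Det(u_s)$ lie in $\Hom_{\Omega_\Q}(R_\Delta,\Q(\mu_e)^\times)$, so $r\,\Det(u_r)^{-1}$ and $s\,\Det(u_s)^{-1}$ represent the same classes as $r$ and $s$ but now take values in the unit id\`eles. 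For $T_\mathbb{Z}$ the content is already trivial, cyclotomic units being units; the normalising morphism is then $\Det(u_t)$ with $u_t=\sum_{i=0}^{p-1}\delta^i$, whose image under $\chi$ is the cyclotomic unit $C=(\chi(\delta)^p-1)/(\chi(\delta)-1)$, and one argues directly with cyclotomic-unit relations, as in Swan's original proof.

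Next one must verify that $u_r$, $u_s$, $u_t$ are units in $\Z_q[\Delta]$ for every rational prime $q\mid e$. For $u_t=\sum_{i=0}^{p-1}\delta^i$ this is elementary: one has the identity $u_tu_t'=1+\frac{pp'-1}{e}\Tr_\Delta$ in $\Z[\Delta]$, where $pp'\equiv 1\pmod e$ and $u_t'=\sum_{i=0}^{p'-1}\delta^{pi}$; since $\Tr_\Delta$ is nilpotent modulo $q$ (as $q\mid e$), $u_tu_t'$ is a unit in $\F_q[\Delta]$, whence $u_t\in\Z_q[\Delta]^\times$. For $u_r$ and $u_s$ the verification is more delicate, precisely because $\Z_q[\Delta]$ is not a maximal order when $q\mid e$, so it is not enough that every character of $\Delta$ takes a $q$-adic unit value on them; here one must use the explicit description of the coefficients $m_i,n_i$ produced by Stickelberger's theorem and Hasse--Davenport, the underlying reason being that $G^e$ and $J$ are units away from $p$ (their ideals being supported above $p$). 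Granting this, $p^{-1}u_t$, $u_r^{-1}$, $u_s^{-1}$ all lie in $\Z_q[\Delta]^\times$ for $q\mid e$, so the id\`ele equal to these at the primes above $e$ and to $1$ elsewhere lies in $U(\hat\Z[\Delta])$; hence the morphisms exhibited in the statement are Determinants of unit id\`eles and represent the trivial class.

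The step I expect to be the real obstacle is passing from the naive representatives produced by the resolutions to the clean ones in the statement, supported only at the primes above $e$. After normalising by $\Det(u_r)^{-1}$ (resp. $\Det(u_s)^{-1}$, resp. the analogous cyclotomic-unit morphism for $T_\mathbb{Z}$), the component at each prime above $e$ is already the required Determinant, but the components at the primes above $p$ are units --- valued in $\Q(\mu_e)$ rather than in the larger field $\Q(\mu_{ep})$ where the Gauss sum itself lives --- which still have to be recognised as Fr\"ohlich Determinants of unit id\`eles of $\Z[\Delta]$, so that they can be absorbed into the denominator. The tool for this recognition is precisely the Hasse--Davenport product, and lifting, formula for Gauss sums (with its counterpart for Jacobi sums, and the telescoping identities among cyclotomic units in the case of $T_\mathbb{Z}$); carrying out the bookkeeping of the Stickelberger exponents uniformly across the three cases is where the real work lies.
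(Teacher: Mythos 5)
Your outline has the same skeleton as the paper's argument (explicit Fr\"ohlich representatives, Stickelberger on the contents, normalisation by $u_t,u_r,u_s$, Hasse--Davenport), but the steps you defer are exactly the ones that carry the weight, and two of them are misdirected. First, the representatives of $(R)$ and $(S)$ are never constructed. Since $R$ and $S$ are $\oo[\Delta]$-modules, one must first resolve each $\kappa(\chi^i)$ over $\oo[\Delta]$ (the kernel $M_i=\pp\oo[\Delta]+(\delta-\zeta^i)\oo[\Delta]$ is locally free with local generator $1+(p-1)\varepsilon_i$ at $\pp$ --- no power residue symbols enter at this stage, contrary to your guess), then apply the norm $\mathscr{N}_{\Q(\zeta)/\Q}$ to obtain $\Omega_\Q$-equivariant morphisms $r,s$, and then compute their contents at every non-faithful character $\chi^d$ ($d\mid e$, $e=de'$): this counting argument yields $\pp^{e\Theta_{e'}N_{e,e'}}$ and $\pp^{(2-\sigma_{e',2})\Theta_{e'}N_{e,e'}}$, i.e.\ ideals generated by $G_{e'}^{ef/f_{e'}}$ and $J_{e'}^{f/f_{e'}}$, not by $G^e$ and $J$ themselves. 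Hence your claim that $r\,\Det(u_r)^{-1}$ and $s\,\Det(u_s)^{-1}$ are unit-valued is precisely the Hasse--Davenport identity $\Det_{\chi^d}(u_r)=(-1)^e(-G_{e'})^{ef/f_{e'}}$ (and its Jacobi analogue, signs included, cf.\ Lemmas \ref{us} and \ref{ur} and Proposition \ref{theta-d}); you use it in your third paragraph and only acknowledge it as ``bookkeeping'' at the end, so the central computation is missing rather than postponed.

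Second, the two closing steps go the wrong way. For the components away from $e$ (in particular above $p$) you propose to ``recognise'' them as Determinants via Hasse--Davenport; what actually works is soft: once the normalised morphism is unit-valued and equivariant, it lies in $\Det(\UU(\M))$ because the abelian $\Delta$ has no symplectic characters (Fr\"ohlich, (I.2.19)), and $\M_q=\Z_q[\Delta]$ for every rational prime $q\nmid e$, which absorbs all those components at once --- no explicit unit of $\Z_p[\Delta]$ needs to be, or is, exhibited, and Hasse--Davenport does not produce one. Finally, the assertion $u_r,u_s\in\Z_q[\Delta]^\times$ for $q\mid e$ is left open (``granting this''), and your reason for distrusting the character-value criterion is mistaken for the cyclic group $\Delta$: if every $\Det_{\chi^h}(u_s)$ is a $q$-adic unit (which follows because $J_{e'}$ and $G_{e'}^{e'}$ generate ideals supported above $p\ne q$), then the determinant of multiplication by $u_s$ on $\Z_q[\Delta]$ is a unit of $\Z_q$, so $u_s\in\Z_q[\Delta]^\times$; the paper reaches the same conclusion by combining membership in $\Det(\UU(\M))$, injectivity of $\Det$ on $\Q_q[\Delta]^\times$ (Proposition \ref{prop:Det}) and $\M_q^\times\cap\Z_q[\Delta]=\Z_q[\Delta]^\times$. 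Your proposed alternative --- direct control of the coefficients $m_i,n_i$, where the $m_i$ are only defined through a multinomial expansion --- is both unnecessary and unworkable as stated.
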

The triviality of $(T_{\Z})$ is essentially Swan's theorem \cite[Corollary 6.1]{SwanPRFG}. Our proof using Fr\"ohlich's Hom-description of the class group, given in Section \ref{hom-rep-TZ}, might be folklore, nevertheless the representative morphism we use to describe $(T_{\Z})$ is slightly different from the one given in \cite[(I.2.23)]{Frohlich-Alg_numb} (see Remark \ref{t-rep}). The proof for $S$ and $R$ will follow from the results of Sections \ref{section:hom-des} and \ref{section:stickelberger} and will enlight the analogy between the torsion modules $R$ and $S$ we introduce and $T_\Z$ (see Theorem \ref{desiderio} and its proof).

%In his early work \cite{Erez} on the square root of the inverse different, Erez already establishes a link between this module and a Jacobi sum, in the case of a cyclic extension of $\Q$ of prime degree. Using Rim's theorem \cite[(42.16)]{CR2} and a result of Ullom \cite[Theorem 1]{Ullom-representations}, he shows that the square root of the inverse different has the same class (as a Galois module) as an ideal of a cyclotomic extension of the rationals (as an ideal), defined by the action of some group algebra element on prime ideals. This group algebra element, which is pretty close to what ours would yield in the prime degree case (see \S\ref{further}), happens to be the exponent which appears in the factorisation of the Jacobi sum, yielding the triviality of the class under study. 

In his early work \cite{Erez} on the square root of the inverse different, Erez already establishes a link between this module and a Jacobi sum, in the case of a cyclic extension of $K/\Q$ of odd prime degree $l$. Using a result of Ullom \cite[Theorem 1]{Ullom-representations}, he shows that the class of $\A_{K/\Q}$ in $\Cl(\Z[\Gal(K/\Q)])$ corresponds, through Rim's isomorphism \cite[(42.16)]{CR2}, to the class in $\Cl(\Q(\mu_l))$ of an ideal of $\Q(\mu_l)$ which is defined by the action of some specific element of $\Z[\Gal(\Q(\mu_l)/\Q)]$ on prime ideals. This group algebra element, which is pretty close to what ours, $u_s$, would yield in the prime degree case (see \S\ref{further}), happens to be the exponent which appears in the factorisation of the Jacobi sum, yielding the triviality of the class under study.

The factorisation of Gauss and Jacobi sums through Stickelberger's theorem is also an essential step in our proof of the triviality of the classes of $R$ and $S$. In the introduction of his thesis \cite{Erez-thesis}, Erez gives other examples of results on the Galois module structure of rings of integers that are proved using Stickelberger elements (\cite{Martinet-diedral}, \cite{Cougnard}). In \cite{ChaseTors} Chase uses such elements to give the composition series of certain primary components of the $\OO_E[G]$-module $\RR_{N/E}$ when $N/E$ is cyclic or elementary abelian. Reversely, Galois module structure results can be used to build Stickelberger elements: the first statement in the theory of integral Galois modules is due to Hilbert (for the ring of integers of an abelian extension of $\Q$ with discriminant coprime to the degree), who used it to construct annihilating elements for the class group of some cyclotomic extensions. Finally, Stickelberger's elements also have a fundamental role in the work of L. McCulloh (see \cite{McCullohCNFEAGR}, \cite{McCullohGMSAE} and the references given in those papers).

%It is also worth remarking that the connection we establish between the Galois module $\RR_{N/E}$ and Gauss sums can be interpreted in the following way. In fact, on the one hand the class of $\RR_{N/E}$  in $\Cl(\Z[G])$ can be represented in the Hom-description by a function which is closely related to resolvents (see \cite[Note 6 to Chapter III]{Frohlich-Alg_numb}). On the other hand, the connection between resolvents and (Galois) Gauss sums is the heart of Fröhlich's Galois module theory. In this sense our results concerning $\RR_{N/E}$ establish a bridge between the approaches of Chase and Fröhlich, whose relationship was so far unclear (see the last lines of \cite[Note 6 to Chapter III]{Frohlich-Alg_numb}).
As pointed out by Fröhlich in \cite[Note 6 to Chapter III]{Frohlich-Alg_numb}, Chase remarks that the character function associated to $\T_{N/E}$ (in the ideal-theoretic Hom-description) is the Artin conductor. Similarly, the character function associated to $\RR_{N/E}$ (in the idèle-theoretic Hom-description) is closely related to a resolvent map (see \cite[p. 210]{ChaseTors}). Fröhlich also wonders how his method, based on the comparison of Galois Gauss sums and resolvents, could be linked to Chase's approach. We hope that the explicit connection we establish between $\RR_{K/F}$ and a Gauss sum might be useful to answer Fröhlich's question.\smallskip

Using the reduction results of Section \ref{section:reduction-to-totally-ramified}, we deduce the following consequence.

\begin{ithm}\label{maincor}
Let $N/E$ be a $G$-Galois tamely ramified extension of number fields. Then the classes of $\T_{N/E}$ and $\RR_{N/E}$ are trivial in $\Cl(\Z[G])$. In particular we have  
$$(\OO_N)=(\C_{N/E})\quad\textrm{and}\quad (\OO_N\otimes_{\OO_E}\OO_N)=(\OO_N)^{[N:E]}=1\enspace.$$ 

If, further, $N/E$ is locally abelian, then the class of $\SSS_{N/E}$ is trivial in $\Cl(\Z[G])$. In particular we have 
$$(\OO_N)=(\A_{N/E})\enspace,$$
thus $\OO_N$, $\C_{N/E}$ and $\A_{N/E}$ define the same class in $\Cl(\Z[G])$. 
\end{ithm}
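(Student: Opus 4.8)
The plan is to deduce Theorem~\ref{maincor} formally from Theorem~\ref{main} together with the reduction carried out in Section~\ref{section:reduction-to-totally-ramified}. First I would appeal to that reduction, which does two things: it shows that $\T_{N/E}$, $\RR_{N/E}$ and —when $N/E$ is locally abelian— $\SSS_{N/E}$ are cohomologically trivial $\Z[G]$-modules, hence define classes in $\Cl(\Z[G])$; and it expresses each of these classes as a product, over the primes $\PP$ of $N$, of classes induced up to $\Z[G]$ from the classes in $\Cl(\Z[I_\PP])$ of the local torsion modules $\T_{K/F}$, $\RR_{K/F}$, $\SSS_{K/F}$ attached to the completions at $\PP$ (here $K/F$ is the associated totally ramified tame $p$-adic extension, with cyclic Galois group $\Delta\cong I_\PP$). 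Since induction maps the trivial class to the trivial class, it is enough to check that each of these local classes vanishes.

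For this I would feed in the isomorphisms recalled in the introduction, $\T_{K/F}\cong T(p,\Z[\Delta])^{f_F}$, $\RR_{K/F}\cong R_\chi(\pp,\oo[\Delta])^{f_F/f}$ and $\SSS_{K/F}\cong S_\chi(\pp,\oo[\Delta])^{f_F/f}$, for a suitably chosen injective character $\chi$ of $\Delta$ and a prime $\pp\mid p$ of $\oo$. The point to verify is that the hypotheses of Theorem~\ref{main} are met: $\Delta$ is cyclic, so it admits injective characters, and since $K/F$ is tame one has $p\nmid e=|\Delta|$, whence the prime $\pp\mid p$ automatically satisfies $\pp\nmid e$. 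Theorem~\ref{main} then gives $(T_\Z)=(R)=(S)=1$ in $\Cl(\Z[\Delta])$; passing to direct powers, the classes of $T(p,\Z[\Delta])^{f_F}$ and of the others vanish as well, so $(\T_{N/E})=(\RR_{N/E})=1$ with no hypothesis on $G$, and $(\SSS_{N/E})=1$ once $N/E$ is locally abelian.

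The remaining assertions follow from short exact sequences and additivity of classes in the locally free class group. Since the different of $N/E$ is an integral ideal, $\OO_N\subseteq\C_{N/E}$, and in $0\to\OO_N\to\C_{N/E}\to\T_{N/E}\to 0$ the two outer terms are locally free over $\Z[G]$ and the cokernel is cohomologically trivial, whence $(\C_{N/E})=(\OO_N)(\T_{N/E})=(\OO_N)$. Similarly $\val_\PP(\A_{N/E})=\tfrac12\val_\PP(\C_{N/E})\ge 0$ gives $\OO_N\subseteq\A_{N/E}$, so $0\to\OO_N\to\A_{N/E}\to\SSS_{N/E}\to 0$ yields $(\A_{N/E})=(\OO_N)$ when $N/E$ is locally abelian, and then $\OO_N$, $\C_{N/E}$ and $\A_{N/E}$ all share one class. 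Finally Proposition~\ref{ONn} gives $(\OO_N\otimes_{\OO_E}\OO_N)=(\OO_N)^{[N:E]}$, and this class is trivial: either because the triviality of $(\RR_{N/E})$, through the link between $\RR_{N/E}$ and $\OO_N\otimes_{\OO_E}\OO_N$ recorded in Section~\ref{section:reduction-to-totally-ramified}, forces $(\OO_N\otimes_{\OO_E}\OO_N)=1$, or —more directly— because Taylor's theorem shows $(\OO_N)^2=1$ while $(\OO_N)=1$ already in odd degree.

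All the real content sits in Theorem~\ref{main} and the reduction of Section~\ref{section:reduction-to-totally-ramified}, which we assume here; granting those, the argument above is purely formal. The one delicate point is the tensor product $\OO_N\otimes_{\OO_E}\OO_N$: one must pin down the exact sequence linking it to $\RR_{N/E}$ correctly (or route through Proposition~\ref{ONn}), and keep careful track of where the locally abelian hypothesis is used — namely only for $\SSS_{N/E}$, hence only for the comparison of $\OO_N$ with $\A_{N/E}$, whereas the statements about $\T_{N/E}$, $\RR_{N/E}$, $\C_{N/E}$ and $\OO_N\otimes_{\OO_E}\OO_N$ hold for every tame $N/E$.
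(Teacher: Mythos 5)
Your proposal is correct and follows essentially the same route as the paper: Theorem \ref{reduction} (with Lemma \ref{cohomtriv}(iii)) turns the global classes into products of classes induced from $\Cl(\Z[I_\PP])$, Theorem \ref{main} kills the local classes $(T(p,\Z[I_\PP]))$, $(R_{\chi_\PP})$, $(S_{\chi_\PP})$, and the comparisons $(\C_{N/E})=(\OO_N)$, $(\A_{N/E})=(\OO_N)$, $(\OO_N\otimes_{\OO_E}\OO_N)=(\OO_N)^{[N:E]}=1$ then follow exactly as in \S\ref{ctglob} via the exact sequences and Proposition \ref{ONn}, with the locally abelian hypothesis entering only through $\SSS_{N/E}$. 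The only blemish is a harmless sign slip: the valuations of $\A_{N/E}$ (like those of $\C_{N/E}$) are $\le 0$, which is precisely what gives the inclusion $\OO_N\subseteq\A_{N/E}$.
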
 

As already mentioned, the triviality of $(\T_{N/E})$ in this context is due to Chase, see \cite[Corollary 1.12]{ChaseTors}. In the same paper he studies the torsion module $\RR_{N/E}$ as an $\OO_E[G]$-module, showing that it determines the local Galois module structure of $\OO_N$ \cite[Theorem 2.10]{ChaseTors} and describing its primary components \cite[Theorem 2.15]{ChaseTors}. We will show that $(\RR_{N/E})=1$, which is in fact equivalent to $(\OO_N\otimes_{\OO_E}\OO_N)=(\OO_N)^{[N:E]}=1$ (see \S\ref{ctglob}). The equality $(\OO_N)^{[N:E]}=1$ follows of course from Taylor's theorem (which implies that $(\OO_N)$ is of order dividing $2$ and is trivial in odd degree) but was also known before Taylor's proof of Fröhlich's conjecture (it follows from \cite[Corollary]{TaylorGMSIRA} using the last remark of \cite[Definition 1.6]{LamAEFG}). We make more comments on the last equality of Theorem \ref{maincor} in the presentation of Section \ref{analytic} below.
%He does not investigate the $\Z[G]$-module structure of $\RR_{N/E}$.
\medskip

We end this paper by focusing in Section \ref{analytic} on the class of the square root of the inverse different  (for the rest of this introduction we shall assume that the inverse different of every extension we consider is a square). We briefly recall some known results on this matter. In odd degree, Erez showed that $(\A_{N/E})\in \Cl(\Z[G])$ is defined (\textit{i.e.} $\A_{N/E}$ is $\Z[G]$-locally free) if and only if $N/E$ is a weakly ramified $G$-Galois extension (\textit{i.e} the second ramification group of every prime is trivial). It seems reasonable to conjecture that $(\A_{N/E})$ is trivial when $N/E$ is a weakly ramified Galois extension of odd degree (see also \cite[Conjecture]{Vinatier-Surla}). As already mentioned this conjecture is true in the tame case, thanks to a result of Erez. When $N/E$ is a weakly ramified $G$-Galois extension of odd degree, one also knows that 
\begin{itemize}
\item $\M\otimes_{\Z[G]}\A_{N/E}$ is free over $\M$, where $\M$ is a maximal order of $\Q[G]$ containing $\Z[G]$ (\cite[Theorem 2]{Erez2});
\item $(\A_{N/E})=1$ if, for any wildly ramified prime $\mathcal P$ of $\OO_N$, the decomposition group is abelian, the inertia group is cyclic and the localized extension $E_P/\Q_p$ is unramified, where $P=\mathcal P\cap E$ and $p\Z=\mathcal P\cap\Q$ (\cite[Theorem 1]{Pickettvinatier});
%; when $E=\Q$, this result boils down to \cite[Th\'eor\`eme 1.2]{Vinatier_jnumb} since the inertia group is always cyclic in the absolute locally abelian case);
%the decomposition group of every wildly ramified prime of $N/E$ is abelian and $E=\Q$ 
\item $(\A_{N/\Q})^e=1$ if $[N:\Q]$ is a power of a prime $p$ and $e$ is the ramification index of $p$ in $N/\Q$ (\cite[Th\'eor\`eme 1]{Vinatier-Surla}); when $p=3$ one even has $(\A_{N/\Q})^3=1$ by \cite[Theorem 1]{Vinatier-3}.
\end{itemize}
It is interesting to observe that so far 
%it was not known whether $(\A_{N/E})$ was trivial or not for \textit{every} tame Galois extension $N/E$. More generally 
the class of $\A_{N/E}$ had only been studied when $N/E$ has odd degree (except for a short local study, due to Burns and Erez, in the absolute, abelian and very wildly ramified case, see \cite[\S 3]{Erez-survey}). Specifically the question of whether it is trivial or not for \textit{every} tame Galois extension $N/E$ had not been considered. The reason for this restriction is that the second Adams operation, which is fundamental in Erez's approach in \cite{Erez2}, does not behave well with respect to induction for groups of even order. 

This is precisely the point where our result above brings new information.
%In Section \ref{analytic} we study the behavior of the class of the square root of the inverse different of tame locally abelian extensions, without restriction on the degree. 
When $N/E$ is locally abelian and tame, Theorem \ref{maincor} together with Taylor's theorem reduces the question to the study of the triviality of the image of the root number class in $\Cl(\Z[G])$ (see Corollary \ref{A=tW}). This immediately gives that $(\A_{N/E})=1$ if $N/E$ is abelian or has odd order (thus recovering Erez's result in the locally abelian case). Computing the appropriate root numbers, we show next that $(\A_{N/E})=1$ if $N/E$ is locally abelian and no real place of $E$ becomes complex in $N$. However the class of the square root of the inverse different is not trivial in general, in other words we have the following result.

\begin{ithm}\label{surprise}
There exists a tame Galois extension $N/\Q$ of even degree such that $\C_{N/\Q}$ is a square and the class of $\A_{N/\Q}$ is nontrivial in $\Cl(\Z[\mathrm{Gal}(N/\Q)])$.  
\end{ithm}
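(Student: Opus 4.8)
The plan is to produce the desired extension explicitly, using the binary tetrahedral group $G=\widetilde{A_4}$ (the group $2.A_4$ of order $24$) as advertised in the abstract, and to detect nontriviality of $(\A_{N/\Q})$ via Taylor's theorem combined with our Theorem \ref{maincor}. First I would recall from Corollary \ref{A=tW} (whose statement we may assume) that for a tame, locally abelian $N/E$ the class $(\A_{N/E})$ equals the image in $\Cl(\Z[G])$ of the Cassou-Nogu\`es--Fr\"ohlich root number class $t_{N/E}W_{N/E}$; by Taylor's theorem $(\OO_N)=W_{N/E}$ already equals that root number class, so what must be computed is the \emph{discrepancy} between $W_{N/E}$ and $t_{N/E}W_{N/E}$, i.e.\ the class represented by the ``$t$-factor'' $t_{N/E}$ coming from the second Adams operation. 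The strategy is therefore: (i) pick $G$ to be the binary tetrahedral group, for which $\Cl(\Z[G])$ is known to be nontrivial and for which Fr\"ohlich and Taylor computed precisely which root number classes are realised; (ii) exhibit a tame $G$-extension $N/\Q$ which is locally abelian and whose inverse different is a square (i.e.\ all inertia groups have odd order); (iii) show that for this extension the relevant root numbers force $t_{N/\Q}$ to represent the nontrivial element of $\Cl(\Z[G])$.

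The key steps, in order, are as follows. Step 1: verify that for $G=2.A_4$ the square root $\A_{N/\Q}$ exists as soon as $N/\Q$ is tame and all inertia subgroups have odd order; since the elements of odd order in $2.A_4$ are exactly those of order $1$ and $3$ (the $3$-Sylow being cyclic of order $3$), this just says that every ramified prime has inertia of order $3$, which is easy to arrange. Step 2: recall that $2.A_4$ has a unique irreducible symplectic (quaternionic) character, call it $\psi$, of degree $2$, and that $\Cl(\Z[2.A_4])\cong \Z/2$ with the nontrivial class detected by the sign of the root number $W(\psi)=W_{N/\Q}(\psi)\in\{\pm1\}$; this is exactly the setup of Fr\"ohlich's and Taylor's examples of nontrivial Galois module classes (e.g.\ Fr\"ohlich's book, III.2, and the $Q_8$/$\widetilde{A_4}$ examples). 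Step 3: by Corollary \ref{A=tW}, $(\A_{N/\Q})$ is detected by the sign $t_{N/\Q}(\psi)\cdot W(\psi)$, where $t_{N/\Q}(\psi)$ is the value at $\psi$ of the explicit idelic morphism $t_{N/\Q}$ attached to the second Adams operation $\psi\mapsto\psi^2-$ something; for a symplectic degree-$2$ character $\psi$ one has $\psi^2 = \mathrm{Sym}^2\psi + \Lambda^2\psi$ with $\Lambda^2\psi = \det\psi = 1$, so the Adams operation $\Psi^2\psi = \mathrm{Sym}^2\psi - \Lambda^2\psi$ has degree $2$; one then computes its decomposition into irreducibles and reads off the corresponding product of root numbers / resolvent contributions, which is a local computation at the (odd, tamely ramified) primes.

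Step 4 is the arithmetic heart: choose a specific field. A natural choice is to take $N$ to be the $2.A_4$-extension of $\Q$ obtained as a suitable central extension of an $A_4$-field $M/\Q$; concretely, start from an $A_4$-extension $M/\Q$ ramified only at a few well-chosen primes $\ell\equiv 1\pmod 3$ (so that the inertia groups are the cyclic $C_3$'s inside $A_4$, giving tameness and odd inertia), with $M/\Q$ locally abelian at those primes, and with the $2.A_4$-cover $N/M$ unramified — this forces all inertia in $N/\Q$ to be $C_3$, hence $\A_{N/\Q}$ exists, and locally abelian is preserved. Then compute $W(\psi)$ for $N/\Q$: this factors through the $A_4$-quotient's $2$-dimensional characters twisted into $\psi$, and the product of local root numbers can be made to equal $+1$ (so $(\OO_N)=1$, matching the odd-ish behaviour) while $t_{N/\Q}(\psi)=-1$, or vice versa; the point is only that the product $t_{N/\Q}(\psi)W(\psi)=-1$, which by Step 2--3 means $(\A_{N/\Q})\neq 1$. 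Finally, one notes $[N:\Q]=24$ is even, $\C_{N/\Q}$ is a square by construction, so the statement follows.

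The main obstacle I expect is Step 4, specifically the simultaneous control of \emph{two} root-number-type invariants — the genuine Artin root number $W_{N/\Q}(\psi)$ (governing $(\OO_N)$) and the ``Adams-twisted'' root number $t_{N/\Q}(\psi)W_{N/\Q}(\psi)$ (governing $(\A_{N/\Q})$) — for one and the same explicit field. One needs a $2.A_4$-extension where these two signs differ, and verifying that requires an honest computation of local root numbers (via Gauss sums, as developed in the earlier sections) at each tamely ramified prime, together with a clean handle on the behaviour at the archimedean place (where the symplectic character contributes a sign $(-1)^{?}$ according to whether real places become complex). In practice one reduces this, using the functoriality of $t$ under the reduction steps of Section \ref{section:reduction-to-totally-ramified} and the explicit formulas of Theorem \ref{main}, to a finite check that some product of $\pm1$'s over the ramified primes of a concrete field is $-1$; the difficulty is purely in making a sharp enough choice of the ramified primes and of the $A_4$-field $M$ that the numerology works out, and in citing the precise form of Taylor's theorem and of the root-number computation for $2.A_4$ from Fr\"ohlich's work.
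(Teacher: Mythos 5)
Your overall plan (use $G=\tilde{A}_4$, invoke Corollary \ref{A=tW}, exhibit an explicit tame extension) matches the paper's, but the detection mechanism in Steps 3--4 rests on a misreading that would make the argument fail. In the paper, $t_G$ is the Cassou-Nogu\`es map $\mathrm{Hom}_{\Omega_\Q}(S_G,\{\pm1\})\to\Cl(\Z[G])$; it depends only on $G$ and has nothing to do with the second Adams operation (which the paper deliberately avoids). Taylor's theorem says $(\OO_N)=t_GW_{N/\Q}$ and Corollary \ref{A=tW} says $(\A_{N/\Q})=t_GW_{N/\Q}$ as well: the two classes are \emph{equal}, so there is no ``discrepancy'' between a root number sign governing $(\OO_N)$ and an Adams-twisted sign governing $(\A_{N/\Q})$ that one could arrange to differ. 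The only way to get $(\A_{N/\Q})\neq 1$ is to make the single invariant $W(N/\Q,\chi_5)$ (the root number of the unique symplectic character) equal to $-1$, and by Proposition \ref{infnonram} this forces the archimedean place to ramify, i.e.\ $N$ must be totally complex. This is exactly where your Step 4 goes wrong: with all inertia cyclic of order $3$ at primes $\ell\equiv1\pmod 3$ and the cover $N/M$ unramified, the nontrivial sign cannot come from the finite places at all — the local symplectic root numbers at odd tame primes are $+1$ by the same reciprocity-map argument as in Proposition \ref{infnonram} — so unless you explicitly build in complexification of the real place (as the Bachoc--Kwon field does: $N/k$ is unramified at finite places but ramified at infinity, and the $-1$ comes from $W(N_\PP/k_P,\phi_{D_\PP})=-1$ at the three real places of the cubic subfield $k$), your construction yields $W_{N/\Q}=1$ and hence a trivial class.

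A second, smaller gap: you assert that $\Cl(\Z[\tilde{A}_4])\cong\Z/2\Z$ with the nontrivial class detected by the sign of $W(\chi_5)$, citing Fr\"ohlich--Taylor folklore. What is actually needed is the injectivity of $t_{\tilde{A}_4}$, which the paper proves (Lemma \ref{tiso}) by comparing with $H_8$: restriction of scalars $\Cl(\Z[\tilde{A}_4])\to\Cl(\Z[H_8])$ is an isomorphism by Swan, $t_{H_8}$ is an isomorphism by Fr\"ohlich, and $\mathrm{Ind}_{H_8}^{\tilde{A}_4}\phi=\chi_5+\chi_6+\overline{\chi}_6$ makes the square commute. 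Some such argument must be supplied, since without injectivity of $t_{\tilde{A}_4}$ the sign $W(N/\Q,\chi_5)=-1$ does not by itself yield a nontrivial class. Once these two points are repaired — choose a totally complex tame locally abelian $\tilde{A}_4$-extension with odd inertia (e.g.\ the explicit field from Bachoc--Kwon, unramified over its quartic subfield at finite places but ramified at infinity), compute $W(N/\Q,\chi_5)=-1$ from the archimedean contributions, and prove injectivity of $t_{\tilde{A}_4}$ — the argument becomes the paper's.
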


In fact, in Section \ref{explicitexample} we will explicitly describe a tame locally abelian $\tilde{A}_4$-Galois extension $N/\Q$, taken from \cite{BachocKwon}, such that $(\A_{N/\Q})\ne 1$ in $\Cl(\Z[\tilde{A}_4])$, where $\tilde{A}_4$ is the binary tetrahedral group (which has order $24$). This is, to our knowledge, the first example of a tame Galois extension of number fields whose square root of the inverse different (exists and) has nontrivial class. We also show that this example is minimal in the sense that $(\A_{N/\Q})=1$ if $N/\Q$ is a tame locally abelian $G$-Galois extension with $\#G\leq 24$ and $G\ne\tilde{A}_4$.

%%%%%%%%%%%%%%%%%%%%%%%%%%%%%%%%%%%%%%%%%%%%%
\section{Reduction to inertia subgroups}\label{section:reduction-to-totally-ramified}
We use the notation of the Introduction, in particular $N/E$ is a tame $G$-Galois extension. For a prime $\PP$ of $N$, we denote by $I_\PP$ (resp. $D_\PP$) the inertia subgroup (resp. decomposition subgroup) of $\PP$ in $N/E$. If $P$ is the prime ideal of $E$ below $\PP$, we will often identify the Galois group of $N_\PP/E_P$ with $D_\PP$, where $N_\PP$ (resp. $E_P$) is the completion of $N$ at $\PP$ (resp. of $E$ at $P$). In this section we show that the $G$-module structure of the torsion modules we are interested in can be recovered by the knowledge, for finitely many primes $\PP$ of $N$, of the $I_\PP$-module structure of a certain torsion $I_\PP$-module. More precisely, if $\Ram(N/E)$ denotes the set of primes of $\OO_E$ which ramify in $N/E$, then $\T_{N/E}$, $\SSS_{N/E}$ and $\RR_{N/E}$ can be written as a direct sum over $\Ram(N/E)$ of torsion $G$-modules induced from $I_{\PP}$-modules, where, for each $P\in\Ram(N/E)$, $\PP$ is any fixed prime above $P$. For $\RR_{N/E}$ and $\T_{N/E}$ such a decomposition follows directly from Chase's results. For instance, in the case of $\T_{N/E}$, the corresponding $I_{\PP}$-module is a suitable power of the quotient $T(p,\Z[I_{\PP}])=\Z[I_{\PP}]/\Sigma_{I_{\PP}}(p)$, where $\Sigma_{I_{\PP}}(p)=p\Z[I_{\PP}]+\Tr_{I_{\PP}}\Z[I_{\PP}]$ is the Swan module generated by the trace $\Tr_{I_{\PP}}=\sum_{g\in I_{\PP}}g\in\Z[I_{\PP}]$ and the residual characteristic $p$ of $\PP$. 

However for torsion modules arising from general $G$-stable ideals of $\OO_N$ (and in particular for $\SSS_{N/E}$), we need to introduce torsion $\oo_{e_P}[I_{\PP}]$-modules and also assume that $N/E$ is locally abelian. Here $e_P$ is the order of $I_{\PP}$ (which indeed depends only on the prime $P$ of $\OO_E$ lying below $\PP$), $\mu_{e_P}$ is the group of $e_P$th roots of unity in $\overline\Q$ and $\oo_{e_P}$ is the ring of integers of $\Q(\mu_{e_P})$. These $\oo_{e_P}[I_{\PP}]$-modules can be considered as analogues of $T(p,\Z[I_{\PP}])$ and they also give a decomposition for $\RR_{N/E}$, which is slightly different from that of Chase and will be needed in the proof of Theorem \ref{main}. To stress their similarity with $T(p,\Z[I_{\PP}])$, we shall denote by $R_{\chi_{\PP}}(\pp,\oo_{e_P}[I_{\PP}])$ and $S_{\chi_{\PP}}(\pp,\oo_{e_P}[I_{\PP}])$ those which correspond to $\RR_{N/E}$ and $\SSS_{N/E}$, respectively (see (\ref{rchi}) and (\ref{schi}) for a precise definition). Here $\chi_{\PP}$ is an injective character of $I_\PP$ and $\pp$ is a prime above $p$ in $\oo_{e_P}$. 

We now state the main result of this section whose proof will be given in \S\ref{reductionproof}.
 
\begin{theorem}\label{reduction}
For every $P\in \Ram(N/E)$, choose a prime $\PP$ of $N$ above $P$. Then, with the notation introduced above, there is an isomorphism of $\Z[G]$-modules
\begin{eqnarray*}
\T_{N/E}&\cong& \bigoplus_{P\in \Ram(N/E)}\Big(\Z[G]\otimes_{\Z[I_{\PP}]}T(p,\Z[I_{\PP}])\Big)^{\oplus [\OO_E/P\,:\,\F_p]}.
\end{eqnarray*}
Furthermore, for every choice of injective characters $\chi_\PP:I_\PP\to \overline{\Q}^\times$ for every prime $\PP$ as above, one can find primes $\pp\subset \oo_{e_P}$ and injections $\oo_{e_P}/\pp\to \OO_N/\PP$ such that there is an isomorphism of $\Z[G]$-modules 
\begin{eqnarray*}
\RR_{N/E}&\cong& \bigoplus_{P\in \Ram(N/E)}\Big(\Z[G]\otimes_{\Z[I_{\PP}]}R_{\chi_{_\PP}}(\pp,\oo_{e_{P}}[I_{\PP}])\Big)^{\oplus [G:D_{\PP}][\OO_N/\PP\,:\,\oo_{e_P}/\pp]}
\end{eqnarray*}
Assume moreover that $N/E$ is locally abelian. Then the injections $\oo_{e_P}/\pp\to \OO_N/\PP$ factor through $\OO_E/P\to \OO_N/\PP$ and there is an isomorphism of $\Z[G]$-modules
\begin{eqnarray*}
\SSS_{N/E}&\cong& \bigoplus_{P\in \Ram(N/E)}\Big(\Z[G]\otimes_{\Z[I_{\PP}]}S_{\chi_{_\PP}}(\pp,\oo_{e_{P}}[I_{\PP}])\Big)^{\oplus [\OO_{E}/P\,:\,\oo_{e_{P}}/\pp]}.
\end{eqnarray*}
\end{theorem}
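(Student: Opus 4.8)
The plan is to treat the three torsion modules in a uniform way by passing to completions and tracking the semilocal structure at each ramified prime. First I would recall the general principle: since $\T_{N/E}=\C_{N/E}/\OO_N$, $\SSS_{N/E}=\A_{N/E}/\OO_N$ and $\RR_{N/E}$ are all supported on the ramified primes of $N/E$ (the first two because $\C_{N/E}$ and $\A_{N/E}$ agree with $\OO_N$ locally at unramified primes, the third by Chase's description), each decomposes as a direct sum over $P\in\Ram(N/E)$ of its $P$-part, which is an $\OO_{E,P}[G]$-module supported above $P$. For a fixed $P$, choosing a prime $\PP\mid P$ identifies this $P$-part with a module induced from $D_\PP$: writing $\OO_N\otimes_{\OO_E}\OO_{E,P}\cong\bigoplus_{\sigma} \OO_{N_\PP}$ over coset representatives $\sigma$ of $G/D_\PP$, one gets $\Z[G]\otimes_{\Z[D_\PP]}(-)_\PP$ where $(-)_\PP$ denotes the completion at $\PP$. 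So it remains to understand each local torsion $D_\PP$-module, and then (since the modules in question turn out to be induced already from $I_\PP$, the inertia group) to re-express everything as induced from $\Z[I_\PP]$.

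Second, for $\T_{N/E}$ and $\RR_{N/E}$ I would simply invoke Chase's local computations: the completed module at $\PP$ is, up to the appropriate multiplicity, a power of $T(p,\Z[I_\PP])$ respectively of the analogous module built from the $e_P$-th power residue data, and the multiplicities $[\OO_E/P:\F_p]$ and $[G:D_\PP]\,[\OO_N/\PP:\oo_{e_P}/\pp]$ come from counting, respectively, the number of copies of $\F_p$ inside the residue field $\OO_E/P$ and the number of primes above $P$ times the degree of the residue extension relative to $\oo_{e_P}/\pp$. The heart of the argument is the module arising from a general $G$-stable ideal $\I$, of which $\SSS_{N/E}$ (with $\I=\OO_N$ viewed inside $\A_{N/E}$, equivalently $\A_{N/E}=\I^{-1}\C_{N/E}^{1/2}$-type data) is the relevant instance. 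Here I would: (i) reduce to the totally ramified local extension $K/F=N_\PP/E_P^{\mathrm{nr}}$-style setup, where the decomposition group equals $I_\PP$ because $N/E$ is \emph{locally abelian} — this is the one place the hypothesis is used, since it forces $D_\PP$ to act through its abelian quotient and lets one diagonalize the Galois action over $\oo_{e_P}$; (ii) decompose the completed ideal and $\OO_{N_\PP}$ according to the characters $\chi$ of $I_\PP$, which by the local normal basis / Noether theorem are each free of rank one over the relevant component; (iii) identify the quotient, character-component by character-component, with the module $S_\chi(\pp,\oo_{e_P}[I_\PP])$ defined in Section \ref{section:back-to-global} via (\ref{schi}), the prime $\pp$ being read off from how the $e_P$-th power residue symbol at $\PP$ interacts with $\chi$. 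Choosing the injection $\oo_{e_P}/\pp\hookrightarrow\OO_N/\PP$ compatibly amounts to fixing this matching of residue data, and local abelianness is exactly what makes this injection factor through $\OO_E/P$, which in turn collapses the multiplicity from $[G:D_\PP][\OO_N/\PP:\oo_{e_P}/\pp]$ down to $[\OO_E/P:\oo_{e_P}/\pp]$.

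Third, having the local statements, I would assemble the global isomorphism: summing the $P$-parts over $\Ram(N/E)$ and using $\Z[G]\otimes_{\Z[D_\PP]}(\Z[D_\PP]\otimes_{\Z[I_\PP]}-)\cong\Z[G]\otimes_{\Z[I_\PP]}-$ together with the fact that each local module is already a power of an $I_\PP$-module, one lands on the displayed formulas, with the multiplicities as computed. The one subtlety to double-check is that the various choices (the prime $\PP\mid P$, the character $\chi_\PP$, the embedding of residue fields) are consistent across the direct sum and that changing them only permutes isomorphic summands — this is routine but must be stated.

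The main obstacle, and the step I expect to require the most care, is (ii)–(iii) above: matching the completed fractional ideal $\I\otimes_{\OO_E}\OO_{E,P}$ — after the character decomposition over $\oo_{e_P}[I_\PP]$ — with the explicit module $S_\chi(\pp,\oo_{e_P}[I_\PP])$, i.e. showing that the "exponent" data encoded in the valuation of $\I$ at $\PP$ translates precisely into the ideal $\pp$ and the defining relation (\ref{schi}). This is where the tame totally ramified structure (a uniformizer of $N_\PP$ on which $I_\PP$ acts by a faithful character, the Gauss-sum-free description of $\A$ via Lagrange resolvents) has to be used quantitatively, and where the hypothesis that $N/E$ is locally abelian is genuinely needed rather than merely convenient.
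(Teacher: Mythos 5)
Your outline for $\T_{N/E}$ and $\RR_{N/E}$ is essentially the paper's route (CRT reduction to completions induced from $D_\PP$, then Chase's local results, Propositions \ref{redtolocr}--\ref{chlemr}, supplemented by Proposition \ref{descent} and, for $\T$, the Krull--Schmidt argument of Remark \ref{tswan}). The problem is the part of the theorem that is actually new, namely $\SSS_{N/E}$, and there your proposal rests on a false claim: you assert that local abelianness forces $D_\PP=I_\PP$ and that this is ``the one place the hypothesis is used.'' An abelian decomposition group can perfectly well have a nontrivial unramified quotient $D_\PP/I_\PP$; indeed the multiplicities in the statement, $[\OO_E/P:\oo_{e_P}/\pp]$ and, for $\RR$, $[\OO_N/\PP:\oo_{e_P}/\pp]$, are there precisely to account for nontrivial residue extensions. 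With $D_\PP=I_\PP$ you would never see them, and your sketch never confronts the step where the real difficulty sits: showing that the local torsion modules, which a priori are $D_\PP$-modules, are induced from $I_\PP$-modules defined over $\oo_{e_P}/\pp$.

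Concretely, what the paper proves (Lemma \ref{redtotramts}) is that $\OO_{N_\PP}/\PP^n$ and $\PP^{-n}/\OO_{N_\PP}$ become induced from inertia only \emph{after} the unramified scalar extension $\OO_F\otimes_{\OO_k}(-)$, where $F$ is the inertia field; removing that base change requires two ingredients your plan does not supply. First, one needs that abelianness of $D_\PP=\Gal(N_\PP/E_P)$ forces $\mu_{e_P}\subset E_P$ (Remark \ref{gammachi}), which is exactly what makes the injection $\oo_{e_P}/\pp\to\OO_N/\PP$ factor through $\OO_E/P$ and makes the candidate $I_\PP$-modules $\kappa_\pp(\chi_\PP^i)$ of (\ref{schi}) definable over the base residue field at all; your phrase ``diagonalize the Galois action over $\oo_{e_P}$'' presupposes this without proving it, and it is false without the roots-of-unity condition. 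Second, even granted this, one only obtains an isomorphism after tensoring with $\OO_F/\PP_F$ over $\OO_E/P$, and the descent is achieved by a Krull--Schmidt cancellation for finite-length $\OO_F/\PP_F[D_\PP]$-modules (Proposition \ref{locab}); nothing in your step (ii)--(iii) replaces this. Incidentally, the character decomposition itself does not come from a local normal basis argument but from the filtration $\{\PP_K^j/\PP_K^n\}$ and semisimplicity of $\OO_F/\PP_F[I_\PP]$ (Proposition \ref{chlem}), each graded piece $\PP_K^i/\PP_K^{i+1}$ being one-dimensional with $I_\PP$ acting through $\chi_{K/F}^i$. As written, your argument for the $\SSS_{N/E}$ isomorphism therefore has a genuine gap at its central step.
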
 

In Section \ref{ctsec} we will see how the above theorem, together with Theorem \ref{main}, can be used to prove Theorem \ref{maincor}. More precisely, we show that the torsion $G$-modules (resp. $I_\PP$-modules) appearing in Theorem \ref{reduction} are $G$-cohomologically trivial (resp. $I_\PP$-cohomologically trivial) and therefore define classes in $\Cl(\Z[G])$ (resp. $\Cl(\Z[I_\PP])$). Thus the isomorphisms of Theorem \ref{reduction} can be translated into equalities of classes in $\Cl(\Z[G])$, which in turn will give directly Theorem \ref{maincor}, assuming Theorem \ref{main}.
   
%The reduction is in two steps: we first reduce to local extensions by taking $P$-primary components, then we perform further reduction to totally ramified extensions in the local setting. We will analyze torsion modules of the form $\OO_N/\I$ or $\I^{-1}/\OO_N$, where $\I$ is an ambiguous ideal of $\OO_N$. This covers in particular the case of $\SSS_{N/E}$ and, of course, that of $\T_{N/E}$, which we will mention to stress the analogy with Chase's proof. As for the torsion module $\RR_{N/E}$, it was first introduced by Chase in \cite{ChaseTors} and is somehow different from the above modules. For convenience of the reader we recall its definition before proceeding to the reduction steps.
%%%%%%%%%%%%
\subsection{The torsion module $\RR_{N/E}$}

The results of this subsection are due to Chase \cite{ChaseTors}. 
\subsubsection{}\label{defr} 
We shall first recall the definition of $\RR_{N/E}$ in a wider context. Let $\Gamma$ be a finite group and let $K/k$ be a $\Gamma$-Galois extension of either global or local fields. If $X$ and $Y$ are sets, we denote by $\Map(X,Y)$ the set of mappings from $X$ to $Y$. Consider the bijection 
\begin{equation}\label{isom-NG}
\psi_{K/k}:K\otimes_k K\to \Map(\Gamma, K)
\end{equation}
defined by $\psi_{K/k}(x\otimes y)(\gamma)=x\gamma(y)$ for $x,y\in K$ and $\gamma\in \Gamma$. Now $K\otimes_k K$ is a $K[\Gamma]$-module with $K$ acting on the left factor, $\Gamma$ on the right and $\Map(\Gamma,K)$ is a $K[\Gamma]$-module with $K$ acting pointwise and $\Gamma$ acting by $(\gamma u)(\gamma')=u(\gamma'\gamma)$ for all $\gamma,\gamma'\in \Gamma$, $u\in\Map(\Gamma,K)$.
%there is a unique $NG$-module structure
%on $\Map(G,N)$ making 
These structures make $\psi_{K/k}$ an isomorphism of $K[\Gamma]$-modules. Restricting $\psi_{K/k}$ to the subring $\OO_K\otimes_{\OO_k}\OO_K\subset K\otimes_{k}K$ yields an $\OO_K[\Gamma]$-modules injection
$$\psi_{K/k}:\OO_K\otimes_{\OO_k}\OO_K\to \Map(\Gamma,\OO_K)$$
whose cokernel
$$\RR_{K/k}=\Map(\Gamma,\OO_K)/\psi_{K/k}(\OO_K\otimes_{\OO_k}\OO_K)$$
is a torsion $\Gamma$-module. We refer the reader to \cite[Sections 2, 3, 4]{ChaseTors} and \cite[Note 6 to Chapter III]{Frohlich-Alg_numb} for more details on $\RR_{K/k}$. 

We now come back to the notation of the beginning of this section, in particular $N/E$ is a tame $G$-Galois extension of number fields. Recall also that for every $P\in \Ram(N/E)$, we fix a prime $\PP$ of $N$ above $P$.

%Then $\RR_{N/E}$ defines a class in $\Cl(\Z[G])$. In fact $\Map(G,\OO_N)$ is always $\OO_N[G]$-free of rank $1$ (hence $\Z[G]$-free of rank $[N:\Q]$) and, when $N/E$ is a tame extension of number fields, $\OO_N\otimes_{\OO_E}\OO_N$ is $\OO_N[G]$-locally free (since $\OO_N$ is $\OO_E[G]$-locally free by Noether's theorem). Thus, in this case,
%$$(\RR_{N/E})=(\OO_N\otimes_{\OO_E}\OO_N)\in \Cl(\Z[G])\enspace.$$

\begin{proposition}\label{redtolocr}
There is an isomorphism of $\OO_E[G]$-modules
\begin{align*}
\RR_{N/E} &\cong \bigoplus_{P\in \Ram(N/E)}\left(\Z[G]\otimes_{\Z[D_{\PP}]} \RR_{N_{\PP}/E_{P}}\right)^{\oplus [G:D_\PP]}.
\end{align*}
\end{proposition}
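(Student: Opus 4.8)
The plan is to decompose the cokernel $\RR_{N/E}$ locally, prime by prime, exploiting the fact that it is a torsion module supported only at ramified primes, and then to repackage each local contribution as an induced module.

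First I would observe that $\RR_{N/E} = \Map(G,\OO_N)/\psi_{N/E}(\OO_N\otimes_{\OO_E}\OO_N)$ is a torsion $\OO_N[G]$-module, so it is the direct sum of its localizations at the (finitely many) primes $\PP$ of $\OO_N$ in its support. I would next check that the support consists exactly of primes above $\Ram(N/E)$: at an unramified prime $P$ of $E$, the extension $N_\PP/E_P$ is unramified, hence $\OO_{N_\PP}\otimes_{\OO_{E_P}}\OO_{N_\PP}\to\Map(D_\PP,\OO_{N_\PP})$ is an isomorphism (this is the standard statement that an unramified extension of local rings is étale, so the different is trivial and $\psi$ is bijective on integers), and more globally the completed map at such $P$ is onto. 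Therefore localizing $\RR_{N/E}$ at the set of primes above a fixed $P$ yields a module that depends only on the completions, and in fact equals $\RR_{\OO_N\otimes_{\OO_E}\OO_{E_P}}$, the analogous cokernel formed over $\OO_{E_P}$.

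The second step is the semilocal-to-local passage. For a fixed $P\in\Ram(N/E)$, write $\OO_{N,P}:=\OO_N\otimes_{\OO_E}\OO_{E_P}\cong\prod_{\PP\mid P}\OO_{N_\PP}$, where $G$ permutes the factors transitively with stabilizer $D_\PP$ for the chosen $\PP$. One checks that $\OO_{N,P}\otimes_{\OO_{E_P}}\OO_{N,P}\cong\OO_{N,P}\otimes_{\OO_{E_P}}\bigl(\OO_{N,P}\otimes_{\OO_{E_P}}\OO_{N_\PP}\bigr)$-type reindexing gives, via the standard Mackey/induction identity for the action of $G$ on the $G/D_\PP$-indexed product, an isomorphism of $\OO_E[G]$-modules between the $P$-part of $\RR_{N/E}$ and $\Z[G]\otimes_{\Z[D_\PP]}\bigl(\text{the }\PP\text{-part}\bigr)$, with an extra multiplicity $[G:D_\PP]$ coming from the fact that $\Map(G,\OO_N)$ restricted over the $G/D_\PP$-orbit decomposes, after the tensor, into $[G:D_\PP]$ copies of the induced piece. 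Concretely: $\Map(G,\OO_{N,P})\cong\Map(G,\prod_{\PP\mid P}\OO_{N_\PP})$, and decomposing the target according to which factor an element lands in, together with $\psi$ being built from multiplication $x\gamma(y)$, produces precisely $[G:D_\PP]$ induced copies of $\Map(D_\PP,\OO_{N_\PP})/\psi_{N_\PP/E_P}(\OO_{N_\PP}\otimes_{\OO_{E_P}}\OO_{N_\PP})=\RR_{N_\PP/E_P}$. Summing over $P\in\Ram(N/E)$ then gives the claimed formula; this is exactly the bookkeeping carried out in \cite[Sections 3, 4]{ChaseTors}, which I would cite for the details.

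The main obstacle, and the step that requires care rather than cleverness, is keeping the $G$-action straight through the isomorphism $\psi_{K/k}(x\otimes y)(\gamma)=x\gamma(y)$: here $G$ acts on the \emph{left} tensor factor on the source but by right translation on $\Map(G,K)$, so when one breaks $\OO_N\otimes_{\OO_E}\OO_N$ over $P$ into blocks indexed by pairs $(\PP,\PP')$ of primes above $P$, one must verify that the block structure is compatible with these two different actions and that the diagonal blocks (corresponding to $\PP=\PP'$ after a suitable $G$-translation) are the ones carrying the non-trivial cokernel while off-diagonal blocks contribute isomorphic pieces of $\Map$. Once the combinatorics of the $G$-orbit on pairs of primes is matched correctly with Frobenius reciprocity $\Map(G,-)\cong\mathrm{Coind}$, the multiplicity $[G:D_\PP]$ and the induction from $\Z[D_\PP]$ both fall out, and the torsion-support reduction from the first step guarantees nothing is lost at unramified primes. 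No deep input is needed beyond tameness (to know $\RR_{N_\PP/E_P}$ is well-behaved) and Chase's local analysis; the proof is essentially a localization-plus-induction argument.
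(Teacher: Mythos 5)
Your proposal is correct and follows essentially the same route as the paper, which proves this proposition simply by citing Chase's Corollary 3.11; your localization-plus-induction sketch, with the block-by-block bookkeeping deferred to Chase's Sections 3 and 4, is exactly the argument underlying that citation. One small imprecision: it is not that only the ``diagonal'' blocks carry the cokernel --- each block $(\PP',\PP)$ of $\OO_{N_{\PP'}}\otimes_{\OO_{E_P}}\OO_{N_\PP}$ maps onto the functions supported on the coset $\{\gamma:\gamma\PP=\PP'\}$ and contributes a translate of $\RR_{N_\PP/E_P}$, and summing over the $[G:D_\PP]$ choices of the left index $\PP'$ is precisely where the multiplicity comes from --- but this is part of the bookkeeping you already delegate to Chase.
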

\begin{proof}
See \cite[Corollary 3.11]{ChaseTors}.
\end{proof}

\subsubsection{}\label{localr}
Proposition \ref{redtolocr} shows that we can focus on the local setting. Therefore in this subsection we shall put ourselves in the following situation (which will appear again at later stages of this paper). We fix a rational prime $p$ and a tamely ramified Galois extension $K/k$ of $p$-adic fields inside a fixed algebraic closure $\overline{\Q}_p$ of $\Q_p$. We denote by $\Gamma$ the Galois group of $K/k$. Let $\Delta\subseteq \Gamma$ be the inertia subgroup of $K/k$, which is cyclic of order denoted by $e$, and set $F=K^{\Delta}$. As usual, $\OO_K$, $\OO_F$ and $\OO_k$ denote the rings of integers of $K$, $F$ and $k$, respectively, and we shall denote by $\PP_K$, $\PP_F$ and $\PP_k$ the corresponding maximal ideals. 

The following result shows that we can in fact focus on totally and tamely ramified local extensions.
 \begin{proposition}\label{redtotramr}
There is an isomorphism of $\OO_K[\Gamma]$-modules
$$\RR_{K/k}\cong\Z[\Gamma]\otimes_{\Z[\Delta]}\RR_{K/F}.$$
\end{proposition}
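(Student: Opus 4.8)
The plan is to exploit the tower $K \supseteq F \supseteq k$, where $F = K^\Delta$ is the maximal unramified subextension of $K/k$, so that $F/k$ is unramified with Galois group $\Gamma/\Delta$ and $K/F$ is totally (and tamely) ramified with Galois group $\Delta$. First I would set up the comparison of the $\Map$-descriptions at the two levels. Using the identification $\Map(\Gamma, \OO_K) \cong \Z[\Gamma] \otimes_{\Z[\Delta]} \Map(\Delta, \OO_K)$ of $\OO_K[\Gamma]$-modules (where on the right $\Gamma$ acts via the left tensor factor and $\Delta$ acts on $\Map(\Delta,\OO_K)$ as in \S\ref{defr}), the point is to show that the submodule $\psi_{K/k}(\OO_K \otimes_{\OO_k} \OO_K)$ corresponds under this identification to $\Z[\Gamma] \otimes_{\Z[\Delta]} \psi_{K/F}(\OO_K \otimes_{\OO_F} \OO_K)$. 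Granting this, taking cokernels and using right-exactness of $\Z[\Gamma] \otimes_{\Z[\Delta]} -$ (which is even exact here since $\Z[\Gamma]$ is free, hence flat, over $\Z[\Delta]$) yields the claimed isomorphism.

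The heart of the matter is therefore the identity $\OO_K \otimes_{\OO_k} \OO_K \;\cong\; \Z[\Gamma] \otimes_{\Z[\Delta]} \big(\OO_K \otimes_{\OO_F} \OO_K\big)$ as $\OO_K[\Gamma]$-modules, compatibly with the $\psi$ maps. The key input is that $\OO_F$ is a free $\OO_k$-module (finite unramified, so in fact $\OO_F$ has a normal integral basis over $\OO_k$ by Noether, $F/k$ being unramified hence tame), and more precisely that $\OO_F \otimes_{\OO_k} \OO_K \cong \Map(\Gamma/\Delta, \OO_K) \cong (\OO_K)^{[\Gamma:\Delta]}$ as $\OO_K$-algebras with $\Gamma/\Delta$-action, via $\psi_{F/k}$ (base-changed to $\OO_K$); this uses that $F/k$ is unramified so $\psi_{F/k}: \OO_F \otimes_{\OO_k} \OO_F \to \Map(\Gamma/\Delta, \OO_F)$ is already an isomorphism, and then extend scalars. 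Writing $\OO_K \otimes_{\OO_k} \OO_K = \OO_K \otimes_{\OO_F} (\OO_F \otimes_{\OO_k} \OO_F) \otimes_{\OO_F} \OO_K$ and feeding in this description of the middle factor lets one peel off the $\Gamma/\Delta$-induction, identifying the whole thing with $\Z[\Gamma]\otimes_{\Z[\Delta]}(\OO_K \otimes_{\OO_F} \OO_K)$; one then checks by a direct computation on simple tensors $x \otimes y$ (tracking where $\psi_{K/k}(x\otimes y)(\gamma) = x\gamma(y)$ goes when $\gamma = \gamma_0\delta$ is decomposed along the tower) that this identification matches $\psi_{K/k}$ with the induced map $\mathrm{id} \otimes \psi_{K/F}$.

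I expect the main obstacle to be purely bookkeeping: making the various $\Gamma$-, $\Delta$- and $\Gamma/\Delta$-actions on the tensor factors and on $\Map(-,-)$ consistent across the tower, in particular keeping straight that $\Gamma$ acts on $\Map(\Gamma,\OO_K)$ by \emph{right} translation while $\OO_K$ acts pointwise, and that the left tensor factor of $\OO_K \otimes_{\OO_k}\OO_K$ carries the $\OO_K$-module structure while the right factor carries the Galois action. There is no serious arithmetic difficulty — tameness enters only through the (already used) fact that $F/k$ is unramified, which guarantees $\psi_{F/k}$ is integrally an isomorphism — and indeed one could alternatively simply cite \cite[Section 3]{ChaseTors}, since this reduction to the totally ramified case is exactly the local counterpart of the global decomposition in Proposition \ref{redtolocr}. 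A clean way to organise the simple-tensor verification is to fix a set-theoretic section $\Gamma/\Delta \to \Gamma$ and use it to write every element of $\Gamma$ uniquely as $\gamma_0\delta$; the induced-module isomorphism then reads off coordinate-wise and the compatibility with $\psi$ becomes a one-line check on each coordinate.
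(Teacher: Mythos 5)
Your plan is sound, but note that the paper does not actually prove this proposition: its ``proof'' is the single citation \cite[Corollary 3.8]{ChaseTors}. So there is no argument in the text to match against; what you have written is essentially the expected direct proof (close in spirit to Chase's own), and it does go through. Concretely, the bookkeeping you anticipate works out as follows: identify $\Map(\Gamma,\OO_K)$ with $\Z[\Gamma]\otimes_{\Z[\Delta]}\Map(\Delta,\OO_K)$ by letting the block of a coset representative $\gamma$ consist of the functions supported on $\Delta\gamma^{-1}$, via $v(\delta\gamma^{-1})=u(\delta)$; with this choice the projection of $\psi_{K/k}(x\otimes z)$ to the block of $\gamma$ is literally $\psi_{K/F}\bigl(x\otimes\gamma^{-1}(z)\bigr)$, which gives the inclusion of $\psi_{K/k}(\OO_K\otimes_{\OO_k}\OO_K)$ into the induced submodule with no computation. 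For the reverse inclusion (independent prescriptions on distinct blocks) the precise input is the \emph{integral} surjectivity of $\psi_{F/k}$, i.e. the existence of $\sum_j a_j\otimes b_j\in\OO_F\otimes_{\OO_k}\OO_F$ whose image is the indicator function of a prescribed element of $\Gal(F/k)$; this is exactly where unramifiedness of $F/k$ enters (tameness of $K/k$ plays no role), and then $w=\sum_j xa_j\otimes\gamma(y)\,b_j$ projects to $\psi_{K/F}(x\otimes y)$ on the block of $\gamma$ and to $0$ on all other blocks, because $b_j\in\OO_F$ is fixed by $\Delta$ and $F/k$ is Galois. Exactness of $\Z[\Gamma]\otimes_{\Z[\Delta]}-$ then transfers the equality of submodules to the cokernels, and all identifications are $\OO_K$-linear and $\Gamma$-equivariant, so you do obtain the stated $\OO_K[\Gamma]$-isomorphism. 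What the citation buys the authors is brevity; what your argument buys is transparency, and it exhibits the same mechanism ($\OO_F\otimes_{\OO_k}\OO_K\cong\Z[\Gamma]\otimes_{\Z[\Delta]}\OO_K$, coming from $\psi_{F/k}$ being an integral isomorphism) that the paper does spell out just before Lemma \ref{redtotramts}, so the two reduction steps can be seen as one and the same computation. The aside about Noether's theorem and normal integral bases is unnecessary: only the splitting $\OO_F\otimes_{\OO_k}\OO_F\cong\Map(\Gal(F/k),\OO_F)$ is used.
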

\begin{proof}
See \cite[Corollary 3.8]{ChaseTors}.
\end{proof}

By standard theory $F$ contains the group of $e$th roots of unity $\mu_{e,p}\subseteq \overline\Q_p$, and we can choose a uniformizer $\pi_{\!_K}$ of $K$ such that $\pi_{\raisebox{2pt}{$\scriptscriptstyle{\!_{K}}$}}^e\in F$ (thus $\pi_{\raisebox{2pt}{$\scriptscriptstyle{\!_{K}}$}}^e$ is a uniformizer of $F$). Consider the map $\chi_{_{K/F}}:\Delta \to \mu_{e,p}$ defined by 
$$\chi_{_{K/F}}(\delta)=\frac{\delta(\pi_{\!_K})}{\pi_{\!_K}}\enspace.$$
Since $K/F$ is totally ramified, any unit $u\in\OO_K^\times$ such that $u^e\in F$ lies in $F$, hence $\chi_{_{K/F}}$ does not depend on the choice of a uniformizer $\pi_{\!_K}$ as above. It easily follows that $\chi_{_{K/F}}$ is a group homomorphism, hence an isomorphism comparing cardinals: $\#\Delta=e=\#\mu_{e,p}$ (see also \cite[Chapitre IV, Propositions 6(a) and 7]{Serre}). 

\begin{remark}\label{gammachi}
Note that $\Delta$ (resp. $\mu_{e,p}$) is a $\Gamma/\Delta$-module with the conjugation (resp. Galois) action. Then $\chi_{_{K/F}}$ is in fact an isomorphism of $\Gamma/\Delta$-modules. To prove this, it is enough to verify that, for every $\gamma\in\Gamma$ and $\delta\in\Delta$, we have $\chi_{_{K/F}}(\gamma\delta\gamma^{-1})=\gamma(\chi_{_{K/F}}(\delta))$. But indeed, if $\pi_{\!_K}$ is as above, we have
$$\chi_{_{K/F}}(\gamma\delta\gamma^{-1})=\frac{\gamma\delta\gamma^{-1}(\pi_{\!_K})}{\pi_{\!_K}}=\gamma\left(\frac{\delta\gamma^{-1}(\pi_{\!_K})}{\gamma^{-1}(\pi_{\!_K})}\right)=\gamma(\chi_{_{K/F}}(\delta))$$
since $\gamma^{-1}(\pi_{\!_K})$ is a uniformizer of $K$ whose $e$th power belongs to $F$. Hence $\chi_{_{K/F}}$ is a $\Gamma$-isomorphism and we deduce in particular that, if $\Gamma$ is abelian, then $\mu_{e,p}\subset k$. The reverse implication is also true: if $\mu_{e,p}\subset k$, then $\Gamma$ acts trivially on $\Delta$. This implies that $\Gamma$ is abelian, since $\Gamma=\langle\gamma,\Delta\rangle$ for any $\gamma\in \Gamma$ whose image in $\Gamma/\Delta$ generates $\Gamma/\Delta$ (which is a cyclic group).
\end{remark}

If $M$ is an $\OO_K$-module, we let $M(\chi_{_{K/F}}^i)$ denote the $\OO_K[\Delta]$-module which is $M$ as an $\OO_K$-module and has $\Delta$-action defined by $\delta\cdot m=\chi_{_{K/F}}^i(\delta)m$.
%(see \cite[(2.7a) and (2.7b)]{ChaseTors}). 

We now show that the $\Delta$-module $\RR_{K/F}$ can be decomposed in smaller pieces. 

\begin{proposition}\label{chlemr}
The action of $\OO_F$ on $\RR_{K/F}$ factors through $\OO_F/\PP_F$ and there is an isomorphism of $\OO_F/\PP_F[\Delta]$-modules
$$\RR_{K/F}\cong \bigoplus_{i=1}^{e-1}(\PP_K^i/\PP_K^{i+1})^{\oplus i}.$$
\end{proposition}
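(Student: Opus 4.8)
The plan is to make the map $\psi_{K/F}$ completely explicit and then diagonalise the $\Delta$-action by passing to isotypic components. Since $K/F$ is totally and tamely ramified of degree $e$, fix a uniformiser $\pi_{\!_K}$ of $K$ with $\pi_{\!_K}^e\in F$ as above; then $\OO_K$ is free over $\OO_F$ with basis $1,\pi_{\!_K},\dots,\pi_{\!_K}^{e-1}$. Hence $\OO_K\otimes_{\OO_F}\OO_K$ is free of rank $e$ over $\OO_K$ acting on the left factor, with basis $\{1\otimes\pi_{\!_K}^l\}_{0\le l\le e-1}$, while $\Map(\Delta,\OO_K)$ is free of rank $e$ over $\OO_K$ acting pointwise. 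Under $\psi_{K/F}$ the element $1\otimes\pi_{\!_K}^l$ maps to the function $g_l\colon\gamma\mapsto\gamma(\pi_{\!_K}^l)=\chi_{_{K/F}}(\gamma)^l\pi_{\!_K}^l$, so $\psi_{K/F}(\OO_K\otimes_{\OO_F}\OO_K)$ is the pointwise-$\OO_K$-span of $g_0,\dots,g_{e-1}$, and the task becomes the identification of the cokernel of this submodule as a $\Delta$-module.

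To deal with the $\Delta$-action I would introduce, for $l\in\Z/e\Z$, the elements $\epsilon_l=\tfrac1e\sum_{k=0}^{e-1}\chi_{_{K/F}}(\delta)^{-lk}\delta^k\in\OO_F[\Delta]$, where $\delta$ is a fixed generator of $\Delta$. These lie in $\OO_F[\Delta]$ because $e$ is prime to $p$ (hence invertible in $\OO_F$) and $\mu_{e,p}\subseteq F$; they are orthogonal idempotents summing to $1$ and satisfy $\delta\,\epsilon_l=\chi_{_{K/F}}(\delta)^l\,\epsilon_l$. As $\psi_{K/F}$ is $\OO_K[\Delta]$-linear it respects the decompositions into the corresponding isotypic components. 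A direct computation gives $\epsilon_l(\OO_K\otimes_{\OO_F}\OO_K)=\OO_K\,(1\otimes\pi_{\!_K}^l)$, while $\epsilon_l\Map(\Delta,\OO_K)=\{\,u\colon u(\delta^k)=\chi_{_{K/F}}(\delta)^{lk}u(1)\,\}$ is free of rank one over $\OO_K$ via $u\mapsto u(1)$ and is isomorphic, as an $\OO_K[\Delta]$-module, to $\OO_K(\chi_{_{K/F}}^l)$; under this identification $g_l$ corresponds to $\pi_{\!_K}^l$. Therefore $\epsilon_l\RR_{K/F}\cong\bigl(\OO_K/\PP_K^l\bigr)(\chi_{_{K/F}}^l)$, and summing over $l$ we obtain an isomorphism of $\OO_K[\Delta]$-modules
$$\RR_{K/F}\;\cong\;\bigoplus_{l=1}^{e-1}\bigl(\OO_K/\PP_K^l\bigr)(\chi_{_{K/F}}^l).$$

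Finally I would rewrite the right-hand side. Since $K/F$ is totally ramified we have $\PP_F\OO_K=\PP_K^e$, which for $1\le l\le e-1$ annihilates $\OO_K/\PP_K^l$; this proves that the $\OO_F$-action on $\RR_{K/F}$ factors through $\OO_F/\PP_F$. Next, $\bigl(\OO_K/\PP_K^l\bigr)(\chi_{_{K/F}}^l)$ is an $l$-dimensional vector space over the residue field $\OO_K/\PP_K$ (note $\OO_K/\PP_K\cong\OO_F/\PP_F$) on which $\delta$ acts as the single scalar $\chi_{_{K/F}}(\delta)^l$, so any basis — for instance the images of $1,\pi_{\!_K},\dots,\pi_{\!_K}^{l-1}$ — exhibits it as $\bigl((\OO_K/\PP_K)(\chi_{_{K/F}}^l)\bigr)^{\oplus l}$. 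On the other hand $\delta$ acts on the image of $\pi_{\!_K}^l$ in the one-dimensional space $\PP_K^l/\PP_K^{l+1}$ by $\chi_{_{K/F}}(\delta)^l$, so $\PP_K^l/\PP_K^{l+1}\cong(\OO_K/\PP_K)(\chi_{_{K/F}}^l)$, whence $\bigl(\OO_K/\PP_K^l\bigr)(\chi_{_{K/F}}^l)\cong\bigl(\PP_K^l/\PP_K^{l+1}\bigr)^{\oplus l}$ as $\OO_F/\PP_F[\Delta]$-modules. Substituting this into the displayed isomorphism and renaming $l=i$ yields the claim.

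I expect the only delicate point — more bookkeeping than genuine obstacle — to be keeping the several module structures apart: the pointwise $\OO_K$-action and the shift $\Delta$-action on $\Map(\Delta,\OO_K)$, versus the left-factor $\OO_K$-action and the right-factor $\Delta$-action on $\OO_K\otimes_{\OO_F}\OO_K$. The idempotents $\epsilon_l$ are available precisely because $e$ is invertible in $\OO_F$, and it is the resulting fact that $\Delta$ acts by a root-of-unity scalar on each isotypic piece that makes both the ``factors through $\OO_F/\PP_F$'' assertion and the splitting of $\OO_K/\PP_K^l$ into its radical layers come out automatically.
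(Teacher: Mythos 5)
Your proof is correct, but it takes a more self-contained route than the paper. The paper's proof begins by quoting Chase's decomposition (\cite[Theorem 2.8]{ChaseTors}), namely the $\OO_K[\Delta]$-isomorphism $\RR_{K/F}\cong\bigoplus_{i=1}^{e-1}(\OO_K/\PP_K^{i})(\chi_{_{K/F}}^i)$, and then spends the whole argument converting each summand: it notes that $\PP_F\OO_K=\PP_K^e$ kills these quotients, uses the semisimplicity of $\OO_F/\PP_F[\Delta]$ together with the filtration by powers of $\PP_K$ (and multiplication by powers of a uniformizer) to split $(\OO_K/\PP_K^{i})(\chi_{_{K/F}}^i)$ into $i$ copies of $(\OO_K/\PP_K)(\chi_{_{K/F}}^i)$, and finally identifies the latter with $\PP_K^i/\PP_K^{i+1}$ via the computation \eqref{actionPimod}. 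You instead reprove Chase's decomposition from scratch: writing $\OO_K=\bigoplus_{l=0}^{e-1}\OO_F\pi_{\!_K}^l$, computing $\psi_{K/F}(1\otimes\pi_{\!_K}^l)$ explicitly, and projecting with the idempotents $\epsilon_l\in\OO_F[\Delta]$ (available exactly because $e$ is prime to $p$ and $\mu_{e,p}\subset F$) to obtain $\epsilon_l\RR_{K/F}\cong(\OO_K/\PP_K^l)(\chi_{_{K/F}}^l)$; and in the last step you replace the paper's semisimplicity-plus-filtration argument by the equivalent but more direct remark that $\delta$ acts on each twisted piece through a single residue-field scalar, so the splitting into lines follows from choosing any basis. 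What your approach buys is independence from Chase's theorem and an explicit, Fourier-style picture of where each graded piece of $\RR_{K/F}$ comes from; what the paper's approach buys is brevity, since the structural input is delegated to \cite{ChaseTors}. Both arguments hinge on the same tameness hypothesis ($e$ invertible modulo $p$), used for your idempotents and for the paper's semisimplicity respectively.
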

\begin{proof}
We start from Chase's decomposition \cite[Theorem 2.8]{ChaseTors} which, in our notation, is an isomorphism of $\OO_K[\Delta]$-modules:
$$\RR_{K/F}\cong \bigoplus_{i=1}^{e-1}(\OO_K/\PP_K^{i})(\chi_{_{K/F}}^i)\enspace.$$
Note that, for $0\leq i\leq e$, $\OO_K/\PP_K^{i}$ is an $\OO_F/\PP_F$-module, since  $\PP_F\OO_K=\PP_K^e\subseteq \PP_K^i$ and hence the action of $\OO_F$ on $\OO_K/\PP_K^{i}$ factors through $\PP_F$. From its definition $\RR_{K/F}$ is an $\OO_F$-module, hence an $\OO_F/\PP_F$-module by the above isomorphism, which is thus an isomorphism of $\OO_F/\PP_F[\Delta]$-modules. In what follows we shall be mainly concerned with $\OO_F/\PP_F[\Delta]$-module structures (although some of the assertions hold true in the category of $\OO_K[\Delta]$-modules). 

Observe that the $\OO_F/\PP_F[\Delta]$-module $(\OO_K/\PP_K^{i})(\chi_{_{K/F}}^i)$ has the filtration $\{(\PP_K^{j}/\PP_K^{i})(\chi_{_{K/F}}^i)\}_{j=0}^{i}$ whose corresponding subquotients are $(\PP_K^{j}/\PP_K^{j+1})(\chi_{_{K/F}}^i)$ for $j=0,\,\ldots,\,i-1$. It is clear that multiplication by the $j$th power of any uniformizer of $K$ induces an $\OO_F/\PP_F$-isomorphism $\OO_K/\PP_K\cong\PP_K^{j}/\PP_K^{j+1}$ and hence an $\OO_F/\PP_F[\Delta]$-isomorphism $(\OO_K/\PP_K)(\chi_{_{K/F}}^i)\cong(\PP_K^{j}/\PP_K^{j+1})(\chi_{_{K/F}}^i)$. Therefore using the se\-mi\-simplicity of $\OO_F/\PP_F[\Delta]$, we get 
$$(\OO_K/\PP_K^{i})(\chi_{_{K/F}}^i)\cong\bigoplus_{j=0}^{i-1}(\OO_K/\PP_K)(\chi_{_{K/F}}^i)=(\OO_K/\PP_K)(\chi_{_{K/F}}^i)^{\oplus i}.$$

Note that the Galois action of $\Delta$ on $\PP_K^i/\PP_K^{i+1}$ coincides with the action given by multiplication by $\chi_{_{K/F}}^i$ since
\begin{equation}\label{actionPimod}
\delta[\pi_{\!_K}^ix]=[\delta(\pi_{\!_K}^ix)]=[\chi_{_{K/F}}(\delta)^i\pi_{_K}^i\delta(x)]=[\chi_{_{K/F}}(\delta)^i\pi_{_K}^ix]
\end{equation}
where, for $y\in \PP_K^i$, we let $[y]$ denote the class of $y$ in $\PP_K^i/\PP_K^{i+1}$ (the last equality of (\ref{actionPimod}) follows from the fact that $\Delta$ acts trivially on $\OO_K/\PP_K=\OO_F/\PP_F$). Thus both $\PP_K^i/\PP_K^{i+1}$ and $(\OO_K/\PP_K)(\chi_{_{K/F}}^i)$ are $\OO_F/\PP_F$-vector spaces of dimension $1$ on which $\Delta$ acts by multiplication by $\chi_{_{K/F}}^i$. Therefore
$$(\OO_K/\PP_K^{i})(\chi_{_{K/F}}^i)\cong (\PP_K^i/\PP_K^{i+1})^{\oplus i}$$ 
as $\OO_F/\PP_F[\Delta]$-modules.
%For the first and third isomorphisms see \cite[Proof of Lemma 1.4]{ChaseTors} and \cite[Theorem 2.8]{ChaseTors}. The second isomorphism is proved analogously.
\end{proof}

%%%%%%%%%%%%
\subsection{Torsion modules arising from ideals}

\subsubsection{}
We keep the notation of the beginning of this section. Let $\I\subset \OO_N$ be a $G$-invariant ideal. We will show that the $G$-module structure of $\OO_{N}/\I$ and $\I^{-1}/\OO_N$ is of local nature. We denote by $\Div(\I)$ the (finite) set of primes of $E$ dividing $\I$ and, for every $P\in \Div(\I)$ we fix a prime $\PP$ of $N$ above $P$. 

\begin{proposition}\label{redtoloc}
Let $\I\subset \OO_N$ be a $G$-invariant ideal. For every prime $P\in\Div(\I)$, let $n_P$ be the valuation of $\I$ at any prime of $\OO_N$ above $P$.
%(this is indeed well-defined). 
Then  there are isomorphisms of $\OO_E[G]$-modules
\begin{align*}
\OO_N/\I &\cong \bigoplus_{P\in \Div(\I)}\Z[G]\otimes_{\Z[D_{\PP}]} \left(\OO_{N_{\PP}}/\PP^{n_P}\OO_{N_{\PP}}\right)\\
\I^{-1}/\OO_N &\cong \bigoplus_{P\in \Div(\I)}\Z[G]\otimes_{\Z[D_{\PP}]} \left(\PP^{-n_P}\OO_{N_{\PP}}/\OO_{N_{\PP}}\right)\enspace.
\end{align*}
\end{proposition}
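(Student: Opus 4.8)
The plan is to invoke the standard structure theory of torsion modules over the Dedekind domain $\OO_N$ while keeping track of the Galois action: split the module according to the primes of $E$ in its support, recognize each coarse summand as a module induced from the appropriate decomposition group, and then replace the global quotients by their completions. I would carry out the argument for $\OO_N/\I$ in detail; the one for $\I^{-1}/\OO_N$ is entirely parallel.

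Since $\I$ is $G$-invariant and $G$ acts transitively on the primes of $\OO_N$ above a given prime $P$ of $E$, every such prime occurs in $\I$ with the same exponent, which is precisely how $n_P$ is well defined. Grouping primes accordingly, write $\I=\prod_{P\in\Div(\I)}\I_P$ with $\I_P=\prod_{\PP'\mid P}(\PP')^{n_P}$: the $\I_P$ are pairwise coprime and each one is $G$-stable, so the Chinese Remainder Theorem gives $\OO_N/\I\cong\bigoplus_{P\in\Div(\I)}\OO_N/\I_P$, an isomorphism of $\OO_E[G]$-modules precisely because each $\I_P$ is $G$-stable. Now fix $P\in\Div(\I)$ together with the chosen prime $\PP\mid P$; applying the Chinese Remainder Theorem once more, $\OO_N/\I_P\cong\bigoplus_{\PP'\mid P}\OO_N/(\PP')^{n_P}$, where $G$ permutes the summands exactly as it permutes the primes above $P$, hence transitively, with the stabilizer of $\PP$ equal to $D_\PP$. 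As $D_\PP$ fixes $\PP$ it acts on $\OO_N/\PP^{n_P}$, the $G$-set of summands is $G/D_\PP$, and the standard description of an induced module yields an isomorphism of $\OO_E[G]$-modules
$$\OO_N/\I_P\;\cong\;\Z[G]\otimes_{\Z[D_\PP]}\big(\OO_N/\PP^{n_P}\big),$$
the right-hand side carrying the $\OO_E$-structure coming from $\OO_N/\PP^{n_P}$.

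It remains to pass to completions. The inclusion $\OO_N\hookrightarrow\OO_{N_\PP}$ is equivariant for the natural actions of $D_\PP=\Gal(N_\PP/E_P)$ and, for every $m\geq 0$, induces an isomorphism of $\OO_E[D_\PP]$-modules $\OO_N/\PP^{m}\xrightarrow{\ \sim\ }\OO_{N_\PP}/\PP^{m}\OO_{N_\PP}$, since a $\PP$-primary torsion module is unchanged by completion at $\PP$. Taking $m=n_P$ in the previous display and summing over $P\in\Div(\I)$ gives the first isomorphism of the proposition. For $\I^{-1}/\OO_N$ one argues identically: $\I^{-1}=\prod_P\I_P^{-1}$ with each $\I_P^{-1}$ a $G$-stable fractional ideal containing $\OO_N$, so the inclusions $\I_P^{-1}\hookrightarrow\I^{-1}$ induce an isomorphism of $\OO_E[G]$-modules $\I^{-1}/\OO_N\cong\bigoplus_{P\in\Div(\I)}\I_P^{-1}/\OO_N$; the $P$-th summand is a torsion $G$-module supported on the primes above $P$, permuted transitively by $G$ with stabilizer $D_\PP$ of $\PP$, and its primary part at $\PP$ is, after completion, $\PP^{-n_P}\OO_{N_\PP}/\OO_{N_\PP}$. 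The same induced-module recognition then produces the second isomorphism.

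The only point that requires genuine care is the bookkeeping of the $G$-action: one must check that the coarse decomposition indexed by $\Div(\I)$ is a decomposition into $G$-submodules, and that inside each coarse piece $G$ permutes the finer summands as the regular $G$-set $G/D_\PP$, so that the relevant module is indeed $\Z[G]\otimes_{\Z[D_\PP]}(-)$. Both assertions follow at once from the $G$-invariance of $\I$ and the transitivity of the Galois action on the primes of $\OO_N$ above a fixed prime of $E$; everything else reduces to the Chinese Remainder Theorem and to the insensitivity of a primary torsion module to completion. I do not anticipate any essential difficulty beyond this routine verification.
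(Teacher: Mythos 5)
Your proof is correct and follows essentially the same route as the paper: decompose by the primes of $E$ in the support via the Chinese Remainder Theorem, identify each block as $\Z[G]\otimes_{\Z[D_\PP]}(-)$ from the transitive $G$-action on the primes above $P$ with stabilizer $D_\PP$, and pass to completions using $\OO_N/\PP^m\cong\OO_{N_\PP}/\PP^m\OO_{N_\PP}$. The only (minor) difference is for $\I^{-1}/\OO_N$: where you invoke the primary decomposition of a finite torsion $\OO_N$-module to split $\I^{-1}/\OO_N\cong\bigoplus_P\I_P^{-1}/\OO_N$, the paper proves this splitting by hand via an explicit ``Chinese remainder'' lemma for coprime ideals $\J_1,\J_2$ using $1=j_1+j_2$; both justifications are sound.
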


\begin{proof}
We begin by proving the first isomorphism. Since $\I$ is $G$-invariant, we can write
$$\I=\prod_{P\in \Div(\I)} \prod_{\PP\mid P}\PP^{n_P}.$$ 
The Chinese remainder theorem gives an $\OO_N$ isomorphism
\begin{equation}\label{chinese}
\OO_N/\I\cong \bigoplus_{P\in \Div(\I)} \OO_N\big/(\prod_{\PP\mid P}\PP^{n_{P}}),
\end{equation}
which is indeed also one of $\OO_E[G]$-modules.
In a similar way we also get an isomorphism of $\OO_E$-modules
$$\OO_N\big/(\prod_{\PP\mid P} \PP^{n_{P}})\cong \prod_{\PP\mid P}(\OO_N/\PP^{n_{P}})$$
for every $P\in\Div(\I)$. The above isomorphism is easily seen to be $G$-invariant, once the right-hand side is given a $G$-module structure by 
$$(g\cdot(x_\PP))_{\PP_0}=g(x_{g^{-1}(\PP_0)})$$
for every $g\in G$, $(x_\PP)\in \prod_{\PP\mid P}(\OO_N/\PP^{-n_{P}})$ and $\PP_0\mid P$.
A standard argument shows that, for any prime $\PP_0$ above $P$, we have 
\begin{equation}\label{chind}
 \prod_{\PP\mid P}(\OO_N/\PP^{n_{P}})\cong\mathrm{Map}_{D_{\PP_0}}(G,\OO_N/\PP_0^{n_{P}})\cong\Z[G]\otimes_{\Z[D_{\PP_0}]}\OO_N/\PP_0^{n_{P}}
\end{equation}
as $\OO_N$-modules and $\OO_E[G]$-modules. Note also that, for every $n\in \mathbb{N}$ and every $\PP\mid P$, the inclusion $N\to N_P$ induces an isomorphism
$$\OO_N/\PP^{n_{P}}\cong \OO_{N_\PP}/\PP^{n_{P}}\OO_{N_\PP}$$
of $\OO_E[D_\PP]$-modules. This shows the first isomorphism of the lemma.
 
The proof of the second isomorphism follows the same pattern, once one has the following analogue of the Chinese remainder theorem.
\begin{lemma}
Let $\J_1,\,\J_2\subset \OO_N$ be ideals with $\J_1+\J_2=\OO_N$. Then there is an isomorphism of $\OO_N$-modules
$$(\J_1\J_2)^{-1}/\OO_N\stackrel{\sim}{\longrightarrow} (\J_1)^{-1}/\OO_N\times \J_2^{-1}/\OO_N.$$
%Let $\J_1,\,\J_2\subset \OO_N$ be ideals with $\J_1+\J_2=\OO_N$. Then the inclusion $\J_1^{-1}\to (\J_1\J_2)^{-1}$ induces an isomorphism of $\OO_N$-modules
%\begin{equation}\label{invchin1}
%\J_1^{-1}/\OO_N\stackrel{\sim}{\longrightarrow} (\J_1\J_2)^{-1}/\J_2^{-1}
%\end{equation} 
%and the natural projections $(\J_1\J_2)^{-1}/\OO_N\to (\J_1\J_2)^{-1}/\J_1^{-1}$ and $(\J_1\J_2)^{-1}/\OO_N\to (\J_1\J_2)^{-1}/\J_2^{-1}$ induce an $\OO_N$-isomorphism
%\begin{equation}\label{invchin2}
%(\J_1\J_2)^{-1}/\OO_N\to (\J_1\J_2)^{-1}/\J_1^{-1}\times (\J_1\J_2)^{-1}/\J_2^{-1}.
%\end{equation} 
\end{lemma}
\begin{proof}
We claim that the inclusions $\J_1^{-1}\to (\J_1\J_2)^{-1}$ and $\J_2^{-1}\to (\J_1\J_2)^{-1}$ induce $\OO_N$-isomorphisms
\begin{equation}\label{invchin1}
\tau_1:\J_1^{-1}/\OO_N\to (\J_1\J_2)^{-1}/\J_2^{-1}\quad\textrm{and}\quad \tau_2:\J_2^{-1}/\OO_N\to (\J_1\J_2)^{-1}/\J_1^{-1}
\end{equation} 
and, analogously, the natural projections $(\J_1\J_2)^{-1}/\OO_N\to (\J_1\J_2)^{-1}/\J_1^{-1}$ and $(\J_1\J_2)^{-1}/\OO_N\to (\J_1\J_2)^{-1}/\J_2^{-1}$ induce an $\OO_N$-isomorphism
\begin{equation}\label{invchin2}
\tau:(\J_1\J_2)^{-1}/\OO_N\to (\J_1\J_2)^{-1}/\J_1^{-1}\times (\J_1\J_2)^{-1}/\J_2^{-1}.
\end{equation}  

Write $1=j_1+j_2$ with $j_1\in \J_1$ and $j_2\in \J_2$. To show that $\tau_1,\,\tau_2$ and $\tau$ are injective, we only need to show that $\J_1^{-1}\cap \J_2^{-1}\subseteq \OO_N$ (the reverse inclusion being obvious). If $j\in \J_1^{-1}\cap \J_2^{-1}$, then $j=1\cdot j=j_1j+j_2j$ and both $j_1j$ and $j_2j$ belong to $\OO_N$. This shows that $\tau_1,\tau_2$ and $\tau$ are injective.

To prove the surjectivity of $\tau_1$, let $j\in(\J_1\J_2)^{-1}$. Then $j_1j\in \J_2^{-1}$ and hence $j-j_1j$ belongs to the class of $j$ in $(\J_1\J_2)^{-1}/\J_2^{-1}$. On the other hand $j-j_1j=j_2j\in \J_1^{-1}$, which shows that $\tau_1$ is surjective and the surjectivity of $\tau_2$ follows by a similar argument.

As for the surjectivity of $\tau$, take $y,z\in (\J_1\J_2)^{-1}$. One easily sees that $x=yj_1+zj_2$ belongs to $(\J_1\J_2)^{-1}$ and 
$$x\equiv yj_1\equiv y-yj_2\equiv y \mod{\J_1^{-1}}\quad \textrm{and}\quad x\equiv zj_2\equiv z-zj_1\equiv z \mod{\J_2^{-1}}.$$
This shows that $\tau$ is surjective and complete the proof of our claim.

The lemma then follows since 
\begin{equation}\label{isoon}
(\tau_1\times \tau_2)^{-1}\circ\tau:(\J_1\J_2)^{-1}/\OO_N\to (\J_1)^{-1}/\OO_N\times \J_2^{-1}/\OO_N
\end{equation} 
is an $\OO_N$-isomorphism.
\end{proof}

Observe also that, if $\J_1$ and $\J_2$ moreover are $G$-stable ideals, then the isomorphism (\ref{isoon}) is also an isomorphism of $\OO_E[G]$-modules.
\end{proof}

\subsubsection{} 
Proposition \ref{redtoloc} allows us to focus on local extensions. We put ourselves in the local setting of \S \ref{localr}. In particular, $K/k$ is a $\Gamma$-extension of $p$-adic fields and $F$ is the subfield of $K$ which is fixed by the inertia group $\Delta$.

We begin by considering the local analogue $\psi_{K/k}$ of the isomorphism introduced in (\ref{isom-NG}), namely $\psi_{K/k}:K\otimes_kK\rightarrow\Map(\Gamma,K)$ sends $x\otimes y$ to $\gamma\mapsto x\gamma(y)$. We give $K\otimes_kK$ its natural $(\Gamma\times\Gamma)$-module structure: $(\gamma,\gamma')(x\otimes y)=\gamma(x)\otimes\gamma'(y)$. Then $\psi_{K/k}$ is an isomorphism of $(\Gamma\times\Gamma)$-modules if we let $(\Gamma\times\Gamma)$ act on $\Map(\Gamma,K)$ by
$$\big((\gamma,\gamma')u\big)(\eta)=\gamma\big(u(\gamma^{-1}\eta\gamma')\big)\enspace,$$
for all $u\in\Map(\Gamma,K)$, $\gamma,\gamma',\eta\in\Gamma$. Note that the action of the subgroup $1\times\Gamma$ of $\Gamma\times\Gamma$ is the same as that introduced below (\ref{isom-NG}), once $1\times\Gamma$ is identified with $\Gamma$.

We  define $\Map(\Gamma,K)^{\Delta}=\Map(\Gamma,K)^{\Delta\times1}$ to be the set of invariant maps under the action of the subgroup $\Delta\times1$ of $\Gamma\times\Gamma$. More explicitely, $\Map(\Gamma,K)^{\Delta}$ is the set of maps $u:\Gamma\to K$ such that
$$\delta(u(\eta))=u(\delta\eta)$$
for all $\delta\in\Delta$, $\eta\in\Gamma$. We may view $\Map(\Gamma,K)^{\Delta}$ as an $F$-algebra with the pointwise operations and as a $\Gamma$-module 
%with $(\gamma u)(\eta)=u(\eta\gamma)$ for every
%$u\in\Map(\Gamma,K)^{\Delta}$ and $\gamma,\eta\in\Gamma$.  
where $\Gamma$ acts as $1\times\Gamma$. Then there is an isomorphism of both $F$-algebras and $F[\Gamma]$-modules:
\begin{equation}\label{indiso}
\Map(\Gamma, K)^{\Delta}\stackrel{\sim}{\longrightarrow}
\Q[\Gamma]\otimes_{\Q[\Delta]} K\ ,\quad u\longmapsto \sum_{\gamma\in\Gamma}\gamma^{-1}\otimes u(\gamma)
\end{equation}
where $\Q[\Gamma]\otimes_{\Q[\Delta]} K$ is the tensor product over $\Q[\Delta]$ of the right $\Q[\Delta]$-module $\Q[\Gamma]$ with the left $\Q[\Delta]$-module $K$. This tensor product is given the structure of a $\Gamma$-module via its left-hand factor and the structure of an $F$-algebra via its right-hand factor. 

%In particular
%$$\left(\sum \gamma \otimes x_\gamma\right)\left(\sum \gamma \otimes y_\gamma\right)=\sum \gamma \otimes x_\gamma y_\gamma\quad \textrm{in $\Q[\Gamma]\otimes_{\Q[\Delta]} K$}.$$

The isomorphism $\psi_{K/k}$ introduced above yields
%Consider now the 
an isomorphism of both $F$-algebras and $F[\Gamma]$-modules:
$$\psi_{K/k}:F\otimes_k K\to \Map(\Gamma, K)^{\Delta}$$
%defined by $\psi_{K/k}(x\otimes y)(\gamma)=x\gamma(y)$ for every $x\in
%F$, $y\in K$ and $\gamma\in \Gamma$ 
(here $F\otimes_k K$ is considered an $F$-algebra via its left factor and as a $\Gamma$-module via its right factor). 
%Then composing $\psi_{K/k}$ 
Composing with the isomorphism in (\ref{indiso}), we get an isomorphism 
$$\tilde \psi_{K/k}:F\otimes_k K\stackrel{\sim}{\longrightarrow} \Q[\Gamma]\otimes_{\Q[\Delta]} K\enspace.$$
Note that $\Z[\Gamma]\otimes_{\Z[\Delta]} \OO_K$ is the maximal order of $\Q[\Gamma]\otimes_{\Q[\Delta]} K$ and, using that $F/k$ is unramified, it is not difficult to show that $\OO_F\otimes_{\OO_k} \OO_K$ is the maximal $\OO_F$-order of $F\otimes_k K$ (see \cite[p. 214]{ChaseTors}). 
%Therefore $\tilde\psi_{K/k}$ sends $\OO_F\otimes_{\OO_k} \OO_K$ isomorphically onto $\Z[\Gamma]\otimes_{\Z[\Delta]} \OO_K$, which is the maximal order of $\Q[\Gamma]\otimes_{\Q[\Delta]} K$. 
Therefore $\tilde \psi_{K/k}$ induces the following isomorphism of rings and $\OO_F[\Gamma]$-modules:
$$\OO_F\otimes_{\OO_k} \OO_K\cong\Z[\Gamma]\otimes_{\Z[\Delta]} \OO_K\enspace.$$
%Similar isomorphisms hold for the torsion modules $\T$ and $\SSS$. 

\begin{lemma}\label{redtotramts}
For every $n\in\mathbb{N}$, the homomorphism $\tilde \psi_{K/k}$ induces isomorphisms of $\OO_F[\Gamma]$-modules 
\begin{align*}
\OO_F\otimes_{\OO_k}\OO_K/\PP_K^n&\cong \Z[\Gamma]\otimes_{\Z[\Delta]}\OO_K/\PP_K^n;\\
\OO_F\otimes_{\OO_k}\PP_K^{-n}/\OO_K&\cong \Z[\Gamma]\otimes_{\Z[\Delta]}\PP_K^{-n}/\OO_K.
\end{align*} 
%In particular
%$$\OO_F\otimes_{\OO_k}\T_{K/k}\cong\Z[\Gamma]\otimes_{\Z[\Delta]}\T_{K/F};$$
%$$\OO_F\otimes_{\OO_k}\SSS_{K/k}\cong\Z[\Gamma]\otimes_{\Z[\Delta]}\SSS_{K/F},$$
%as $\OO_F[\Gamma]$-modules.
\end{lemma}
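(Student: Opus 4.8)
The plan is to leverage the ring-and-module isomorphism $\tilde\psi_{K/k}\colon \OO_F\otimes_{\OO_k}\OO_K\stackrel{\sim}{\longrightarrow}\Z[\Gamma]\otimes_{\Z[\Delta]}\OO_K$ established just above, and to show it carries the two natural short exact sequences of $\OO_F[\Gamma]$-modules attached to $\PP_K^n$ onto the corresponding ones on the right-hand side. Concretely, for the first isomorphism I would tensor the exact sequence $0\to\PP_K^n\to\OO_K\to\OO_K/\PP_K^n\to 0$ of $\OO_k[\Delta]$-modules with the flat $\OO_k$-module $\OO_F$ (flatness holds because $\OO_F/\OO_k$ is unramified, hence free of finite rank, or simply because $F/k$ is a finite separable extension and $\OO_F$ is $\OO_k$-free). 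This yields $0\to\OO_F\otimes_{\OO_k}\PP_K^n\to\OO_F\otimes_{\OO_k}\OO_K\to\OO_F\otimes_{\OO_k}\OO_K/\PP_K^n\to 0$, and the same computation on the other side (now $\Z[\Gamma]$ is $\Z[\Delta]$-free, hence flat) gives $0\to\Z[\Gamma]\otimes_{\Z[\Delta]}\PP_K^n\to\Z[\Gamma]\otimes_{\Z[\Delta]}\OO_K\to\Z[\Gamma]\otimes_{\Z[\Delta]}\OO_K/\PP_K^n\to 0$.

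Next I would identify $\tilde\psi_{K/k}(\OO_F\otimes_{\OO_k}\PP_K^n)$ with $\Z[\Gamma]\otimes_{\Z[\Delta]}\PP_K^n$ inside $\Z[\Gamma]\otimes_{\Z[\Delta]}\OO_K$. The cleanest route: $\PP_K^n$ is an $\OO_K$-submodule of $\OO_K$, stable under $\Delta$ (being an ideal of $\OO_K$), and $\tilde\psi_{K/k}$ is an isomorphism of rings and of $\OO_F[\Gamma]$-modules whose source and target, as rings, have $\OO_K$ sitting inside via the right-hand tensor factor ($x\mapsto \Tr_\Gamma$-type identification, or more simply: $\OO_K = 1\otimes\OO_K$ after the identification with $\Map(\Gamma,K)^\Delta$). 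Since $\tilde\psi_{K/k}$ is a ring isomorphism it respects multiplication by elements of $\OO_K$, so it sends $(\OO_F\otimes_{\OO_k}\OO_K)\cdot\pi_K^n$ to $(\Z[\Gamma]\otimes_{\Z[\Delta]}\OO_K)\cdot\pi_K^n$; and $(\OO_F\otimes_{\OO_k}\OO_K)\cdot\pi_K^n = \OO_F\otimes_{\OO_k}\PP_K^n$ because $\PP_K^n=\pi_K^n\OO_K$ is principal (as $K$ is local), while $(\Z[\Gamma]\otimes_{\Z[\Delta]}\OO_K)\cdot\pi_K^n=\Z[\Gamma]\otimes_{\Z[\Delta]}\PP_K^n$ for the same reason, using flatness to commute the tensor with the submodule inclusion. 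Passing to cokernels through $\tilde\psi_{K/k}$ then gives the first displayed isomorphism. For the second, run the identical argument with the exact sequence $0\to\OO_K\to\PP_K^{-n}\to\PP_K^{-n}/\OO_K\to 0$ of $\OO_k[\Delta]$-modules, noting $\PP_K^{-n}=\pi_K^{-n}\OO_K$ is again principal, so $\tilde\psi_{K/k}$ (extended to the $K$-algebra level, where it is still a ring isomorphism) carries $\OO_F\otimes_{\OO_k}\PP_K^{-n}$ onto $\Z[\Gamma]\otimes_{\Z[\Delta]}\PP_K^{-n}$ by multiplication by $\pi_K^{-n}$, and one takes cokernels over $\OO_K$.

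The one point deserving a little care — and what I would call the main (though modest) obstacle — is the bookkeeping of module actions: one must check that the $\OO_F[\Gamma]$-structure used on $\OO_F\otimes_{\OO_k}\OO_K/\PP_K^n$ (with $\OO_F$ acting through the left factor, $\Gamma$ through the right) is exactly the one induced from $\OO_F\otimes_{\OO_k}\OO_K$ by reduction, and likewise on the right-hand side that $\Z[\Gamma]\otimes_{\Z[\Delta]}\OO_K/\PP_K^n$ carries the quotient structure compatible with $\tilde\psi_{K/k}$. This is immediate from the fact, recalled in the excerpt, that $\tilde\psi_{K/k}$ is simultaneously a ring isomorphism and an $\OO_F[\Gamma]$-module isomorphism: both structures are preserved, hence so are all subquotients defined by ideals, together with their induced structures. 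No further input is needed; the result is thus a formal consequence of the already-established isomorphism $\OO_F\otimes_{\OO_k}\OO_K\cong\Z[\Gamma]\otimes_{\Z[\Delta]}\OO_K$ together with flatness of $\OO_F$ over $\OO_k$ and of $\Z[\Gamma]$ over $\Z[\Delta]$, plus the principality of the ideals $\PP_K^{\pm n}$ in the local ring $\OO_K$.
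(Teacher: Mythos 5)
Your overall setup is the same as the paper's: tensor the two short exact sequences attached to $\PP_K^{\pm n}$ with the flat modules $\OO_F$ over $\OO_k$ and $\Z[\Gamma]$ over $\Z[\Delta]$, and feed in the isomorphism $\tilde\psi_{K/k}\colon\OO_F\otimes_{\OO_k}\OO_K\cong\Z[\Gamma]\otimes_{\Z[\Delta]}\OO_K$. The divergence is in the finishing step, and that is where there is a genuine gap. You want $\tilde\psi_{K/k}(\OO_F\otimes_{\OO_k}\PP_K^n)=\Z[\Gamma]\otimes_{\Z[\Delta]}\PP_K^n$ by arguing that a ring isomorphism respects multiplication by $\pi_K^n$ and that multiplication by $\pi_K^n$ carves out the $\PP_K^n$-level submodule on both sides ``for the same reason''. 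On the left this is fine, because $\OO_K$ really does sit in $\OO_F\otimes_{\OO_k}\OO_K$ through the right-hand factor. On the right it does not: $\Z[\Gamma]\otimes_{\Z[\Delta]}\OO_K$ is not an $\OO_K$-module via its right-hand factor (the rule $a\otimes x\mapsto a\otimes\pi_K x$ is not well defined over $\Z[\Delta]$ since $\delta(\pi_K x)\neq\pi_K\delta(x)$), and its ring structure is the one transported from $\Map(\Gamma,\OO_K)^\Delta$ with pointwise operations, which is not factor-wise. In particular the ``more simply, $\OO_K=1\otimes\OO_K$'' reading is wrong: $\tilde\psi_{K/k}(1\otimes\pi_K^n)$ is the function $\gamma\mapsto\gamma(\pi_K^n)$, i.e.\ $\sum_\gamma\gamma^{-1}\otimes\gamma(\pi_K^n)$, not $1\otimes\pi_K^n$, and multiplication by the literal element $1\otimes\pi_K^n$ in the transported ring does not have image $\Z[\Gamma]\otimes_{\Z[\Delta]}\PP_K^n$. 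With the correct (twisted) embedding the claimed equality is true, but it is not a formal consequence of flatness: you must check, say in the function model, that multiplication by $\gamma\mapsto\gamma(\pi_K^n)$ maps $\Map(\Gamma,\OO_K)^\Delta$ onto $\Map(\Gamma,\PP_K^n)^\Delta$ (each $\gamma(\pi_K)$ is again a uniformizer because $\PP_K$ is $\Gamma$-stable), and that the latter is identified with $\Z[\Gamma]\otimes_{\Z[\Delta]}\PP_K^n$ under the integral analogue of (\ref{indiso}) — a statement of essentially the same nature as the lemma you are proving.

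The paper sidesteps all of this. It only uses the trivial containment $\tilde\psi_{K/k}(\OO_F\otimes_{\OO_k}\PP_K^{\pm n})\subseteq\Z[\Gamma]\otimes_{\Z[\Delta]}\PP_K^{\pm n}$, immediate from $x\otimes y\mapsto\sum_\gamma\gamma^{-1}\otimes x\gamma(y)$ and $\Gamma$-stability of $\PP_K$, to get commutative diagrams with exact rows. For the first sequence the induced map on quotients is then surjective, and the count $\#(\OO_F\otimes_{\OO_k}\OO_K/\PP_K^n)=(\#\OO_K/\PP_K^n)^{[F:k]}=(\#\OO_K/\PP_K^n)^{[\Gamma:\Delta]}=\#(\Z[\Gamma]\otimes_{\Z[\Delta]}\OO_K/\PP_K^n)$ forces it to be an isomorphism, the snake lemma giving the statement on the submodules for free. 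For the second sequence one only needs injectivity of the middle arrow, which follows from torsion-freeness of $\OO_F\otimes_{\OO_k}\PP_K^{-n}$ over $\OO_k$ after passing to $F\otimes_k K$, plus the same counting. If you want to keep your direct identification of the image, add the function-model verification above; otherwise the surjectivity-plus-counting argument is the cheaper repair.
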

\begin{proof}
Consider the following commutative diagram of $\OO_F[\Gamma]$-modules
$$
\xymatrix{
0\ar[r]&\OO_F\otimes_{\OO_k}\PP^n_K \ar[d]^{\tilde\psi_{K/k}} \ar[r] & \OO_F\otimes_{\OO_k}\OO_K \ar[d]^{\tilde\psi_{K/k}} \ar[r] &\OO_F\otimes_{\OO_k}\OO_K/\PP_K^n \ar[d]\ar[r]&0\\
0\ar[r]&\Z[\Gamma]\otimes_{\Z[\Delta]}\PP^n_K\ar[r] &  \Z[\Gamma]\otimes_{\Z[\Delta]}\OO_K \ar[r] & \Z[\Gamma]\otimes_{\Z[\Delta]}\OO_K/\PP_K^n\ar[r]&0\\}
$$
%$$
%\begin{CD}
%0@>>> \OO_F\otimes_{\OO_k}\PP^n_K@>>> \OO_F\otimes_{\OO_k}\OO_K@>>> \OO_F\otimes_{\OO_k}\OO_K/\PP_K^n@>>>0\\
%@.@V\tilde\psi_{K/k} VV@V\tilde\psi_{K/k} VV@VVV\\
%0@>>> \Z[\Gamma]\otimes_{\Z[\Delta]}\PP^n_K@>>> \Z[\Gamma]\otimes_{\Z[\Delta]}\OO_K @>>> \Z[\Gamma]\otimes_{\Z[\Delta]}\OO_K/\PP_K^n@>>>0
%\end{CD}
%$$
It has exact rows since $\OO_F$ (resp. $\Z[\Gamma]$) is a flat $\OO_k$-module (resp. $\Z[\Delta]$-module), being free. The central vertical arrow is an isomorphism, as remarked above. In particular, the right-hand vertical arrow is surjective. But one easily verifies that  
\begin{eqnarray*}
\#\left( \OO_F\otimes_{\OO_k}\OO_K/\PP^n_K\right)&=&(\#\OO_K/\PP^n_K)^{[F:k]}\\
&=&(\#\OO_K/\PP^n_K)^{[\Gamma:\Delta]}\\
&=&\#\left(\Z[\Gamma]\otimes_{\Z[\Delta]}\OO_K/\PP^n_K\right).
\end{eqnarray*}
Therefore the right-hand vertical arrow is an isomorphism and so is the left-hand one by the snake lemma. This proves the first isomorphism of the lemma.

The proof of the second isomorphism is similar: consider the following commutative diagram of $\OO_F[\Gamma]$-modules with exact rows
$$
\xymatrix{
0\ar[r]&\OO_F\otimes_{\OO_k}\OO_K \ar[d]^{\tilde\psi_{K/k}} \ar[r] & \OO_F\otimes_{\OO_k}\PP^{-n}_K \ar[d]^{\tilde\psi_{K/k}} \ar[r] &\OO_F\otimes_{\OO_k}\PP_K^{-n}/\OO_K \ar[d]\ar[r]&0\\
0\ar[r]&\Z[\Gamma]\otimes_{\Z[\Delta]}\OO_K\ar[r] &  \Z[\Gamma]\otimes_{\Z[\Delta]}\PP^{-n}_K \ar[r] & \Z[\Gamma]\otimes_{\Z[\Delta]}\PP_K^{-n}/\OO_K\ar[r]&0\\}
$$
%$$
%\begin{CD}
%0@>>> \OO_F\otimes_{\OO_k}\OO_K@>>> \OO_F\otimes_{\OO_k}\PP^{-n}_K@>>> \OO_F\otimes_{\OO_k}\PP_K^{-n}/\OO_K@>>>0\\
%@.@V\tilde\psi_{K/k} VV@V\tilde\psi_{K/k} VV@VVV\\
%0@>>> \Z[\Gamma]\otimes_{\Z[\Delta]}\OO_K@>>> \Z[\Gamma]\otimes_{\Z[\Delta]}\PP^{-n}_K @>>> \Z[\Gamma]\otimes_{\Z[\Delta]}\PP_K^{-n}/\OO_K@>>>0
%\end{CD}
%$$
The left-hand vertical arrow is an isomorphism. Therefore it suffices to prove that the central arrow is injective (one then conclude using a cardinality argument as above). For that purpose, it is enough to show that the map $\OO_F\otimes_{\OO_k}\PP^{-n}_K\to F\otimes_{k}K$ is injective, thanks to the following commutative diagram of $\OO_F[\Gamma]$-modules 
$$
\begin{CD}
\OO_F\otimes_{\OO_k}\PP^{-n}_K@>>> F\otimes_{k}K\\
@V\tilde\psi_{K/k} VV@V\tilde\psi_{K/k} VV\\
\Z[\Gamma]\otimes_{\Z[\Delta]}\PP^{-n}_K@>>> \Z[\Gamma]\otimes_{\Z[\Delta]}K
\end{CD}
$$
whose right-hand arrow is an isomorphism. Note that $F\otimes_{k}K$ is the localization of the $\OO_k$-module  $\OO_F\otimes_{\OO_k}\PP^{-n}_K$ at the multiplicative set $k^\times$. The map $\OO_F\otimes_{\OO_k}\PP^{-n}_K\to F\otimes_{k}K$ is then injective because $\OO_F\otimes_{\OO_k}\PP^{-n}_K$ is a torsion free $\mathcal{O}_k$-module.
\end{proof}

The above lemma is somehow unsatisfactory, if compared for example with Proposition \ref{redtotramr}, because it says that the $\Gamma$-modules $\OO_K/\PP_K^n$ and $\PP_K^{-n}/\OO_K$ are induced from some $\Delta$-module only after tensoring with $\OO_F$. In the next subsection, introducing torsion $\Delta$-modules coming from global cyclotomic fields, we will get rid of this scalar extension, at least when $K/k$ is abelian.

%The previous lemma, together with the description of the $\Delta$-module structure of $\T_{K/F}$ and $\SSS_{K/F}$, will allow us to get a description of the $\Gamma$-module structure of $\T_{K/k}$ $\SSS_{K/k}$ and $\OO_K/\PP_K^n$, at least in the case where $\Gamma$ is abelian (see Propostion \ref{locab}). 

The following proposition (which may be considered as a generalization of \cite[Lemma 1.4]{ChaseTors}), shows that the $\Delta$-modules $\OO_K/\PP_K^n$ and $\PP_K^{-n}/\OO_K$ break up in smaller pieces as in Proposition \ref{chlemr}. Recall that $e$ is the order of $\Delta$.  

\begin{proposition}\label{chlem} 
For every $n\in \mathbb{Z}$ with $0\leq n\leq e$, the action of $\OO_F$ on $\OO_K/\PP_K^{n}$ and $\PP_K^{-n}/\OO_K$ factors through an action of $\OO_F/\PP_F$ and we have
$$\OO_K/\PP_K^{n}\cong\bigoplus_{i=0}^{n-1}\PP_K^i/\PP_K^{i+1}$$ 

$$\PP_K^{-n}/\OO_K\cong\bigoplus_{i=1}^{n}\PP_K^{e-i}/\PP_K^{e-i+1}$$
%In particular we have an isomorphism of $\OO_F[\Delta]$-modules
%$$\T_{K/F}\cong \bigoplus_{i=1}^{e-1}\PP_K^i/\PP_K^{i+1},\quad\SSS_{K/F}\cong \bigoplus_{i=\frac{e+1}{2}}^{e-1}\PP_K^i/\PP_K^{i+1}$$
as $\OO_F/\PP_F[\Delta]$-modules.
\end{proposition}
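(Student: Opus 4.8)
The plan is to mimic the proof of Proposition~\ref{chlemr}, which is the totally ramified case of essentially this statement, but now tracking the twist by a power of $\chi_{_{K/F}}$. First I would recall from the theory of tamely totally ramified extensions that $\PP_F\OO_K=\PP_K^e$, so for $0\leq n\leq e$ the $\OO_F$-action on $\OO_K/\PP_K^n$ and on $\PP_K^{-n}/\OO_K$ factors through $\OO_F/\PP_F$: in the first case because $\PP_F\OO_K=\PP_K^e\subseteq\PP_K^n$, and in the second case because $\PP_F\cdot\PP_K^{-n}=\PP_K^{e-n}\subseteq\OO_K$. This also makes both modules $\OO_F/\PP_F[\Delta]$-modules, and $\OO_F/\PP_F[\Delta]$ is semisimple since $e$ is prime to $p$.

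Next I would set up the natural filtrations. For $\OO_K/\PP_K^n$ use $\{\PP_K^j/\PP_K^n\}_{j=0}^{n}$, whose successive subquotients are the one-dimensional $\OO_F/\PP_F$-vector spaces $\PP_K^j/\PP_K^{j+1}$ for $j=0,\dots,n-1$. For $\PP_K^{-n}/\OO_K$ use $\{\PP_K^{-n+j}/\OO_K\}_{j=0}^{n}$, with subquotients $\PP_K^{j-n}/\PP_K^{j-n+1}$ for $j=0,\dots,n-1$, i.e.\ the spaces $\PP_K^{-i}/\PP_K^{-i+1}$ for $i=1,\dots,n$. Here I would remark that, as $\Delta$-modules, $\PP_K^{-i}/\PP_K^{-i+1}\cong \PP_K^{e-i}/\PP_K^{e-i+1}$: multiplication by $\pi_{\!_K}^{e}$ (equivalently by a uniformizer of $F$) gives an $\OO_F/\PP_F$-isomorphism $\PP_K^{-i}/\PP_K^{-i+1}\cong\PP_K^{e-i}/\PP_K^{e-i+1}$, and it is $\Delta$-equivariant because $\pi_{\!_K}^e\in F$ is fixed by $\Delta$. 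This rewrites the subquotients of the second filtration as $\PP_K^{e-i}/\PP_K^{e-i+1}$ for $i=1,\dots,n$.

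Then I would invoke semisimplicity to split both filtrations. Since $\OO_F/\PP_F[\Delta]$ is semisimple, every short exact sequence of $\OO_F/\PP_F[\Delta]$-modules splits, so $\OO_K/\PP_K^n$ is isomorphic to the direct sum of its subquotients $\bigoplus_{i=0}^{n-1}\PP_K^i/\PP_K^{i+1}$, and likewise $\PP_K^{-n}/\OO_K\cong\bigoplus_{i=1}^{n}\PP_K^{e-i}/\PP_K^{e-i+1}$, which are exactly the two claimed isomorphisms. (If one wants the more refined statement in terms of the character $\chi_{_{K/F}}$, one notes as in (\ref{actionPimod}) that $\delta$ acts on $\PP_K^j/\PP_K^{j+1}$ by multiplication by $\chi_{_{K/F}}(\delta)^j$, so $\PP_K^j/\PP_K^{j+1}\cong(\OO_K/\PP_K)(\chi_{_{K/F}}^j)$; but the stated form only requires the bare decomposition.)

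The only genuine subtlety — hence the step I would be most careful about — is the identification $\PP_K^{-i}/\PP_K^{-i+1}\cong\PP_K^{e-i}/\PP_K^{e-i+1}$ of $\Delta$-modules used to express the subquotients of $\PP_K^{-n}/\OO_K$ with nonnegative exponents: one must use a uniformizer of $F$ (an $e$-th power of a uniformizer of $K$) rather than an arbitrary uniformizer of $K$ so that the multiplication map commutes with $\Delta$. Everything else is the same semisimplicity-plus-filtration argument already carried out in Proposition~\ref{chlemr}, so no new obstacle arises; the cardinality bookkeeping ($\#(\OO_K/\PP_K^n)=\#(\PP_K^{-n}/\OO_K)=q^n$ with $q=\#\OO_F/\PP_F$) is an automatic consistency check.
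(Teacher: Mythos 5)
Your proposal is correct and follows essentially the same route as the paper: the same filtrations, the same appeal to semisimplicity of $\OO_F/\PP_F[\Delta]$ (since $p\nmid e$), and the same identification $\PP_K^{-i}/\PP_K^{-i+1}\cong\PP_K^{e-i}/\PP_K^{e-i+1}$ via multiplication by the $e$th power of a uniformizer of $K$, i.e.\ a uniformizer of $F$, exactly as in the paper's proof. The point you flag as the subtlety (using $\pi_{\!_K}^e\in F$ so the multiplication map is $\Delta$-equivariant) is precisely the choice the paper makes as well.
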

\begin{proof}
Both $\OO_K/\PP_K^n$ and $\PP_K^{-n}/\OO_K$ are $\OO_F/\PP_F[\Delta]$-modules (see the proof of Proposition \ref{chlemr}) and have the filtrations $\{\PP_K^i/\PP^n_K\}_{i=0}^{n}$ and $\{\PP_K^{-i}/\OO_K\}_{i=0}^{n}$, respectively. Since $\OO_F/\PP_F[\Delta]$ is semisimple, we have
$$\OO_K/\PP_K^n\cong\bigoplus_{i=0}^{n-1}\PP_K^i/\PP_K^{i+1} \qquad \textrm{and}\qquad \PP_K^{-n}/\OO_K\cong\bigoplus_{i=1}^{n}\PP_K^{-i}/\PP_K^{-i+1} $$
as $\OO_F/\PP_F[\Delta]$-modules. This yields the first isomorphism. Furthermore multiplication by the $e$th power of any uniformizer of $K$ induces a $\OO_F/\PP_F[\Delta]$-isomorphism 
$$\PP_K^{-i}/\PP_K^{-i+1}\cong \PP_K^{e-i}/\PP_K^{e-i+1}.$$ 
We thus get the second isomorphism.    
\end{proof}

\begin{remark}
We add a comment on the hypothesis on $n$ in the above proposition. If $n>e$, then $\OO_K/\PP_K^n$ is not a semisimple $\OO_F[\Delta]$-module. For, suppose the contrary: then we would have an isomorphism of $\OO_F[\Delta]$-modules
$$\OO_K/\PP_K^n\cong\bigoplus_{i=0}^{n-1}\PP_K^i/\PP_K^{i+1}$$
by the same arguments as in the proof of Proposition \ref{chlem}. But this implies in particular that  $\OO_K/\PP_K^n$ is an $\OO_F/\PP_F$-module, which is clearly not the case since $n>e$. However Proposition \ref{chlem} will be useful in computing the class of $\OO_K/\PP_K^n$ in $\Cl(\Z[\Gamma])$ (in the sense of Section \ref{ctsec}) for arbitrary $n\in\mathbb{N}$, thanks to Proposition \ref{mode}.
\end{remark}

\subsection{Switch to a global cyclotomic field}\label{section:back-to-global}
In this subsection we will perform a further reduction, relating the modules $\OO_K/\PP^n_K$, $\PP^{-n}_K/\OO_K$ and $\RR_{K/F}$ to new torsion Galois modules, associated to the ring of integers of a certain cyclotomic field. 

%We will introduce this cyclotomic setting in a greater generality than what is actually needed for the purpose of this paper. On the one hand this more general formulation will give stronger results with no additional effort, on the other hand we feel that it could give a deeper insight on our results which we hope to develop in the near future.  

%In this section we let $\Delta$ be a cyclic group of order $e$. So $\Delta$ may, and at some point later in this subsection will, be the Galois group of the extension $K/F$ as in the previous subsections. 
Recall that $\Delta$ is cyclic of order $e$. As in the Introduction $\mu_e$ denotes the group of $e$th roots of unity in $\overline\Q$ and $\oo$ is the ring of integers of $\Q(\mu_e)$. Let $\chi:\Delta\to \mu_e$ be a character of $\Delta$. For any $\oo$-module $M$, we shall consider the $\oo[\Delta]$-module $M(\chi)$ whose underlying $\oo$-module is $M$ and $\Delta$ acts as $\delta\cdot m=\chi(\delta)m$. We shall be mainly concerned with the case where $M$ is the residue field $\kappa_\pp=\oo/\pp$ of a prime $\pp\subset \oo$ not dividing $e$. 
%Since $e=\#\Delta$ is coprime to $\#\oo/\pp$,
%(this is an easy consequence of \cite[Chapitre VIII, Corollaire 1]{Serre}).
%\leftnote{Citation}

%\begin{proposition}
%Let $\chi:\Delta\to \mu_e$ be a character and let $\pp\subset\oo$ be a prime above $p$, where $p$ is a rational prime not dividing $e$. Then for every $\sigma\in\mathrm{Gal}(\Q(\mu_e)/\Q)$ there is an isomorphism of $\Z[\Delta]$-modules
%$$(\oo/\pp)(\chi)\cong (\oo/\sigma(\pp))(\chi^\sigma).$$
%\end{proposition}
%\begin{proof}
%For any $x\in \oo$, denote by $[x]_{\pp}$ (resp. $[x]_{\sigma(\pp)}$) the class of $x$ in $\oo/\pp$ (resp. in $\oo/\sigma(\pp)$). Define $\sigma_*:\oo/\pp(\chi)\cong \oo/\sigma(\pp)(\chi^\sigma)$ by $\sigma_*[x]_{\pp}=[\sigma(x)]_{\sigma(\pp)}$. Then $\sigma_*$ is clearly bijective and gives the desired isomorphism, since, for any $\delta\in\Delta$,
%$$\sigma_*(\delta\cdot[x]_{\pp})=[\sigma(\chi(\delta)x)]_{\sigma(\pp)}=[\chi^{\sigma}(\delta)\sigma(x)]_{\sigma(\pp)}=\delta\cdot(\sigma_*[x]_{\pp}).$$
%\end{proof}

\subsubsection{} We now explain the relation between the modules introduced above with those of the previous subsections. We come back to the setting of \S\ref{localr}: in particular $K/F$ is a $\Delta$-Galois extension of $p$-adic fields which is totally and tamely ramified of degree $e$.

\begin{lemma}\label{embedding}
If $\chi:\Delta\to \mu_e$ is injective, then there exists an embedding $\iota:\overline\Q\to\overline\Q_p$ such that $\iota\circ\chi=\chi_{_{K/F}}$.
\end{lemma}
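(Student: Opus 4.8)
The plan is to produce the embedding $\iota$ by comparing two concrete descriptions of $\chi_{_{K/F}}$, one global and one local. First I would set $\zeta = \chi_{_{K/F}}(\delta)$ for a fixed generator $\delta$ of $\Delta$; since $\chi_{_{K/F}}:\Delta\to\mu_{e,p}$ is an isomorphism (as recalled just before Remark \ref{gammachi}), $\zeta$ is a primitive $e$th root of unity in $\overline{\Q}_p$, and similarly $\chi(\delta)$ is a primitive $e$th root of unity in $\overline{\Q}$ because $\chi$ is injective and $\#\Delta=e$. The field $\Q(\mu_e)\subseteq\overline{\Q}$ is generated by $\chi(\delta)$, and there is a (unique) field isomorphism $\Q(\chi(\delta))\xrightarrow{\sim}\Q(\zeta)$ sending $\chi(\delta)\mapsto\zeta$, since both are isomorphic to $\Q[x]/\Phi_e(x)$ via the primitive root in question. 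Composing this with the inclusion $\Q(\zeta)\subseteq\overline{\Q}_p$ gives a field embedding $\Q(\mu_e)\to\overline{\Q}_p$ which, by construction, sends $\chi(\delta)$ to $\chi_{_{K/F}}(\delta)$.

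Next I would extend this embedding of $\Q(\mu_e)$ to all of $\overline{\Q}$. This is the standard fact that any embedding of a subfield of $\overline{\Q}$ into an algebraically closed field extends to $\overline{\Q}$: explicitly, one invokes that $\overline{\Q}$ is an algebraic closure of $\Q(\mu_e)$ and that any embedding of a field into an algebraically closed field extends along any algebraic extension (Zorn's lemma / the isomorphism extension theorem). Call the resulting embedding $\iota:\overline{\Q}\to\overline{\Q}_p$. It restricts on $\Q(\mu_e)$ to the map constructed above, so $\iota(\chi(\delta))=\chi_{_{K/F}}(\delta)$.

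Finally I would check $\iota\circ\chi=\chi_{_{K/F}}$ as homomorphisms $\Delta\to\overline{\Q}_p^\times$. Both sides are group homomorphisms on the cyclic group $\Delta=\langle\delta\rangle$, so it suffices that they agree on $\delta$, which is exactly the equality $\iota(\chi(\delta))=\chi_{_{K/F}}(\delta)$ just established; hence they agree on every power of $\delta$, i.e.\ everywhere on $\Delta$. I do not anticipate a serious obstacle here: the only mild point of care is that the map $\Q(\chi(\delta))\to\Q(\zeta)$ is genuinely a field isomorphism rather than merely additive, which follows because both fields are cyclotomic of the same degree $\varphi(e)$ and the chosen generators satisfy the same minimal polynomial $\Phi_e$; everything else is a routine application of the isomorphism extension theorem together with the fact that a homomorphism out of a cyclic group is determined by the image of a generator.
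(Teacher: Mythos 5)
Your proposal is correct and follows essentially the same route as the paper: define the embedding on $\Q(\mu_e)$ by matching $\chi(\delta)$ with $\chi_{_{K/F}}(\delta)$ (the paper phrases this as $\iota(\chi(\delta))=\chi_{_{K/F}}(\delta)$ for all $\delta\in\Delta$, using that both $\chi$ and $\chi_{_{K/F}}$ are isomorphisms onto groups of $e$th roots of unity), then extend to $\overline\Q\to\overline\Q_p$ by the isomorphism extension theorem. Your extra verification via the common minimal polynomial $\Phi_e$ and the reduction to the generator $\delta$ is just a more explicit rendering of the same argument.
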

\begin{proof}
We first define $\iota:\Q(\mu_e)\to\overline\Q_p$ by setting $\iota(\chi(\delta))=\chi_{_{K/F}}(\delta)$ for every $\delta\in\Delta$. Note that this indeed defines an injective field homomorphism, since $\chi$ and $\chi_{_{K/F}}$ are actually isomorphisms. Then we can extend $\iota$ to an embedding $\overline{\Q}\to\overline{\Q}_p$ in infinitely many ways and any of these extensions satisfies the requirements of the lemma.
\end{proof}

We now fix an injective character $\chi:\Delta\to \mu_e$ and an embedding $\iota:\overline\Q\to\overline\Q_p$ such that $\iota\circ\chi=\chi_{_{K/F}}$. Note that $\iota(\oo)\subseteq \OO_F$ (since $\mu_{e,p}\subset F$) and therefore we can view any $\OO_F$-module as a $\oo$-module via $\iota$. 

\begin{proposition}\label{descent}
Let $\pp\subset\oo$ be the prime ideal above $p$ such that $\iota(\pp)\subset \PP_F$. For every natural integer $i$ and every uniformizer $\pi_{\!_K}$ of $K$, we have an isomorphism of $\OO_F[\Delta]$-modules:
$$\PP_K^i/\PP_K^{i+1}\ \cong \kappa_\pp(\chi^i)\otimes_{\kappa_\pp}
\OO_F/\PP_F\enspace,$$
where the right-hand side of the above isomorphism is an $\OO_F/\PP_F$-module via its right factor and a $\Delta$-module via its left factor.  
\end{proposition}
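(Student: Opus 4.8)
The plan is to construct an explicit map between the two $\OO_F[\Delta]$-modules and show it is an isomorphism; the observation that makes this easy is that, after unwinding definitions, both sides are one-dimensional vector spaces over $\OO_F/\PP_F$, so any nonzero $\OO_F/\PP_F$-linear map between them is automatically bijective. First I would record the relevant structures. Since $K/F$ is totally ramified, $\OO_K/\PP_K=\OO_F/\PP_F$, hence $\PP_K^i/\PP_K^{i+1}$ is free of rank one over $\OO_F/\PP_F$ with basis the class $[\pi_{\!_K}^i]$ of $\pi_{\!_K}^i$, and in particular $\OO_F$ acts on it through $\OO_F/\PP_F$. On the other side, the choice of $\pp$ means that reduction of $\iota$ modulo $\pp$ is a well-defined injective ring homomorphism $\kappa_\pp\to\OO_F/\PP_F$, making $\OO_F/\PP_F$ a $\kappa_\pp$-algebra; since $\kappa_\pp(\chi^i)$ is free of rank one over $\kappa_\pp$, the module $\kappa_\pp(\chi^i)\otimes_{\kappa_\pp}\OO_F/\PP_F$ is free of rank one over $\OO_F/\PP_F$.

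Next I would define
$$\varphi\colon\ \kappa_\pp(\chi^i)\otimes_{\kappa_\pp}\OO_F/\PP_F\ \longrightarrow\ \PP_K^i/\PP_K^{i+1},\qquad \overline a\otimes\overline b\ \longmapsto\ [\,\iota(a)\,b\,\pi_{\!_K}^i\,]\qquad(a\in\oo,\ b\in\OO_F).$$
This is well defined on representatives because $\iota(\pp)\subseteq\PP_F$ and $\PP_F\PP_K^i=\PP_K^{e+i}\subseteq\PP_K^{i+1}$, it is additive in each variable, and its $\kappa_\pp$-balancedness is the identity $\iota(ca)=\iota(c)\iota(a)$ for $c\in\oo$, so it descends to the tensor product. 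By construction $\varphi$ is $\OO_F/\PP_F$-linear through the right-hand factor, and $\varphi(\overline 1\otimes\overline 1)=[\pi_{\!_K}^i]\ne 0$, so $\varphi$ is nonzero.

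It remains to verify $\Delta$-equivariance, where $\Delta$ acts on the source through the left-hand factor, that is, by $\chi^i$, and on the target by the Galois action. Using that $\iota(a)$ and $b$ lie in $\OO_F$ and are therefore fixed by $\delta\in\Delta$, together with $\iota\circ\chi=\chi_{_{K/F}}$, one has on the one hand
$$\varphi\big(\delta\cdot(\overline a\otimes\overline b)\big)=\varphi\big(\overline{\chi(\delta)^i a}\otimes\overline b\big)=[\chi_{_{K/F}}(\delta)^i\,\iota(a)\,b\,\pi_{\!_K}^i],$$
and on the other hand $\delta\cdot[\iota(a)b\pi_{\!_K}^i]=[\iota(a)\,b\,\delta(\pi_{\!_K})^i]=[\chi_{_{K/F}}(\delta)^i\,\iota(a)\,b\,\pi_{\!_K}^i]$, where the last equality is the computation (\ref{actionPimod}); this step is valid for an arbitrary uniformizer $\pi_{\!_K}$ because $\delta$ acts trivially on $\OO_K/\PP_K$, so $\delta(\pi_{\!_K})/\pi_{\!_K}\equiv\chi_{_{K/F}}(\delta)\bmod\PP_K$ and hence $\delta(\pi_{\!_K})^i\equiv\chi_{_{K/F}}(\delta)^i\pi_{\!_K}^i\bmod\PP_K^{i+1}$. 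Thus $\varphi$ is a nonzero morphism of $\OO_F/\PP_F[\Delta]$-modules between one-dimensional $\OO_F/\PP_F$-vector spaces, hence an isomorphism, and it is an isomorphism of $\OO_F[\Delta]$-modules since $\OO_F$ acts through $\OO_F/\PP_F$ on both sides. I do not expect a genuine obstacle here; the only points requiring care are keeping the two $\OO_F/\PP_F$-module structures and the two $\Delta$-actions on the tensor product apart, and observing that (\ref{actionPimod}), stated there for the special uniformizer with $\pi_{\!_K}^e\in F$, still holds modulo $\PP_K^{i+1}$ for an arbitrary uniformizer.
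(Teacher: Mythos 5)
Your proof is correct and follows essentially the same route as the paper's: an explicit map between the two rank-one $\OO_F/\PP_F$-modules (the paper writes the inverse map $[\pi_{\!_K}^i x]\mapsto[1]\otimes[x]$), with the identification $\OO_K/\PP_K=\OO_F/\PP_F$, the compatibility $\iota\circ\chi=\chi_{_{K/F}}$, and the key computation (\ref{actionPimod}) that $\Delta$ acts on $\PP_K^i/\PP_K^{i+1}$ by multiplication by $\chi_{_{K/F}}^i$ doing all the work. Your added checks (well-definedness on the tensor product, the one-dimensionality argument, and the remark that the congruence behind (\ref{actionPimod}) persists modulo $\PP_K^{i+1}$ for an arbitrary uniformizer) are just explicit versions of steps the paper treats as immediate.
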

\begin{proof}
We identify $\OO_F/\PP_F$ with $\OO_K/\PP_K$ via the inclusion $\OO_F\subset \OO_K$. Then sending $[\pi_{\raisebox{2pt}{$\scriptscriptstyle{\!_{K}}$}}^ix]\in \PP_K^i/\PP_K^{i+1}$ to $[1]\otimes[x]\in \kappa_\pp(\chi^i)\otimes_{\kappa_\pp}
\OO_F/\PP_F$ clearly gives a $\OO_F$-isomorphism between $\PP_K^i/\PP_K^{i+1}$ and $\kappa_\pp(\chi^i)\otimes_{\kappa_\pp}
\OO_F/\PP_F$. By (\ref{actionPimod}), $\Delta$ acts as multiplication by $\chi_{\!_{K/F}}^i$ on $\PP_K^i/\PP_K^{i+1}$ and therefore $\delta\cdot[\pi_{\raisebox{2pt}{$\scriptscriptstyle{\!_{K}}$}}^ix]$ maps to
$$[1]\otimes[(\chi^i_{\!_{K/F}})(\delta)\,x]=[1]\otimes\iota\chi^i(\delta)[x]=\chi^i(\delta)[1]\otimes[x]=(\delta\cdot[1])\otimes[x]=\delta\cdot([1]\otimes[x])\enspace.$$
\end{proof}

%We record the following easy corollary.
%
%\begin{corollary}\label{descentsums}
%Let $\pp\subset\oo$ be as in Propositon \ref{descent}. If $\{b_i\}_{i=0}^{a}$ is a sequence of natural numbers, we have
%$$\bigoplus_{i=0}^{a}\left(\PP_K^i/\PP_K^{i+1}\right)^{b_i}\cong  \bigoplus_{i=0}^{a}\kappa_\pp(\chi^i)^{b_i}\otimes_{\kappa_\pp}
%\OO_F/\PP_F\enspace,$$
%as $\OO_F[\Delta]$-modules.
%\end{corollary} 
 
We are ready for the main application of the torsion $\oo[\Delta]$-modules we have introduced. They allow us to write the $\Gamma$-modules $\OO_K/\PP_K^{n}$ and $\PP_K^{-n}/\OO_K$ as induced from some $\Delta$-modules, at least if $K/k$ is abelian.  

\begin{proposition}\label{locab}
Let $\pp\subset\oo$ be the prime ideal above $p$ such that $\iota(\pp)\subset \PP_F$. Assume that $K/k$ is abelian and let $0\leq n\leq e$ be an integer. Then $\iota$ induces an inclusion $\kappa_\pp\to\OO_k/\PP_k$ (hence $\OO_k/\PP_k$ is a $\kappa_\pp$-module via $\iota$) and there are isomorphisms of $\OO_k/\PP_k[\Gamma]$-modules
\begin{eqnarray*}
\OO_K/\PP_K^n&\cong& \OO_k/\PP_k\otimes_{\kappa_\pp}\Big(\Z[\Gamma]\otimes_{\Z[\Delta]}\Big(\bigoplus_{i=0}^{n-1}\kappa_\pp(\chi^i)\Big)\Big)\enspace,\\
\PP_K^{-n}/\OO_K&\cong& \OO_k/\PP_k\otimes_{\kappa_\pp}\Big(\Z[\Gamma]\otimes_{\Z[\Delta]}\Big(\bigoplus_{i=1}^{n}\kappa_\pp(\chi^{e-i})\Big)\Big)\enspace,
\end{eqnarray*}
where the right-hand sides of the above isomorphisms are $\OO_k/\PP_k$-modules via their left factors and $\Gamma$-modules via their right factors.
\end{proposition}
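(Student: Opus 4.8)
The plan is to combine the previous results in this subsection with Lemma \ref{redtotramts} (the local-to-semilocal comparison over $\OO_F$) and Proposition \ref{descent} (the descent of each graded piece $\PP_K^i/\PP_K^{i+1}$ to $\kappa_\pp(\chi^i)\otimes_{\kappa_\pp}\OO_F/\PP_F$), using crucially the hypothesis that $K/k$ is abelian. First I would record why $\iota$ induces an inclusion $\kappa_\pp\hookrightarrow\OO_k/\PP_k$: since $K/k$ is abelian, Remark \ref{gammachi} gives $\mu_{e,p}\subset k$, hence $\iota(\oo)\subseteq\OO_k$ and $\iota(\pp)\subseteq\PP_k$ (indeed $\iota(\pp)=\PP_k\cap\iota(\oo)$ because $\pp$ is the prime over $p$ with $\iota(\pp)\subset\PP_F$ and $\PP_F\cap k=\PP_k$), so the residue map identifies $\kappa_\pp$ with a subfield of $\OO_k/\PP_k$; this also means $\OO_F/\PP_F$ is naturally an $\OO_k/\PP_k$-algebra, and $\OO_F/\PP_F\cong\OO_k/\PP_k\otimes_{\kappa_\pp}(\OO_F/\PP_F)'$ is not needed — rather I just need $\OO_F/\PP_F$ as a $\kappa_\pp$-algebra containing $\OO_k/\PP_k$.

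Next, for the module $\OO_K/\PP_K^n$ I would chain the isomorphisms as follows. Proposition \ref{chlem} gives $\OO_K/\PP_K^n\cong\bigoplus_{i=0}^{n-1}\PP_K^i/\PP_K^{i+1}$ as $\OO_F/\PP_F[\Delta]$-modules; Proposition \ref{descent} rewrites each summand as $\kappa_\pp(\chi^i)\otimes_{\kappa_\pp}\OO_F/\PP_F$, so that
\[
\OO_K/\PP_K^n\ \cong\ \Big(\bigoplus_{i=0}^{n-1}\kappa_\pp(\chi^i)\Big)\otimes_{\kappa_\pp}\OO_F/\PP_F
\]
as $\OO_F/\PP_F[\Delta]$-modules, where $\Delta$ acts through the left factor. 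Now apply Lemma \ref{redtotramts}, which identifies $\OO_K/\PP_K^n$ (viewed as an $\OO_F[\Gamma]$-module via $\OO_F\otimes_{\OO_k}\OO_K\cong\Z[\Gamma]\otimes_{\Z[\Delta]}\OO_K$, and here $F/k$ unramified forces $\OO_F\otimes_{\OO_k}\OO_K/\PP_K^n\cong\OO_K/\PP_K^n$ as the $\Gamma$-module we want) with $\Z[\Gamma]\otimes_{\Z[\Delta]}\OO_K/\PP_K^n$. Substituting the graded decomposition and using that $\otimes_{\kappa_\pp}\OO_F/\PP_F$ commutes with $\Z[\Gamma]\otimes_{\Z[\Delta]}(-)$ (since $\OO_F/\PP_F$ carries trivial $\Gamma$-action relative to the induction and flatness is automatic over a field), one gets
\[
\OO_K/\PP_K^n\ \cong\ \Big(\Z[\Gamma]\otimes_{\Z[\Delta]}\bigoplus_{i=0}^{n-1}\kappa_\pp(\chi^i)\Big)\otimes_{\kappa_\pp}\OO_F/\PP_F.
\]
Finally I would descend the coefficient ring from $\OO_F/\PP_F$ to $\OO_k/\PP_k$: because $F/k$ is unramified, $\OO_F/\PP_F$ is the residue field of $F$ and $\Gamma/\Delta=\Gal(F/k)$ acts on it with fixed field $\OO_k/\PP_k$; combining this with the induction from $\Delta$ to $\Gamma$ (normal basis / the identification $\OO_F\otimes_{\OO_k}\OO_K\cong\Z[\Gamma]\otimes_{\Z[\Delta]}\OO_K$ reduced mod $\PP_K$) collapses $(\Z[\Gamma]\otimes_{\Z[\Delta]}V)\otimes_{\kappa_\pp}\OO_F/\PP_F$ to $\OO_k/\PP_k\otimes_{\kappa_\pp}(\Z[\Gamma]\otimes_{\Z[\Delta]}V)$ as $\OO_k/\PP_k[\Gamma]$-modules, for any $\kappa_\pp[\Delta]$-module $V$. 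This gives the first displayed isomorphism; the second follows verbatim, replacing Proposition \ref{chlem}'s first decomposition by its second, $\PP_K^{-n}/\OO_K\cong\bigoplus_{i=1}^n\PP_K^{e-i}/\PP_K^{e-i+1}$, and Lemma \ref{redtotramts}'s first isomorphism by its second.

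The step I expect to be the main obstacle is the last one: making precise the descent of scalars from $\OO_F/\PP_F$ to $\OO_k/\PP_k$ compatibly with the $\Gamma$-action coming from induction. The subtlety is that $\Gamma$ acts simultaneously on the induced module $\Z[\Gamma]\otimes_{\Z[\Delta]}V$ through the left factor and (via $\Gamma/\Delta$) on $\OO_F/\PP_F$, and one must check that these two actions on the tensor product $\big(\Z[\Gamma]\otimes_{\Z[\Delta]}V\big)\otimes_{\kappa_\pp}\OO_F/\PP_F$ — which is what Lemma \ref{redtotramts} literally produces once one unwinds $\Z[\Gamma]\otimes_{\Z[\Delta]}(W\otimes_{\kappa_\pp}\OO_F/\PP_F)$ — recombine correctly to the diagonal $\Gamma$-action matching $\OO_k/\PP_k\otimes_{\kappa_\pp}(\Z[\Gamma]\otimes_{\Z[\Delta]}V)$. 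Concretely, for $\gamma\in\Gamma$, $w\in W=\bigoplus\kappa_\pp(\chi^i)$, $a\in\OO_F/\PP_F$, one has $\gamma\cdot((g\otimes w)\otimes a)=(\gamma g\otimes w)\otimes\bar\gamma(a)$ on the left side, and one needs the isomorphism $(\Z[\Gamma]\otimes_{\Z[\Delta]}W)\otimes_{\kappa_\pp}\OO_F/\PP_F\xrightarrow{\sim}\OO_F/\PP_F\otimes_{\kappa_\pp}(\Z[\Gamma]\otimes_{\Z[\Delta]}W)$ given by $(g\otimes w)\otimes a\mapsto \bar g^{-1}(a)\otimes(g\otimes w)$ (using $\bar g\in\Gamma/\Delta$) to be $\Gamma$-equivariant for the diagonal action on the target; checking this, and then observing that the $\Gamma$-invariants of $\OO_F/\PP_F$ in the left factor are exactly $\OO_k/\PP_k$ so that the whole module is extended from $\OO_k/\PP_k$, is the genuinely load-bearing verification. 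Everything else is an assembly of Propositions \ref{chlem} and \ref{descent} and Lemma \ref{redtotramts} already proved above.
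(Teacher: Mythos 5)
Your assembly of Proposition \ref{chlem}, Proposition \ref{descent} and Lemma \ref{redtotramts}, and the use of $K/k$ abelian (via Remark \ref{gammachi}) to get $\kappa_\pp\hookrightarrow\OO_k/\PP_k$, match the paper's proof; but the final descent step, which you yourself flag as load-bearing, is where the argument breaks. The parenthetical claim that ``$F/k$ unramified forces $\OO_F\otimes_{\OO_k}\OO_K/\PP_K^n\cong\OO_K/\PP_K^n$'' is false: the left side has cardinality $\#(\OO_K/\PP_K^n)^{[F:k]}$ (this count is exactly what the paper uses to prove Lemma \ref{redtotramts}). Likewise your ``collapse'' $(\Z[\Gamma]\otimes_{\Z[\Delta]}V)\otimes_{\kappa_\pp}\OO_F/\PP_F\cong\OO_k/\PP_k\otimes_{\kappa_\pp}(\Z[\Gamma]\otimes_{\Z[\Delta]}V)$ cannot hold: the two sides have $\kappa_\pp$-dimensions differing by the factor $[\OO_F/\PP_F:\OO_k/\PP_k]=[F:k]$. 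Moreover the diagonal action $\gamma\cdot((g\otimes w)\otimes a)=(\gamma g\otimes w)\otimes\bar\gamma(a)$ you want to descend along is not the action these modules actually carry: in Lemma \ref{redtotramts} the $\Gamma$-action on $\Z[\Gamma]\otimes_{\Z[\Delta]}\OO_K/\PP_K^n$ is by left multiplication on the $\Z[\Gamma]$-factor only, and the $\OO_F$-structure lives on the other factor, so $\gamma$ never touches the coefficient $a$. A genuine Galois-descent argument would require putting compatible semilinear structures on \emph{both} sides and checking that $\tilde\psi_{K/k}$ intertwines them -- none of which is done -- and even then it would only identify the $\OO_F/\PP_F$-form, not recover $\OO_K/\PP_K^n$ for free.

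What the chain of isomorphisms actually gives is
$$\OO_F/\PP_F\otimes_{\OO_k/\PP_k}\OO_K/\PP_K^n\ \cong\ \Big(\Z[\Gamma]\otimes_{\Z[\Delta]}\bigoplus_{i=0}^{n-1}\kappa_\pp(\chi^i)\Big)\otimes_{\kappa_\pp}\OO_F/\PP_F\ \cong\ \Big(\OO_k/\PP_k\otimes_{\kappa_\pp}\Big(\Z[\Gamma]\otimes_{\Z[\Delta]}\bigoplus_{i=0}^{n-1}\kappa_\pp(\chi^i)\Big)\Big)\otimes_{\OO_k/\PP_k}\OO_F/\PP_F,$$
where the second step uses $\iota(\oo)\subset\OO_k$. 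Since in both outer modules $\Gamma$ acts only on the non-coefficient factor, each is, as an $\OO_k/\PP_k[\Gamma]$-module of finite length, the $[F:k]$-fold direct sum of the module you want ($\OO_K/\PP_K^n$ on the left, $\OO_k/\PP_k\otimes_{\kappa_\pp}(\Z[\Gamma]\otimes_{\Z[\Delta]}\bigoplus_i\kappa_\pp(\chi^i))$ on the right); the missing ingredient is a cancellation theorem -- the Krull--Schmidt (Noether--Deuring) theorem for finite-length modules -- which is exactly how the paper concludes. With that replacement for your last paragraph, the rest of your outline (including the verbatim variant for $\PP_K^{-n}/\OO_K$) goes through.
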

\begin{proof}
We prove the first isomorphism, the proof of the second is similar. Our proof is inspired by that of \cite[Theorem 1.7]{ChaseTors}. Using Lemma \ref{redtotramts}, Propositions \ref{chlem} and \ref{descent}, we get
\[\begin{split}
\OO_F/\PP_F\otimes_{\OO_k/\PP_k}\OO_K/\PP_K^n&\cong
\Z[\Gamma]\otimes_{\Z[\Delta]}\OO_K/\PP_K^n\\
&\cong
\Z[\Gamma]\otimes_{\Z[\Delta]}\Big(\bigoplus_{i=0}^{n-1}\PP_K^i/\PP_K^{i+1}\Big)\\
&\cong
\Z[\Gamma]\otimes_{\Z[\Delta]}\Big(\Big(\bigoplus_{i=0}^{n-1}\kappa_\pp(\chi^i)\Big)
\otimes_{\kappa_\pp}\OO_F/\PP_F\Big)\enspace.
\end{split}\]
Observe now that, since $K/k$ is abelian, we have $\mu_{e,p}\subset k$ by Remark \ref{gammachi} and hence $\iota(\oo)\subset \OO_k$. In particular we can write the above isomorphism as
$$\OO_F/\PP_F\otimes_{\OO_k/\PP_k}\OO_K/\PP_K^n\cong
\Big(\Big(\Z[\Gamma]\otimes_{\Z[\Delta]}\Big(\bigoplus_{i=0}^{n-1}\kappa_\pp(\chi^i)\Big)\Big)
\otimes_{\kappa_\pp}\OO_k/\PP_k\Big)
\otimes_{\OO_k/\PP_k}\OO_F/\PP_F.$$
Since the above are isomorphisms of $\OO_F/\PP_F[\Gamma]$-modules of finite length, we can apply the Krull-Schmidt theorem to conclude (see \cite[\S 6, Exercise 2]{CR1}).
\end{proof}

\begin{remark}\label{unrfree}
If $K/k$ is unramified, then Proposition \ref{locab} simply asserts that $\OO_K/\PP_K$ and $\PP_K^{-1}/\OO_K$ are free $\OO_k/\PP_k[\Gamma]$-modules (which is well-known and can be proved directly). In fact, if $K/k$ is unramified, then of course it is abelian (even cyclic) and $\Delta$ is trivial. In particular $e=1$, $\chi$ is trivial and $\kappa_\pp=\F_p$. Then, by Proposition \ref{locab}, we get 
$$\begin{array}{rcccl}
\OO_K/\PP_K&\cong& \OO_k/\PP_k\otimes_{\F_p}\Big(\Z[\Gamma]\otimes_{\Z}\F_p\Big)&\cong& \OO_k/\PP_k[\Gamma];\\
\PP_K^{-1}/\OO_K&\cong& \OO_k/\PP_k\otimes_{\kappa_\pp}\Big(\Z[\Gamma]\otimes_{\Z}\F_p\Big)&\cong& \OO_k/\PP_k[\Gamma].
\end{array}$$
as $\OO_k/\PP_k[\Gamma]$-modules.
\end{remark}

In view of Proposition \ref{locab} and since we are mainly interested in $\T_{K/k}$, $\SSS_{K/k}$ and $\RR_{K/k}$ we introduce the following notation for any prime $\pp\subset \oo$ not dividing $e$:
\begin{align}
T_{\chi}(\pp,\oo[\Delta])&=\bigoplus_{i=1}^{e-1}\kappa_\pp(\chi^i)\enspace,\label{tchi}\\
S_{\chi}(\pp,\oo[\Delta])&=\bigoplus_{i=\frac{e+1}{2}}^{e-1}\kappa_\pp(\chi^i)\enspace,\label{schi}\\
%S^*_{\chi}(\pp,\oo[\Delta])&=\bigoplus_{i=\frac{e+1}{2}}^{e-1}(\oo/\pp)(\chi^i) \quad \textrm{if $e$ is odd}\\
R_{\chi}(\pp,\oo[\Delta])&=\bigoplus_{i=1}^{e-1}\kappa_\pp(\chi^i)^{\oplus i}\enspace\label{rchi}.
\end{align}
For $S_\chi(\pp,\oo[\Delta])$ to be defined, $e$ is required to be odd, an assumption that will always be implicit when needed. 

\begin{remark}\label{tswan}
There is a link between $T_{\chi}(\pp,\oo[\Delta])$ and the Swan module $\Sigma_\Delta(p)=p\Z[\Delta]+\Tr_\Delta\Z[\Delta]$, where $\Delta=\langle\delta\rangle$, $\Tr_\Delta=\sum_{i=0}^{e-1}\delta^i\in\Z[\Delta]$ and $p$ is the re\-si\-dual characteristic of $\pp$.
%the module $T_\chi(\pp,\oo[\Delta])$ is related to the Swan module as follows. 
Using the decomposition of $\kappa_\pp[\Delta]$ given by primitive idempotents, one easily gets that
\begin{equation}\label{gralg}
\kappa_\pp[\Delta]\cong\bigoplus_{i=0}^{e-1}\kappa_\pp(\chi^i)
\end{equation} 
as $\oo[\Delta]$-modules. It follows that
$$\kappa_\pp[\Delta]/\langle\Tr_\Delta\rangle=T_\chi(\pp,\oo[\Delta])\enspace.$$
This already shows that $T_\chi(\pp,\oo[\Delta])$ is independent of the injective character $\chi$. Further, since
%Consider now the Swan module $\Sigma_\Delta(p)=p\Z[\Delta]+\Tr_\Delta\Z[\Delta]$, where $\Tr_\Delta=\sum_{i=0}^{e-1}\delta^i\in\Z[\Delta]$. Let 
%Considering the quotient $T(p,\Z[\Delta])$ of $\Z[\Delta]$ by , which equals
$$T(p,\Z[\Delta])=\Z[\Delta]/\Sigma_\Delta(p)=\F_p[\Delta]/\langle\Tr_\Delta\rangle\enspace,$$ 
we get
$$T_\chi(\pp,\oo[\Delta])=T(p,\Z[\Delta])\otimes_{\F_p}\kappa_\pp\enspace.$$
The above isomorphism can be used to eliminate the hypothesis $K/k$ abelian, at least for $\T_{K/k}=\PP_K^{1-e}/\OO_K$. Choosing $\pp$ as in Proposition \ref{locab} and arguing as in the proof of that proposition, we get
\begin{align*}
\OO_F/\PP_F\otimes_{\OO_k/\PP_k}\T_{K/k}
&\cong \Z[\Gamma]\otimes_{\Z[\Delta]}\left(\OO_F/\PP_F\otimes_{\kappa_\pp}T_\chi(\pp,\oo[\Delta])\right)\\
&\cong \Z[\Gamma]\otimes_{\Z[\Delta]}\left(\OO_F/\PP_F\otimes_{\F_p}T(p,\Z[\Delta])\right)\\
&\cong\OO_F/\PP_F\otimes_{\F_p}\left(\Z[\Gamma]\otimes_{\Z[\Delta]}T(p,\Z[\Delta])\right)\\
&\cong\OO_F/\PP_F\otimes_{\OO_k/\PP_k}\OO_k/\PP_k\otimes_{\F_p}\left(\Z[\Gamma]\otimes_{\Z[\Delta]}T(p,\Z[\Delta])\right)\enspace.
\end{align*}
Then, as in the proof of Proposition \ref{locab}, by the Krull-Schmidt theorem we get
$$\T_{K/k}\cong \OO_k/\PP_k\otimes_{\F_p}\left(\Z[\Gamma]\otimes_{\Z[\Delta]}T(p,\Z[\Delta])\right)$$
as $\OO_k/\PP_k[\Gamma]$-modules.
\end{remark}

\subsubsection{}\label{reductionproof}
We now collect the results obtained so far to complete our reduction step. We recall the setting
described at the beginning of this section. Let $N/E$ be a tame $G$-Galois extension of number fields. For any prime $P$ of $\OO_E$ we fix a prime $\PP\subseteq \OO_N$ dividing $P$. Let $D_{\PP}$ (resp. $I_{\PP}$) denote the decomposition
group (resp. the inertia subgroup) of $\PP$ in $G$. Then the cardinality of $I_\PP$ only depends on $P$ and we denote it by $e_P$. Using Lemma \ref{embedding}, we fix an injective character $\chi_{_{\PP}}:I_{\PP}\to \overline{\Q}^{\times}$ and an embedding $\iota_{_{\PP}}:\overline{\Q}\to\overline\Q_p$ (where $p$ is the rational prime below $\PP$ and $\overline\Q_p$ is an algebraic closure of $\Q_p$ containing the completion $N_{\PP}$ of $N$ at $\PP$), such that $\iota_{_{\PP}}\circ\chi_{_{\PP}}=\chi_{_{N_{\PP}/F_{\PP}}}$ where $F_{\PP}=N_{\PP}^{I_{\PP}}$. These choices determine a prime ideal $\pp$ in the ring of integers $\oo_{e_{P}}$ of $\Q(\mu_{e_{P}})\subset\overline{\Q}$ (where $e_{P}=\#I_{\PP}$) satisfying $\iota_{_{\PP}}(\pp)\subseteq \PP\OO_{N_{\PP}}$. Moreover $\OO_{F_\PP}$ is an $\oo_{e_{P}}$-module via $\iota_{_{\PP}}$. Recall that $\Ram(N/E)$ is the set of primes of $E$ that ramify in $N/E$.

%\begin{theorem}\label{reduction}
%We have isomorphisms of $\Z[G]$-modules 
%\begin{eqnarray*}
%\RR_{N/E}&\cong& \bigoplus_{P\in \Ram(N/E)}\Big(\Z[G]\otimes_{\Z[I_{\PP}]}R_{\chi_{_\PP}}(\pp,\oo_{e_{P}}[I_{\PP}])\Big)^{[G:D_{\PP}][\OO_N/\PP:\oo_{e_P}/\pp]}\\
%\T_{N/E}&\cong& \bigoplus_{P\in \Ram(N/E)}\Big(\Z[G]\otimes_{\Z[I_{\PP}]}T(p,\Z[I_{\PP}])\Big)^{[\OO_E/P:\F_p]}.
%\end{eqnarray*}
%Assume moreover that $N/E$ is locally abelian, then
%\begin{eqnarray*}
%\SSS_{N/E}&\cong& \bigoplus_{P\in \Ram(N/E)}\Big(\Z[G]\otimes_{\Z[I_{\PP}]}S_{\chi_{_\PP}}(\pp,\oo_{e_{P}}[I_{\PP}])\Big)^{[\OO_{E}/P:\oo_{e_{P}}/\pp]}.
%\end{eqnarray*}
%\end{theorem} 
\begin{proof}[Proof of Theorem \ref{reduction}]
Using Proposition \ref{redtoloc} and Remark \ref{tswan}: 
\begin{eqnarray*}
\T_{N/E}&\cong &\bigoplus_{P\in \Ram(N/E)} \Z[G]\otimes_{\Z[D_{\PP}]}\T_{N_{\PP}/E_{P}}\\
&\cong&  \bigoplus_{P\in \Ram(N/E)}\Z[G]\otimes_{\Z[D_{\PP}]}\Big( \Z[D_{\PP}]\otimes_{\Z[I_{\PP}]}T(p,\Z[I_{\PP}])\Big)^{\oplus [\OO_E/P:\F_p]}
\end{eqnarray*}
as $\Z[G]$-modules.

By Propositions \ref{redtolocr}, \ref{redtotramr}, \ref{chlemr} and \ref{descent} and with the choices of $\chi_P$ and $\iota_\PP$ described above, we have
isomorphisms of $\Z[G]$-modules:
\begin{eqnarray*}
\RR_{N/E}&\cong &\bigoplus_{P\in \Ram(N/E)} \left(\Z[G]\otimes_{\Z[D_{\PP}]}\RR_{N_{\PP}/E_{P}}\right)^{\oplus[G:D_{\PP}]}\\
&\cong&  \bigoplus_{P\in \Ram(N/E)}\Big(\Z[G]\otimes_{\Z[D_{\PP}]}\Big( \Z[D_{\PP}]\otimes_{\Z[I_{\PP}]}R_{\chi_{_\PP}}(\pp,\oo_{e_{P}}[I_{\PP}])\Big)\Big)^{\oplus [G:D_{\PP}][\OO_N/\PP:\oo_{e_P}/\pp]}\enspace.
\end{eqnarray*}
Suppose now that $N/E$ is locally abelian. Then $E_P$ contains the $e_P$th roots of unity in $\overline{\Q}_p$ (as explained in Remark \ref{gammachi}) and therefore $\iota_\PP$ induces an inclusion $\oo_{e_P}/\pp\to\OO_{E_P}/P\OO_{E_P}\cong \OO_E/P$.  Moreover using Propositions \ref{redtoloc} and \ref{locab} we have isomorphisms of $\Z[G]$-modules:
\begin{eqnarray*}
\SSS_{N/E}&\cong &\bigoplus_{P\in \Ram(N/E)} \Z[G]\otimes_{\Z[D_{\PP}]}\SSS_{N_{\PP}/E_{P}}\\
&\cong&  \bigoplus_{P\in \Ram(N/E)}\Z[G]\otimes_{\Z[D_{\PP}]}\Big( \Z[D_{\PP}]\otimes_{\Z[I_{\PP}]}S_{\chi_{_\PP}}(\pp,\oo_{e_{P}}[I_{\PP}])\Big)^{\oplus [\OO_{E}/P:\oo_{e_{P}}/\pp]}\enspace.
\end{eqnarray*}
\end{proof}

For torsion modules arising from general $G$-stable ideals we no more have isomorphisms as in Theorem \ref{reduction} but still equalities of classes in $\Cl(\Z[G])$, in the sense we shall now explain. 

\subsection{Classes of cohomologically trivial modules}\label{ctsec}
\subsubsection{} In this subsection $G$ is an arbitrary finite group (we do not need it to be the Galois group of a particular extension of number fields). We will interpret the results we have obtained so far in terms of classes in the locally free class group. Recall that a $G$-module $M$ is $G$-cohomologically trivial if, for every $i\in\mathbb{Z}$ and every subgroup $G'<G$, the Tate cohomology group $\hat H^{i}(G',M)$ is trivial. If $A$ is the ring of integers of a number field, let $\Cl(A[G])$ be the locally free class group of $A[G]$ (see \cite[I, \S2]{Frohlich-Alg_numb} for locally free modules and the locally free class group).

\begin{lemma}\label{cohomtriv}
Let $A$ be the ring of integers of a number field. Let $M$ be a finitely generated $A[G]$-module.
\begin{enumerate} 
\item[(i)] $M$ is $A[G]$-projective if and only if it is $A[G]$-locally free. 
\item[(ii)] $M$ is $G$-cohomologically trivial if and only if there exists an $A[G]$-resolution $0\to P_1\to P_0\to M\to 0$ of $M$ with $P_0$ and $P_1$ locally free. In this case the class $(P_0)^{-1}(P_1)$ in $\Cl(A[G])$ is independent of the chosen locally free resolution of $M$ and will be denoted by $(M)_{A[G]}$.
\item[(iii)] If $G$ is a subgroup of a finite group $\tilde{G}$ and $M$ is $G$-cohomologically trivial, then $M\otimes_{A[G]}A[\tilde{G}]$ is $\tilde{G}$-cohomologically trivial and $$(M\otimes_{A[G]}A[\tilde{G}])_{A[\tilde{G}]}=\mathrm{Ind}_{G}^{\tilde{G}}\big((M)_{A[G]}\big)$$ 
where $\mathrm{Ind}_{G}^{\tilde{G}}:\Cl(A[G])\to \Cl(A[\tilde{G}])$ is the map which sends the class $(P)_{A[G]}\in \Cl(A[G])$ of a locally free $A[G]$-module $P$ to the class $(P\otimes_{A[G]}A[\tilde{G}])\in \Cl(A[\tilde{G}])$.  
\end{enumerate}
\end{lemma}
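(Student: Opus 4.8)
\textbf{Proof plan for Lemma \ref{cohomtriv}.}

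The plan is to treat the three parts in order, using standard facts about orders over Dedekind domains together with the theory of cohomologically trivial modules. For part (i), I would argue locally: a finitely generated $A[G]$-module is projective if and only if it is projective after localizing at every prime of $A$, and over the (semilocal, hence with $A_{\mathfrak p}$ a discrete valuation ring) ring $A_{\mathfrak p}[G]$ finitely generated projective modules that have constant rank are free; since $A$ has trivial Picard group locally this gives local freeness, and conversely local freeness trivially implies local projectivity hence projectivity. The one subtlety is to make sure the ranks match up so that ``locally free'' means of a fixed rank — I would invoke that $M$ projective over $A[G]$ has well-defined rank at each prime of the centre, and use that $\Spec A$ is connected, or simply take this as the definition of locally free as in \cite[I, \S2]{Frohlich-Alg_numb}.

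For part (ii), the forward direction is the heart of the matter. Given $M$ finitely generated and $G$-cohomologically trivial, I would choose any surjection $P_0\twoheadrightarrow M$ with $P_0$ a finitely generated free (hence locally free) $A[G]$-module, and set $P_1=\ker(P_0\to M)$; $P_1$ is finitely generated since $A[G]$ is Noetherian, and $A$-torsion-free since it is a submodule of $A$-free $P_0$. Now the key input is Nakayama/Rim-type cohomological criterion for projectivity: a finitely generated $A[G]$-module that is $A$-torsion-free (equivalently $A$-projective, as $A$ is Dedekind) and $G$-cohomologically trivial is $A[G]$-projective — this is the classical theorem (Nakayama, Rim, see also \cite[\S3]{ChaseTors} or standard references). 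From the short exact sequence $0\to P_1\to P_0\to M\to 0$ with $P_0$ $A$-free and $M$ $G$-cohomologically trivial, a dimension-shift in Tate cohomology shows $P_1$ is $G$-cohomologically trivial, and it is $A$-torsion-free, so it is $A[G]$-projective, hence locally free by part (i). Conversely, if such a resolution exists, $M$ is $G$-cohomologically trivial because $P_0,P_1$ are (projective modules over $A[G]$ are cohomologically trivial, again by dimension shifting from the free case) and cohomological triviality passes to a cokernel of an injection between cohomologically trivial modules via the long exact sequence. For well-definedness of $(M)_{A[G]}:=(P_0)^{-1}(P_1)$, I would use Schanuel's lemma: given two locally free resolutions $0\to P_1\to P_0\to M\to 0$ and $0\to Q_1\to Q_0\to M\to 0$, Schanuel gives $P_0\oplus Q_1\cong Q_0\oplus P_1$, whence $(P_0)(Q_1)=(Q_0)(P_1)$ in $\Cl(A[G])$, i.e. $(P_0)^{-1}(P_1)=(Q_0)^{-1}(Q_1)$.

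For part (iii), I would tensor a chosen locally free resolution $0\to P_1\to P_0\to M\to 0$ with $A[\tilde G]$ over $A[G]$. Since $A[\tilde G]$ is free as a right $A[G]$-module (of rank $[\tilde G:G]$), the functor $-\otimes_{A[G]}A[\tilde G]$ is exact, so $0\to P_1\otimes_{A[G]}A[\tilde G]\to P_0\otimes_{A[G]}A[\tilde G]\to M\otimes_{A[G]}A[\tilde G]\to 0$ is a short exact sequence; the outer terms are locally free $A[\tilde G]$-modules (induction of locally free is locally free), so by part (ii) the middle term $M\otimes_{A[G]}A[\tilde G]$ is $\tilde G$-cohomologically trivial and
\begin{equation*}
(M\otimes_{A[G]}A[\tilde G])_{A[\tilde G]}=(P_0\otimes_{A[G]}A[\tilde G])^{-1}(P_1\otimes_{A[G]}A[\tilde G])=\mathrm{Ind}_G^{\tilde G}\big((P_0)^{-1}(P_1)\big)=\mathrm{Ind}_G^{\tilde G}\big((M)_{A[G]}\big),
\end{equation*}
using that $\mathrm{Ind}_G^{\tilde G}$ is by definition the homomorphism $\Cl(A[G])\to\Cl(A[\tilde G])$ induced by $P\mapsto P\otimes_{A[G]}A[\tilde G]$ on locally free modules. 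I expect the main obstacle to be part (ii): specifically, invoking cleanly the theorem that an $A$-torsion-free, $G$-cohomologically trivial, finitely generated $A[G]$-module is projective — everything else is bookkeeping with exact sequences, Schanuel's lemma, and flatness of group-ring extensions. One should also be a little careful that ``cohomologically trivial'' here means all Tate groups vanish for all subgroups, which is exactly what is needed to apply the criterion subgroup-by-subgroup (or, if one prefers, prime-by-prime after localizing, reducing to the case of a finite group whose order may not be invertible).
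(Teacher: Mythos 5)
Your handling of (ii) and (iii) is essentially the paper's argument: the forward direction of (ii) rests on the classical Nakayama--Rim criterion (a finitely generated, $A$-torsion-free, $G$-cohomologically trivial module is $A[G]$-projective), which the paper simply quotes (via Chinburg--de Rham, Proposition 4.1), well-definedness comes from Schanuel's lemma, and (iii) is proved exactly as you do, by tensoring a projective resolution with $A[\tilde G]$, which is free over $A[G]$, and using multiplicativity of classes.

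The genuine gap is in your sketch of (i). The step ``over the semilocal ring $A_{\mathfrak p}[G]$ finitely generated projective modules of constant rank are free'' is false: group rings are not local, and already for a prime $\mathfrak q$ of $A$ not dividing $\#G$ the ring $A_{\mathfrak q}[G]$ decomposes; for instance $\Z_{(q)}[C_2]\cong\Z_{(q)}\times\Z_{(q)}$ for odd $q$, and either factor is a finitely generated projective module of constant $\Z_{(q)}$-rank that is not free. The real content of ``projective $\Rightarrow$ locally free'' is Swan's theorem, whose essential point is that for a finitely generated projective $A[G]$-module $P$ the rational span $K\otimes_A P$ (with $K$ the fraction field of $A$) is a \emph{free} $K[G]$-module; this is proved by Swan's induction-theoretic argument and cannot be replaced by a ``rank is locally constant on a connected spectrum'' argument, because the relevant invariant of $P$ over the order $A[G]$ is the class of $K\otimes_A P$ in $G_0(K[G])$, i.e.\ a virtual character, not a single integer, and the nontrivial assertion is precisely that this class is a multiple of the regular representation. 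Once that is granted, the local step is fine (a projective $A_{\mathfrak p}[G]$-module with free rational span is free, by Krull--Schmidt/idempotent lifting), and your converse direction (locally free $\Rightarrow$ projective) is harmless. So for (i) you should either quote Swan's theorem, as the paper does, or actually supply Swan's argument; ``taking it as the definition'' is not an option, since local freeness has a fixed meaning here. With that repair, the rest of your plan goes through.
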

\begin{proof}
For (i) and the first assertion of (ii) see for example \cite[Proposition 4.1]{Chinburg-deRham} ((i) is a classical result of Swan). The last assertion of (ii) follows immediately from Schanuel's lemma. 

To prove (iii), suppose that $M$ is $G$-cohomologically trivial. Then, by (i) and (ii), there exists exact sequence of $A[G]$-modules 
$$0\to P_1\to P_0\to M\to 0$$
with $P_0,P_1$ projective. Observe that $A[\tilde{G}]$ is a free $A[G]$-module. In particular the functor $\--\otimes_{A[G]}A[\tilde{G}]$ from the category of $A[G]$-modules to that of $A[\tilde{G}]$-modules is exact and we get an exact sequence of $A[\tilde{G}]$-modules
\begin{equation}\label{estens}
0\to P_1\otimes_{A[G]}A[\tilde{G}]\to P_0\otimes_{A[G]}A[\tilde{G}]\to M\otimes_{A[G]}A[\tilde{G}]\to 0.
\end{equation}
Note that, for $i=0,1$, $P_i\otimes_{A[G]}A[\tilde{G}]$ is $A[\tilde{G}]$-projective since $P_i$ is $A[G]$-projective (this can be easily seen using the characterization of projective modules as direct summand of free modules). In particular, the exact sequence (\ref{estens}) implies that $M\otimes_{A[G]}A[\tilde{G}]$ is $\tilde{G}$-cohomologically trivial by (i) and (ii). Moreover we have  
\begin{eqnarray*}
(M\otimes_{A[G]}A[\tilde{G}])_{A[\tilde{G}]}&=& (P_0\otimes_{A[G]}A[\tilde{G}])^{-1}(P_1\otimes_{A[G]}A[\tilde{G}])\\
&=&\mathrm{Ind}_G^{\tilde{G}}\big((P_0)_{A[G]}\big)^{-1}\mathrm{Ind}_G^{\tilde{G}}\big((P_1)_{A[G]}\big)\\
&=&\mathrm{Ind}_G^{\tilde{G}}\big((P_0)_{A[G]}^{-1}(P_1)_{A[G]}\big)\\
&=&\mathrm{Ind}_G^{\tilde{G}} \big((M)_{A[G]}\big)
\end{eqnarray*}
in $\Cl(A[\tilde{G}])$.
\end{proof}

In this section we will use the above lemma when $A=\Z$ but later we will also need the case where $A$ is the ring of integers of a cyclotomic field. If $M$ is a finitely generated $\Z[G]$-module which is cohomologically trivial, we will denote $(M)_{\Z[G]}\in\Cl(\Z[G])$ simply by $(M)$.

\subsubsection{}
We first put ourselves in the local setting of \S \ref{localr}. In particular $K/k$ is a tame $\Gamma$-Galois extension of $p$-adic fields with inertia subgroup $\Delta$ and $F=K^{\Delta}$. Note that for every $a,b\in \mathbb{Z}$ with $b\geq a$, the $\Z[\Gamma]$-module $\PP_K^a/\PP_K^b$ is $\Gamma$-co\-ho\-mo\-logically trivial. This follows immediately from the fact that $\PP_K^a$ and $\PP_K^b$ are $\Gamma$-co\-ho\-mo\-logically trivial (see \cite[Theorem 2]{Ullom-cohomology}).

\begin{proposition}\label{mode}
For every $m, n\in\mathbb{N}$ such that $n \equiv m \!\mod{e}$, we have 
\begin{equation}\label{morceaux}
(\OO_K/\PP_K^n)=(\OO_K/\PP_K^m)\in \Cl(\Z[\Gamma])\enspace.
\end{equation}
\begin{proof}
Without loss of generality we may assume that $n\geq m$ and write $n=m+ae$ for some $a\in\mathbb{N}$. It is clear that
$$(\OO_K/\PP_K^n)=(\OO_K/\PP_K^m)(\PP_K^m/\PP_K^n)=(\OO_K/\PP_K^m)\prod_{j=1}^{a}(\PP_K^{m+(j-1)e}/\PP_K^{m+je})$$
in $\Cl(\Z[\Gamma])$. Thus we only have to prove that, for every $b\in\mathbb{N}$, $(\PP_K^b/\PP_K^{b+e})=0$ in $\Cl(\Z[\Gamma])$. Arguing as in the proof of Proposition \ref{chlemr}, we observe that $\PP_K^b/\PP_K^{b+e}$ is an $\OO_F/\PP_F[\Delta]$-module and, since $\OO_F/\PP_F[\Delta]$ is semisimple, we have an isomorphism of $\OO_F/\PP_F[\Delta]$-modules
$$\PP_K^b/\PP_K^{b+e}\cong \bigoplus_{i=0}^{e-1}\PP_K^{b+i}/\PP_K^{b+i+1}\enspace.$$
As remarked in the proof of Proposition \ref{chlemr}, $\PP_K^{b+i}/\PP_K^{b+i+1}$ is an $\OO_F/\PP_F$-vector space of dimension $1$ on which $\Delta$ acts by multiplication by $\chi_{_{\!K/F}}^{b+i}$. Thus using the decomposition of $\OO_F/\PP_F[\Delta]$ given by primitive idempotents we get
\begin{equation}\label{idempo}
\bigoplus_{i=0}^{e-1}\PP_K^{b+i}/\PP_K^{b+i+1}\cong\OO_F/\PP_F[\Delta]\cong \OO_F/\PP_F\otimes_{\F_p}\F_p[\Delta]
\end{equation}
as $\OO_F/\PP_F[\Delta]$-modules.
Now it easily follows from Lemma \ref{redtotramts} that 
$$\OO_F/\PP_F\otimes_{\OO_k/\PP_k}\PP_K^b/\PP_K^{b+e}\cong \Z[\Gamma]\otimes_{\Z[\Delta]}\PP_K^b/\PP_K^{b+e}$$
as $\OO_F/\PP_F[\Gamma]$-modules.
Therefore using (\ref{idempo}) we get isomorphisms
\begin{align*}
\OO_F/\PP_F\otimes_{\OO_k/\PP_k}\PP_K^b/\PP_K^{b+e}&\cong \Z[\Gamma]\otimes_{\Z[\Delta]}\left(\OO_F/\PP_F\otimes_{\F_p}\F_p[\Delta]\right)\\
&\cong\OO_F/\PP_F\otimes_{\F_p}\F_p[\Gamma]\\
&\cong\OO_F/\PP_F\otimes_{\OO_k/\PP_k}\left(\OO_k/\PP_k\otimes_{\F_p}\F_p[\Gamma]\right)\enspace
\end{align*}
of $\OO_F/\PP_F[\Gamma]$-modules. As in the proof of Proposition \ref{locab}, we can apply the Krull-Schmidt theorem and deduce that $\PP_K^b/\PP_K^{b+e}$ and $\OO_k/\PP_k\otimes_{\F_p}\F_p[\Gamma]$ are isomorphic $\OO_k[\Gamma]$-modules (and hence in particular as $\Z[\Gamma]$-modules). Now $\F_p[\Gamma]$ is a cohomologically trivial $\Gamma$-module whose class in $\Cl(\Z[\Gamma])$ is trivial, thanks to the $\Z[G]$-free re\-so\-lution  
$$0\to p\Z[G]\to \Z[G]\to \F_p[G]\to 0.$$ 
We have thus proved what we wanted. 
\end{proof}
\end{proposition}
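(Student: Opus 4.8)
The plan is to reduce the statement to the single assertion that $(\PP_K^b/\PP_K^{b+e})=1$ in $\Cl(\Z[\Gamma])$ for every $b\in\mathbb{N}$, and then to prove this by recognising $\PP_K^b/\PP_K^{b+e}$ as (a scalar extension of) a free $\Z[\Gamma]$-module. First I would normalise by assuming $n\ge m$ and writing $n=m+ae$. Recall that $\PP_K^a/\PP_K^b$ is $\Gamma$-cohomologically trivial whenever $b\ge a$ (noted just before the statement, using \cite[Theorem 2]{Ullom-cohomology}), so all the quotients below define classes. The short exact sequence $0\to\PP_K^m/\PP_K^n\to\OO_K/\PP_K^n\to\OO_K/\PP_K^m\to 0$ gives $(\OO_K/\PP_K^n)=(\OO_K/\PP_K^m)(\PP_K^m/\PP_K^n)$, and filtering $\PP_K^m/\PP_K^n$ by the ideals $\PP_K^{m+je}$ gives $(\PP_K^m/\PP_K^n)=\prod_{j=1}^a(\PP_K^{m+(j-1)e}/\PP_K^{m+je})$. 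Hence it suffices to treat one factor $M_b:=\PP_K^b/\PP_K^{b+e}$.

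Next I would analyse $M_b$ along the lines of Proposition \ref{chlemr}. Since $\PP_F\OO_K=\PP_K^e$, the ideal $\PP_F$ annihilates $M_b$, so $M_b$ is a module over $\OO_F/\PP_F[\Delta]$, which is semisimple because $e=\#\Delta$ is prime to the residual characteristic $p$. Filtering $M_b$ by the submodules $\PP_K^{b+i}/\PP_K^{b+e}$ and using semisimplicity yields $M_b\cong\bigoplus_{i=0}^{e-1}\PP_K^{b+i}/\PP_K^{b+i+1}$ as $\OO_F/\PP_F[\Delta]$-modules. By (\ref{actionPimod}) each $\PP_K^{b+i}/\PP_K^{b+i+1}$ is one-dimensional over $\OO_F/\PP_F$ with $\Delta$ acting through $\chi_{_{K/F}}^{b+i}$; as $\chi_{_{K/F}}$ is an isomorphism onto $\mu_{e,p}$, these $e$ characters exhaust the characters of $\Delta$, so the idempotent decomposition of the group algebra gives $M_b\cong\OO_F/\PP_F[\Delta]\cong\OO_F/\PP_F\otimes_{\F_p}\F_p[\Delta]$ as $\OO_F/\PP_F[\Delta]$-modules.

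Finally I would transport this through Lemma \ref{redtotramts}, which yields $\OO_F/\PP_F\otimes_{\OO_k/\PP_k}M_b\cong\Z[\Gamma]\otimes_{\Z[\Delta]}M_b$; combined with the previous step and the natural isomorphism $\Z[\Gamma]\otimes_{\Z[\Delta]}(\OO_F/\PP_F\otimes_{\F_p}\F_p[\Delta])\cong\OO_F/\PP_F\otimes_{\F_p}\F_p[\Gamma]$, the right-hand side becomes $\OO_F/\PP_F\otimes_{\OO_k/\PP_k}\big(\OO_k/\PP_k\otimes_{\F_p}\F_p[\Gamma]\big)$. All modules in sight have finite length over $\OO_F/\PP_F[\Gamma]$, so the Krull--Schmidt theorem (as in the proof of Proposition \ref{locab}) lets me cancel the scalar extension along the unramified extension $\OO_k/\PP_k\to\OO_F/\PP_F$ and conclude $M_b\cong\OO_k/\PP_k\otimes_{\F_p}\F_p[\Gamma]$ as $\Z[\Gamma]$-modules. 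Since $(\F_p[\Gamma])=1$ via the free resolution $0\to p\Z[\Gamma]\to\Z[\Gamma]\to\F_p[\Gamma]\to 0$ (both flanking modules being free of rank $\#\Gamma$), we obtain $(M_b)=(\F_p[\Gamma])^{[\OO_k/\PP_k:\F_p]}=1$, which proves the proposition.

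The step I expect to be the main obstacle is this last Krull--Schmidt descent: Lemma \ref{redtotramts} only determines $M_b$ after tensoring up to $\OO_F$, so one genuinely needs the comparison module on the other side to be recognisably defined over $\OO_k$ (here coming from $\Z[\Gamma]$ via $\F_p[\Gamma]$), so that uniqueness of indecomposable decompositions over $\OO_F/\PP_F[\Gamma]$ propagates the identification down to a $\Z[\Gamma]$-isomorphism. Verifying that the intermediate isomorphisms are $\Gamma$-equivariant rather than merely $\OO_F/\PP_F$-linear, and checking the finite-length hypothesis, is where the care lies; this is precisely the mechanism borrowed from Chase's proof of \cite[Theorem 1.7]{ChaseTors}.
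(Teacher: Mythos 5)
Your proof is correct and follows essentially the same route as the paper's: the same reduction via the filtration to $(\PP_K^b/\PP_K^{b+e})$, the same identification of this quotient with $\OO_F/\PP_F[\Delta]$ via semisimplicity and the characters $\chi_{_{K/F}}^{b+i}$, the same descent through Lemma \ref{redtotramts} and Krull--Schmidt, and the same conclusion from the freeness of the resolution of $\F_p[\Gamma]$. There is nothing to add.
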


\subsubsection{}\label{ctglob}
We now come back to the global setting of \S \ref{reductionproof}. Thus $N/E$ is a tame $G$-Galois extension of number fields. Note that every $G$-stable fractional ideal of $N$ is $\Z[G]$-projective (see \cite[Proposition 1.3]{Ullom-normal_bases}) hence locally free by Lemma \ref{cohomtriv} (i). In particular, if $\I$ is a $G$-stable ideal of $\OO_N$, then $\OO_N/\I$ and $\I^{-1}/\OO_N$ are $G$-cohomologically trivial (by Lemma \ref{cohomtriv} (ii)). Therefore we can consider the classes $(\OO_N/\I)$ and $(\I^{-1}/\OO_N)$ in $\Cl(\Z[G])$ and in fact 
$$(\OO_N/\I)=(\I)(\OO_N)^{-1}\quad\textrm{and}\quad(\I^{-1}/\OO_N)=(\OO_N)(\I^{-1})^{-1}.$$  
Similarly, $\RR_{N/E}$ defines a class in $\Cl(\Z[G])$. In fact $\Map(G,\OO_N)$ is $\OO_N[G]$-free of rank $1$ (hence $\Z[G]$-free of rank $[N:\Q]$) and $\OO_N\otimes_{\OO_E}\OO_N$ is $\OO_N[G]$-locally free (since $\OO_N$ is $\OO_E[G]$-locally free by Noether's theorem). Thus, in this case,
$$(\RR_{N/E})=(\OO_N\otimes_{\OO_E}\OO_N)\in \Cl(\Z[G])\enspace.$$

Note also that, for every prime $P$ of $\OO_E$ and any integer $i$, the $I_{\PP}$-module $(\oo_{e_{\PP}}/\pp)(\chi_{\PP}^i)$ is cohomologically trivial. In fact, for every $i\in\mathbb{Z}$ and every subgroup $I<I_\PP$, $\hat H^{i}(I,(\oo_{e_{\PP}}/\pp)(\chi_{\PP}^i))$ is annihilated by $e_P$ (see \cite[Chapitre VIII, Corollaire 1 to Proposition 3]{Serre}) and $p$ (since $p$ annihilates $(\oo_{e_{\PP}}/\pp)(\chi_{\PP}^i)$). Since $N/E$ is tame, we have $(p,e_P)=1$ and hence $\hat H^{i}(I,(\oo_{e_{\PP}}/\pp)(\chi_{\PP}^i))=0$. Thanks to Lemma \ref{cohomtriv} (ii) this allows us to consider the class $\big((\oo_{e_{\PP}}/\pp)(\chi_{\PP}^i)\big)\in\Cl(\Z[I_{\PP}])$.

\begin{proposition}\label{preburns}
Let $\I$ be a $G$-stable ideal of $\OO_N$ and assume that $N/E$ is locally abelian at $P\in \Div(\I)$. For every prime $P\in\Div(\I)$, fix a prime $\PP$ of $\OO_N$ dividing $P$ and let $n_P$ be the valuation of $\I$ at $\PP$ (this indeed depends only on $P$). For every choice of injective characters $\chi_\PP:I_\PP\to \overline{\Q}^\times$ for every prime $\PP$ as above, one can find primes $\pp\subset \oo_{e_P}$ and injections $\oo_{e_P}/\pp\to \OO_N/\PP$ such that we have equalities
$$(\OO_N/\I)=\prod_{P\in \Div(\I)}\prod_{i=0}^{m_P-1}\mathrm{Ind}_{I_\PP}^{G}\left((\oo_{e_P}/\pp)(\chi_{_\PP}^{i})\right)^{[\OO_{E}/P:\oo_{e_{P}}/\pp]}$$
$$(\I^{-1}/\OO_N)=\prod_{P\in \Div(\I)}\prod_{i=1}^{m_P}\mathrm{Ind}_{I_\PP}^{G}\left((\oo_{e_P}/\pp)(\chi_{_\PP}^{e-i})\right)^{[\OO_{E}/P:\oo_{e_{P}}/\pp]}$$
in $\Cl(\Z[G])$, where $m_P$ is the smallest nonnegative integer congruent to $n_P$ modulo $e_P$. In particular, if $\I$ is coprime with the different of $N/E$, then $(\OO_N/\I)=(\I^{-1}/\OO_N)=1$.
\end{proposition}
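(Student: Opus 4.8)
The plan is to bootstrap from the local computations already established. First I would apply Proposition~\ref{redtoloc} to get, as $\Z[G]$-modules,
$$\OO_N/\I \cong \bigoplus_{P\in\Div(\I)}\Z[G]\otimes_{\Z[D_\PP]}\big(\OO_{N_\PP}/\PP^{n_P}\OO_{N_\PP}\big),\qquad \I^{-1}/\OO_N \cong \bigoplus_{P\in\Div(\I)}\Z[G]\otimes_{\Z[D_\PP]}\big(\PP^{-n_P}\OO_{N_\PP}/\OO_{N_\PP}\big).$$
Writing $K=N_\PP$, all modules in sight are cohomologically trivial (the global ones over $G$, and $\OO_K/\PP_K^{n_P}$, $\PP_K^{-n_P}/\OO_K$ over $\Gamma=D_\PP$, as recalled in \S\ref{ctglob}). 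Hence by Lemma~\ref{cohomtriv}(iii), together with the fact that the class of a finite direct sum of cohomologically trivial modules is the product of their classes, these isomorphisms translate into
$$(\OO_N/\I)=\prod_{P\in\Div(\I)}\mathrm{Ind}_{D_\PP}^{G}\big((\OO_K/\PP_K^{n_P})_{\Z[D_\PP]}\big),\qquad (\I^{-1}/\OO_N)=\prod_{P\in\Div(\I)}\mathrm{Ind}_{D_\PP}^{G}\big((\PP_K^{-n_P}/\OO_K)_{\Z[D_\PP]}\big),$$
reducing the problem to the local setting of \S\ref{localr}, with $k=E_P$, $\Delta=I_\PP$ and $e=e_P$.

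Next I would carry out the reduction modulo $e$ on the two local classes. For $\OO_K/\PP_K^{n_P}$ this is exactly Proposition~\ref{mode}, giving $(\OO_K/\PP_K^{n_P})_{\Z[\Gamma]}=(\OO_K/\PP_K^{m_P})_{\Z[\Gamma]}$. For $\PP_K^{-n_P}/\OO_K$ one extra step is needed, since Proposition~\ref{mode} is not stated for negative exponents: writing $n_P=m_P+ae$ and choosing a uniformizer $\pi_k$ of $k$ (which lies in $\OO_F$ as $F/k$ is unramified), multiplication by $\pi_k^{a}$ is a $\Z[\Gamma]$-isomorphism $\PP_K^{-n_P}/\OO_K\stackrel{\sim}{\longrightarrow}\PP_K^{-m_P}/\PP_K^{ae}$, and the exact sequence $0\to\PP_K^{-m_P}/\OO_K\to\PP_K^{-m_P}/\PP_K^{ae}\to\OO_K/\PP_K^{ae}\to0$, combined with Proposition~\ref{mode} (which makes $(\OO_K/\PP_K^{ae})$ trivial), yields $(\PP_K^{-n_P}/\OO_K)_{\Z[\Gamma]}=(\PP_K^{-m_P}/\OO_K)_{\Z[\Gamma]}$. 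Since $0\le m_P\le e$ and $K/k$ is abelian ($N/E$ being locally abelian at $P$), Proposition~\ref{locab} now applies; using that $\OO_k/\PP_k\cong\OO_E/P$ is free over $\kappa_\pp=\oo_{e_P}/\pp$ of rank $[\OO_E/P:\oo_{e_P}/\pp]$, its isomorphisms restrict to isomorphisms of $\Z[\Gamma]$-modules
$$\OO_K/\PP_K^{m_P}\cong\Big(\Z[\Gamma]\otimes_{\Z[\Delta]}\bigoplus_{i=0}^{m_P-1}\kappa_\pp(\chi^i)\Big)^{\oplus[\OO_E/P:\oo_{e_P}/\pp]},\qquad \PP_K^{-m_P}/\OO_K\cong\Big(\Z[\Gamma]\otimes_{\Z[\Delta]}\bigoplus_{i=1}^{m_P}\kappa_\pp(\chi^{e-i})\Big)^{\oplus[\OO_E/P:\oo_{e_P}/\pp]}.$$

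To conclude, recall that each $\kappa_\pp(\chi_\PP^i)$ is $\Delta$-cohomologically trivial (\S\ref{ctglob}), so Lemma~\ref{cohomtriv}(iii) gives $(\Z[\Gamma]\otimes_{\Z[\Delta]}\kappa_\pp(\chi^i))_{\Z[\Gamma]}=\mathrm{Ind}_{I_\PP}^{D_\PP}\big((\oo_{e_P}/\pp)(\chi_\PP^i)\big)$. Substituting into the last displays, applying $\mathrm{Ind}_{D_\PP}^{G}$ (a homomorphism on class groups, so it respects products and powers) and using transitivity $\mathrm{Ind}_{D_\PP}^{G}\circ\mathrm{Ind}_{I_\PP}^{D_\PP}=\mathrm{Ind}_{I_\PP}^{G}$ produces exactly the two claimed formulas. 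Finally, if $\I$ is coprime to the different of $N/E$, then every $P\in\Div(\I)$ is unramified in $N/E$ (in the tame case $\PP\mid\mathfrak{d}_{N/E}$ if and only if $P$ ramifies), so $I_\PP$ is trivial, $e_P=1$, hence $m_P=0$, and both products over $i$ are empty, giving $(\OO_N/\I)=(\I^{-1}/\OO_N)=1$.

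I expect the only genuinely delicate points to be the reduction modulo $e$ for $\PP_K^{-n}/\OO_K$ — not literally covered by Proposition~\ref{mode}, whence the uniformizer argument above — and the bookkeeping of direct-sum multiplicities through the two successive inductions $I_\PP\subset D_\PP\subset G$, so that the exponents $[\OO_E/P:\oo_{e_P}/\pp]$ come out precisely as stated; everything else is a direct chaining of Propositions~\ref{redtoloc}, \ref{mode}, \ref{locab} and Lemma~\ref{cohomtriv}.
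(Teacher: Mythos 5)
Your proof is correct and follows essentially the same route as the paper's: Proposition \ref{redtoloc}, then Propositions \ref{mode} and \ref{locab} together with Lemma \ref{cohomtriv}(iii) and transitivity of induction (the paper writes this out for the first equality and declares the second ``similar''; your uniformizer argument is a valid way to fill in that symmetric step, and your empty-product treatment of the unramified case is equivalent to the paper's observation that $I_\PP=1$ forces each factor to be $(\F_p[G])=1$). One cosmetic slip: the short exact sequence you display has sub and quotient interchanged --- it should read $0\to\OO_K/\PP_K^{ae}\to\PP_K^{-m_P}/\PP_K^{ae}\to\PP_K^{-m_P}/\OO_K\to 0$ --- but since the multiplicativity of classes on short exact sequences of cohomologically trivial modules is symmetric in sub and quotient, this does not affect the argument.
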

\begin{proof}
We prove only one of the two displayed equalities, namely the first one, since the other follows by similar arguments.
First of all note that we have an isomorphism of $\OO_E[G]$-modules 
\begin{equation}\label{isodec}
\OO_N/\I \cong \bigoplus_{P\in \Div(\I)}\Z[G]\otimes_{\Z[D_{\PP}]} \OO_{N_{\PP}}/\PP^{n_P}\OO_{N_{\PP}}
\end{equation}
by Proposition \ref{redtoloc}. By Propositions \ref{mode} and \ref{locab} we have
\begin{eqnarray*}(\OO_{N_{\PP}}/\PP^{n_P}\OO_{N_{\PP}})&=&(\OO_{N_{\PP}}/\PP^{m_P}\OO_{N_{\PP}})\\
&=& \Big(\Z[D_\PP]\otimes_{\Z[I_{\PP}]}\Big(\bigoplus_{i=0}^{m_P-1}(\oo_{e_P}/\pp)(\chi_{_\PP}^i)\Big)\Big)^{[\OO_E/P:\oo_{e_P}/\pp]}
\end{eqnarray*}
in $\Cl(\Z[D_{\PP}])$, which together with (\ref{isodec}) and Lemma \ref{cohomtriv} (iii) gives the first equality of the proposition.

To prove the last assertion, note that if $\I$ is coprime with the different of $N/E$, then for every $\PP\mid \I$ we have $I_{\PP}=1$. In particular, $\chi_\PP$ is trivial, $e_P=1$ and $\oo_{e_P}/\pp=\F_p$ (thus we are in situation similar to that of Remark \ref{unrfree}). Therefore, for every $i\in\Z$, using Lemma \ref{cohomtriv} (iii)
\begin{eqnarray*}
\mathrm{Ind}_{I_\PP}^{G}\left((\oo_{e_P}/\pp)(\chi_{_\PP}^{i})\right)&=&\left((\oo_{e_P}/\pp)(\chi_{_\PP}^{i})\otimes_{\Z[I_\PP]}\Z[G]\right)\\
&=&\left(\F_p\otimes_{\Z}\Z[G]\right)\\
&=&(\F_p[G]).
\end{eqnarray*}
As in the proof of Proposition \ref{mode}, we observe that $(\F_p[G])=1$, which concludes the proof of the proposition.
\end{proof}

The above proposition can be used to prove the following interesting result. Recall that we regard $\OO_N\otimes_{\OO_E}\OO_N$ as a $\Z[G]$-module with the action defined in \S\ref{defr}, \textit{i.e.} $G$ only acts on the right-hand factor. 

\begin{proposition}\label{ONn}
We have 
$$(\OO_N\otimes_{\OO_E}\OO_N)=(\OO_N)^{[N:E]}\quad\textrm{in $\Cl(\Z[G])$}.$$
\end{proposition}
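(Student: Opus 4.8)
The plan is to reduce, via the Steinitz structure theorem for projective modules over the Dedekind domain $\OO_E$, to the statement just established in Proposition~\ref{preburns}. Write $n=[N:E]=\#G$. Since $\OO_N$ is a projective $\OO_E$-module of rank $n$ (it is even $\OO_E[G]$-locally free of rank $1$ by Noether's theorem), Steinitz's theorem provides an isomorphism of $\OO_E$-modules $\OO_N\cong\OO_E^{\,n-1}\oplus\mathfrak a$ for some fractional ideal $\mathfrak a$ of $\OO_E$. Moreover, as every ideal class of $\OO_E$ contains an integral ideal coprime to any prescribed ideal, we may and do choose $\mathfrak a$ to be an integral ideal of $\OO_E$ coprime to all of the finitely many primes in $\Ram(N/E)$.

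First I would apply the functor $-\otimes_{\OO_E}\OO_N$, with $G$ acting through the right-hand factor, to the above $\OO_E$-isomorphism. This is a functor from $\OO_E$-modules to $\OO_E[G]$-modules (the $G$-action never touches the left argument), so it yields an isomorphism of $\OO_E[G]$-modules
\[
\OO_N\otimes_{\OO_E}\OO_N\ \cong\ \OO_N^{\,n-1}\oplus(\mathfrak a\otimes_{\OO_E}\OO_N)\ \cong\ \OO_N^{\,n-1}\oplus\mathfrak a\OO_N ,
\]
the last isomorphism because $\mathfrak a$ is invertible, hence flat, so multiplication $\mathfrak a\otimes_{\OO_E}\OO_N\to\mathfrak a\OO_N\subseteq N$ is an isomorphism. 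All modules occurring here are $G$-stable fractional ideals of $N$, or finite direct sums of such, hence $\Z[G]$-projective (see \cite[Proposition~1.3]{Ullom-normal_bases} and \S\ref{ctglob}), so in particular $G$-cohomologically trivial and defining classes in $\Cl(\Z[G])$. The displayed isomorphism therefore translates into
\[
(\OO_N\otimes_{\OO_E}\OO_N)=(\OO_N)^{\,n-1}(\mathfrak a\OO_N)\qquad\text{in }\Cl(\Z[G]).
\]

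It remains to prove $(\mathfrak a\OO_N)=(\OO_N)$. Now $\mathfrak a\OO_N$ is a $G$-invariant integral ideal of $\OO_N$ (because $G$ fixes $\OO_E\supseteq\mathfrak a$) which, by the choice of $\mathfrak a$, is coprime to the different of $N/E$; in particular $N/E$ is unramified, hence locally abelian, at every prime dividing $\mathfrak a\OO_N$, so Proposition~\ref{preburns} applies and gives $(\OO_N/\mathfrak a\OO_N)=1$ in $\Cl(\Z[G])$. Combining this with the identity $(\OO_N/\I)=(\I)(\OO_N)^{-1}$ recalled in \S\ref{ctglob}, applied to $\I=\mathfrak a\OO_N$, yields $(\mathfrak a\OO_N)=(\OO_N)$, and therefore $(\OO_N\otimes_{\OO_E}\OO_N)=(\OO_N)^{\,n}=(\OO_N)^{[N:E]}$, as claimed.

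The only points that need a little care are bookkeeping ones: that the Steinitz isomorphism of $\OO_E$-modules transports to an isomorphism of $\OO_E[G]$-modules after tensoring (automatic, by functoriality, since $G$ acts only on the untouched right factor), and that $\mathfrak a$ can be taken integral and prime to $\Ram(N/E)$ so that Proposition~\ref{preburns} applies with no local abelian hypothesis needed. I do not expect any real obstacle beyond this; in particular the argument uses only Proposition~\ref{preburns} and not Theorem~\ref{main}, so it is available at this stage. (Alternatively, one can prove $(\mathfrak a\OO_N)=(\OO_N)$ without the coprimality choice, by observing that $(\mathfrak a\OO_N)(\OO_N)^{-1}$ is the image under restriction of scalars $\Cl(\OO_E[G])\to\Cl(\Z[G])$ of $(\mathfrak a\OO_E[G])(\OO_E[G])^{-1}$, and that $\mathfrak a\OO_E[G]\cong\mathfrak a\otimes_{\Z}\Z[G]$ is $\Z[G]$-free since $\mathfrak a$ is $\Z$-free.)
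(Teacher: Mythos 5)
Your proof is correct and follows essentially the same route as the paper's: Steinitz decomposition $\OO_N\cong\OO_E^{\,n-1}\oplus\mathfrak a$ with the ideal chosen in its class prime to the ramification, tensoring with $\OO_N$ to get $(\OO_N\otimes_{\OO_E}\OO_N)=(\OO_N)^{n-1}(\mathfrak a\OO_N)$, and then Proposition~\ref{preburns} applied to the ideal coprime to the different to conclude $(\mathfrak a\OO_N)=(\OO_N)$. The only cosmetic difference is that you invoke the elementary fact that every ideal class contains an integral ideal prime to a prescribed one, where the paper cites Chebotarev's density theorem for the same choice.
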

\begin{proof}
Set $n=[N:E]$. By the structure theorem for $\OO_E$-modules, we know that $\OO_N$ is $\OO_E$-isomorphic to $\OO_E^{\oplus (n-1)}\oplus J$, where $J$ is an ideal of $\OO_E$. By Chebotarev's density theorem, we can find an ideal $I$ of $\OO_E$ belonging to the ideal class of $J$ and such that $I$ is coprime with the discriminant of $N/E$. In particular $\OO_N$ is also $\OO_E$-isomorphic to $\OO_E^{\oplus (n-1)}\oplus I$ and
$$
\OO_N\otimes_{\OO_E}\OO_N\cong \big(\OO_E\otimes_{\OO_E}\OO_N\big)^{\oplus (n-1)}\oplus \big(I\otimes_{\OO_E}\OO_N\big)
\cong \OO_N^{\oplus (n-1)}\oplus I\OO_N
$$
as $\OO_E[G]$-modules (since $G$ only acts on the right-hand term of $\OO_N\otimes_{\OO_E}\OO_N$). In particular we get
\begin{equation}\label{decoclass}
(\OO_N\otimes_{\OO_E}\OO_N)=(\OO_N)^{n-1}(I\OO_N) \quad\textrm{in $\Cl(\Z[G])$}.
\end{equation}
Now $I\OO_N$ is of course a $G$-stable ideal of $\OO_N$ since $I$ is an ideal of $\OO_E$. In particular $I\OO_N$ is locally free because $N/E$ is tame (see \cite[Proposition 1.3]{Ullom-normal_bases}) and $\OO_N/I\OO_N$ is $G$-cohomologically trivial by Lemma \ref{cohomtriv} (ii). Moreover we have
$$(I\OO_N)=(\OO_N)(\OO_N/I\OO_N)\quad \textrm{in $\Cl(\Z[G])$.}$$
Note that $I\OO_N$ is coprime with the different of $N/E$. In particular $(\OO_N/I\OO_N)=1$ by Proposition \ref{preburns} and therefore $(I\OO_N)=(\OO_N)$. Plugging this equality in (\ref{decoclass}) we get the statement of the proposition.
\end{proof}

\subsubsection{} We end this section by showing how Theorem \ref{reduction} can be used to reduce the proof of Theorem \ref{maincor} to that of Theorem \ref{main} 
\begin{proof}[Proof of Theorem \ref{maincor} assuming Theorem \ref{main}]
By Theorem \ref{reduction} and using Lemma \ref{cohomtriv} (iii), we have the following equalities in $\Cl(\Z[G])$:
\begin{eqnarray*}
(\RR_{N/E})&=& \prod_{P\in \Ram(N/E)}\mathrm{Ind}_{I_\PP}^G\big(R_{\chi_{_\PP}}(\pp,\oo_{e_{P}}[I_{\PP}])\big)^{[G:D_{\PP}][\OO_N/\PP:\oo_{e_P}/\pp]}\enspace,\\
(\T_{N/E})&=& \prod_{P\in \Ram(N/E)}\mathrm{Ind}_{I_\PP}^G\big(T(p,\Z[I_{\PP}])\big)^{[\OO_E/P:\F_p]}
\end{eqnarray*}
and, if $N/E$ is locally abelian, 
\begin{eqnarray*}
(\SSS_{N/E})&=& \prod_{P\in \Ram(N/E)}\mathrm{Ind}_{I_\PP}^G\big(S_{\chi_{_\PP}}(\pp,\oo_{e_{P}}[I_{\PP}])\big)^{[\OO_{E}/P:\oo_{e_{P}}/\pp]}\enspace.
\end{eqnarray*}
By Theorem \ref{main}, for every prime $P\in\Ram(N/E)$ and every prime $\PP\mid P$ in $\OO_N$, we have 
$$(T(p,\Z[I_\PP]))=(S_{\chi_\PP}(\pp,\oo_{e_P}[I_\PP]))=(R_{\chi_\PP}(\pp,\oo_{e_P}[I_\PP]))=1$$
in $\Cl(\Z[I_\PP])$. Thus $(\T_{N/E})=1$ which implies $(\OO_N)=(\C_{N/E})$. Moreover $(\RR_{N/E})=1$ which gives $(\OO_{N}\otimes_{\OO_E}\OO_N)=(\OO_N[G])$ and, since $\OO_N[G]$ is $\Z[G]$-free of rank $[N:\Q]$, we deduce $(\OO_{N}\otimes_{\OO_E}\OO_N)=1$. In particular we also have $(\OO_N)^{[N:E]}=1$ by Proposition \ref{ONn}. Finally, if $N/E$ is locally abelian, we have $(\SSS_{N/E})=1$ which implies $(\OO_N)=(\A_{N/E})$. The proof of Theorem \ref{maincor} is then achieved. 
\end{proof}

\section{Hom-representatives}\label{section:hom-des}

We now come to the proof of Theorem \ref{main}, which will be achieved in two steps. In this section, we apply Fröhlich's machinery to get a first description of Hom-representatives of the classes involved in its statement. Then in the next section we use Stickelberger's theorem to refine this description and complete the proof.

We are thus in the cyclotomic setting introduced in the previous section, namely we fix an integer $e$, a cyclic group $\Delta$ of order $e$ and an injective character $\chi:\Delta\to \mu_e$, where $\mu_e$ is the group of $e$th roots of unity in $\overline{\Q}$. We let $\oo$ denote the ring of integers of the cyclotomic field $\mathbb{Q}(\mu_e)$. Let $p$ denote a rational prime such that $p\nmid e$ and let $\pp\subset \oo$ denote a prime ideal above $p$. We set $\kappa=\oo/\pp$, 
$$T_\Z=T(p,\Z[\Delta])\ ,\quad R=R_\chi(\pp,\oo[\Delta])\ ,\quad S=S_\chi(\pp,\oo[\Delta])\enspace.$$ 
We fix a primitive $e$th root of unity $\zeta\in\mu_e$ and we let $\delta\in\Delta$ be defined by $\chi(\delta)=\zeta$.

%%%%%%%%%%%%%%%%%%%%%%%%%%%%%%%%%%%%%%%%%%%%%%%%%%
\subsection{Hom description of the class group}\label{hdcg}

In this section and the following one, we are interested in determining classes in the class group $\Cl(\Z[\Delta])$. In order to do so, at some point we shall have to consider class groups of a group algebra with a larger coefficient ring. Further in Section \ref{analytic} we shall also need a description of the class group of the group algebra $\Z[G]$ where $G$ is any finite group. So we will recall Fröhlich's Hom-description in its most general form. If $L$ is any number field with $L\subset\overline\Q$, we set $\Omega_L=\mathrm{Gal}(\overline{\Q}/L)$ and we let $\oo_L$ and $J(L)$ denote the ring of integers and the id\`ele group of $L$, respectively. 

\subsubsection{} Fr\"ohlich's Hom-description of $\Cl(\oo_L[G])$, where $L$ is a number field and $G$ is a finite group, is the group isomorphism
\begin{equation}\label{hom-des}
\Cl(\oo_L[G])\cong\frac{\Hom_{\Omega_L}(R_G,J(L'))}{\Hom_{\Omega_L}(R_G,(L')^{\times\!})\Det({\mathcal U}(\oo_L[G]))}
\end{equation}
given by the explicit construction of a representative homomorphism of the class of any locally free rank one module, see \cite[Theorem 1]{Frohlich-Alg_numb}. This construction will be shown and used in the next subsections. We now briefly explain the objects involved in \eqref{hom-des}, referring to \cite{Frohlich-Alg_numb} for a more complete account of Fr\"ohlich's Hom-description.

We begin with the upper part, where $R_G$ is the additive group of virtual characters of $G$ with values in $\overline{\Q}$. The number field $L'$ is ``big enough'', in particular it is Galois over $\Q$, contains $L$ and the values of the characters of $G$. In our cyclotomic setting described above, we shall only be concerned with the cases where $L=\Q$ or $\Q(\mu_e)$ and $G=\Delta$: since $\Delta$ is cyclic of order $e$, in these cases one can take $L'=\Q(\mu_e)\subset \overline{\Q}$. The homomorphisms in $\Hom_{\Omega_L}(R_G,J(L'))$ are those which commute with the natural actions of $\Omega_L$ on $R_G$ and $J(L')$. 

In the lower part, $\Hom_{\Omega_L}(R_G,(L')^\times)$ is the subgroup of $\Hom_{\Omega_L}(R_G,J(L'))$ yielded by the diagonal embedding of $(L')^\times$ in $J(L')$. The second factor needs more explanations. First
$${\mathcal U}(\oo_L[G])=\prod_{\mathfrak{l}}\oo_{L_{\mathfrak{l}}}[G]^{\times}\subseteq\prod_{\mathfrak{l}}{L_\mathfrak{l}}[G]^{\times}\enspace,$$
where $\mathfrak{l}$ runs over all places of $L$ and $\oo_{L_{\mathfrak{l}}}$ denotes the ring of integers of a completion $L_\mathfrak{l}$ of $L$ at $\mathfrak{l}$ (with $\oo_{L_{\mathfrak{l}}}=L_{\mathfrak{l}}$ if $\mathfrak{l}$ is archimedean). Let $x=(x_\mathfrak{l})_\mathfrak{l}\in\prod_{\mathfrak{l}}{L_\mathfrak{l}}[G]^{\times}$, the character function $\Det(x)=(\Det(x_\mathfrak{l}))_\mathfrak{l}$ is defined componentwise. For each place $\mathfrak{l}$ of $L$ the `semi-local' component $\Det(x_\mathfrak{l})$ takes values in $(L'\otimes_L L_\mathfrak{l})^\times$, embedded in $J_\mathfrak{l}(L')=\prod_{\LL\mid\mathfrak{l}}(L'_\LL)^\times$, where $\LL$ runs over the prime ideals of $\oo_{L'}$ above $\mathfrak{l}$, through the isomorphism of $L'$-algebras
\begin{equation}\label{embeddings}
L'\otimes_L L_\mathfrak{l}\cong\prod_{\LL\mid\mathfrak{l}}L'_\LL
\end{equation}
built on the various embeddings of $L'$ in $\bar L_\mathfrak{l}$, a given algebraic closure of $L_\mathfrak{l}$, that fix $\mathfrak{l}$.
By linearity we only need to define the character function $\Det(x_\mathfrak{l})$ on the irreducible characters $\theta$ of $G$. Write $x_\mathfrak{l}=\sum_{g\in G}x_{\mathfrak{l},g}g$, then $\Det_{\theta}(x_\mathfrak{l})$ is the image in $J_\mathfrak{l}(L')$, under isomorphism \eqref{embeddings}, of the determinant of the matrix
$$\sum_{g\in G}x_{\mathfrak{l},g}\Theta(g)\enspace,$$
where $\Theta$ is any matrix representation of character $\theta$ and the $(i,j)$-entry of the above matrix $\sum_{g\in G}\Theta_{i,j}(g)\otimes x_{\mathfrak{l},g}$ indeed belongs to $L'\otimes_L L_\mathfrak{l}$. 

Note that, by $\Omega_L$-equivariance, the values of $\Det(x_\mathfrak{l})=\big(\Det(x_\mathfrak{l})_\LL\big)_{\LL\mid\mathfrak{l}}$ in $J_\mathfrak{l}(L')$ are determined by those of any component $\Det(x_\mathfrak{l})_{\LL}$, see \cite[II, Lemma 2.1]{Frohlich-Alg_numb}. In the following we may thus implicitely assume that a place $\LL$ of $L'$ is fixed above each place $\mathfrak{l}$ of $L$ and focus on the  $\LL$-component $\Det(x_\mathfrak{l})_\LL$, that we shall indeed plainly denote by $\Det(x_\mathfrak{l})$, omitting the unnecessary subscript. Let $l$ denote the rational place below $\mathfrak l$ and $\overline{\Q}_l$ an algebraic closure of $\Q_l$ containing $L'_\LL$. The resulting local function $\Det(x_\mathfrak{l})$ belongs to $\Hom_{\Omega_{L_\mathfrak{l}}}(R_{G,l},(L'_\LL)^\times)$, where $R_{G,l}$ is the group of virtual characters of $G$ with values in $\overline\Q_l$ and $\Omega_{L_\mathfrak{l}}=\Gal(\overline\Q_l,L_\mathfrak{l})$. 
%%%%%%%%%%%%%%%%
\subsubsection{} We now assume that $G$ is abelian. With the above notation and conventions, we get 
$$\Det_{\theta}(x_\mathfrak{l})=\sum_{g\in G}x_{\mathfrak{l},g}\theta(g)\in L'_{\mathfrak{L}}\enspace,$$
where we have implicitely embedded $L'$ in $L'_\LL$ (in accordance with the above choice of a place $\LL$ above $\mathfrak{l}$). 
\begin{proposition}\label{prop:Det}
With the above notation and assuming that $G$ is abelian, the group homomorphism $\Det:{L_\mathfrak{l}}[G]^{\times}\to\Hom_{\Omega_{L_\mathfrak{l}}}(R_{G,l},(L'_\LL)^\times)$ is injective.
\end{proposition}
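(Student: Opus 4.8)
The plan is to deduce the injectivity of $\Det$ from the linear independence of the characters of $G$. Since $\Det$ is a group homomorphism, it suffices to prove that its kernel is trivial; and because the irreducible $\overline\Q_l$-characters of $G$ form a $\Z$-basis of $R_{G,l}$, an element $x\in L_\mathfrak{l}[G]^\times$ lies in $\ker\Det$ precisely when $\Det_\theta(x)=1$ for every irreducible character $\theta$ of $G$. As $G$ is abelian, each such $\theta$ is one-dimensional, so $\Det_\theta$ is the $L_\mathfrak{l}$-linear form $\sum_{g\in G}z_g g\mapsto\sum_{g\in G}z_g\theta(g)$ on $L_\mathfrak{l}[G]$. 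Hence, writing $z=x-1_{L_\mathfrak{l}[G]}=\sum_{g\in G}z_g g$, the condition $\Det_\theta(x)=1=\Det_\theta(1_{L_\mathfrak{l}[G]})$ for all $\theta$ becomes, by linearity, $\sum_{g\in G}z_g\theta(g)=0$ for every $L'_\LL$-valued character $\theta$ of $G$.

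Next I would use that $L'_\LL$ is a splitting field for $G$: it has characteristic zero and, by the defining property of $L'$, it contains the values of all characters of $G$, hence a primitive root of unity of order $\exp(G)$; consequently the set $\widehat G$ of $L'_\LL$-valued characters of $G$ has exactly $|G|$ elements. The square matrix $\bigl(\theta(g)\bigr)_{\theta\in\widehat G,\ g\in G}$ is then invertible over $L'_\LL$: this follows from the orthogonality relation, namely $\sum_{\theta\in\widehat G}\theta(g)\theta(h)^{-1}$ equals $|G|$ if $g=h$ and $0$ otherwise, together with the fact that $|G|$ is a unit in $L'_\LL$. Therefore the homogeneous system above forces $z_g=0$ for all $g$, that is $x=1_{L_\mathfrak{l}[G]}$, which is what we wanted.

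Equivalently, and perhaps more conceptually, the map $z\mapsto(\Det_\theta(z))_{\theta\in\widehat G}$ is the composite of the injective inclusion $L_\mathfrak{l}[G]\hookrightarrow L'_\LL[G]$ with the isomorphism of $L'_\LL$-algebras $L'_\LL[G]\xrightarrow{\ \sim\ }\prod_{\theta\in\widehat G}L'_\LL$ afforded by the primitive idempotents $e_\theta=|G|^{-1}\sum_{g\in G}\theta(g^{-1})g$, which exist precisely because $L'_\LL$ splits $G$ in characteristic zero; being an injective ring homomorphism, it restricts to an injection on unit groups, and $\Det$ is obtained from it by identifying a tuple in $\prod_{\theta\in\widehat G}(L'_\LL)^\times$ with the $\Omega_{L_\mathfrak{l}}$-equivariant homomorphism $R_{G,l}\to(L'_\LL)^\times$ it determines on the basis $\widehat G$. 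I do not expect a genuine obstacle here: the only point requiring care is that $L'_\LL$ really is a splitting field for $G$ — guaranteed by the choice of $L'$ and by working in characteristic zero — so that the orthogonality relations and the idempotents $e_\theta$ are available.
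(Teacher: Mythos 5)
Your proof is correct: reducing to the vanishing of $\sum_{g\in G}z_g\theta(g)$ for all $\theta$ and invoking the invertibility of the character matrix (equivalently, the splitting $L'_\LL[G]\cong\prod_{\theta}L'_\LL$ via the idempotents $e_\theta$, available since $L'_\LL$ has characteristic zero and contains the character values) is exactly the standard argument behind this statement. The paper itself gives no argument but simply cites Fr\"ohlich [(II.5.2)], whose proof proceeds along the same lines, so your write-up is essentially the same approach, made self-contained.
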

%This result easily derives from a more general statement in which $G$ is not supposed to be abelian, see for instance \cite[Proposition 52.9]{CR2}. In view of its importance in the proof of Theorem \ref{main} (indeed we shall only use the fact that the above map is injective) we now give an elementary proof.
\begin{proof}
See \cite[(II.5.2)]{Frohlich-Alg_numb}.
\end{proof}
We shall use the above result in Section \ref{section:stickelberger}.
%%%%%%%%%%%%%%%%%%%%%%%%%%%%%%%%%%%%%%%%%%%%%%
\subsection{Hom-representative of $(T_{\Z})$}\label{hom-rep-TZ} 
We shall consider, as in Remark \ref{tswan}, the Swan module 
$\Sigma_\Delta(p)=p\Z[\Delta]+\Tr_\Delta\Z[\Delta]$ and its
associated torsion module $T_{\Z}=\Z[\Delta]/\Sigma_\Delta(p)$. In other words we have the following exact sequence of $\Z[\Delta]$-modules:
\begin{equation*}\label{exact}
0\rightarrow\Sigma_\Delta(p)\rightarrow\Z[\Delta]\rightarrow
T_{\Z}\rightarrow 0.
\end{equation*}
As remarked by Swan (see \cite[\S 6]{SwanPRFG}), $\Sigma_\Delta(p)$ is $\Z[\Delta]$-projective hence locally free by Lemma \ref{cohomtriv} (i). In particular, by Lemma \ref{cohomtriv} (ii), $T_{\Z}$ is $\Delta$-cohomologically trivial and we have an equality $\big(T_{\Z}\big)=\big(\Sigma_\Delta(p)\big)$ in $\Cl(\Z[\Delta])$. We now follow Fr\"ohlich's recipe to build a representative morphism $v$ for this class.

If $x$ and $y$ are elements of a same set, we let $\updelta_{x,y}$ denote their Kronecker delta, namely $\updelta_{x,y}=1$ if $x=y$, $0$ otherwise.
\begin{lemma}\label{swan-repr1}
Let $v\in\Hom_{\Omega_\Q}(R_\Delta,J(\Q))$ be defined by
$$v(\chi^h)_q=\left\{
\begin{array}{ll}1& \textrm{if $q\ne p$,}\\
p^{1-\updelta_{h,e}}& \textrm{if $q= p$,}
\end{array}
\right.$$
where $q$ is any rational prime and $h\in\{1,\ldots,e\}$. Then $v$ represents the class of $\Sigma_\Delta(p)$ through Fr\"ohlich's Hom-description of $\Cl(\Z[\Delta])$.
\end{lemma}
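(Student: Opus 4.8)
The goal is to verify that the explicit idèlic homomorphism $v$ represents $(\Sigma_\Delta(p))=(T_\Z)$ via Fröhlich's Hom-description. The strategy is to follow Fröhlich's recipe literally: realise $\Sigma_\Delta(p)$ as a locally free rank-one $\Z[\Delta]$-module, pick a full $\Q[\Delta]$-basis together with local generators at each prime $q$, and read off the representing homomorphism from the "change of basis" idèle. The point is that $\Sigma_\Delta(p)$ is locally free of rank one: at primes $q\neq p$ one has $\Sigma_\Delta(p)\otimes\Z_q=\Z_q[\Delta]$ since $p$ is a unit there (because $p\nmid e$ is not assumed — rather $p$ is invertible mod $q$ when $q\neq p$, so $p\Z_q[\Delta]+\Tr_\Delta\Z_q[\Delta]=\Z_q[\Delta]$), while at $q=p$ one must exhibit a single generator of $\Sigma_\Delta(p)\otimes\Z_p$ over $\Z_p[\Delta]$.

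First I would fix the rational basis: $\Sigma_\Delta(p)$ spans $\Q[\Delta]$ over $\Q$, and a convenient $\Q[\Delta]$-generator is simply $1\in\Z[\Delta]\subset\Sigma_\Delta(p)$ after inverting $p$ — but more useful is to work idèle-component-wise. At each $q\neq p$, a local generator of $\Sigma_\Delta(p)_q=\Z_q[\Delta]$ is $1$, giving trivial local component. At $q=p$, I would produce an explicit $\lambda_p\in\Z_p[\Delta]$ with $\Sigma_\Delta(p)_p=\lambda_p\Z_p[\Delta]$. The natural candidate uses the idempotent decomposition of $\Q_p[\Delta]$: write $\epsilon=\frac{1}{e}\Tr_\Delta$ for the idempotent cutting out the trivial character, so $\Z_p[\Delta]=\epsilon\Z_p[\Delta]\oplus(1-\epsilon)\Z_p[\Delta]$ (valid since $e\in\Z_p^\times$). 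Then $\Tr_\Delta=e\epsilon$ generates the $\epsilon$-part up to the unit $e$, and $p$ generates the $(1-\epsilon)$-part trivially but scales the $\epsilon$-part by $p$; one checks $\Sigma_\Delta(p)_p=p\epsilon\Z_p[\Delta]\oplus(1-\epsilon)\Z_p[\Delta]$, so $\lambda_p=p\epsilon+(1-\epsilon)$ is a generator. Comparing $\lambda_p$ with the rational generator $1$ via $\Det$, and using that on the trivial character $\theta=\chi^e=1$ one has $\Det_{1}(\lambda_p)=p\cdot\frac{1}{e}\cdot e\cdot\dots$ — more carefully $\theta(\epsilon)=1$ and $\theta(1-\epsilon)=0$ for $\theta$ trivial, giving $\Det_1(\lambda_p)=p$, while for $\theta=\chi^h$ with $h\neq e$ one has $\theta(\epsilon)=0$, $\theta(1-\epsilon)=1$, giving $\Det_{\chi^h}(\lambda_p)=1$. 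This matches $v(\chi^h)_p=p^{1-\updelta_{h,e}}$ exactly.

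The main obstacle — really the only non-formal point — is pinning down the local generator $\lambda_p$ and checking carefully that $\Sigma_\Delta(p)_p$ is exactly $\lambda_p\Z_p[\Delta]$ and not something larger or smaller; this requires the semisimple idempotent splitting over $\Z_p$ and a short verification that $p\Z_p[\Delta]+\Tr_\Delta\Z_p[\Delta]=p\epsilon\Z_p[\Delta]\oplus(1-\epsilon)\Z_p[\Delta]$. One inclusion is clear since $p\in p\epsilon\Z_p[\Delta]\oplus(1-\epsilon)\Z_p[\Delta]$ needs checking ($p=p\epsilon+p(1-\epsilon)$ and $p(1-\epsilon)\in(1-\epsilon)\Z_p[\Delta]$, fine) and $\Tr_\Delta=e\epsilon\in p\epsilon\Z_p[\Delta]$?—no, $e\epsilon\notin p\epsilon\Z_p[\Delta]$ in general, but $e\in\Z_p^\times$ so $\epsilon\in\Tr_\Delta\Z_p[\Delta]$, hence $\epsilon\in\Sigma_\Delta(p)_p$, and then $p\epsilon+(1-\epsilon)\cdot(\text{anything})$... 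I would simply observe $\epsilon\in\Sigma_\Delta(p)_p$ and $p(1-\epsilon)=p-p\epsilon\in\Sigma_\Delta(p)_p$ forces $1-\epsilon\in\Sigma_\Delta(p)_p$ (dividing by $p\in$ the $(1-\epsilon)$-component which is a unit there), wait — $p$ is not a unit, so rather: $\Sigma_\Delta(p)_p\supseteq\Z_p[\Delta]\cdot\epsilon+p\Z_p[\Delta]=\epsilon\Z_p[\Delta]+p(1-\epsilon)\Z_p[\Delta]$, and this is all of $\epsilon\Z_p[\Delta]\oplus p(1-\epsilon)\Z_p[\Delta]$... so actually the generator should be $\epsilon+p(1-\epsilon)$, giving $\Det_1=1$, $\Det_{\chi^h}=p$ for $h\neq e$ — which is the *reciprocal* of $v$. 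Since Hom-representatives are only well-defined up to $\Det(\mathcal U(\Z[\Delta]))$ and the diagonal $(\Q^\times)$-image, and $v$ versus its reciprocal differ by sign conventions in Fröhlich's recipe (whether one uses the generator or its inverse when passing from module to homomorphism), I would carefully track Fröhlich's convention from \cite[Theorem 1]{Frohlich-Alg_numb} to get the direction right; this bookkeeping is the real content, the algebra being routine. Finally, I would note that $v$ is genuinely $\Omega_\Q$-equivariant (the $\Omega_\Q$-action permutes the $\chi^h$ and fixes the value $p$), so it does define an element of $\Hom_{\Omega_\Q}(R_\Delta,J(\Q))$, completing the verification.
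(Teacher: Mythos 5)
Your overall route is exactly the paper's: exhibit local generators of $\Sigma_\Delta(p)\otimes\Z_q$ (namely $1$ for $q\neq p$ and an idempotent-built element at $q=p$) and read off the representative as $\chi^h\mapsto(\Det_{\chi^h}(\alpha_q))_q$. Your corrected local generator $\epsilon+p(1-\epsilon)$, with $\epsilon=\frac{1}{e}\Tr_\Delta$, is precisely the paper's $\varepsilon_0+p\varepsilon_1$, and your identification $\Sigma_\Delta(p)\otimes\Z_p=\epsilon\Z_p[\Delta]\oplus p(1-\epsilon)\Z_p[\Delta]$ is the right one (your first claim, $p\epsilon\Z_p[\Delta]\oplus(1-\epsilon)\Z_p[\Delta]$ with generator $p\epsilon+(1-\epsilon)$, is wrong: since $e\in\Z_p^\times$, the trace already gives you all of $\epsilon\Z_p[\Delta]$, while the $(1-\epsilon)$-part only receives $p(1-\epsilon)\Z_p[\Delta]$).

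The genuine gap is that you end by asserting the correct generator yields the \emph{reciprocal} of $v$ and defer the resolution to "tracking Fröhlich's convention" — but there is no mismatch and nothing to track. You have misread the exponent $1-\updelta_{h,e}$: the statement says $v(\chi^e)_p=p^{1-\updelta_{e,e}}=1$ on the trivial character and $v(\chi^h)_p=p^{1-0}=p$ for $h\neq e$. Your computation $\Det_{\chi^e}(\epsilon+p(1-\epsilon))=1$ and $\Det_{\chi^h}(\epsilon+p(1-\epsilon))=p$ ($h\neq e$) therefore agrees with $v$ on the nose; it was your first computation, with the wrong generator $p\epsilon+(1-\epsilon)$, that produced the reciprocal, and your claim there that it "matches $v$ exactly" is the error. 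Once the delta is read correctly, Fröhlich's recipe applied to $\alpha_p=\epsilon+p(1-\epsilon)$ and $\alpha_q=1$ ($q\neq p$) gives $v$ directly, and the proof closes without any appeal to a sign or direction convention.
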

\begin{proof}
As remarked above the $\mathbb{Z}[\Delta]$-module $\Sigma_\Delta(p)$ is locally free. For any rational prime $q$, we will now find a generator $\alpha_{q}$ of the free $\mathbb{Z}_q[\Delta]$-module $\mathbb{Z}_q\otimes_{\mathbb{Z}}\Sigma_\Delta(p)$. 
\begin{itemize}
	\item $q\ne p$: since $p$ is invertible in $\mathbb{Z}_q$, we
          have
          $\mathbb{Z}_q\otimes_{\mathbb{Z}}\Sigma_\Delta(p)=\mathbb{Z}_q[\Delta]$,
          so we can take $\alpha_{q}=1$.  
	\item $q=p$: set $\varepsilon_0=\frac{1}{e}\Tr_\Delta$ and
          $\varepsilon_1=1-\varepsilon_0$. Note that
          $\varepsilon_0,\varepsilon_1\in \mathbb{Z}_p[\Delta]$ since
          $e\in\mathbb{Z}_p^\times$. We have
          $\mathbb{Z}_p\otimes_{\mathbb{Z}}
          \Sigma_\Delta(p)=p\mathbb{Z}_p[\Delta]+\varepsilon_0\mathbb{Z}_p[\Delta]$ 
and, since $\varepsilon_i\varepsilon_j=\updelta_{i,j}\varepsilon_i$ for $i,j\in\{0,1\}$,
$$p=(\varepsilon_0+p\varepsilon_1)(p\varepsilon_0+\varepsilon_1)\quad\textrm{and}\quad\varepsilon_0=(\varepsilon_0+p\varepsilon_1)\varepsilon_0\enspace,$$
so that
	$$p\mathbb{Z}_p[\Delta]+\varepsilon_0\mathbb{Z}_p[\Delta]\subseteq (\varepsilon_0+p\varepsilon_1)\mathbb{Z}_p[\Delta].$$
	On the other hand $\varepsilon_0+p\varepsilon_1$ clearly
        belongs to
        $p\mathbb{Z}_p[\Delta]+\varepsilon_0\mathbb{Z}_p[\Delta]$,
        hence $\mathbb{Z}_p\otimes_{\mathbb{Z}}
        \Sigma_\Delta(p)=(\varepsilon_0+p\varepsilon_1)\mathbb{Z}_p[\Delta]$
        and we can take $\alpha_{p}=\varepsilon_0+p\varepsilon_1$. 
\end{itemize}
By Fr\"ohlich's theory, the morphism
$\chi^h\mapsto\big(\Det_{\chi^h}(\alpha_q)\big)_q$ represents the class of
$\Sigma_\Delta(p)$ in $\Hom_{\Omega_\Q}(R_\Delta,J(\Q))$. The basic
computations $\Det_{\chi^h}(\varepsilon_0)=\updelta_{h,e}$ and
$\Det_{\chi^h}(\varepsilon_1)=1-\updelta_{h,e}$ yield the result.
\end{proof}

\begin{remark}\label{t-rep}
Dividing $v$ by the global valued equivariant morphism $\tilde c_v$
defined by $\tilde c_v(\chi^h)=p^{1-\updelta_{h,e}}$, $1\le h\le
e$, yields another representative morphism of the class of
$\Sigma_\Delta(p)$ in $\Cl(\Z[\Delta])$, with values in $J(\Q)$: for
any $1\le h\le e$,
$$(v\tilde c_v^{-1})(\chi^h)_q=\left\{\begin{array}{ll}p^{\updelta_{h,e}-1}&
\textrm{if $q\ne p$,}\\ 
1& \textrm{if $q= p$.}\\
\end{array}\right.$$
If $q\nmid pe$, then $p^{1-\updelta_{h,e}}=\Det_{\chi^h}(\varepsilon_0+{p}\varepsilon_1)$, where $\varepsilon_0$, $\varepsilon_1$ are defined as in the proof of Lemma \ref{swan-repr1}, and satisfy $\varepsilon_0+{p}\varepsilon_1\in\mathbb{Z}_q[\Delta]^\times$. 
If $q\mid e$, then $p\in\Z_q^\times\subset\mathbb{Z}_q[\Delta]^\times$ and $\Det_{\chi^h}(p)=p$ for every $1\le h\le e$. Let $\beta\in\UU(\Z[\Delta])$ be defined by $\beta_q=p$ if $q\mid e$, $\beta_q=\varepsilon_0+{p}\varepsilon_1$ if $q\nmid pe$ and $\beta_q=1$ otherwise. Then $v\tilde c_v^{-1}\Det(\beta)$ is Ullom's representative morphism of the class of $\Sigma_\Delta(p)$ given for instance in \cite[(I.2.23)]{Frohlich-Alg_numb}. 
\end{remark}

%%%%%%%%%%%%%%%%%%%%%%%%%%%%%%%%%%%%%%%%%%%%%%%
\subsection{Hom-representatives of $(R)$ and $(S)$}\label{hom-rep-ki} 

In this subsection we use Fröhlich's construction of a representative homomorphism of the class of $\kappa(\chi^i)$, where $i$ is any integer such that $0\le i\le e-1$. These representative homomorphisms yield representatives for the classes of the torsion modules under study, namely $R$ and $S$. 

Recall that $\Delta=\langle\delta\rangle$ and that the $\oo[\Delta]$-module $\kappa(\chi^i)$ is defined to be $\kappa=\oo/\pp$ as $\oo$-module, with action of $\Delta$ given by  $\delta\cdot x=\chi^i(\delta)x=\zeta^ix$ for any $x\in \kappa$. 

\subsubsection{}
Let us fix an integer $0\leq i\leq e-1$, and let $\phi_i:\oo[\Delta]\to \kappa(\chi^i)$ be the only $\oo[\Delta]$-module homomorphism which sends $1$ to $1$, hence $\delta$ to $[\zeta^i]$, the class of $\zeta^i$ in $\kappa$. Note that $\phi_i$ is surjective and set 
$$M_i=\pp\oo[\Delta]+(\delta-\zeta^i)\oo[\Delta]\subset \oo[\Delta]\enspace.$$
Then the sequence of $\oo[\Delta]$-modules 
\begin{equation}\label{esmi}
0\to M_i\to \oo[\Delta]\stackrel{\phi_i}{\longrightarrow}\kappa(\chi^i)\to 0
\end{equation} 
is exact (since clearly $M_i\subseteq\ker(\phi_i)$ and $\#(\oo[\Delta]/M_i)=\#(\oo/\pp[\Delta]/(\delta-[\zeta^i]))=\#\oo/\pp$). 

In the next proposition, we will show, by finding explicit local generators, that $M_i$ is a locally free $\oo[\Delta]$-module. Anyway, this fact can be also shown as follows (see also the proof of \cite[Proposition 4.1]{Chinburg-deRham}): $\oo[\Delta]$ is $\Delta$-cohomologically trivial (it is a free $\Z[\Delta]$-module) and the same holds $\kappa(\chi^i)$ as observed in \S \ref{ctglob}. Therefore from the above exact sequence, we see that $M_i$ is $\Delta$-cohomologically trivial. Since it is also $\oo$-torsion free (being a submodule of $\oo[\Delta]$), we deduce that it is $\oo[\Delta]$-projective (this can be seen following the proof of \cite[Chapitre IX, Th\'eor\`eme 7]{Serre}, with $\Z$ replaced by $\oo$, and using \cite[Section 14.4, Exercice 1]{Serre-reprlin}). 
In particular, by Lemma \ref{cohomtriv} (i), the $\oo[\Delta]$-module $M_i$ is locally free and we have, by Lemma \ref{cohomtriv} (ii),
\begin{equation}\label{minus}
(\kappa(\chi^i))_{\oo[\Delta]}=(\oo[\Delta])_{\oo[\Delta]}^{-1}(M_i)_{\oo[\Delta]}=(M_i)_{\oo[\Delta]}\quad \mbox{ in }\Cl(\oo[\Delta])\enspace.
\end{equation}
%We shall therefore focus on the class of $M_i$. To describe it we use Fr\"ohlich's Hom-description of the class group. 

For any place $\mathfrak{q}$ of $\oo$ we denote by $\oo_{\mathfrak{q}}$ the completion of $\oo$ at $\mathfrak{q}$ (note that $\oo_{\mathfrak{q}}=\mathbb{C}$ when $\mathfrak{q}$ is infinite). With a harmless abuse of notation, we will denote by $\zeta$ the image of $\zeta$ under the embedding $\oo\to\oo_\pp$.

\begin{proposition}\label{Mi}
For every place $\mathfrak{q}$ of $\oo$, $\oo_{\mathfrak{q}}\otimes_{\oo} M_i=x_{i,\mathfrak{q}}\oo_{\mathfrak{q}}[\Delta]$
with
$$x_{i,\mathfrak{q}}=\left\{\begin{array}{ll}
1&\textrm{if $\mathfrak{q}\ne \pp$,}\\
1+(p-1)\varepsilon_i&\textrm{if $\mathfrak{q}= \pp$,}
\end{array}\right.$$ 
where $\varepsilon_i=\frac{1}{e}\sum\limits_{j=0}^{e-1} \zeta^{ij}\delta^{-j}\in \oo_{\pp}[\Delta]$. 
\end{proposition}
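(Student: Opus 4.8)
The plan is to verify the stated generator place by place, splitting off the trivial case $\qq\ne\pp$ from the substantive case $\qq=\pp$. If $\qq\ne\pp$, then $\pp$ is not contained in the maximal ideal of the complete local ring $\oo_\qq$ (and the archimedean case, where $\oo_\qq=\mathbb{C}$, is even more trivial), so $\pp\oo_\qq=\oo_\qq$; since $M_i\subseteq\oo[\Delta]$ and tensoring with the flat $\oo$-algebra $\oo_\qq$ sends generators to generators, we get $\oo_\qq\otimes_\oo M_i=\pp\oo_\qq[\Delta]+(\delta-\zeta^i)\oo_\qq[\Delta]=\oo_\qq[\Delta]$, so $x_{i,\qq}=1$ works.

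Now fix $\qq=\pp$. I would begin by recording two elementary consequences of $p\nmid e$: first, $p$ is unramified in $\Q(\mu_e)$, so $\pp\oo_\pp=p\oo_\pp$; second, $e\in\oo_\pp^\times$ (so $\varepsilon_i$ makes sense in $\oo_\pp[\Delta]$), and for $j\not\equiv k\pmod e$ the element $\zeta^j-\zeta^k=-\zeta^k(1-\zeta^{j-k})$ is a unit in $\oo_\pp$, because $\prod_{m=1}^{e-1}(1-\zeta^m)=e$ shows that $1-\zeta^{j-k}$ divides $e$ in $\oo$. Consequently $X^e-1=\prod_{j=0}^{e-1}(X-\zeta^j)$ is a product of pairwise coprime factors in $\oo_\pp[X]$, and the Chinese Remainder Theorem gives an isomorphism of $\oo_\pp$-algebras
$$\oo_\pp[\Delta]=\oo_\pp[X]/(X^e-1)\ \xrightarrow{\ \sim\ }\ \prod_{j=0}^{e-1}\oo_\pp,\qquad \delta\longmapsto(\zeta^j)_{0\le j\le e-1}.$$
Under this isomorphism I would track the three relevant elements: a standard orthogonality computation shows that $\varepsilon_i$ maps to the $i$-th standard idempotent $(\updelta_{i,j})_{0\le j\le e-1}$, whence $x_{i,\pp}=1+(p-1)\varepsilon_i$ maps to the tuple with entry $p$ in position $i$ and $1$ elsewhere; while $\delta-\zeta^i$ maps to $(\zeta^j-\zeta^i)_{0\le j\le e-1}$, whose entries are units for $j\ne i$ and $0$ for $j=i$. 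Using $\pp\oo_\pp=p\oo_\pp$, the module $\oo_\pp\otimes_\oo M_i=\pp\oo_\pp[\Delta]+(\delta-\zeta^i)\oo_\pp[\Delta]$ therefore corresponds to the ideal of $\prod_j\oo_\pp$ which equals $p\oo_\pp$ in position $i$ and $\oo_\pp$ in every other position; this is exactly the principal ideal generated by the image of $x_{i,\pp}$, so $\oo_\pp\otimes_\oo M_i=x_{i,\pp}\oo_\pp[\Delta]$ after pulling back.

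Equivalently — and in closer analogy with the proof of Lemma~\ref{swan-repr1} — one can avoid naming the product decomposition and argue with the orthogonal idempotents $\varepsilon_j=\frac{1}{e}\sum_{k}\zeta^{jk}\delta^{-k}$ directly: setting $\varepsilon_i'=1-\varepsilon_i=\sum_{j\ne i}\varepsilon_j$, one has $(\delta-\zeta^i)\varepsilon_i=0$ and $\varepsilon_j=(\zeta^j-\zeta^i)^{-1}(\delta-\zeta^i)\varepsilon_j$ for $j\ne i$, which gives $(\delta-\zeta^i)\oo_\pp[\Delta]=\varepsilon_i'\oo_\pp[\Delta]$; then from $x_{i,\pp}=\varepsilon_i'+p\varepsilon_i$ together with $x_{i,\pp}\varepsilon_i'=\varepsilon_i'$ and $x_{i,\pp}(p\varepsilon_i'+\varepsilon_i)=p$ one obtains $p\oo_\pp[\Delta]+\varepsilon_i'\oo_\pp[\Delta]=x_{i,\pp}\oo_\pp[\Delta]$, the same conclusion. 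I expect the only points needing a word of justification to be the two number-theoretic facts above — that $p$ is unramified in $\Q(\mu_e)$ and that $\zeta^j-\zeta^k$ is a $\pp$-adic unit for $j\not\equiv k\pmod e$ — both immediate from $p\nmid e$; the rest is routine linear algebra over $\prod_j\oo_\pp$ (resp. idempotent bookkeeping).
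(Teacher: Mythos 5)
Your proof is correct, and both of your variants share the paper's central device: the orthogonal idempotents $\varepsilon_k=\frac1e\sum_j\zeta^{kj}\delta^{-j}$ of $\oo_\pp[\Delta]$ and, at the end, literally the same identity (your $x_{i,\pp}(p\varepsilon_i'+\varepsilon_i)=p$ is the paper's $p=(1+(p-1)\varepsilon_i)(p-(p-1)\varepsilon_i)$) to show that $p\oo_\pp[\Delta]+(1-\varepsilon_i)\oo_\pp[\Delta]$ is principal with generator $x_{i,\pp}$. Where you genuinely diverge is in how $\oo_\pp\otimes_\oo M_i$ gets identified with that ideal: the paper never touches the generator $\delta-\zeta^i$ at $\pp$; instead it realizes $\oo_\pp\otimes_\oo M_i$ as $\ker(\tilde\phi_i)$, observes the inclusion $\pp\oo_\pp[\Delta]+(1-\varepsilon_i)\oo_\pp[\Delta]\subseteq\ker(\tilde\phi_i)$, and forces equality by comparing the quotient $\oo_\pp[\Delta]/\big(\pp\oo_\pp[\Delta]+(1-\varepsilon_i)\oo_\pp[\Delta]\big)\cong\varepsilon_i\oo_\pp[\Delta]/\varepsilon_i\pp\oo_\pp[\Delta]\cong\kappa(\chi^i)$ with the target of $\tilde\phi_i$. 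You instead prove directly that $(\delta-\zeta^i)\oo_\pp[\Delta]=(1-\varepsilon_i)\oo_\pp[\Delta]$ (equivalently, diagonalize $\oo_\pp[\Delta]\cong\prod_j\oo_\pp$ by CRT), which requires the extra — but correctly justified — input that $\zeta^j-\zeta^i$ is a $\pp$-adic unit for $j\ne i$, via $\prod_{m=1}^{e-1}(1-\zeta^m)=e$ and $p\nmid e$ (together with $\pp\oo_\pp=p\oo_\pp$, which the paper also uses). The trade-off is minor: the paper's kernel-plus-cardinality argument avoids any arithmetic of cyclotomic units and reuses the exact sequence \eqref{esmi} it has already set up, while your route is more self-contained and makes the local structure of $M_i$ completely explicit in coordinates; both are complete proofs.
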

\begin{proof}
If $\mathfrak{q}\ne\pp$, since $\pp\oo_{\mathfrak{q}}=\oo_{\mathfrak{q}}$, we have $\oo_{\mathfrak{q}}[\Delta]=\oo_{\mathfrak{q}}\otimes_{\oo} M_i$, so we can take $x_{i,\mathfrak{q}}=1$. 

Assume $\mathfrak{q}=\pp$. For every $0\leq k\leq e-1$, consider the idempotent
$$\varepsilon_k=\frac{1}{e}\sum_{j=0}^{e-1} \zeta^{kj}\delta^{-j}\in \oo_{\pp}[\Delta]\enspace.$$ 
Then \smash{$1=\sum\limits_{k=0}^{e-1} \varepsilon_k$} and $\varepsilon_h\varepsilon_k=\updelta_{h,k}\varepsilon_h$ and therefore 
$$\oo_\pp[\Delta]=\bigoplus_{k=0}^{e-1}\varepsilon_k\oo_\pp[\Delta]\enspace.$$
Now set $\tilde\phi_i=\phi_i\otimes \mathrm{id}:\oo[\Delta]\otimes_{\oo} \oo_{\pp}=\oo_{\pp}[\Delta]\to \kappa(\chi^i)\otimes_{\oo} \oo_{\pp}=\kappa(\chi^i)$, then $\tilde\phi_i(\varepsilon_i)=1$ and
$$\pp\oo_{\pp}[\Delta]+(1-\varepsilon_i )\oo_{\pp}[\Delta]\subseteq \mathrm{ker}(\tilde \phi_i)=\oo_{\pp}\otimes_{\oo} M_i\enspace.$$ 
By the above properties of the idempotents we have
$$\oo_{\pp}[\Delta]/\big(\pp\oo_{\pp}[\Delta]+(1-\varepsilon_i )\oo_{\pp}[\Delta]\big)\cong \varepsilon_i\oo_{\pp}[\Delta]/\varepsilon_i\pp\oo_{\pp}[\Delta]\cong \kappa(\chi^i)\enspace,$$
so that 
$$\pp\oo_{\pp}[\Delta]+(1-\varepsilon_i )\oo_{\pp}[\Delta]=\oo_{\pp}\otimes_{\oo} M_i\enspace.$$
We have indeed $\pp\oo_{\pp}[\Delta]+(1-\varepsilon_i)\oo_{\pp}[\Delta]=(1+(p-1)\varepsilon_i)\oo_{\pp}[\Delta]$: using the equalities
\begin{align*}
p&=(1+(p-1)\varepsilon_i)(p-(p-1)\varepsilon_i)\enspace,\\
1-\varepsilon_i&=(1+(p-1)\varepsilon_i)(1-\varepsilon_i)\enspace,
\end{align*}
we get $\pp\oo_{\pp}[\Delta]+(1-\varepsilon_i)\oo_{\pp}[\Delta]\subseteq(1+(p-1)\varepsilon_i)\oo_{\pp}[\Delta]$, since $(p)=\pp\oo_{\pp}$; the reverse inclusion follows from 
$$1+(p-1)\varepsilon_i=p\varepsilon_i +1-\varepsilon_i\enspace.$$
Therefore we can take $x_{i,\pp}=1+(p-1)\varepsilon_i$.
\end{proof}
In view of (\ref{minus}), we get the following representative homomorphism of the class of $\kappa(\chi^i)$ in $\Cl(\oo[\Delta])$.
\begin{corollary}\label{relrep}
The homomorphism $v_i$ with
values in the id\`eles group $J(\Q(\zeta))$, defined at any
place $\qq$ of $\oo$ by  
$$v_i(\chi^h)_{\mathfrak{q}}=\mathrm{Det}_{\chi^h}(x_{i,\mathfrak{q}})=\left\{
\begin{array}{ll}
p&\textrm{if $\mathfrak{q}=\pp$, $i \equiv h\pmod{e}$,}\\
1&\textrm{otherwise.}
\end{array}\right.$$
represents the class $(\kappa(\chi^i))_{\oo[\Delta]}$ in
$\mathrm{Hom}_{\Omega_{\Q(\zeta)}}(R_{\Delta},J(\Q(\zeta)))$. 
\end{corollary}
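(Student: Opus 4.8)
The plan is to simply unwind Fröhlich's recipe for producing a Hom-representative, using the explicit local generators $x_{i,\mathfrak{q}}$ just computed in Proposition \ref{Mi}. By (\ref{minus}) we have $(\kappa(\chi^i))_{\oo[\Delta]}=(M_i)_{\oo[\Delta]}$ in $\Cl(\oo[\Delta])$, and Proposition \ref{Mi} exhibits $M_i$ as a locally free rank-one $\oo[\Delta]$-module with local generator $x_{i,\mathfrak{q}}$ at each finite place $\mathfrak{q}$ (and $x_{i,\mathfrak{q}}=1$ at the archimedean places, where the completion is $\mathbb{C}$). Fröhlich's Hom-description of $\Cl(\oo[\Delta])$, recalled in \S\ref{hdcg}, then asserts that the morphism $\chi^h\mapsto\big(\Det_{\chi^h}(x_{i,\mathfrak{q}})\big)_{\mathfrak{q}}$ lies in $\Hom_{\Omega_{\Q(\zeta)}}(R_\Delta,J(\Q(\zeta)))$ and represents the class $(M_i)_{\oo[\Delta]}$. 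So the only thing to do is to evaluate these Determinants.

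The computation is routine since $\Delta$ is abelian: by the formula at the end of \S\ref{hdcg}, for $x=\sum_{g\in\Delta}x_g g$ one has $\Det_{\chi^h}(x)=\sum_g x_g\chi^h(g)$. At a place $\mathfrak{q}\neq\pp$ we have $x_{i,\mathfrak{q}}=1$, so $\Det_{\chi^h}(x_{i,\mathfrak{q}})=1$ for all $h$. At $\mathfrak{q}=\pp$ we have $x_{i,\pp}=1+(p-1)\varepsilon_i$ with $\varepsilon_i=\frac{1}{e}\sum_{j=0}^{e-1}\zeta^{ij}\delta^{-j}$; applying $\Det_{\chi^h}$ and using $\chi^h(\delta^{-j})=\zeta^{-hj}$ gives
$$\Det_{\chi^h}(\varepsilon_i)=\frac{1}{e}\sum_{j=0}^{e-1}\zeta^{ij}\zeta^{-hj}=\frac{1}{e}\sum_{j=0}^{e-1}\zeta^{(i-h)j},$$
which is the orthogonality relation: it equals $1$ if $i\equiv h\pmod e$ and $0$ otherwise. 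Hence $\Det_{\chi^h}(x_{i,\pp})=1+(p-1)\updelta$, which is $p$ when $i\equiv h\pmod e$ and $1$ otherwise. This matches the stated formula for $v_i$.

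There is no serious obstacle here; the statement is essentially a direct corollary of Proposition \ref{Mi} combined with the definition of the Hom-description. The only minor points to be careful about are: that the idelic morphism one writes down is genuinely $\Omega_{\Q(\zeta)}$-equivariant (this is automatic from Fröhlich's theory once one knows the local generators come from an actual $\oo[\Delta]$-lattice, or can be checked directly — note $v_i$ is supported at the single prime $\pp$ above $p$ and its value there is the rational integer $p$, so equivariance under $\Omega_{\Q(\zeta)}$ is immediate since it permutes the primes above $p$ while $v_i$ would need to be constant on them; in fact it is cleaner to invoke \cite[II, Lemma 2.1]{Frohlich-Alg_numb} and specify $v_i$ only at one chosen $\pp\mid p$, as the excerpt implicitly does); and that one uses the correct normalization of Fröhlich's map so that the local generator of $M_i\otimes\oo_\mathfrak{q}$ feeds directly into $\Det$. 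Both are covered by the conventions set up in \S\ref{hdcg}, so the proof is just the orthogonality computation above.
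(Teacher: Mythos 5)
Your proposal is correct and follows exactly the paper's own argument: invoke Fr\"ohlich's recipe together with Equality (\ref{minus}) and the local generators $x_{i,\mathfrak{q}}$ from Proposition \ref{Mi}, then evaluate $\Det_{\chi^h}(\varepsilon_i)=\frac{1}{e}\sum_{j=0}^{e-1}\zeta^{(i-h)j}$, which is $1$ or $0$ according to whether $i\equiv h\pmod e$ or not. The extra remarks on $\Omega_{\Q(\zeta)}$-equivariance and the choice of a place above $p$ are consistent with the conventions of \S\ref{hdcg} and do not change the argument.
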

\begin{proof}
By Fröhlich's theory, Equality \eqref{minus} and Proposition \ref{Mi}, we know that $(v_i)_{\mathfrak{q}}=\mathrm{Det}(x_{i,\mathfrak{q}})$ represents the class $(\kappa(\chi^i))_{\oo[\Delta]}$. Let $h\in\{0,\ldots,e-1\}$, then
$$\mathrm{Det}_{\chi^h}(\varepsilon_i)=\frac{1}{e}\sum_{j=0}^{e-1}\zeta^{(i-h)j}=\left\{
\begin{array}{ll}
0&\textrm{if $i\not \equiv h\pmod{e}$}\\
1&\textrm{if $i \equiv h\pmod{e}$}
\end{array}\right.,$$
because $\zeta^{i-h}$ is a root of the polynomial $\sum_{j=0}^{e-1}X^j$ precisely when $(i-h)\not \equiv 0\pmod{e}$. The result follows.
\end{proof}

In order to get a representative homomorphism for the class $(\kappa(\chi^i))\in \Cl(\Z[\Delta])$, we just need to take the norm of $v_i$, namely 
$$\mathscr{N}(v_i)=\mathscr{N}_{\Q(\zeta)/\Q}(v_i)$$
represents $(\kappa(\chi^i))$ in $\mathrm{Hom}_{\Omega_{\Q}}(R_{\Delta},J(\Q(\zeta)))$, see \cite[Theorem 2]{Frohlich-Alg_numb}. Before recalling the definition of $\mathscr{N}_{\Q(\zeta)/\Q}$, we introduce some notation and make a remark.

%\begin{notation}
For any $\alpha\in(\Z/e\Z)^\times$, let
$\sigma_\alpha\in\mathrm{Gal}(\Q(\zeta)/\Q)$ be the
automorphism defined by $\sigma_\alpha(\zeta)=\zeta^\alpha$. For any
integer $n$, we let $\bar n=n\mod e$ denote its class modulo $e$, and we may write
$\sigma_n$ instead of $\sigma_{\bar n}$ if $n$ is coprime with $e$.
%\end{notation}
\begin{remark}\label{cyclochar}
The map $\alpha\mapsto\sigma_\alpha$ is in fact a group isomorphism 
$$\sigma:(\Z/e\Z)^\times\to\mathrm{Gal}(\Q(\zeta)/\Q)$$
which sends the subgroup $\langle\bar p\rangle$ to the decomposition subgroup of $\pp\mid p$ (see \cite[\S 13.2, Corollary to Theorem 2]{Ireland-Rosen}), specifically $\pp^{\sigma_{p}}=\pp$. Hence, if $\Lambda\in (\Z/e\Z)^\times/\langle\bar p\rangle$, we may denote by $\pp^{\sigma_\Lambda}$ the ideal $\pp^{\sigma_\lambda}$ where
$\lambda$ is any lift of $\Lambda$ in $(\Z/e\Z)^\times$. The prime ideals above $p$ in $\oo$ are exactly the conjugates $\pp^{\sigma_{\Lambda}}$ with $\Lambda\in(\Z/e\Z)^\times/\langle\bar p\rangle$ and, for $\alpha\in (\Z/e\Z)^\times$,
\begin{equation}\label{jinalpha}
\pp^{\sigma_{\alpha}}=\pp^{\sigma_{\Lambda}}\iff \alpha\in \Lambda\enspace.
\end{equation}
\end{remark}

By definition, one has
$$\mathscr{N}(v_i)(\chi^h)_{\mathfrak{q}}=\left(\prod_{k}v_i\big((\chi^h)^{\sigma_{k}}\big)^{{\sigma_{k}}^{-1}}\right)_{\mathfrak{q}}=\prod_{k}\left(v_i(\chi^{hk})_{\mathfrak{q}^{\sigma_{k}}}\right)^{\sigma_{k}^{-1}}\enspace,$$
where the product runs over the integers $k$ such that $0\le k\le e-1$
and $k$ is coprime to $e$.

\begin{proposition}\label{nvichq}
For any place $\qq$ of $\oo$ and for any $0\le h\le e-1$, we have
$$\mathscr{N}(v_i)(\chi^{h})_{\mathfrak{q}}=\left\{\begin{array}{ll}
1&\textrm{if $\mathfrak{q}\nmid p$,}\\
p^{n(\Lambda,i,h)}&\textrm{if $\mathfrak{q}=\pp^{\sigma_{\Lambda}}$ for some $\Lambda\in (\Z/e\Z)^\times/\langle\bar p\rangle$,}\\
\end{array}\right.$$
where we have set $n(\Lambda,i,h)=\#\{\alpha\in\Lambda: \alpha\bar i=\bar h\}\enspace.$
\end{proposition}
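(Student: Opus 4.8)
The plan is to unwind the definition of $\mathscr{N}(v_i)$ recalled just above the statement, substitute the explicit values of $v_i$ provided by Corollary~\ref{relrep}, and then carry out a bookkeeping of the summation index using the dictionary between the primes of $\oo$ above $p$ and the cosets of $\langle\bar p\rangle$ in $(\Z/e\Z)^\times$ furnished by Remark~\ref{cyclochar} and \eqref{jinalpha}.

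First I would note that, by Corollary~\ref{relrep}, every value of $v_i$ equals $1$ or $p$, hence lies in $\Q$; since each $\sigma_k^{-1}$ fixes $\Q$ pointwise, every factor in
$$\mathscr{N}(v_i)(\chi^{h})_{\mathfrak{q}}=\prod_{k}\left(v_i(\chi^{hk})_{\mathfrak{q}^{\sigma_{k}}}\right)^{\sigma_{k}^{-1}}$$
is unchanged by the twist. Therefore $\mathscr{N}(v_i)(\chi^h)_{\mathfrak q}=p^{N}$, where $N$ is the number of integers $k$ with $0\le k\le e-1$ and $\gcd(k,e)=1$ such that $v_i(\chi^{hk})_{\mathfrak{q}^{\sigma_k}}=p$, that is (again by Corollary~\ref{relrep}) such that $\mathfrak{q}^{\sigma_k}=\pp$ \emph{and} $hk\equiv i\pmod e$ hold simultaneously.

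If $\mathfrak{q}\nmid p$, then $\mathfrak{q}^{\sigma_k}$ lies above the same rational prime as $\mathfrak{q}$ (because $\sigma_k$ fixes $\Z$), so it is never equal to $\pp$; hence $N=0$ and the $\mathfrak q$-component is $1$, which is the first case. If $\mathfrak{q}=\pp^{\sigma_\Lambda}$, I would fix a lift $\lambda\in(\Z/e\Z)^\times$ of $\Lambda$ and use that $k\mapsto \bar k$ is a bijection from $\{k:0\le k\le e-1,\ \gcd(k,e)=1\}$ onto $(\Z/e\Z)^\times$, thereby rewriting $N$ as a count over $\alpha\in(\Z/e\Z)^\times$. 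By Remark~\ref{cyclochar} one has $\mathfrak{q}^{\sigma_k}=\pp^{\sigma_{\lambda k}}$ (using that $\sigma$ is a homomorphism into an abelian group), and by \eqref{jinalpha} this equals $\pp$ precisely when $\lambda k\in\langle\bar p\rangle$, i.e. when $\alpha=\bar k\in\Lambda^{-1}$; meanwhile the congruence $hk\equiv i\pmod e$ reads $\alpha\bar h=\bar i$ in $\Z/e\Z$. Thus $N=\#\{\alpha\in\Lambda^{-1}:\alpha\bar h=\bar i\}$, and the substitution $\alpha\mapsto\alpha^{-1}$ (legitimate since $\alpha$ is a unit) converts this into $\#\{\alpha\in\Lambda:\alpha\bar i=\bar h\}=n(\Lambda,i,h)$, which is the desired formula.

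The whole argument is a chain of elementary reindexings, so I expect no serious obstacle; the one point I would check most carefully is the final substitution, which must land on the coset $\Lambda$ (not $\Lambda^{-1}$) \emph{and} on the relation $\alpha\bar i=\bar h$ (not $\alpha\bar h=\bar i$) simultaneously — it is exactly the inversion $\alpha\mapsto\alpha^{-1}$ that flips both at once, which is why the statement is phrased with $n(\Lambda,i,h)=\#\{\alpha\in\Lambda:\alpha\bar i=\bar h\}$. The degenerate case $e=1$ (so $\Delta$ trivial) can be dispatched separately, where every assertion is immediate.
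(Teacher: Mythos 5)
Your proof is correct and follows essentially the same route as the paper's: unwind the definition of $\mathscr{N}(v_i)$, substitute the values from Corollary \ref{relrep}, and use \eqref{jinalpha} to translate $\qq^{\sigma_k}=\pp$ into a coset condition. The only (cosmetic) difference is that you index by $\alpha=\bar k\in\Lambda^{-1}$ and then invert, while the paper counts directly via $\alpha=\bar k^{-1}\in\Lambda$.
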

\begin{proof}
The case $\mathfrak{q}\nmid p$ follows immediately from the above, so we assume that $\mathfrak{q}=\pp^{\sigma_{\Lambda}}$ for some $\Lambda\in (\Z/e\Z)^\times/\langle\bar p\rangle$. Then $\mathfrak{q}^{\sigma_{k}}=\pp$ if and only if ${\bar k}^{-1}\in\Lambda$, by (\ref{jinalpha}). Thus, using Corollary \ref{relrep},
$$v_i(\chi^{hk})_{\mathfrak{q}^{\sigma_{k}}}
=\left\{
\begin{array}{ll}
p&\textrm{if ${\bar k}^{-1}\in \Lambda$, $i\equiv hk\pmod{e}$;}\\
1&\textrm{otherwise.}
\end{array}\right. $$
The result follows.
\end{proof}

\subsubsection{}
Since the Hom-description (\ref{hom-des}) is a group isomorphism, the classes 
$$(R)=\prod_{i=1}^{e-1}\left(\kappa(\chi^i)\right)^i\quad\textrm{and, if $e$ is odd,} \quad (S)=\prod_{i=\frac{e+1}{2}}^{e-1}\left(\kappa(\chi^i)\right)$$
are represented respectively in $\mathrm{Hom}_{\Omega_{\Q}}(R_{\Delta},J(\Q(\zeta)))$ by the homomorphisms:
$$r=\prod_{i=1}^{e-1}\mathscr{N}(v_i)^{i}\quad\textrm{and} \quad s=\prod_{i=\frac{e+1}{2}}^{e-1}\mathscr{N}(v_i)\enspace.$$
We immediately get the following result.
\begin{corollary}\label{rs-rep}
let $0\le h\le e-1$ then, if $\mathfrak{q}\nmid p$:
$$r(\chi^h)_{\mathfrak{q}}=s(\chi^h)_{\mathfrak{q}}=1$$
and, if $\qq=\pp^{\sigma_{\Lambda}}$ for some $\Lambda\in(\Z/e\Z)^\times/\langle\bar
p\rangle$:
$$
r(\chi^h)_{\qq}=p^{\sum_{i=1}^{e-1}in(\Lambda,i,h)}\enspace,\quad
s(\chi^h)_{\qq}=p^{\sum_{i=\frac{e+1}{2}}^{e-1}n(\Lambda,i,h)}\enspace.
$$
\end{corollary}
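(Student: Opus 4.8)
The plan is to deduce Corollary~\ref{rs-rep} directly from Proposition~\ref{nvichq} by taking products componentwise. First I would recall, as already established just above, that $(R)$ and $(S)$ are represented in $\Hom_{\Omega_\Q}(R_\Delta,J(\Q(\zeta)))$ by $r=\prod_{i=1}^{e-1}\mathscr{N}(v_i)^{i}$ and $s=\prod_{i=\frac{e+1}{2}}^{e-1}\mathscr{N}(v_i)$; this uses that the Hom-description \eqref{hom-des} is a group isomorphism (so products of classes correspond to products of representatives) and that $\mathscr{N}(v_i)$ represents $(\kappa(\chi^i))$, via \cite[Theorem 2]{Frohlich-Alg_numb}. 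Then, since evaluation of a homomorphism at the fixed character $\chi^h$ and projection onto the $\qq$-component are both group homomorphisms into $(\Q(\zeta)_\qq)^\times$, one has
$$r(\chi^h)_\qq=\prod_{i=1}^{e-1}\bigl(\mathscr{N}(v_i)(\chi^h)_\qq\bigr)^{i},\qquad s(\chi^h)_\qq=\prod_{i=\frac{e+1}{2}}^{e-1}\mathscr{N}(v_i)(\chi^h)_\qq$$
for every $0\le h\le e-1$ and every place $\qq$ of $\oo$.

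Next I would substitute the two cases of Proposition~\ref{nvichq}. When $\qq\nmid p$, each factor $\mathscr{N}(v_i)(\chi^h)_\qq$ equals $1$, so $r(\chi^h)_\qq=s(\chi^h)_\qq=1$. When $\qq=\pp^{\sigma_\Lambda}$ for some $\Lambda\in(\Z/e\Z)^\times/\langle\bar p\rangle$, one has $\mathscr{N}(v_i)(\chi^h)_\qq=p^{n(\Lambda,i,h)}$, whence $r(\chi^h)_\qq=\prod_{i=1}^{e-1}p^{i\,n(\Lambda,i,h)}=p^{\sum_{i=1}^{e-1}i\,n(\Lambda,i,h)}$ and $s(\chi^h)_\qq=p^{\sum_{i=\frac{e+1}{2}}^{e-1}n(\Lambda,i,h)}$, which is the assertion. (For $s$ one uses the standing convention that $e$ is odd, so that $\tfrac{e+1}{2}$ is an integer.) I do not foresee any real obstacle here: all the substance is in Proposition~\ref{nvichq}, and this last step is merely bookkeeping with the exponents.
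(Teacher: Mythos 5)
Your proposal is correct and is essentially the paper's own argument: the paper likewise notes that, since the Hom-description is a group isomorphism, $(R)$ and $(S)$ are represented by $r=\prod_{i=1}^{e-1}\mathscr{N}(v_i)^{i}$ and $s=\prod_{i=\frac{e+1}{2}}^{e-1}\mathscr{N}(v_i)$, and then obtains the corollary immediately by substituting the componentwise values from Proposition \ref{nvichq}. Your bookkeeping with the exponents (and the remark that $e$ odd makes $\tfrac{e+1}{2}$ an integer) is exactly the intended "immediate" step.
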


\subsubsection{}
We now explicitly compute the numbers $n(\Lambda,i,h)$ introduced above for
$\Lambda\in(\Z/e\Z)^\times/\langle\bar p\rangle$,
$i,h\in\{0,\ldots,e-1\}$. We extend the definition to
$\Lambda'\in(\Z/e'\Z)^\times/\langle(p\!\mod e')\rangle$ for 
any divisor $e'$ of $e$, $i',h'\in\Z$, by setting:
$$n(\Lambda',i',h')=\#\{\alpha'\in\Lambda': \alpha'(i'\!\!\!\mod
e')=(h'\!\!\!\mod e')\}\enspace.$$
Of course $n(\Lambda',i',h')$ only depends on $\Lambda'$ and the
residue classes $(i'\!\!\!\mod e')$ and $(h'\!\!\!\mod e')$ of $i'$
and $h'$ modulo $e'$.
For any divisor $d$ of $e$, let $f_d$ denote the (multiplicative)
order of $p$ modulo $d$ (thus $f_e=f$). The greatest common divisor of
integers $a,b$ is denoted by $\gcd(a,b)$.

\begin{lemma}\label{nlih}
Let $\Lambda\in(\Z/e\Z)^\times/\langle\bar p\rangle$,
$i,h\in\{0,\ldots,e-1\}$, then:
\begin{enumerate}[(i)]
\item $n(\Lambda,i,h)\not=0\Rightarrow \gcd(i,e)=\gcd(h,e)$;
\item suppose $\gcd(i,e)=\gcd(h,e)=d$ and set $i=di'$, $h=dh'$, $e=de'$, one
  has
$$n(\Lambda,i,h)
%=\frac{f}{f_{e'}}\,n(\Lambda',i',h')
=\left\{\begin{array}{ll}
f/f_{e'} & \textrm{if $(h'\!\!\!\mod e')\in(i'\!\!\!\mod e')\Lambda'$,}\cr
0 & \textrm{otherwise,}

\end{array}\right.
$$
where $\Lambda'=(\Lambda\mod e')\in(\Z/e'\Z)^\times/\langle(p\!\mod e')\rangle$.
\end{enumerate}
\end{lemma}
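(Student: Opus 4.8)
\textbf{Proof plan for Lemma \ref{nlih}.}

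The plan is to analyse the defining set $\{\alpha\in\Lambda:\alpha\bar\imath=\bar h\text{ in }\Z/e\Z\}$ directly, keeping track of how the divisibility structure of $i$ and $h$ modulo $e$ constrains the multipliers $\alpha$. First I would prove (i): if $\alpha\in(\Z/e\Z)^\times$ satisfies $\alpha\bar\imath=\bar h$, then since $\alpha$ is a unit, multiplication by $\alpha$ is an automorphism of $\Z/e\Z$, hence preserves the order of an element; equivalently it preserves the gcd with $e$. Thus $\gcd(i,e)=\gcd(\alpha i,e)=\gcd(h,e)$, which is the contrapositive of (i). This also shows that whenever $n(\Lambda,i,h)\ne0$ we may write $d=\gcd(i,e)=\gcd(h,e)$, $i=di'$, $h=dh'$, $e=de'$ with $\gcd(i',e')=\gcd(h',e')=1$.

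Next, for (ii), I would reduce the congruence $\alpha i\equiv h\pmod e$ to one modulo $e'$: writing $i=di'$, $h=dh'$, $e=de'$, the congruence $\alpha di'\equiv dh'\pmod{de'}$ is equivalent to $\alpha i'\equiv h'\pmod{e'}$ (divide through by $d$). Since $i'$ is a unit modulo $e'$, this forces $\alpha\equiv h'(i')^{-1}\pmod{e'}$, a single prescribed residue class $\beta_0\in(\Z/e'\Z)^\times$. So the set of admissible $\alpha$ is exactly $\{\alpha\in\Lambda:\alpha\bmod e'=\beta_0\}$. This is nonempty if and only if $\beta_0$ lies in the image $\Lambda'=(\Lambda\bmod e')$ of $\Lambda$ under the reduction map $(\Z/e\Z)^\times\to(\Z/e'\Z)^\times$ — that is, if and only if $(h'\bmod e')\in(i'\bmod e')\Lambda'$, which is the condition stated. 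When it is nonempty, its cardinality equals the cardinality of a fibre of the composite map $\Lambda\to(\Z/e'\Z)^\times/\langle(p\bmod e')\rangle$ followed by... — more precisely, I would count as follows. The class $\Lambda$ is a coset of $\langle\bar p\rangle$, of size $f=f_e$; reduction mod $e'$ sends $\langle\bar p\rangle$ onto $\langle(p\bmod e')\rangle$, of size $f_{e'}$. Among the $f$ elements of $\Lambda$, those reducing to the single fixed residue $\beta_0$ modulo $e'$ form, when nonempty, a coset of $\ker$ of the reduction map restricted appropriately; a clean way is to observe that the fibres of $\Lambda\twoheadrightarrow\Lambda'$ (as sets, where $\Lambda'$ is viewed inside $(\Z/e'\Z)^\times$, with $\#\Lambda'=f_{e'}$) all have the same size $f/f_{e'}$ by homogeneity under translation by $\langle\bar p\rangle$, and $\beta_0\in\Lambda'$ exactly in the stated case. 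Hence $n(\Lambda,i,h)=f/f_{e'}$ or $0$ accordingly.

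I expect the main technical point to be the last counting step — verifying cleanly that every nonempty fibre of the reduction $\Lambda\to(\Z/e'\Z)^\times$ has exactly $f/f_{e'}$ elements. The cleanest justification is that $\Lambda=g\langle\bar p\rangle$ for some $g$, reduction mod $e'$ is a group homomorphism sending $\langle\bar p\rangle$ onto $\langle(p\bmod e')\rangle$ with kernel (intersected with $\langle\bar p\rangle$) of order $f/f_{e'}$, so the fibres over points of $\bar g\langle(p\bmod e')\rangle=\Lambda'$ are precisely the cosets of that kernel, all of size $f/f_{e'}$, while fibres over points outside $\Lambda'$ are empty. One must also double-check the harmless point, already noted in the paragraph preceding the lemma, that $n(\Lambda',i',h')$ depends only on the residues of $i',h'$ modulo $e'$, so that restating the condition as $(h'\bmod e')\in(i'\bmod e')\Lambda'$ is legitimate; this is immediate from the definition.
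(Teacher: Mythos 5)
Your argument is correct and follows essentially the same route as the paper: part (i) via the fact that units preserve gcd with $e$, and part (ii) by reducing the congruence modulo $e'$, pinning $\alpha$ to the single residue $h'(i')^{-1}$ mod $e'$, and counting $f/f_{e'}$ solutions inside $\Lambda$. The only cosmetic difference is that you count via the fibres of the reduction homomorphism restricted to the coset $\Lambda=g\langle\bar p\rangle$, whereas the paper parametrizes $\Lambda$ as $\{ap^k:0\le k\le f-1\}$ and invokes periodicity in $k$ modulo $f_{e'}$ — the same count in different clothing, and your uniform treatment quietly absorbs the paper's separate $d=e$ case.
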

\begin{proof}
Suppose $n(\Lambda,i,h)\not=0$. Let $\alpha\in\Lambda$ be such that
$\alpha\bar i=\bar h$ and let $a\in\alpha$ (so $a\in\Z$), then
$ai\equiv h\pmod{e}$ and the equality $\gcd(i,e)=\gcd(h,e)$ follows since $\gcd(a,e)=1$. 

We now assume the condition of assertion (ii) is satisfied and use the
same notations. If $d=e$,
$n(\Lambda,i,h)=n(\Lambda,0,0)=\#\Lambda=f=f/f_1$ and $0\in 0\Lambda'$
is always satisfied. Otherwise, $(i',e')=1$ and one has, for any
$\alpha'\in(\Z/e'\Z)^\times$:
$$\alpha'(i'\!\!\!\mod e')=(h'\!\!\!\mod e')\Longleftrightarrow\alpha'=(h'\!\!\!\mod e')(i'\!\!\!\mod e')^{-1}\enspace,$$
hence $n(\Lambda',i',h')=1$ or $0$ depending on whether $(h'\!\!\!\mod
e')(i'\!\!\!\mod e')^{-1}$ belongs to $\Lambda'$ or not. We thus only have
to show that 
\begin{equation}\label{nlih'}
n(\Lambda,i,h)=\frac{f}{f_{e'}}\,n(\Lambda',i',h')\enspace.
\end{equation}
As above, let $\alpha\in\Lambda$ and $a\in\alpha$. We may rewrite
$n(\Lambda,i,h)$ as 
\begin{align*}
n(\Lambda,i,h)
&=\#\{0\leq k\leq f-1: \alpha\overline{p}^k\bar i=\bar h\}\\
&=\#\{0\leq k\leq f-1: a{p}^ki\equiv h\pmod{e}\}\\
&=\#\{0\leq k\leq f-1: a{p}^ki'\equiv h'\pmod{e'}\}
\enspace.
\end{align*}
Note that, if we set $\alpha'=(\alpha\mod e')$, then $a\in\alpha'$ and
$\alpha'\in\Lambda'$, hence, similarly:
$$n(\Lambda',i',h')=\#\{0\leq k\leq f_{e'}-1: a{p}^ki'\equiv h'\pmod{e'}\}
\enspace.$$
The result follows since $f_{e'}$ is the order of $p$ in $(\Z/e'\Z)^\times$.
\end{proof}

%%%%%%%%%%%%%%%%%%%%%%%%%%%%%%%%%%%%%%%%%%
\subsection{The contents of $s$ and $r$}\label{section:content}
In this subsection we compute the contents of the id\`eles $\mathscr{N}(v_i)(\chi^h)$, $s(\chi^h)$ and $r(\chi^h)$ for $0\le i,h\le e-1$. Recall that the content of an id\`ele $x=(x_\qq)_\qq\in J(\Q(\zeta))$ is the fractional ideal $cont(x)=\prod_\qq\qq^{\val_\qq(x_\qq)}$ of $\Q(\zeta)$, where $\val_\qq$ is the $\qq$-valuation and the product runs over finite prime ideals $\qq$ of $\oo$. 

Since the valuation of $p$ at a prime ideal $\qq=\pp^{\sigma_{\Lambda}}$ with $\Lambda\in(\Z/e\Z)^\times/\langle\bar p\rangle$ equals $1$, it follows from Proposition \ref{nvichq} and Corollary \ref{rs-rep} that 
% the contents of $s(\chi^h)$ and $r(\chi^h)$ respectively equal:
\begin{align}
cont(\mathscr{N}(v_i)(\chi^h))& = \pp^{\sum\limits_{\Lambda}n(\Lambda,i,h)\,\sigma_{\Lambda}}\label{cnvi}\\
cont(s(\chi^h))& = \pp^{\sum\limits_{\Lambda}\sum_{i=\frac{e+1}{2}}^{e-1}n(\Lambda,i,h)\,\sigma_{\Lambda}}\label{cs}\\
cont(r(\chi^h))& = \pp^{\sum\limits_{\Lambda}\sum_{i=1}^{e-1}in(\Lambda,i,h)\,\sigma_{\Lambda}}\label{cr}
\end{align} 
where in each sum $\Lambda$ runs over $(\Z/e\Z)^\times/\langle\bar p\rangle$.

\subsubsection{}\label{contentnvi}
Since $\mathscr{N}(v_i)$, $s$ and $r$ are $\Omega_\Q$-equivariant, their values on $R_\Delta$ are determined by the values at $\chi^d$, with $d\mid e$. Namely, if $h$ is an integer and $d$ is the greatest common divisor of $h$ and $e$, we write $h=dh'$ and get 
%and $e=de'$, then
\begin{equation}\label{equivariance}
\mathscr{N}(v_i)(\chi^h)=\mathscr{N}(v_i)\big((\chi^d)^{\sigma_{h'}}\big)=\mathscr{N}(v_i)(\chi^d)^{\sigma_{h'}}
\end{equation}
and analogously for $s$ and $r$.

For any $d\mid e$, write $e=de'$ and set $\zeta_{e'}=\zeta^{d}$ (thus $\zeta_e=\zeta$); for $\alpha'\in(\Z/e'\Z)^\times$, let $\sigma_{e',\alpha'}\in\mbox{Gal}(\Q(\zeta_{e'})/\Q)$ be the automorphism sending $\zeta_{e'}$ to $\zeta_{e'}^{\alpha'}$ (thus $\sigma_{e,\alpha'}=\sigma_{\alpha'}$). Since $e'\mid e$, $\Q(\zeta_{e'})\subseteq\Q(\zeta)$, hence $\sigma_{e',\alpha'}$ can be lifted in $\mbox{Gal}(\Q(\zeta)/\Q)$ (in $\varphi(e)/\varphi(e')$ different ways). To ease notation, if $j$ is an integer with $(j,e')=1$, we may write $\sigma_{e',j}$ instead of $\sigma_{e',(j\!\!\mod e')}$.

%In the proof of the next results, we shall need some more notations, that will also be used later: for any divisor $d$ of $e$, set
We also set
$\oo_{e'}=\Z[\zeta_{e'}]$ and $\pp_{e'}=\pp\cap \oo_{e'}$ (thus $\oo_e=\oo$ and $\pp_e=\pp$).
If $\alpha'\in(\Z/e'\Z)^\times$, the ideal $\pp_{e'}^{\sigma_{e',\alpha'}}$ only depends on the class of $\alpha'$ modulo $(p\!\!\mod e')$. So if $\Lambda'\in(\Z/e'\Z)^\times/\langle(p\!\!\mod e')\rangle$, we denote by $\pp_{e'}^{\sigma_{\Lambda'}}$ the ideal $\pp_{e'}^{\sigma_{e',\alpha'}}$ where $\alpha'$ is any lift of $\Lambda'$ in $(\Z/e'\Z)^\times$.  
\begin{lemma}\label{decomposition}
Let $d\mid e$ and set $e=de'$. Let $\Lambda'\in(\Z/e'\Z)^\times/\langle(p\mod e')\rangle$, then
$$\pp^{\sum\limits_{\Lambda\in\Lambda'}\sigma_\Lambda}=\pp_{e'}^{\sigma_{\Lambda'}}\oo\enspace,$$
where the sum is on the elements $\Lambda$ of the coset $\Lambda'$ in $(\Z/e\Z)^\times/\langle(p\mod e)\rangle$.
\end{lemma}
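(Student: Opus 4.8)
The plan is to deduce the identity from the standard decomposition theory of $p$ in the tower of cyclotomic fields $\Q\subseteq\Q(\zeta_{e'})\subseteq\Q(\zeta)$, where $\zeta=\zeta_e$ and $e=de'$. Write $L=\Q(\zeta)$ and $F=\Q(\zeta_{e'})$, so that $\oo=\oo_L$ and $\oo_{e'}=\oo_F$. The two general facts I will invoke are: first, restriction of automorphisms $\Gal(L/\Q)\to\Gal(F/\Q)$ corresponds, under $\sigma$ and $\sigma_{e',\cdot}$, to the surjective reduction map $\mathrm{red}\colon(\Z/e\Z)^\times\to(\Z/e'\Z)^\times$; concretely $\sigma_\alpha|_F=\sigma_{e',\alpha}$ for $\alpha$ coprime to $e$, since $\sigma_\alpha(\zeta_{e'})=\sigma_\alpha(\zeta^d)=\zeta^{d\alpha}=\zeta_{e'}^{\alpha}$; second, since $p\nmid e$, $p$ is unramified in $L/\Q$, hence in every subextension, so the extension to $\oo$ of a prime of $\oo_{e'}$ above $p$ is a squarefree product of primes of $\oo$.

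First I would realize $\qq':=\pp_{e'}^{\sigma_{\Lambda'}}$ as the contraction of a prime of $\oo$. Choose $\alpha'\in(\Z/e'\Z)^\times$ in the class $\Lambda'$ and lift it to $\alpha\in(\Z/e\Z)^\times$; since $\sigma_\alpha$ preserves $\oo_{e'}$ and $\sigma_\alpha|_F=\sigma_{e',\alpha'}$, one gets $\qq'=\sigma_\alpha(\pp\cap\oo_{e'})=\pp^{\sigma_\alpha}\cap\oo_{e'}$. The same computation shows, for every $\beta\in(\Z/e\Z)^\times$, that $\pp^{\sigma_\beta}\cap\oo_{e'}=\pp_{e'}^{\sigma_{e',\mathrm{red}(\beta)}}$. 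Combined with the observation (recalled just before the lemma) that $\pp_{e'}^{\sigma_{e',\gamma'}}$ depends only on the class of $\gamma'$ modulo $\langle p\bmod e'\rangle$, this gives the criterion: $\pp^{\sigma_\beta}$ lies above $\qq'$ if and only if $\mathrm{red}(\beta)\in\Lambda'$.

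It then remains to count. By Remark \ref{cyclochar} every prime of $\oo$ above $p$ is of the form $\pp^{\sigma_\beta}$, and $\pp^{\sigma_\beta}=\pp^{\sigma_{\beta_1}}$ precisely when $\beta,\beta_1$ lie in one coset of $\langle p\bmod e\rangle$; moreover $\mathrm{red}$ carries such a coset onto a coset of $\langle p\bmod e'\rangle$, so the condition $\mathrm{red}(\beta)\in\Lambda'$ depends only on the class $\Lambda$ of $\beta$ in $(\Z/e\Z)^\times/\langle p\bmod e\rangle$ and is equivalent to $\mathrm{red}(\Lambda)=\Lambda'$, i.e.\ to $\Lambda$ being one of the cosets $\Lambda\in\Lambda'$ occurring in the statement. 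Hence the primes of $\oo$ lying over $\qq'$ are exactly the pairwise distinct ideals $\pp^{\sigma_\Lambda}$ with $\Lambda\in\Lambda'$, and since $p$ is unramified in $L/F$ this yields
\[\pp_{e'}^{\sigma_{\Lambda'}}\oo=\qq'\oo=\prod_{\mathfrak P\mid\qq'}\mathfrak P=\prod_{\Lambda\in\Lambda'}\pp^{\sigma_\Lambda}=\pp^{\sum_{\Lambda\in\Lambda'}\sigma_\Lambda},\]
which is the desired equality.

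I do not expect a genuine obstacle here: this is routine ramification theory for abelian extensions, and the only point requiring care is the bookkeeping that keeps separate an element $\beta\in(\Z/e\Z)^\times$, its class modulo $\langle p\bmod e\rangle$ (indexing a prime of $\oo$) and its reduction modulo $e'$ (indexing a prime of $\oo_{e'}$). Making the compatibility of $\mathrm{red}$ with these two quotients explicit, as above, is exactly what legitimizes the notation $\Lambda\in\Lambda'$ and closes the argument.
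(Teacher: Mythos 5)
Your proof is correct and follows essentially the same route as the paper's: both rest on the compatibility $\sigma_\alpha|_{\Q(\zeta_{e'})}=\sigma_{e',\,\alpha\bmod e'}$ together with the fact that $p$ is unramified in $\Q(\zeta)/\Q(\zeta_{e'})$. The only cosmetic difference is that you pin down exactly which primes of $\oo$ lie over $\pp_{e'}^{\sigma_{\Lambda'}}$ via contraction, whereas the paper proves only the divisibility $\pp^{\sum_{\Lambda\in\Lambda'}\sigma_\Lambda}\mid\pp_{e'}^{\sigma_{\Lambda'}}\oo$ and then concludes by counting that the number of primes of $\oo$ above $\pp_{e'}$ equals $\#\Lambda'$.
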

\begin{proof}
Let $\Lambda\in(\Z/e\Z)^\times/\langle\bar p\rangle$ and $\Lambda'\in(\Z/e'\Z)^\times/\langle(p\!\!\mod e')\rangle$ then, since $\pp\mid\pp_{e'}\oo$,
$$\Lambda\in\Lambda'\ \Rightarrow\ 
{\sigma_\Lambda}_{|_{\Q(\zeta_{e'})}}=\sigma_{\Lambda'}\ \Rightarrow\ 
\pp^{\sigma_\Lambda}\mid\pp_{e'}^{\sigma_{\Lambda'}}\oo\enspace.$$
It follows that $\pp^{\sum_{\Lambda\in\Lambda'}\sigma_\Lambda}\mid\pp_{e'}^{\sigma_{\Lambda'}}\oo$. Since $\Q(\zeta)/\Q(\zeta_{e'})$ is unramified at $\pp_{e'}$, the number of primes above $\pp_{e'}$ in $\oo$ equals $\frac{\varphi(e)}{\varphi(e')}/\frac{f}{f_{e'}}=\frac{\#(\Z/e\Z)^\times/\langle\bar p\rangle}{\#(\Z/e'\Z)^\times/\langle(p\!\!\mod e')\rangle}=\#\Lambda'\,$ (as a coset in $(\Z/e\Z)^\times/\langle\bar p\rangle$). The result follows.
%$$\pp^{\sum_{\Lambda\in\Lambda'}\sigma_\Lambda}=\pp_{e'}^{\sigma_{\Lambda'}}\oo\enspace.$$
\end{proof}

\begin{proposition}\label{prop:contentnvi}
Let $d\mid e$ and set $e=de'$, then
$$cont(\mathscr{N}(v_i)(\chi^d))=
\left\{\begin{array}{ll}
\oo & \mbox{if }\gcd(i,e)\not=d \\
\left(\pp_{e'}^{\sigma_{\Lambda'_i}}\oo\right)^{f/f_{e'}} & \mbox{if } \gcd(i,e)=d
\end{array}\right.$$
where $i=di'$ and $\Lambda'_i\in(\Z/e'\Z)^\times/\langle(p\mod e')\rangle$ is such that $(i'\mod e')^{-1}\in\Lambda'_i$.
\end{proposition}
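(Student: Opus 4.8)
The plan is to read the content of $\mathscr N(v_i)(\chi^d)$ straight off the formula \eqref{cnvi} from Subsection \ref{section:content}, namely
$$cont(\mathscr N(v_i)(\chi^d))=\pp^{\sum_\Lambda n(\Lambda,i,d)\,\sigma_\Lambda}\ ,$$
where $\Lambda$ ranges over $(\Z/e\Z)^\times/\langle\bar p\rangle$, and then to evaluate the exponent by substituting the explicit values of $n(\Lambda,i,d)$ provided by Lemma \ref{nlih}. The only preliminary remark needed is that for $h=d$ one has $\gcd(h,e)=d$, and writing $h=dh'$ forces $h'=1$; this is what specializes Lemma \ref{nlih} to the situation at hand.

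First I would dispose of the case $\gcd(i,e)\neq d$: by Lemma \ref{nlih}(i), $n(\Lambda,i,d)=0$ for every $\Lambda$, hence $cont(\mathscr N(v_i)(\chi^d))=\oo$. In the remaining case $\gcd(i,e)=d$, write $i=di'$. Then Lemma \ref{nlih}(ii) with $h'=1$ says that $n(\Lambda,i,d)$ equals $f/f_{e'}$ exactly when $(1\!\!\mod e')\in(i'\!\!\mod e')\Lambda'$, i.e.\ when $\Lambda':=(\Lambda\!\!\mod e')$ is the coset $\Lambda'_i\in(\Z/e'\Z)^\times/\langle(p\!\!\mod e')\rangle$ containing $(i'\!\!\mod e')^{-1}$, and equals $0$ otherwise. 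Plugging this into the displayed formula and factoring out the common exponent $f/f_{e'}$ leaves
$$cont(\mathscr N(v_i)(\chi^d))=\Bigl(\pp^{\sum_{\Lambda\in\Lambda'_i}\sigma_\Lambda}\Bigr)^{f/f_{e'}}\ ,$$
where the inner sum runs over those cosets $\Lambda$ in $(\Z/e\Z)^\times/\langle\bar p\rangle$ whose reduction modulo $e'$ is $\Lambda'_i$. Lemma \ref{decomposition} identifies the inner ideal $\pp^{\sum_{\Lambda\in\Lambda'_i}\sigma_\Lambda}$ with $\pp_{e'}^{\sigma_{\Lambda'_i}}\oo$, which yields precisely the asserted formula.

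I expect no real obstacle here: the proposition is a bookkeeping corollary of Lemmas \ref{nlih} and \ref{decomposition}. The one point deserving attention is keeping the two layers of cosets straight — $(\Z/e\Z)^\times/\langle\bar p\rangle$ versus $(\Z/e'\Z)^\times/\langle(p\!\!\mod e')\rangle$ — and observing that it is exactly the reduction map between them that reorganizes the exponent sum into a sum over a single fiber, so that Lemma \ref{decomposition} applies as stated.
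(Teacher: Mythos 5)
Your proposal is correct and follows essentially the same route as the paper's own proof: read off the exponent from \eqref{cnvi}, evaluate $n(\Lambda,i,d)$ via Lemma \ref{nlih} (with $h'=1$), and collapse the fiber sum $\pp^{\sum_{\Lambda\in\Lambda'_i}\sigma_\Lambda}$ to $\pp_{e'}^{\sigma_{\Lambda'_i}}\oo$ by Lemma \ref{decomposition}. Nothing further is needed.
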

\begin{proof}
Since $\gcd(d,e)=d$, the result is clear from \eqref{cnvi} and Lemma \ref{nlih} in the case $\gcd(i,e)\not=d$, hence we now assume $\gcd(i,e)=d$ and write $i=di'$. Then $i'$ and $e'$ are coprime so let $\Lambda'_i\in(\Z/e'\Z)^\times/\langle(p\mod e')\rangle$ be such that $(i'\mod e')^{-1}\in\Lambda'_i$.
From Lemma \ref{nlih} we know that $n(\Lambda,i,d)=f/f_{e'}$ if $\Lambda\in\Lambda'_i$, $0$ otherwise, hence from \eqref{cnvi} we get
$$cont(\mathscr{N}(v_i)(\chi^d))=\pp^{\frac{f}{f_{e'}}\sum\limits_{\Lambda\in\Lambda'_i}\sigma_\Lambda}$$
and the result follows using Lemma \ref{decomposition}.
\end{proof}

\begin{remark}\label{kappacontent}
Using (\ref{equivariance}), it follows that, for every $i,h\in\{0,\,\ldots,\, e-1\}$, the ideal $cont(\mathscr{N}(v_i)(\chi^h))$ is either trivial or, if $\gcd(i,e)=\gcd(h,e)=d$, the extension to $\oo$ of an ideal of $\oo_{e'}$ ($e'=e/d$) whose absolute norm is congruent to $1$ modulo $e$. (Indeed, by definition of $f_{e'}$, the absolute norm of $\pp_{e'}^{f/f_{e'}}$ and of its conjugates is $(p^{f/f_{e'}})^{f_{e'}}=p^f$ which is congruent to $1$ modulo $e$.)
\end{remark}
%\begin{proof}
%We use the same notation as in Lemma \ref{nlih}. Let $\Lambda'$ be the class in $(\Z/e'\Z)^\times/\langle({p \mod e'})\rangle$ such that 
%$$({h'\mod e'})\in({i'\mod e'})\Lambda'$$
%($\Lambda'$ exists and is unique since both $h'$ and $i'$ are coprime with $e'$). In particular, by Proposition \ref{nvichq} and Lemma \ref{nlih}, the ideal defined by $\mathscr{N}(v_i)(\chi^h)$ is $(\prod_{\Lambda\in \Lambda'}\pp^{\sigma_\Lambda})^{f/f_e'}$. This is clearly the extension to $\Q(\mu_e)$ of the ideal $\pp_{e'}^{f/f_{e'}}$ where $\pp_{e'}= \Q(\mu_{e'})\cap \pp$ (note that $\Q(\mu_{e'})=\Q(\chi^h(\Delta))$). By definition of $f_{e'}$, the absolute norm of $\pp_{e'}^{f/f_{e'}}$ is $(p^{f/f_{e'}})^{f_{e'}}=p^f$ which is congruent to $1$ modulo $e$.
%\end{proof}

\subsubsection{}
We begin by computing the content of $r$: the strategy for the content of $s$ will be similar but the calculations for $r$ are simpler.

For any divisor $e'$ of $e$, we denote by $\ZZ_{e'}$ the subgroup of $(\Z/e\Z)^\times$ of elements congruent to $1$ modulo $e'$, namely 
$$\ZZ_{e'}=\sigma^{-1}\big(\Gal(\Q(\zeta)/\Q(\zeta_{e'}))\big)\enspace,$$
where $\sigma:\Gal(\Q(\zeta)/\Q(\zeta_{e'}))\to(\Z/e\Z)^\times$ is the isomorphism of Remark \ref{cyclochar}. 
We also introduce the relative norm and Stickelberger's element: 
\begin{align*}
N_{e,e'}&=\sum_{\alpha\in \ZZ_{e'}}\sigma_\alpha\quad\in\ \Z[\mathrm{Gal}(\Q(\zeta)/\Q(\zeta_{e'}))]\enspace;\\
\Theta_{e'}&=\frac{1}{e'}\sum_{\substack{1\le j\le e'-1\\(j,e')=1}}j{\sigma}_{e',j}^{-1}\quad\in\ \Q[\mathrm{Gal}(\Q(\zeta_{e'})/\Q)]\enspace.
\end{align*}
Note that $N_{e,1}$ equals the absolute norm $N_{\Q(\zeta)/\Q}$. 

\begin{proposition}\label{contentr}
Let $d\mid e$ and set $e=de'$, then
$$cont(r(\chi^d))=\pp^{e\Theta_{e'}N_{e,e'}}\enspace.$$
\end{proposition}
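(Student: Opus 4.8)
The plan is to compute the content of $r(\chi^d)$ by combining the explicit formula \eqref{cr} with the evaluation of the numbers $n(\Lambda,i,h)$ supplied by Lemma \ref{nlih}, then recognising the resulting exponent as $e\Theta_{e'}N_{e,e'}$ after applying Lemma \ref{decomposition}. First I would fix $d\mid e$, write $e=de'$, and use \eqref{cr} with $h=d$:
\[
cont(r(\chi^d))=\pp^{\sum_{\Lambda}\sum_{i=1}^{e-1}i\,n(\Lambda,i,d)\,\sigma_\Lambda}\enspace,
\]
the outer sum over $\Lambda\in(\Z/e\Z)^\times/\langle\bar p\rangle$. By Lemma \ref{nlih}(i), the term $n(\Lambda,i,d)$ vanishes unless $\gcd(i,e)=\gcd(d,e)=d$, so only the indices $i=di'$ with $1\le i'\le e'-1$ and $\gcd(i',e')=1$ contribute; for these, Lemma \ref{nlih}(ii) gives $n(\Lambda,i,d)=f/f_{e'}$ exactly when $(1\!\!\mod e')\in (i'\!\!\mod e')\Lambda'$, i.e.\ when $\Lambda'=(\Lambda\mod e')$ equals $(i'\!\!\mod e')^{-1}$, and $0$ otherwise.

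Next I would reorganise the double sum by first summing over $\Lambda$ in a fixed coset $\Lambda'\in(\Z/e'\Z)^\times/\langle(p\!\!\mod e')\rangle$ and then over $\Lambda'$. For each admissible $i=di'$, the inner contribution is $i\cdot(f/f_{e'})\sum_{\Lambda\in\Lambda'_i}\sigma_\Lambda$ where $\Lambda'_i=(i'\!\!\mod e')^{-1}$. Applying Lemma \ref{decomposition}, $\pp^{\sum_{\Lambda\in\Lambda'_i}\sigma_\Lambda}=\pp_{e'}^{\sigma_{\Lambda'_i}}\oo$, so
\[
cont(r(\chi^d))=\Big(\prod_{\substack{1\le i'\le e'-1\\(i',e')=1}}\big(\pp_{e'}^{\sigma_{(i')^{-1}}}\oo\big)^{di'}\Big)^{f/f_{e'}}
=\Big(\pp_{e'}^{\,d\sum_{i'}i'\,\sigma_{e',i'}^{-1}}\oo\Big)^{f/f_{e'}}\enspace,
\]
the sum over the same range of $i'$. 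Reindexing, $d\sum_{i'}i'\,\sigma_{e',i'}^{-1}=de'\Theta_{e'}=e\Theta_{e'}$ by the definition of the Stickelberger element $\Theta_{e'}$. Finally, the outer exponent $f/f_{e'}$ is exactly $\#\ZZ_{e'}$ (the index $[\Q(\zeta):\Q(\zeta_{e'})]$ divided by the residue degree $f_{e'}$, equivalently the number of primes of $\oo$ above $\pp_{e'}$), and raising $\pp_{e'}^{e\Theta_{e'}}\oo$ to that power, distributed through the Galois action, yields $\pp_{e'}^{e\Theta_{e'}N_{e,e'}}$ once one notes $N_{e,e'}=\sum_{\alpha\in\ZZ_{e'}}\sigma_\alpha$ acts on the already-$\Gal(\Q(\zeta)/\Q(\zeta_{e'}))$-fixed fractional ideal $\pp_{e'}^{e\Theta_{e'}}\oo$ by multiplying its exponent by $\#\ZZ_{e'}$. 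This gives $cont(r(\chi^d))=\pp^{e\Theta_{e'}N_{e,e'}}$.

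The main obstacle I anticipate is purely bookkeeping: keeping straight the three layers of indexing — the primes $\Lambda$ of $\oo$ above $p$, their images $\Lambda'$ in the quotient attached to $e'$, and the exponents $i=di'$ — and checking that the factor $f/f_{e'}$ produced fibrewise by Lemma \ref{nlih}(ii) is reconciled with the norm $N_{e,e'}$ exactly rather than up to an unwanted power. One should be slightly careful that $\sum_{\Lambda\in\Lambda'}\sigma_\Lambda$ has $\#\Lambda'=f/f_{e'}$ terms (Lemma \ref{decomposition}), so the exponent $f/f_{e'}$ coming from the multiplicity in $n(\Lambda,i,d)$ is not double-counting the same primes; it genuinely multiplies the exponent of the norm-pushed ideal. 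A sanity check at $d=e$ (so $e'=1$, $\Theta_1$ trivial, $N_{e,1}=N_{\Q(\zeta)/\Q}$) and at $d=1$ should confirm the formula.
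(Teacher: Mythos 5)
Your computation up to the identity
\[
cont(r(\chi^d))=\Big(\pp_{e'}^{\,e\Theta_{e'}}\oo\Big)^{f/f_{e'}}
\]
is correct and follows essentially the paper's own route (formula \eqref{cr}, Lemma \ref{nlih} to isolate the indices $i=di'$ with $\gcd(i',e')=1$, then Lemma \ref{decomposition} to descend to $\pp_{e'}$). The problem is the final step, where you identify this with $\pp^{e\Theta_{e'}N_{e,e'}}$. The assertions you use there are false in general: $f/f_{e'}$ is \emph{not} equal to $\#\ZZ_{e'}=[\Q(\zeta):\Q(\zeta_{e'})]=\varphi(e)/\varphi(e')$ (take $e=15$, $e'=3$ and $p\equiv 1\pmod{15}$: then $f/f_{e'}=1$ while $\#\ZZ_{e'}=4$), nor is it the number of primes of $\oo$ above $\pp_{e'}$ (that number is $\varphi(e)f_{e'}/\varphi(e')f$). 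Moreover, applying $N_{e,e'}$ to the $\Gal(\Q(\zeta)/\Q(\zeta_{e'}))$-stable ideal $\pp_{e'}^{e\Theta_{e'}}\oo$ would produce $\big(\pp_{e'}\oo\big)^{e\Theta_{e'}N_{e,e'}}$ rather than $\pp^{e\Theta_{e'}N_{e,e'}}$; the base matters, since $\pp_{e'}\oo$ is the product of \emph{all} primes of $\oo$ above $\pp_{e'}$, not just $\pp$. So the argument as written only closes when $\pp_{e'}$ is inert in $\Q(\zeta)/\Q(\zeta_{e'})$.

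The correct bridge — and it is exactly the opening line of the paper's proof — is the identity $\pp^{N_{e,e'}}=\pp_{e'}^{f/f_{e'}}\oo$. This holds because $\ZZ_{e'}$ permutes the primes of $\oo$ above $\pp_{e'}$ transitively, the stabilizer of $\pp$ is its decomposition group over $\pp_{e'}$, of order $f/f_{e'}$, and $\Q(\zeta)/\Q(\zeta_{e'})$ is unramified at $\pp$; hence summing over $\alpha\in\ZZ_{e'}$ hits each conjugate of $\pp$ exactly $f/f_{e'}$ times. With this in place of your identification, your computed expression gives
\[
\Big(\pp_{e'}^{\,e\Theta_{e'}}\oo\Big)^{f/f_{e'}}
=\Big(\pp_{e'}^{f/f_{e'}}\oo\Big)^{e\Theta_{e'}}
=\big(\pp^{N_{e,e'}}\big)^{e\Theta_{e'}}
=\pp^{e\Theta_{e'}N_{e,e'}}\enspace,
\]
which completes the proof. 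So the gap is localized and easily repaired, but as stated the last step does not go through.
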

\begin{proof}
%facile !
We begin with the right-hand side. First $\pp^{N_{e,e'}}=\pp_{e'}^{f/f_{e'}}\oo$, so that $\pp^{e\Theta_{e'}N_{e,e'}}$ equals $\pp_{e'}\oo$ to the exponent:
$$d\frac{f}{f_{e'}}\sum_{\Lambda'}\Bigg(\sum_{\substack{1\le j\le e'-1\\j\!\!\!\mod e'\in\Lambda'}}j\Bigg){\sigma}_{\Lambda'}^{-1}\enspace,$$
where the first sum is on $\Lambda'\in(\Z/e'\Z)^\times/\langle(p\!\!\mod e')\rangle$.

We go on with the left-hand side. We start from Formula (\ref{cr}). Since $\gcd(d,e)=d$, using Lemma \ref{nlih}, $cont(r(\chi^d))$ equals $\pp$ to the exponent:
$$\sum\limits_{\Lambda}\Bigg(\sum\limits_{\substack{1\le i'\le e'-1\\1\in(i'\Lambda\!\!\!\mod e')}}di'\frac{f}{f_{e'}}\Bigg)\sigma_{\Lambda}
=d\frac{f}{f_{e'}}\sum\limits_{\Lambda'}\Bigg(\sum\limits_{\substack{1\le i'\le e'-1\\1\in(i'\!\!\!\mod e')\Lambda'}}i'\Bigg)\sum\limits_{\Lambda\in\Lambda'}\sigma_{\Lambda}\enspace,
$$
where $\Lambda$ runs over $(\Z/e\Z)^\times/\langle\bar
p\rangle$ in the first sum to the left, $\Lambda'$ runs over
$(\Z/e'\Z)^\times/\langle(p\!\!\mod e')\rangle$ in the
first sum to the right, and is seen as a coset of
$(\Z/e\Z)^\times/\langle\bar p\rangle$ in the last sum
to the right (for the reduction modulo $e'$). Using Lemma \ref{decomposition},
%$$\pp^{\sum_{\Lambda\in\Lambda'}\sigma_\Lambda}=\pp_{e'}^{\sigma_{\Lambda'}}\oo\enspace.$$
we get that $cont(r(\chi^d))$ equals $\pp_{e'}\oo$ to the exponent:
$$d\frac{f}{f_{e'}}\sum\limits_{\Lambda'}\Bigg(\sum\limits_{\substack{1\le
    i'\le e'-1\\1\in(i'\!\!\!\mod
    e')\Lambda'}}i'\Bigg)\sigma_{\Lambda'}
=d\frac{f}{f_{e'}}\sum\limits_{\Lambda'}\Bigg(\sum\limits_{\substack{1\le
    i'\le e'-1\\(i'\!\!\!\mod
    e')\in\Lambda'^{-1}}}i'\Bigg)\sigma_{\Lambda'}
\enspace,$$
and the equality follows.
\end{proof}

\subsubsection{}\label{further}
To deal with $s$, we need to introduce further elements in various group
algebras. For any divisors $d$ and $e'$ of $e$, let 
$$\HH_{e'}=\{\beta'\in (\Z/e'\Z)^\times\,:\,\exists\,\,  b\in\beta',\, \frac{e'+1}{2}\leq b\leq e'-1\}$$
and set
\begin{align*}
H_{e'}&=\sum_{\alpha'\,\in\,\HH_{e'}}\sigma_{e',\alpha'}^{-1}\quad\in\ \Z[\mathrm{Gal}(\Q(\zeta_{e'})/\Q)]\enspace,\\
H_{e,d}&=\sum_{\substack{\alpha\in(\Z/e\Z)^\times\\ \alpha\bar d\,\in\,\HH_e} }\sigma_{\alpha}^{-1}\quad\in\ \Z[\mathrm{Gal}(\Q(\zeta)/\Q)]\enspace.
\end{align*}
Note that $H_e=H_{e,1}$.
\begin{lemma}\label{contents1}
Let $d\mid e$, then $cont(s(\chi^d))=\pp^{H_{e,d}}$.
\end{lemma}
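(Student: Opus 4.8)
\textit{Proof plan.} The statement is essentially a combinatorial rewriting of an ideal that has already been produced: formula \eqref{cs} gives
$$cont(s(\chi^d))=\pp^{\sum_{\Lambda}\bigl(\sum_{i=\frac{e+1}{2}}^{e-1}n(\Lambda,i,d)\bigr)\sigma_{\Lambda}},$$
with $\Lambda$ running over $(\Z/e\Z)^\times/\langle\bar p\rangle$. So the plan is to unfold the definition $n(\Lambda,i,d)=\#\{\alpha\in\Lambda:\alpha\bar i=\bar d\}$, pass from a sum over cosets to a sum over $(\Z/e\Z)^\times$, and then recognise the exponents that appear as exactly the membership condition defining $H_{e,d}$.

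First I would interchange the two inner sums, obtaining for each coset $\Lambda$ the identity $\sum_{i=\frac{e+1}{2}}^{e-1}n(\Lambda,i,d)=\sum_{\alpha\in\Lambda}m(\alpha)$, where I set $m(\alpha):=\#\{i:\frac{e+1}{2}\le i\le e-1,\ \alpha\bar i=\bar d\}$. Since by Remark \ref{cyclochar} the prime $\pp^{\sigma_{\alpha}}$ depends only on the class of $\alpha$ modulo $\langle\bar p\rangle$, one may raise $\pp$ to the above exponent and rewrite the result as a product indexed by $(\Z/e\Z)^\times$, obtaining the ideal identity
$$cont(s(\chi^d))=\prod_{\alpha\in(\Z/e\Z)^\times}\bigl(\pp^{\sigma_{\alpha}}\bigr)^{m(\alpha)}.$$
The point of this step is that, although the group-ring element in the exponent genuinely depends on the choice of coset representatives, the resulting ideal does not, so the coset sum collapses cleanly once one works at the level of ideals.

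It then remains to evaluate $m(\alpha)$. Since $\alpha$ is a unit modulo $e$, the condition $\alpha\bar i=\bar d$ determines $i$ modulo $e$, hence $m(\alpha)\in\{0,1\}$, and $m(\alpha)=1$ exactly when the representative of $\alpha^{-1}\bar d$ in $\{0,\dots,e-1\}$ falls in the window $\{\frac{e+1}{2},\dots,e-1\}$ — that is, exactly when $\alpha^{-1}\bar d\in\HH_e$. Substituting $\beta=\alpha^{-1}$, so that $\sigma_{\alpha}=\sigma_{\beta}^{-1}$, converts $\prod_{\alpha^{-1}\bar d\in\HH_e}\pp^{\sigma_{\alpha}}$ into $\prod_{\beta\bar d\in\HH_e}\pp^{\sigma_{\beta}^{-1}}=\pp^{H_{e,d}}$, which is the asserted formula.

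The computation is elementary throughout; the one place that requires care is the bookkeeping when $d>1$. There $\alpha^{-1}\bar d$ is no longer a unit, so I must check that the window condition on its representative still corresponds correctly to the defining condition of $H_{e,d}$, and that the integers $i$ in the range with $\gcd(i,e)\ne\gcd(d,e)$ — which contribute $m(\alpha)=0$ automatically — are properly discarded. Lemma \ref{nlih}(i) is exactly what makes this transparent, so I do not expect it to be a genuine obstacle.
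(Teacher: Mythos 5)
Your argument is correct and is essentially the paper's own proof run in the opposite direction: the paper rewrites $H_{e,d}$ (after the substitution $\beta=\alpha^{-1}$ and grouping by cosets $\Lambda$) until it matches the exponent in \eqref{cs}, while you unfold \eqref{cs} until it matches $H_{e,d}$, using the same three ingredients — the coset-invariance of $\pp^{\sigma_\alpha}$, the identification $\sum_{i=\frac{e+1}{2}}^{e-1}n(\Lambda,i,d)=\#\{\beta\in\Lambda:\beta^{-1}\bar d\in\HH_e\}$, and the inversion $\sigma_{\alpha}=\sigma_{\beta}^{-1}$. The caveat you raise about $d>1$ (where $\alpha^{-1}\bar d$ need not be a unit, so membership in $\HH_e$ must be read as the representative-in-$[\frac{e+1}{2},e-1]$ condition) is exactly the mild abuse of notation the paper itself commits, and it causes no difficulty.
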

\begin{proof}
Rewrite $H_{e,d}$ as
$$H_{e,d}=\sum_{\Lambda}\sum_{\substack{\beta\in\Lambda\\ \beta^{-1}\bar d\,\in\,\HH_e} }\sigma_{\beta}\enspace,$$
where $\Lambda$ runs over $(\Z/e\Z)^\times/\langle\bar p\rangle$. It follows that
$$\pp^{H_{e,d}}=\pp^{\sum_{\Lambda}\#\{\beta\in\Lambda\,:\,\beta^{-1}\bar d\,\in\,\HH_e\}\sigma_{\Lambda}}\enspace.$$
But $\#\{\beta\in\Lambda\,:\,\beta^{-1}\bar d\in \HH_e\}=\sum\limits_{i=\frac{e+1}{2}}^{e-1}n(\Lambda,i,d)$, which yields the result using (\ref{cs}).
\end{proof}

\begin{lemma}\label{contents2} 
Let $d\mid e$ and set $e=de'$, then 
$$H_{e,d}={H}_{e'}N_{e,e'}\quad\mathrm{and}\quad H_{e'}=(2-\sigma_{e',2})\Theta_{e'}\enspace.$$
\end{lemma}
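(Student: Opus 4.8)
The plan is to establish both identities by a direct computation in the relevant group rings, the whole argument reducing to elementary bookkeeping with residues; recall that $e$ is odd throughout (this is implicit whenever $S$, hence $H_{e,d}$ and the $\HH$'s, are in play), so that every divisor $d$ and $e'=e/d$ is odd and in particular $2\in(\Z/e'\Z)^\times$, making $\sigma_{e',2}$ meaningful. Throughout I read $\HH_e$ (resp. $\HH_{e'}$) as the set of classes in $\Z/e\Z$ (resp. $\Z/e'\Z$) having a representative in $[(e+1)/2,e-1]$ (resp. $[(e'+1)/2,e'-1]$), which is the reading forced by the proof of Lemma~\ref{contents1}; when the relevant class happens to be prime to the modulus this agrees with the definition in \S\ref{further}.

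For the identity $H_{e,d}=H_{e'}N_{e,e'}$ I would first unwind the right-hand side. Since $N_{e,e'}$ is the sum over the subgroup $\ZZ_{e'}$ identified with $\Gal(\Q(\zeta)/\Q(\zeta_{e'}))$, multiplying any lift of $H_{e'}$ to $\Z[\Gal(\Q(\zeta)/\Q)]$ by $N_{e,e'}$ is independent of the chosen lift (as $N_{e,e'}$ is invariant under that subgroup), so $H_{e'}N_{e,e'}$ is well defined; partitioning $(\Z/e\Z)^\times$ into the fibres of the reduction map $(\Z/e\Z)^\times\to(\Z/e'\Z)^\times$, which are exactly the cosets of $\ZZ_{e'}$, then gives
$$H_{e'}N_{e,e'}=\sum_{\substack{\alpha\in(\Z/e\Z)^\times\\ (\alpha\!\!\mod e')\,\in\,\HH_{e'}}}\sigma_\alpha^{-1}\enspace.$$
Comparing with the definition of $H_{e,d}$, it suffices to prove, for $\alpha\in(\Z/e\Z)^\times$, the equivalence $\alpha\bar d\in\HH_e\Leftrightarrow(\alpha\!\!\mod e')\in\HH_{e'}$. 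For this, let $s\in\{0,\dots,e-1\}$ be the representative of $\alpha d$; since $d\mid e=de'$ one has $s=ds'$, where $s'\in\{1,\dots,e'-1\}$ is the representative of $\alpha$ modulo $e'$ (and $\gcd(s',e')=1$). Thus $\alpha\bar d\in\HH_e$ reads $(de'+1)/2\le ds'\le de'-1$; the upper bound is automatic, and since $e'$ is odd a short check shows $ds'\ge(de'+1)/2\Leftrightarrow s'\ge(e'+1)/2$, i.e. $s'\in\HH_{e'}$, as wanted.

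For the identity $H_{e'}=(2-\sigma_{e',2})\Theta_{e'}$ I would compare the coefficient of $\sigma_{e',\gamma}$ on both sides, for $\gamma\in(\Z/e'\Z)^\times$. Writing $j_0\in\{1,\dots,e'-1\}$ for the representative of $\gamma^{-1}$, the left-hand coefficient is $1$ if $\gamma^{-1}\in\HH_{e'}$, i.e. if $j_0\ge(e'+1)/2$, and $0$ otherwise. On the right, using $\sigma_{e',j}^{-1}=\sigma_{e',j^{-1}}$, the coefficient of $\sigma_{e',\gamma}$ in $\Theta_{e'}$ is $j_0/e'$, while the coefficient in $\sigma_{e',2}\Theta_{e'}$ is contributed by the index $j\equiv 2\gamma^{-1}\pmod{e'}$ and hence equals $2j_0/e'$ if $2j_0<e'$ and $(2j_0-e')/e'$ if $2j_0>e'$ (the case $2j_0=e'$ being excluded by $\gcd(j_0,e')=1$). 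Subtracting, the coefficient of $\sigma_{e',\gamma}$ in $(2-\sigma_{e',2})\Theta_{e'}$ is $0$ when $j_0<e'/2$ and $1$ when $j_0>e'/2$, which matches the left-hand side.

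The only genuinely delicate points are the two elementary equivalences about when multiplication by $d$ (resp. by $2$) pushes a residue across the midpoint $e'/2$ — these use the oddness of $e'$ in an essential way — together with pinning down the (standard) convention that $H_{e'}N_{e,e'}$ is computed by lifting $H_{e'}$ to $\Z[\Gal(\Q(\zeta)/\Q)]$, the product being lift-independent because $N_{e,e'}$ is the norm element of $\Gal(\Q(\zeta)/\Q(\zeta_{e'}))$. Once these are in place, both equalities follow by the coefficient bookkeeping above.
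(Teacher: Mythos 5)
Your proof is correct and follows essentially the same route as the paper's: your membership equivalence $\alpha\bar d\in\HH_e\Leftrightarrow(\alpha\bmod e')\in\HH_{e'}$, obtained after splitting $(\Z/e\Z)^\times$ into cosets of $\ZZ_{e'}$, is exactly the content of the paper's bijection $\HH_{e'}\times\ZZ_{e'}\to\{\gamma:\gamma\bar d\in\HH_e\}$, and your coefficientwise comparison for $(2-\sigma_{e',2})\Theta_{e'}$ is the same reindexing the paper carries out via $\sigma_{e',2}=\sigma_{e',(e'+1)/2}^{-1}$. Your explicit reading of $\HH_e$ as the set of classes (not necessarily units) with a representative in $[(e+1)/2,e-1]$, and of $H_{e'}N_{e,e'}$ via a lift-independent choice of lifts, matches the conventions the paper uses implicitly, so no gap remains.
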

\begin{remark}
Note that the first equality is not ambiguous, even though it contains a slight abuse of notation: $H_{e'}$ belongs to $\Z[\Gal(\Q(\zeta_{e'})/\Q)]$ whereas $N_{e,e'}$ and $H_{e,d}$ belong to $\Z[\Gal(\Q(\zeta_{e})/\Q)]$. Nevertheless, if $\alpha'\in(\Z/e'\Z)^\times$, then $\sigma_{\alpha}N_{e,e'}$ does not depend on the choice of a lift $\alpha$ of $\alpha'$ in $(\Z/e\Z)^\times$. Indeed
$$\sigma_{\alpha}N_{e,e'}=\sum_{\beta\in \ZZ_{e'}}\sigma_{\alpha\beta}$$
is the sum of all the lifts of $\sigma_{e',\alpha'}$ in $\Gal(\Q(\zeta)/\Q)$. Thus in order to get an equality in $\Z[\Gal(\Q(\zeta_{e})/\Q)]$, one just has to replace each term $\sigma_{e',\alpha'^{-1}}N_{e,e'}$ of the sum $H_{e'}N_{e,e'}$ by $\sigma_{\alpha^{-1}}N_{e,e'}$, where $\alpha$ is any lift of $\alpha'$ in $(\Z/e\Z)^\times$.
\end{remark} 
\begin{proof}
We begin with the first equality. We suppose we have fixed a lift $\alpha\in(\Z/e\Z)^\times$ of each $\alpha'\in(\Z/e'\Z)^\times$. In view of the previous remark,
$$H_{e'}N_{e,e'}=\sum_{\substack{\alpha'\in\HH_{e'}\\\beta\in
    \ZZ_{e'}}}\sigma_{\alpha^{-1}\beta}=\sum_{\substack{\alpha'\in\HH_{e'}\\\beta\in
    \ZZ_{e'}}}\sigma_{\alpha\beta}^{-1}\enspace,$$
since $\ZZ_{e'}$ is a subgroup of $\Gal(\Q(\zeta)/\Q)$. We are thus left to
show that the map 
\begin{equation}\label{map}
(\alpha',\beta)\in\HH_{e'}\times \ZZ_{e'}\longmapsto\alpha\beta\in\{\gamma\in(\Z/e\Z)^\times:\gamma\bar d\in\HH_{e}\}
\end{equation}
is a well-defined one-to-one correspondence.

We first show that the if $(\alpha',\beta)\in \HH_{e'}\times \ZZ_{e'}$, then indeed $\alpha\beta\in\{\gamma\in(\Z/e\Z)^\times:\gamma\bar d\in\HH_{e}\}$. Of course $\alpha\beta\in(\Z/e\Z)^\times$, so we have to show that $\alpha\beta\bar
d\in\HH_e$. Let $a\in\alpha'$ be such that $\frac{e'+1}{2}\le a\le e'-1$. Observe that $ad\in \alpha\beta\overline{d}$. In fact $\beta\in \ZZ_{e'}$, there exists $t\in\Z$ such that $1+te'\in\beta$ and clearly
$$ad\equiv ad(1+te') \pmod{e}.$$ 
Since clearly $ad(1+te')\in \alpha\beta\overline{d}$, we deduce $ad\in \alpha\beta\overline{d}$. Now $\frac{e+1}{2}\le \frac{e+d}{2}\le ad\le e-d\le e-1$, hence $\alpha\beta\bar
d\in\HH_e$. Thus the map in (\ref{map}) is well-defined.

To show that it is one-to-one, we define its inverse. Suppose $\gamma\in(\Z/e\Z)^\times$ is such that $\gamma\bar
d\in\HH_e$. Since $d\mid e$, there exists $c\in\gamma$ such that
$\frac{e+1}{2}\le cd\le e-1$, hence $\frac{e'+1}{2}\le c\le e'-1$. This shows that 
$\alpha'=(c\mod e')\in(\Z/e'\Z)^\times$ belongs to $\HH_{e'}$. In particular $\alpha'$ is the image of $\gamma$ in $(\Z/e'\Z)^\times$. Since $\alpha'$ is also the image of $\alpha$ in $(\Z/e'\Z)^\times$, there exists a unique $\beta\in
\ZZ_{e'}$ such that $\gamma=\alpha\beta$. It is clear that the map sending $\gamma$ to $(\alpha',\beta)$ is the inverse of (\ref{map}).

We now show the second equality of the lemma. To simplify notations
in this proof, we write $\sigma_{k}$ instead of $\sigma_{e',k}$. We note that
$\sigma_{2}=\sigma_{\frac{e'+1}{2}}^{-1}$ and compute
$$e'(2-\sigma_{2})\Theta_{e'}=\sum_{\substack{1\le j\le e'-1\\(j,e')=1}}2j{\sigma}_{j}^{-1}-\sum_{\substack{1\le j\le e'-1\\(j,e')=1}}j{\sigma}_{\frac{e'+1}{2}j}^{-1}\enspace.$$
Let $j,j'\in\{k\in\Z:1\le k\le e'-1, (k,e')=1\}$, then
$j'\equiv\frac{e'+1}{2}j\pmod{e'}$ if and
only if $j\equiv 2j'\pmod{e'}$, hence the coefficient of
$\sigma_{j'}^{-1}$ in the second sum above is $2j'$ if $1\le
j'\le\frac{e'-1}{2}$, $2j'-e'$ otherwise. Therefore
$$e'(2-\sigma_{2})\Theta_{e'}=e'\sum_{\substack{\frac{e'+1}{2}\le j\le
    e'-1\\(j,e')=1}}{\sigma}_{j}^{-1}$$
and the result follows.

\end{proof}
%Since $\sigma_{e',2}={\sigma_2}_{\mid\Q(\zeta_{e'})}$, we deduce from the preceding lemma that
%\begin{equation}\label{Hed}
%H_{e,d}=(2-\sigma_2){\Theta}_{e'}N_{e,e'}\enspace,
%\end{equation}
Combining Lemmas \ref{contents1} and \ref{contents2} yields:
\begin{proposition}\label{contents}
Let $d\mid e$ and set $e=de'$, then
$$cont(s(\chi^d))=\pp^{(2-\sigma_{e',2}){\Theta}_{e'}N_{e,e'}}\enspace.$$
\end{proposition}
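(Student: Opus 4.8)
The plan is to simply chain together the two lemmas that immediately precede the statement, so the argument is essentially a substitution. First I would invoke Lemma \ref{contents1}, which already identifies $cont(s(\chi^d))$ with $\pp^{H_{e,d}}$; this reduces the whole problem to rewriting the group-algebra element $H_{e,d}$ in the desired shape. Then I would apply Lemma \ref{contents2}: its first identity factors $H_{e,d}=H_{e'}N_{e,e'}$, pulling the relative norm $N_{e,e'}$ (from $\Q(\zeta_e)$ down to $\Q(\zeta_{e'})$) out of the sum, and its second identity rewrites $H_{e'}=(2-\sigma_{e',2})\Theta_{e'}$ in terms of the Stickelberger element $\Theta_{e'}$. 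Substituting the second into the first gives $H_{e,d}=(2-\sigma_{e',2})\Theta_{e'}N_{e,e'}$, and feeding this exponent back into $\pp^{H_{e,d}}$ yields the claimed formula $cont(s(\chi^d))=\pp^{(2-\sigma_{e',2})\Theta_{e'}N_{e,e'}}$.

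The one point that needs care is the one already flagged in the remark following Lemma \ref{contents2}: the element $(2-\sigma_{e',2})\Theta_{e'}$ a priori lives in $\Q[\Gal(\Q(\zeta_{e'})/\Q)]$, whereas $N_{e,e'}$ and the exponent of $\pp$ naturally sit in the group algebra of $\Gal(\Q(\zeta_e)/\Q)$. One interprets a product $\sigma_{e',\alpha'^{-1}}N_{e,e'}$ as $\sigma_{\alpha^{-1}}N_{e,e'}$ for any lift $\alpha\in(\Z/e\Z)^\times$ of $\alpha'$; this is unambiguous because $\sigma_\alpha N_{e,e'}$ is precisely the sum of all lifts of $\sigma_{e',\alpha'}$ in $\Gal(\Q(\zeta_e)/\Q)$, hence independent of the choice of $\alpha$. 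With this convention the identities of Lemma \ref{contents2} are genuine equalities in $\Z[\Gal(\Q(\zeta_e)/\Q)]$, so the substitution above is legitimate and the final identity is an honest equality of fractional ideals of $\Q(\zeta_e)$.

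Since the statement is a direct composition of two already-established lemmas, I do not expect any real mathematical obstacle here: the substantive work — the double-coset correspondence underlying $H_{e,d}=H_{e'}N_{e,e'}$ and the manipulation of $\Theta_{e'}$ behind $H_{e'}=(2-\sigma_{e',2})\Theta_{e'}$ — has already been carried out in the proof of Lemma \ref{contents2}. The only thing to watch is the bookkeeping: keeping the reduction-modulo-$e'$ conventions for $\Theta_{e'}$, $H_{e'}$, and the lifts of the Galois automorphisms consistent throughout, so that the final formula is stated at the correct level, namely as an equality of ideals in $\oo$ rather than in $\oo_{e'}$.
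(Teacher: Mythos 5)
Your proposal matches the paper exactly: Proposition \ref{contents} is obtained there by the same two-step substitution, combining Lemma \ref{contents1} ($cont(s(\chi^d))=\pp^{H_{e,d}}$) with Lemma \ref{contents2} ($H_{e,d}=H_{e'}N_{e,e'}$ and $H_{e'}=(2-\sigma_{e',2})\Theta_{e'}$), with the lifting convention handled precisely as in the remark you cite. Your argument is correct and complete.
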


%%%%%%%%%%%%%%%%%%%%%%%%%%%%%%%%

\subsection{Arithmetically realizable classes}\label{section:arcburns}

The result we have obtained so far in this section, combined with those of Section \ref{ctsec} allow us to give an alternative proof of a result of Burns, concerning the Galois module structure of ideals in tame locally abelian extensions of number fields. 

We start by recalling Burns's definitions and notation. Let $G$ be a finite group and let $\RRR^a_\Q(G)$ denote the subgroup of $\Cl(\Z[G])$ which is generated by classes of $G$-stable (integral) ideals of tame locally abelian $G$-Galois extensions of number fields. Let also $\RRR^a_{\Q,0}(G)$ denote the subgroup of $\RRR^a_\Q(G)$ consisting of classes of rings of integers of tame locally abelian $G$-Galois extensions of number fields. To describe the difference between $\RRR^a_{\Q,0}(G)$ and $\RRR^a_\Q(G)$, consider the subgroup $\Cl(\Z[G])_{\mathscr{C},\Sigma}$ of $\Cl(\Z[G])$ which is defined as follows. Let first $C$ be a cyclic subgroup of $G$ and denote by $\Cl(\Z[C])_\Sigma$ the subgroup of $\Cl(\Z[C])$ ge\-ne\-ra\-ted by classes which can be represented in the ideal-theoretic Hom-description (see \cite[Note 4 to Chapter I]{Frohlich-Alg_numb}) by functions whose value on a character $\chi:C\to \overline{\Q}$ is a fractional ideal of $\Q(\chi)$ supported only above rational primes congruent to $1$ modulo $\#C$. If $\mathscr{C}$ is the set of cyclic subgroups of $G$, then
$$\Cl(\Z[G])_{\mathscr{C},\Sigma}=\prod_{C\in\mathscr{C}}\mathrm{Ind}_C^G\Cl(\Z[C])_\Sigma\enspace.$$ 

The result of Burns \cite[Theorem 1.1]{BurnsARC} which we are going to reprove reads as follows.

\begin{theorem}
There is an inclusion
$$\RRR^a_\Q(G)\subseteq \RRR^a_{\Q,0}(G)\cdot\Cl(\Z[G])_{\mathscr{C},\Sigma}\enspace.$$
\end{theorem}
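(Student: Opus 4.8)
The plan is to reduce everything to the local torsion modules studied in Section~\ref{section:reduction-to-totally-ramified} and then invoke the structure of their Fröhlich representatives, without needing the full force of Theorem~\ref{main}. Let $N/E$ be a tame locally abelian $G$-Galois extension and let $\I$ be a $G$-stable ideal of $\OO_N$. By Proposition~\ref{preburns}, the class $(\OO_N/\I)$ in $\Cl(\Z[G])$ is a product over $P\in\Div(\I)$ of induced classes $\mathrm{Ind}_{I_\PP}^G\big((\oo_{e_P}/\pp)(\chi_\PP^i)\big)$ (raised to appropriate integer powers), and likewise for $(\I^{-1}/\OO_N)$; since $\I$ is locally free, $(\I)=(\OO_N)(\OO_N/\I)$, so the class of \emph{any} $G$-stable ideal of $N$ differs from $(\OO_N)$ by such a product of induced local classes. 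Because each $I_\PP$ is cyclic, it therefore suffices to show that for every cyclic group $\Delta=\langle\delta\rangle$ of order $e$, every injective character $\chi\colon\Delta\to\mu_e$, every prime $\pp\subset\oo$ above a rational prime $p\nmid e$, and every $0\le i\le e-1$, the class $(\kappa(\chi^i))_{\Z[\Delta]}$ lies in $\Cl(\Z[\Delta])_\Sigma$ — the subgroup generated by classes representable in the ideal-theoretic Hom-description by functions whose value at a character is a fractional ideal supported only over rational primes $\equiv 1\pmod{\#\Delta}$.

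First I would translate the idèle-theoretic representative into the ideal-theoretic one: by Corollary~\ref{relrep} and Proposition~\ref{nvichq}, the class $(\kappa(\chi^i))_{\Z[\Delta]}$ is represented by $\mathscr{N}(v_i)$, whose content is computed in \S\ref{contentnvi}. By Remark~\ref{kappacontent}, for each $h$ with $\gcd(h,e)=\gcd(i,e)=d$, the component $cont(\mathscr{N}(v_i)(\chi^h))$ is the extension to $\oo$ of a fractional ideal of $\oo_{e'}$ (where $e'=e/d$) whose absolute norm is $\equiv 1\pmod e$, hence a fortiori a product of primes of $\Q(\zeta_{e'})=\Q(\chi^h)$ lying over rational primes $\equiv 1\pmod{e'}$ — and $e'$ divides $e=\#\Delta$, which is exactly the divisibility one needs for the generator of $\Cl(\Z[\langle\delta\rangle])$ attached to $\chi^h$ of order $e'$. (When the content is trivial there is nothing to check.) The passage from the idèle-theoretic to the ideal-theoretic Hom-description is the standard one — taking contents kills the denominator $\Det(\UU)$ and sends $\Hom_{\Omega_\Q}(R_\Delta,(\Q(\zeta))^\times)$ to principal ideals — so the content computation above literally exhibits $(\kappa(\chi^i))$ as represented by a function landing in ideals supported over primes $\equiv 1\pmod{e'}$.

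Putting the pieces together: $(\OO_N/\I)$ and $(\I^{-1}/\OO_N)$ are products of $\mathrm{Ind}_{I_\PP}^G$ of classes each of which, by the above, lies in $\mathrm{Ind}_{C}^G\Cl(\Z[C])_\Sigma$ for the cyclic group $C=I_\PP$; hence $(\OO_N/\I)$ and $(\I^{-1}/\OO_N)$ lie in $\Cl(\Z[G])_{\mathscr C,\Sigma}$. Since $(\I)=(\OO_N)\cdot(\OO_N/\I)$ with $(\OO_N)\in\RRR^a_{\Q,0}(G)$ by definition, we get $(\I)\in\RRR^a_{\Q,0}(G)\cdot\Cl(\Z[G])_{\mathscr C,\Sigma}$. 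As $\I$ and $N/E$ were arbitrary, the generators of $\RRR^a_\Q(G)$ all lie in $\RRR^a_{\Q,0}(G)\cdot\Cl(\Z[G])_{\mathscr C,\Sigma}$, which is a subgroup, giving the claimed inclusion. The main obstacle I anticipate is purely bookkeeping rather than conceptual: matching the order of the cyclic subgroup used in the definition of $\Cl(\Z[C])_\Sigma$ (namely $\#C$) with the modulus $e'$ coming out of the content computation — one must be careful that the relevant character $\chi^h$ of $\Delta$ factors through a quotient of order $e'$ and that $e'\mid\#\Delta$, so that "supported over primes $\equiv1\pmod{e'}$" indeed meets the definitional requirement "$\equiv1\pmod{\#C}$" after restricting to the cyclic subgroup generated by $\delta$ — together with checking that the induction maps $\mathrm{Ind}_{I_\PP}^G$ in Proposition~\ref{preburns} are compatible with the $\mathrm{Ind}_C^G$ appearing in the definition of $\Cl(\Z[G])_{\mathscr C,\Sigma}$, which is Lemma~\ref{cohomtriv}(iii).
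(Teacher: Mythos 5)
Your overall architecture is the same as the paper's: reduce via Proposition~\ref{preburns} (and Lemma~\ref{cohomtriv}(iii)) to showing $\big((\oo_{e_P}/\pp)(\chi_{\PP}^i)\big)\in\Cl(\Z[I_\PP])_\Sigma$, then pass from the id\`ele-theoretic representative $\mathscr{N}(v_i)$ to the ideal-theoretic one by taking contents (legitimate here because the components at places dividing $e_P$ are $1$). But the step ``absolute norm $\equiv 1\pmod e$, hence \emph{a fortiori} a product of primes lying over rational primes $\equiv 1\pmod{e'}$'' is false, and this is exactly where the real content of the theorem lies. By Proposition~\ref{prop:contentnvi}, $cont(\mathscr{N}(v_i)(\chi^d))$ is a power of (a conjugate of) $\pp_{e'}$ extended to $\oo$, so it is supported above the residual characteristic $p$ of $\pp$; the congruence $p^f\equiv 1\pmod e$ recorded in Remark~\ref{kappacontent} says nothing about $p$ itself. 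Whenever $p$ has order $f>1$ modulo $e$ (the generic case), the content is supported above a rational prime that is \emph{not} congruent to $1$ modulo $\#I_\PP$, so it does not directly exhibit membership in $\Cl(\Z[I_\PP])_\Sigma$ as defined. Your parenthetical worry about matching $e'$ with $\#C$ is also pointing in the wrong direction: the definition of $\Cl(\Z[C])_\Sigma$ requires support over primes $\equiv 1\pmod{\#C}$, not $\pmod{e'}$, but even the weaker congruence fails for $p$.

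What is missing is precisely the paper's closing proposition: the subgroup $\Cl'(\Z[C])_\Sigma$ of classes represented by ideal-valued functions whose values have absolute norm $\equiv 1\pmod{\#C}$ coincides with $\Cl(\Z[C])_\Sigma$. This is not formal. The paper proves it by constructing a well-defined surjection $\pi_C:\bigoplus_{d\mid c}\Cl_{\mathfrak c}(\Q(\chi^d))\to\Cl(\Z[C])$ from ray class groups modulo $\mathfrak c=(c)\cdot\infty$ (well-definedness requires showing that multipliers $x_d\equiv 1\pmod{(c)}$, totally positive, lie in the denominator of the ideal-theoretic Hom-description, via the lemma $\Hom_{\Omega_{\Q_q}}(R_{C,q},1+c\Z_q[\xi_c])\subset\Det(\Z_q[C]^\times)$ at primes $q\mid c$), and then applying Chebotarev's density theorem to replace any ray class whose absolute norm is trivial modulo $\mathfrak c$ by a prime of $\Q(\chi^d)$ that splits completely over $\Q$, whose underlying rational prime is then genuinely $\equiv 1\pmod c$. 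Without this norm-to-support conversion your argument only shows $(\OO_N/\I)\in\prod_{C}\mathrm{Ind}_C^G\Cl'(\Z[C])_\Sigma$, which is not yet the statement to be proved.
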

\begin{remark}
Burns also proves that one has equality in the above theorem when $G$ is abelian. Moreover his result holds with $\Q$ replaced by any number field $L$ which is absolutely unramified at primes dividing $\#G$ and $\Z$ replaced by the ring of integers of $L$.
%Moreover his result holds with $\Z$ replaced by the ring of integers of any subextension of $E$ which is absolutely unramified at any prime dividing $\#G$.
\end{remark}

\begin{proof}
Let $N/E$ be a tame locally abelian $G$-Galois extension of number fields and let $\I$ be a $G$-stable ideal of $\mathcal{O}_N$. We have to show that 
$$(\I)\in \RRR^a_{\Q,0}(G)\cdot\Cl(\Z[G])_{\mathscr{C},\Sigma}\enspace.$$
Of course it is enough to show that
$$(\OO_N/\I)=(\I)(\OO_N)^{-1}\in\Cl(\Z[G])_{\mathscr{C},\Sigma}\enspace.$$ 
Note that, for any prime $\PP$ of $\OO_N$, the inertia subgroup $I_{\PP}$ of $\PP$ in $N/E$ is cyclic, since $N/E$ is tame. Thanks to Proposition \ref{preburns}, we are then reduced to show that, for every $0\leq i\leq e_P-1$, $(\oo_{e_{P}}/\pp(\chi_{\PP}^i))\in\Cl(\Z[I_\PP])_\Sigma$ with notation as in \S \ref{reductionproof}. Let $\mathscr{N}(v_i)$ be the representative homomorphism of $\big((\oo_{e_{P}}/\pp)(\chi_{\PP}^i)\big)$ described in Proposition \ref{nvichq}. Since $\mathscr{N}(v_i)$ is $\Omega_{\Q}$-equivariant, we deduce that $\mathscr{N}(v_i)(\chi_{\PP}^h)\in J(\Q(\chi_\PP^h))$. By Propositon \ref{nvichq}, $\mathscr{N}(v_i)(\chi_{\PP}^h)$ has component $1$ at every prime dividing $e_P$. Thus to obtain a function representing $\big((\oo_{e_{P}}/\pp)(\chi_{\PP}^i)\big)$ in the ideal-theoretic Hom-description, one only needs to take the content of $\mathscr{N}(v_i)(\chi_{\PP}^h)$, as an ideal of $\Q(\chi_\PP^h)$ (see for instance \cite[p. 429]{FrohlichAGMSTE}). By Remark \ref{kappacontent}, the content of $\mathscr{N}(v_i)(\chi_{\PP}^h)$, as an ideal of $\Q(\chi_{\PP}^h)$, has absolute norm $1$ modulo $e_{P}=\#I_{\PP}$.
 This implies that $\big((\oo_{e_{\PP}}/\pp)(\chi_{\PP}^i)\big)\in \Cl(\Z[I_{\PP}])_\Sigma$, thanks to the following result, whose proof uses the arguments of \cite[pp. 388-389]{BurnsARC}.
\end{proof}

\begin{proposition}
Let $C$ be a cyclic group and let $\Cl'(\Z[C])_\Sigma$ be the subgroup of $\Cl(\Z[C])$ generated by classes which can be represented in the ideal-theoretic Hom-description by functions whose values are fractional ideals of absolute norm congruent to $1$ modulo $c=\#C$. Then $\Cl'(\Z[C])_{\Sigma}= \Cl(\Z[C])_{\Sigma}$. 
\end{proposition}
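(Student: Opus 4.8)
The plan is to prove the two inclusions separately. The inclusion $\Cl(\Z[C])_\Sigma\subseteq\Cl'(\Z[C])_\Sigma$ is elementary: writing $c=\#C$, a rational prime $\ell\equiv 1\pmod c$ splits completely in $\Q(\mu_c)$, hence in $\Q(\chi)$ for every character $\chi$ of $C$ (as $\Q(\chi)\subseteq\Q(\mu_c)$), so every prime of $\Q(\chi)$ above such an $\ell$ has absolute norm $\ell\equiv 1\pmod c$, and so does any product of such primes. Thus a function representing a class in the ideal-theoretic Hom-description and supported only above rational primes $\equiv 1\pmod c$ automatically takes values of absolute norm $\equiv 1\pmod c$, which gives the inclusion. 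The reverse inclusion is the substantial one, and I would follow the Chebotarev density argument of \cite[pp.~388--389]{BurnsARC}.

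So let $g$ be a representative function, in the ideal-theoretic Hom-description of $\Cl(\Z[C])$ (see \cite[Note 4 to Chapter I]{Frohlich-Alg_numb}), of a generating class of $\Cl'(\Z[C])_\Sigma$: thus $g$ is an $\Omega_\Q$-equivariant function on $R_C$ with fractional-ideal values and $N_{\Q(\chi)/\Q}(g(\chi))\equiv 1\pmod c$ for every $\chi$. By $\Omega_\Q$-equivariance, $g$ is determined by its values $g(\psi_d)$ on one chosen character $\psi_d$ of each order $d\mid c$, and $g(\psi_d)$ is a fractional ideal of $\Q(\psi_d)=\Q(\mu_d)$ of absolute norm $\equiv 1\pmod c$. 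I would reduce the proof to the following arithmetic claim: for each $d\mid c$, the ideal class of $g(\psi_d)$ in $\Cl(\Q(\mu_d))$ is represented by a fractional ideal $\mathfrak b_d$ supported only above rational primes $\equiv 1\pmod c$. Granting this, pick $x_d\in\Q(\mu_d)^\times$ with $g(\psi_d)=\mathfrak b_d\,(x_d)$; the assignment $\psi_d\mapsto(x_d)$ extends, by equivariance, to a well-defined $h\in\Hom_{\Omega_\Q}(R_C,P_{\Q(\mu_c)})$, where $P_{\Q(\mu_c)}$ is the group of principal fractional ideals of $\Q(\mu_c)$ (the characters of a fixed order $d$ form a single $\Omega_\Q$-orbit with stabiliser $\Omega_{\Q(\mu_d)}$, and $x_d\in\Q(\mu_d)^\times$ is $\Omega_{\Q(\mu_d)}$-fixed). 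Since $\Hom_{\Omega_\Q}(R_C,P_{\Q(\mu_c)})$ is the image under the content map of $\Hom_{\Omega_\Q}(R_C,\Q(\mu_c)^\times)$, it lies in the denominator of the ideal-theoretic Hom-description; hence $g$ and $gh^{-1}$ represent the same class, and $gh^{-1}$ takes all its values in ideals supported above primes $\equiv 1\pmod c$ (its value at $\psi_d$ is $\mathfrak b_d$, and the values at the remaining characters are Galois conjugates of these). Therefore the class lies in $\Cl(\Z[C])_\Sigma$.

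It remains to prove the arithmetic claim. Let $H_d$ be the Hilbert class field of $\Q(\mu_d)$ and let $\rho_d\colon\Cl(\Q(\mu_d))\to\Gal\big((\Q(\mu_c)\cap H_d)/\Q(\mu_d)\big)$ be the restriction of the Artin map. For an ideal $\mathfrak b$ of $\Q(\mu_d)$ prime to $c$ one has $\mathrm{Art}_{\Q(\mu_c)/\Q(\mu_d)}(\mathfrak b)=\big(N_{\Q(\mu_d)/\Q}(\mathfrak b)\bmod c\big)$, so the hypothesis $N(g(\psi_d))\equiv 1\pmod c$ forces $\rho_d\big([g(\psi_d)]\big)=1$. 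On the other hand, a prime $\qq$ of $\Q(\mu_d)$ above a rational prime $\ell\equiv 1\pmod c$ splits completely in $\Q(\mu_c)$, so its Frobenius in $\Gal\big((\Q(\mu_c)\cap H_d)/\Q(\mu_d)\big)$ is trivial, i.e. $[\qq]\in\ker\rho_d$; and, conversely, by Chebotarev applied to the compositum $H_d\Q(\mu_c)$, every element of $\ker\rho_d$ is the class of such a prime $\qq$. Hence the ideal classes represented by ideals supported above rational primes $\equiv 1\pmod c$ are exactly those in $\ker\rho_d$, which contains $[g(\psi_d)]$. This establishes the claim, and with it the proposition.

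The main obstacle is precisely the Chebotarev step in the last paragraph: one must realise every class in $\ker\rho_d$ by a prime $\qq$ whose \emph{underlying rational prime} is $\equiv 1\pmod c$, not merely by a degree-one prime of norm $\equiv 1\pmod c$, and this forces one to work carefully inside $H_d\Q(\mu_c)$ (keeping track of the ramified extension $\Q(\mu_c)/\Q(\mu_d)$ and its intersection with $H_d$); this is the content of the computation in \cite[pp.~388--389]{BurnsARC}. A minor technical point, already built into the Hom-description, is to confirm that the equivariant reassembly $h$ of the elements $x_d$ indeed lies in the denominator, which is why one phrases the construction in terms of the content map applied to $\Hom_{\Omega_\Q}(R_C,\Q(\mu_c)^\times)$.
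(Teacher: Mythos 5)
Your reduction breaks at the step ``the assignment $\psi_d\mapsto(x_d)$ \dots lies in the denominator of the ideal-theoretic Hom-description''. That claim is not justified, and it is false in general: in the ideal-theoretic description one is \emph{not} allowed to modify an ideal-valued representative by an arbitrary principal-ideal-valued equivariant function. If that were permitted, the description would identify $\Cl(\Z[C])$ with a quotient of $\Hom_{\Omega_\Q}(R_C,I(\Q(\mu_c)))/\Hom_{\Omega_\Q}(R_C,P(\Q(\mu_c)))\cong\prod_{d\mid c}\Cl(\Q(\mu_d))\cong\Cl(\M)$, i.e.\ it would kill the kernel group $\ker\big(\Cl(\Z[C])\to\Cl(\M)\big)$, which is nontrivial for many cyclic $C$. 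What does lie in the denominator are principal-valued functions $\chi^d\mapsto x_d\Z[\chi^d]$ whose generators can be matched, at every place above $c\infty$, by $\Det(u_q)$ with $u_q\in\Z_q[C]^\times$; for instance it suffices that each $x_d$ be totally positive and congruent to $1$ modulo $c$ --- this is exactly what the lemma $\Hom_{\Omega_{\Q_q}}(R_{C,q},1+c\Z_q[\xi_c])\subset\Det(\Z_q[C]^\times)$, together with the archimedean statement of \cite[I, Proposition 2.2]{Frohlich-Alg_numb}, is proved for. Your $x_d$ carry no congruence or positivity condition, so $gh^{-1}$ need not represent the same class as $g$ in $\Cl(\Z[C])$; the point you dismiss at the end as a minor technicality ``already built into the Hom-description'' is precisely the gap.

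Consequently, controlling only the ideal class of $g(\psi_d)$ in $\Cl(\Q(\mu_d))$ cannot suffice: you must control its ray class modulo $\mathfrak{c}=(c)\cdot\infty$. That is how the paper proceeds: it builds the surjection $\pi_C:\bigoplus_{d\mid c}\Cl_{\mathfrak{c}}(\Q(\chi^d))\to\Cl(\Z[C])$, well defined thanks to the lemma just quoted, and compares the two subgroups at the level of ray classes. Working there also makes the density step simpler than your Artin-map computation in $H_d\Q(\mu_c)$: inside a fixed ray class modulo $\mathfrak{c}$ there is a prime $\mathfrak{p}$ of $\Q(\chi^d)$ splitting completely in $\Q(\chi^d)/\Q$ (primes of higher degree have density zero); if the class lies in $\Cl'_{\mathfrak{c},\Sigma}$, its absolute norm is the ray class of a positive integer congruent to $1$ modulo $c$, and since $N\mathfrak{p}=p\Z$ with $p$ the underlying rational prime, one gets $p\equiv 1\pmod{c}$ directly. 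Your Frobenius argument in $\Gal\big((\Q(\mu_c)\cap H_d)/\Q(\mu_d)\big)$ does correctly identify which \emph{ideal} classes contain ideals supported over rational primes congruent to $1$ modulo $c$, but that information lives at the level of $\Cl(\M)$ and is too coarse to conclude in $\Cl(\Z[C])$. The repair is exactly the paper's ray-class refinement.
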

\begin{proof}
Let $\chi:C\to \overline{\Q}^\times$ be a fixed injective character of $C$. Then the $\Omega_\Q$-conjugacy classes of characters of $C$ are represented by $\chi^d$, with $d$ running through the divisors of $c$. In particular, as remarked in \S\ref{contentnvi} for the idèle-theoretic Hom-description, a representative homomorphism of an element of $\Cl(\Z[C])$ in the ideal-theoretic Hom-description is determined by its value on $\chi^d$ for every divisor $d$ of $c$. For any such divisor $d$, let $\Cl_\mathfrak{c}(\Q(\chi^d))$ denote the ray class group of $\Q(\chi^d)$ modulo $\mathfrak{c}$, where $\Q(\chi^d)$ is the extension of $\Q$ generated by the values of $\chi^d$ and $\mathfrak{c}$ is the product of the ideal generated by $c$ and the archimedean primes of $\Q(\chi^d)$. Consider the homomorphism
$$\pi_{C}:\bigoplus_{d\mid c}\Cl_\mathfrak{c}(\Q(\chi^d))\to \Cl(\Z[C])$$
which is defined as follows. For any $d\mid c$, let $a_d$ be a class in $\Cl_\mathfrak{c}(\Q(\chi^d))$ and choose a fractional ideal $\mathfrak{a}_d$ of $\Q(\chi^d)$ with the following properties: 
\begin{itemize}
\item[(i)] the numerator and denominator of $\mathfrak{a}_d$ are coprime with $c$; 
\item[(ii)] the class of $\mathfrak{a}_d$ in $\Cl_\mathfrak{c}(\Q(\chi^d))$ is $a_d$. 
\end{itemize}
Define $\pi_C((a_d)_{d\mid c})\in \Cl(\Z[C])$ to be the class represented by the function which, for any divisor $d\mid c$, sends $\chi^d$ to $\mathfrak{a}_d$. In fact $\pi_C((a_d)_{d\mid c})$ is independent of the choice of the ideals $\mathfrak{a}_d$ with the above properties, as we now show. For every $d\mid c$, choose a fractional ideal $\mathfrak{b}_d$ with properties (i) and (ii). Then, for every $d\mid c$, there exists a totally positive element $x_d\in \Q(\chi^d)$ such that $x_d\equiv 1 \pmod {(c)}$ and $\mathfrak{b}_d=x_d\mathfrak{a}_d$. We claim that the map $\chi^d\mapsto x_d\Z[\chi^d]$ lies in the denominator of the ideal theoretic Hom-description of $\Cl(\Z[C])$. In other words we have to show that for every place $\qq$ dividing $\mathfrak{c}$, there exists $u_q\in\Z_q[C]^\times$ such that $\Det_{\chi^d}(u_q)=x_d$ for every $d\mid c$, where $q$ is the place of $\Q$ below $\qq$ and we identify $x_d$ with its image in $\Q(\chi^d)_\qq^\times$. When $\qq$ is archimedean this follows from the second part of \cite[I, Proposition 2.2]{Frohlich-Alg_numb}, while when $\qq$ is finite it is a consequence of the following lemma.

\begin{lemma}
Let $q$ be a rational prime dividing $c$ and let $\xi_{c}$ be a primitive $c$th root of unity in an algebraic closure $\overline{\Q}_q$ of $\Q_q$. Then
$$\Hom_{\Omega_{\Q_q}}(R_{C,q},1+c\Z_q[\xi_c])\subset \Det(\Z_q[C]^\times).$$
\end{lemma}
\begin{proof} 
Let $\widehat C$ be the group of $\overline{\Q}_q$-characters of $C$. Consider the isomorphism of $\overline{\Q}_q$-algebras
$$\overline{\Q}_q[C]\to \mathrm{Map}(\widehat C,\overline{\Q}_q)$$
which sends $\sum_{\gamma\in C} a_\gamma \gamma\in \Q_q[C]$ to the map $\chi\mapsto \sum_{\gamma\in C}a_\gamma\chi(\gamma)$. Note that $\Omega_{\Q_q}$ acts on both $\overline{\Q}_q[C]$ (via its action on $\overline{\Q}_q$) and $\mathrm{Map}(\widehat C,\overline{\Q}_q)$ (via its actions on $\widehat C$ and $\overline{\Q}_q$) and the above isomorphism is $\Omega_{\Q_q}$-equivariant. Taking invariants we get an isomorphism of $\Q_q$-algebras
$$\Q_q[C]\to \mathrm{Map}_{\Omega_{\Q_q}}(\widehat C,\Q_q).$$
In particular, if $\mathfrak{M}_q$ is the maximal order of $\Q_q[C]$, we get an isomorphism of rings 
$$\mathfrak{M}_q\to\mathrm{Map}_{\Omega_{\Q_q}}(\widehat C,\Z_q[\xi_c]),$$
since $\Z_q[\xi_c]$ is the maximal order of $\Q_q(\xi_c)$. 
From this we get a group isomorphism 
$$1+c\mathfrak{M}_q\to \mathrm{Map}_{\Omega_{\Q_q}}(\widehat C,1+c\Z_q[\xi_c]).$$
Extending maps on characters by linearity, we can identify $\mathrm{Map}_{\Omega_{\Q_q}}(\widehat C,1+c\Z_q[\xi_c])$ with $\Hom_{\Omega_{\Q_q}}(R_{C,q},1+c\Z_q[\xi_c])$. Thus the above isomorphism is the usual $\Det$ map
$$\Det:1+c\mathfrak{M}_q\to \Hom_{\Omega_{\Q_q}}(R_{C,q},1+c\Z_q[\xi_c]).$$ 
This implies in particular that $1+c\mathfrak{M}_q\subset \mathfrak{M}_q^\times$, since $1+c\Z_q[\xi_c]\subset \Z_q[\xi_c]^\times$ because $q\mid c$. Moreover $c\mathfrak{M}_q\subset \Z_q[C]$ (see \cite[Theorem 41.1]{Reiner}) and in particular $1+c\mathfrak{M}_q\subset \Z_q[C]\cap \mathfrak{M}_q^\times=\Z_q[C]^\times$ (for the last equality see \cite[Exercise 4, Section 25]{Reiner}). Hence
$$\Hom_{\Omega_{\Q_q}}(R_{C,q},1+c\Z_q[\xi_c])= \Det(1+c\mathfrak{M}_q)\subset \Det(\Z_q[C]^\times).$$
\end{proof}

So the map $\pi_C$ is well-defined and one easily checks that it is a group homomorphism. Moreover it follows immediately from the ideal-theoretic Hom-description that $\pi_C$ is surjective. 

Now let $\Cl_{\mathfrak{c},\Sigma}(\Q(\chi^d))$ (resp. $\Cl'_{\mathfrak{c},\Sigma}(\Q(\chi^d))$) be the subgroup of $\Cl_\mathfrak{c}(\Q(\chi^d))$ which is generated by the classes of fractional ideals of $\Q(\chi^d)$ supported only above rational primes congruent to $1$ modulo $c$ (resp. by the classes of fractional ideals of $\Q(\chi^d)$ which have absolute norm congruent to $1$ modulo $c$). Observe that $\pi_C$ maps $\bigoplus_{d\mid c}\Cl_{\mathfrak{c},\Sigma}(\Q(\chi^d))$ (resp. $\bigoplus_{d\mid c}\Cl'_{\mathfrak{c},\Sigma}(\Q(\chi^d))$) surjectively onto $\Cl(\Z[C])_\Sigma$ (resp. $\Cl'(\Z[C])_\Sigma$). We claim that $\Cl_{\mathfrak{c},\Sigma}'(\Q(\chi^d))\subseteq \Cl_{\mathfrak{c},\Sigma}(\Q(\chi^d))$ for any divisor $d$ of $c$. In fact, fix $d\mid c$ and let for the moment $x\in \Cl_{\mathfrak{c}}(\Q(\chi^d))$ be any class. Then Chebotarev's density theorem implies that the set of primes of $\Q(\chi^d)$ belonging to $x$ has positive Dirichlet's density $\delta>0$ (see for instance \cite[Chapter VII, Theorem 7.2]{MilneCFT}). Note that $\delta$ is also equal to the density of the set of primes of $\Q(\chi^d)$ belonging to $x$ and splitting completely in $\Q(\chi^d)/\Q$ (\cite[Chapter IV, Corollary 4.6]{MilneCFT}). In particular there exists a prime $\mathfrak{p}$ of $\Q(\chi^d)$ representing $x$ and splitting completely in $\Q(\chi^d)/\Q$. Now suppose that $x\in \Cl_{\mathfrak{c},\Sigma}'(\Q(\chi^d))$: then the absolute norm of $x$ is the trivial class in the ray class group of $\Q$ modulo $(c)$ times the archimedean prime of $\Q$. This means that the absolute norm of $\mathfrak{p}$ is generated by a positive integer congruent to $1$ modulo $c$. But since $\mathfrak{p}$ splits completely, the absolute norm of $\mathfrak{p}$ coincides with the prime ideal $p\Z$ below $\mathfrak{p}$. Hence $x$ contains a prime whose underlying rational prime is congruent to $1$ modulo $c$, which means that $x\in  \Cl_{\mathfrak{c},\Sigma}(\Q(\chi^d))$. Thus $\Cl_{\mathfrak{c},\Sigma}'(\Q(\chi^d))\subset \Cl_{\mathfrak{c},\Sigma}(\Q(\chi^d))$ for every divisor $d$ of $c$ as claimed and clearly the reverse inclusion $\Cl_{\mathfrak{c},\Sigma}(\Q(\chi^d))\subset \Cl'_{\mathfrak{c},\Sigma}(\Q(\chi^d))$ also holds. Therefore 
$$\Cl(\Z[C])_\Sigma=\pi_C(\bigoplus_{d\mid c}\Cl_{c,\Sigma}(\Q(\chi^d)))=\pi_C(\bigoplus_{d\mid c}\Cl'_{c,\Sigma}(\Q(\chi^d)))=\Cl'(\Z[C])_\Sigma.$$
\end{proof}

%%%%%%%%%%%%%%%%%%%%%%%%%%%%%%%%%%%%%%%%%%

%%%%%%%%%%%%%%%%%%%%%%%%%%%%%%%%%%%%%%%%%%%%%%
\section{Explicit unit elements}\label{section:stickelberger}
This section is devoted to finding explicit unit elements associated to the classes of $T_\Z$, $S$ and $R$ in $\Cl(\Z[\Delta])$, yielding the proof of their triviality. We stick to the notation introduced in the previous section. 

%We also use our techniques to give a new proof of a theorem of Burns about realisable classes.

\subsection{A cyclotomic unit to describe $(T_\Z)$}
In this subsection we will prove the triviality of $T_{\mathbb{Z}}$ in $\Cl(\Z[\Delta])$, which has already been proved by Chase using a well-known result of Swan. We hope that our proof could serve as a guiding path for the cases of $S$ and $R$. As to our knowledge, every proof of the triviality of $(T_\Z)$ uses cyclotomic units. In our approach, we need them to modify the representative homomorphism $v$ of $T_{\mathbb{Z}}$ (constructed in \S\ref{hom-rep-TZ}) by an equivariant function on characters of $\Delta$ with values in $\overline{\Q}^\times$. When we deal with the classes of $S$ and $R$, we will essentially replace cyclotomic units by Jacobi and Gauss sums (compare (\ref{cv}) with (\ref{cs2}) and (\ref{cr2})).

\begin{proposition}\label{swan}
The class of $\Sigma_\Delta(p)=p\Z[\Delta]+\Tr_\Delta\Z[\Delta]$ is represented in
$\Hom_{\Omega_\Q}(R_\Delta,J(\Q(\zeta)))$  by the morphism whose
$\qq$-component at a prime ideal $\qq$ of $\oo$ is
$$\left\{
\begin{array}{ll}
1 & \textrm{if }\qq\nmid e\enspace,\\
\Det(p^{-1}u_t)& \textrm{if }\qq\mid e\enspace,
\end{array}
\right.$$
where $u_t=1+\delta+\cdots+\delta^{p-1}\in\Z[\Delta]$. Further,
if $\qq\cap\Z=q\Z$ with $q\not=p$, then $u_t\in\Z_q[\Delta]^\times$. As a direct 
consequence, we get: 
$$\big(T_{\Z}\big)=\big(\Sigma_\Delta(p)\big)=1\quad \textrm{in }\Cl(\Z[\Delta])\enspace.$$
\end{proposition}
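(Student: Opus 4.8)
The strategy is to start from the representative $v$ of $(\Sigma_\Delta(p))$ built in Lemma~\ref{swan-repr1}, to modify it by an explicit ``cyclotomic'' element of the denominator of the Hom-description \eqref{hom-des}, to recognise the result as the morphism $w$ of the statement, and finally to observe that $w$ is itself a $\Det$ of a unit idèle. In this way one obtains at once that $w$ represents $(\Sigma_\Delta(p))$ and that this class is trivial. Throughout, the morphism $v$ (a priori valued in $J(\Q)$) is viewed in $J(\Q(\mu_e))$ via the diagonal embedding, which is compatible with the $\Omega_\Q$-actions.

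First I would prove the local unit claim. For a rational prime $q\ne p$, since $\Z_q[\Delta]$ is $q$-adically complete, $u_t\in\Z_q[\Delta]^\times$ is equivalent to $\bar u_t=1+\bar\delta+\dots+\bar\delta^{\,p-1}$ being a unit of $\F_q[\Delta]$, i.e.\ having nonzero image in each residue field of $\F_q[\Delta]$. Such a residue field is reached by a character $\psi\colon\Delta\to\overline\F_q^{\times}$, and one computes $\psi(u_t)=\sum_{i=0}^{p-1}\psi(\delta)^i$, which equals $\bar p\ne0$ if $\psi(\delta)=1$ and equals $\bigl(\psi(\delta)^p-1\bigr)/\bigl(\psi(\delta)-1\bigr)\ne 0$ otherwise, the latter because the order of $\psi(\delta)$ divides $e$, hence is prime to $p$, so $\psi(\delta)^p$ is again a nontrivial root of unity. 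In particular $p^{-1}u_t\in\Z_q[\Delta]^\times$ whenever $q\mid e$ (as then $q\ne p$). For later use I would also record that $\Det_{\chi^h}(u_t)=\sum_{i=0}^{p-1}\zeta^{ih}$ equals $p$ when $e\mid h$ and equals the cyclotomic unit $(\zeta^{ph}-1)/(\zeta^h-1)\in\oo^{\times}$ when $e\nmid h$, since $\zeta^h$ and $\zeta^{ph}$ then have the same nontrivial order.

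Next I would dispose of the morphism $w$ itself: let $u'\in\mathcal U(\Z[\Delta])$ be the unit idèle with $u'_q=p^{-1}u_t$ for $q\mid e$ and $u'_q=1$ otherwise (well defined by the previous paragraph). Then $\Det(u')$ is exactly the morphism $w$ of the statement, so $w$ is a legitimate $\Omega_\Q$-equivariant morphism lying in $\Det(\mathcal U(\Z[\Delta]))$, hence representing the trivial class. It thus remains to check that $(v)=(w)$, i.e.\ that $v\,w^{-1}\in\Hom_{\Omega_\Q}(R_\Delta,\Q(\mu_e)^{\times})\,\Det(\mathcal U(\Z[\Delta]))$. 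Computing $v\,w^{-1}$ place-by-place from Lemma~\ref{swan-repr1} and the form of $w$, I would exhibit the factorisation $v\,w^{-1}=\Det(z)\,\Det(u)$, where $z=\varepsilon_0+p\,(u_t\varepsilon_1)^{-1}\in\Q[\Delta]^{\times}$, with $\varepsilon_0=\tfrac1e\Tr_\Delta$ and $\varepsilon_1=1-\varepsilon_0$ the idempotents of $\Q[\Delta]$ and $(u_t\varepsilon_1)^{-1}$ the inverse of $u_t\varepsilon_1$ in the ring $\varepsilon_1\Q[\Delta]$, is a global (hence diagonal) element, and where $u\in\mathcal U(\Z[\Delta])$ is the unit idèle equal to $\varepsilon_0+u_t\varepsilon_1$ at $p$, to $1$ at the primes dividing $e$, and to $\varepsilon_0+p^{-1}u_t\varepsilon_1$ at every other finite or infinite place. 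The element $z$ is precisely where the cyclotomic unit $C$ enters, through the values $\Det_{\chi^h}(z)$. That $u$ is indeed a unit idèle uses that $\Z_q[\Delta]$ is a maximal order for $q\nmid e$ — so the $\varepsilon_i$ make sense there and a componentwise check suffices — together with the unitness of the values $\Det_{\chi^h}(u_t)$ from the first step. Combining $v=w\,\Det(z)\,\Det(u)$ with $w=\Det(u')$ yields $(T_\Z)=(\Sigma_\Delta(p))=(v)=(w)=1$ in $\Cl(\Z[\Delta])$.

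The main obstacle is the bookkeeping needed to keep every local component in the correct unit group while carrying out the cyclotomic modification: the primes $q\mid e$ must be handled separately, since there $\Z_q[\Delta]$ is not a maximal order, the idempotents $\varepsilon_i$ are unavailable, and one is forced to use that $u_t$ \emph{itself} (not merely its semisimple components) is a $q$-adic unit — which is exactly the content of the first step. Once this is established and the unit idèle $u$ is split correctly across the three families of places $q=p$, $q\mid e$, and $q\nmid pe$, the remainder is formal manipulation inside Fröhlich's Hom-description.
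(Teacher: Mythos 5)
Your argument is correct, and it reaches the stated representative and the triviality of $(T_\Z)$ by a route that, while sharing the paper's guiding idea (modify the Swan representative $v$ of Lemma \ref{swan-repr1} by a cyclotomic-unit-based global morphism and recognize the result as a $\Det$ of unit idèles), differs in how the two technical points are settled. First, you prove $u_t\in\Z_q[\Delta]^\times$ for every $q\ne p$ directly, by reducing modulo $q$ and evaluating at characters $\Delta\to\overline{\F}_q^\times$; the paper instead obtains this only for $q\mid e$ and indirectly, by first placing $vc_v^{-1}$ in $\Det(\UU(\M))$ via \cite[(I.2.19)]{Frohlich-Alg_numb} (which uses that $\Delta$ has no symplectic characters), computing $(vc_v^{-1})_\qq=\Det(p^{-1}u_t)$, and then combining the injectivity of $\Det$ (Proposition \ref{prop:Det}) with $\M_q^\times\cap\Z_q[\Delta]=\Z_q[\Delta]^\times$. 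Second, where the paper invokes the maximal-order result to handle all places $\qq\nmid e$ at once, you make the membership of $vw^{-1}$ in the denominator fully explicit: your $z=\varepsilon_0+p(u_t\varepsilon_1)^{-1}$ satisfies $\Det(z)=c_v$, so your global modification coincides with the paper's, and your idèle $u$ (equal to $\varepsilon_0+u_t\varepsilon_1$ at $p$, to $1$ at $q\mid e$, and to $\varepsilon_0+p^{-1}u_t\varepsilon_1$ elsewhere) supplies by hand the unit idèle whose existence the paper deduces abstractly; the componentwise identity $v=w\,\Det(z)\,\Det(u)$ with $w=\Det(u')$ then checks out at the three families of places. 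The trade-off is clear: the paper's proof is shorter because it leans on Fröhlich's general facts, while yours is more elementary and self-contained at the local level, at the cost of the idempotent bookkeeping; the only points you leave implicit — the invertibility of $u_t\varepsilon_1$ in $\varepsilon_1\Q[\Delta]$ and the place-by-place verification of the factorisation — follow routinely from the character values of $u_t$ you record.
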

\begin{proof}
Recall the expression of the representative homomorphism $v$ of $(T_\Z)$ given in Lemma \ref{swan-repr1}. In order to show that $v$ belongs to the denominator of the Hom-description, we shall modify it by a global valued equivariant morphism, with values in the cyclotomic field $\Q(\zeta)$. We thus now look at $v$ as a morphism with values in the id\`eles of $\Q(\zeta)$, through the natural embedding $J(\Q)\subseteq J(\Q(\zeta))$ given by $(x_q)_q\mapsto(x_\qq)_\qq$ with $x_\qq=x_q$ if $\qq$ is a prime ideal of $\oo$ above the rational prime $q$. The content of the id\`ele $v(\chi^h)\in J(\Q(\zeta))$ is then the principal ideal $(p^{1-\updelta_{h,e}})$ of $\oo$.  

We consider the morphism $c_v\in\Hom(R_\Delta,\Q(\zeta)^\times)$ defined by 
\begin{equation}\label{cv}
c_v(\chi^e)=1\ ,\quad c_v(\chi^h)=\frac{1-\zeta^h}{1-\zeta^{ph}}\,p
\end{equation}
for $h\in\{1,\ldots,e-1\}$. This morphism clearly commutes with the action of ${\Omega_\Q}$, hence $vc_v^{-1}$ also represents the class of $\Sigma_\Delta(p)$. Note that $\frac{1-\zeta^{ph}}{1-\zeta^{h}}$ is a cyclotomic unit, thus belongs to $\oo^\times$. 
%$\frac{1-\zeta^h}{1-\zeta^{ph}}$ is the inverse of a cyclotomic unit, hence is a unit. 
It follows that $vc_v^{-1}$ takes unit idelic values, namely belongs to $\Hom_{\Omega_\Q}(R_\Delta,\UU\big(\Q(\zeta)\big))$. We deduce from \cite[(I.2.19)]{Frohlich-Alg_numb}, in which the $+$ sign disappears since the abelian group $\Delta$ has no symplectic character, that 
\begin{equation}\label{max-order}
vc_v^{-1}\in\Det(\UU(\M))\enspace,
\end{equation}
where $\M$ denote the maximal order in $\Q[\Delta]$. Since $\M_q(=\M\otimes_\Z\Z_q)=\Z_q[\Delta]$ whenever $q\nmid e$ (see \cite[Proposition 27.1]{CR1}) and $\M_\infty=\mathbb{R}[\Delta]$, 
we get that $(vc_v^{-1})_\qq\in\Det(\Z_q[\Delta]^\times)$ if $\qq\nmid e$ and $\qq\cap\Z=q\Z$. We now prove that the same relationship holds when $\qq\mid e$. It will follow from the next result.
\begin{lemma}
Let $\qq$ be a prime ideal of $\oo$ above a rational prime $q$ different from $p$, then
$$(vc_v^{-1})_\qq=\Det(p^{-1}u_t)$$
and $u_t\in\Z_q[\Delta]^\times$. Hence $(vc_v^{-1})_\qq\in\Det(\Z_q[\Delta]^\times)$.
\end{lemma}
\begin{proof}
Let $h\in\{1,\ldots,e\}$, then
$$\Det_{\chi^h}(u_t)=\left\{\begin{array}{ll}
p & \textrm{ if }h=e,\\
1+\zeta^h+\cdots+\zeta^{h(p-1)}=\frac{1-\zeta^{ph}}{1-\zeta^h}
& \textrm{ otherwise,}
\end{array}\right.\enspace.$$
%namely
%$$\Det_{\chi^h}(u_t)=p^{\updelta_{h,e}}\left(\frac{1-\zeta^{ph}}{1-\zeta^h}\right)^{1-\updelta_{h,e}}=p\left(\frac{1-\zeta^{ph}}{1-\zeta^h}p^{-1}\right)^{1-\updelta_{h,e}}\enspace.$$
Since $\qq\nmid p$, one has $v(\chi^h)_\qq=1$, hence $p^{-1}\Det_{\chi^h}(u_t)=(vc_v^{-1})_\qq(\chi^h)$. It follows that 
$(vc_v^{-1})_\qq=\Det(p^{-1}u_t)$ as announced. 

Further, we know by (\ref{max-order}) that there exists $w_q\in
\mathcal{M}_q^\times$ such that  
$$(vc_v^{-1})_\qq=\Det(w_q)\enspace.$$ 
It follows that
$\Det(w_q)=\Det(p^{-1}u_t)$, so by Proposition \ref{prop:Det} we must have $w_q=p^{-1}u_t$. 
Since $q\not= p$, 
$$p^{-1}u_t\in\mathcal{M}_q^\times\cap\Z_q[\Delta]
%=(\mathcal{M}_\qq^\times\cap\oo_\qq[\Delta])\cap\Z_q[\Delta]=\oo_\qq[\Delta]^\times\cap\Z_q[\Delta]
=\Z_q[\Delta]^\times\enspace,$$
(see \cite[\S 25, Exercise 4]{Reiner}); the same holds for $u_t$.
\end{proof}
Combining with the above result, we get that $vc_v^{-1}\in\Det(\UU(\Z[\Delta]))$, namely $vc_v^{-1}$ lies in the denominator of the Hom-Description of $\Cl(\Z[\Delta])$ and thus $(\Sigma_\Delta(p))=1$. Further we can change $vc_v^{-1}$ by multiplying its $\qq$-component, whenever by $\qq\nmid e$, by its inverse (which also belongs to $\Det(\Z_q[\Delta]^\times)$ if $\qq\cap\Z=q\Z$), to get the announced representative of $(\Sigma_\Delta(p))$. This ends the proof of Proposition \ref{swan}.
\end{proof}
Clearly the trivial homomorphism sending any character to $1$ also represents the class of $\Sigma_\Delta(p)$, since this class is trivial. In the representative morphism given in Proposition \ref{swan} (and in Theorem \ref{main}) we have chosen to keep the $\qq$-components for $\qq\mid e$ as they appeared in the proof because of their arithmetic meaning, in order to recall the link between the Galois structure of the Swan module and the cyclotomic units.

The above proposition proves the assertions concerning $(T_\mathbb{Z})$ in Theorem \ref{main}. In the rest of this section we will deal with the assertions concerning $(S)$ and $(R)$. 

%%%%%%%%%%%%%%%%%%%%%%%%%%%%%%%%%%
\subsection{Gauss and Jacobi sums to describe $(R)$ and $(S)$}

\subsubsection{}
We start introducing Gauss and Jacobi sums associated to the residue fields of the intermediate extensions of $\Q(\zeta)/\Q$. We denote by $\mu_\infty$ the subgroup of roots of unity in $\overline\Q^\times$ and we let $\xi$ denote an element of order $p$ of $\mu_\infty$.

Let ${e'}$ be any divisor of $e$ and recall that $\oo_{e'}=\Z[\zeta_{e'}]$ and $\pp_{e'}=\pp\cap \oo_{e'}$ respectively denote the ring of integers and the prime ideal below $\pp$ in the subfield $\Q(\zeta_{e'})$ of $\Q(\zeta)$. Let $\theta$ denote a multiplicative character of $\oo_{e'}/\pp_{e'}$, namely an homomorphism $(\oo_{e'}/\pp_{e'})^\times\to\mu_\infty$, extended to $\oo_{e'}/\pp_{e'}$ by the convention $\theta(0)=0$. The Gauss sum relative to $\theta$ is defined as: 
$$G(\theta)=\sum_{x\in\oo_{e'}/\pp_{e'}}\theta(x)\xi^{\mathrm{Tr}_{e'}(x)}\enspace,$$
where $\mathrm{Tr}_{e'}:\oo_{e'}/\pp_{e'}\to \Z/p\Z$ denotes the residue field trace homomorphism. We will mostly be concerned with the case where $\theta=\left(\frac{}{\pp_{e'}}\right)^{-1}$ is the inverse of the ${e'}$th power residue symbol, and we set $G_{e'}=G(\left(\frac{}{\pp_{e'}}\right)^{-1})$. Specifically, for any $x\in(\oo_{e'}/\pp_{e'})^\times$, $\left(\frac{x}{\pp_{e'}}\right)$ is the ${e'}$th root of unity defined by the congruence
$$\left(\frac{x}{\pp_{e'}}\right)\equiv x^{\frac{p^{f_{\!{e'}}}\,-\,1}{{e'}}}\quad \pmod{\pp_{e'}}\enspace.$$

Suppose $\theta$ is a multiplicative character of $\oo_{e'}/\pp_{e'}$ that takes values in $\{0\}\cup\mu_{e'}$, then $G(\theta)\in\oo_{e'}[\xi]$. Let $\tau\in\Gal(\Q(\zeta_{e'},\xi)/\Q(\zeta_{e'}))$ and let $\beta\in\F_p^\times$ be such that $\tau(\xi)=\xi^\beta$, then the multiplication-by-$\beta$ map is a bijection of $\oo_{e'}/\pp_{e'}$ onto itself and 
\begin{equation}\label{galois-gauss}
G(\theta)^{\tau}=\sum_{x\in\oo_{e'}/\pp_{e'}}\theta(x)\xi^{\beta\mathrm{Tr}_{e'}(x)}=\sum_{x\in\oo_{e'}/\pp_{e'}}\theta(\beta^{-1}x)\xi^{\mathrm{Tr}_{e'}(x)}=\theta(\beta)^{-1}G(\theta)\enspace.
\end{equation}
It follows that 
\begin{equation}\label{invarianceGdd}
{G(\theta)}^{e'}\ \in\ \oo_{e'}\enspace,
\end{equation}
in particular ${G_{e'}}^{e'}\in\oo_{e'}$.

In the same setting, let $\theta$, $\theta'$ be multiplicative characters of $\oo_{e'}/\pp_{e'}$. The Jacobi sum relative to $\theta$ and $\theta'$ is:
$$J(\theta,\theta')=\sum_{x\in \oo_{e'}/\pp_{e'}}\theta(x)\theta'(1-x)\enspace.$$ 
If $\theta\theta'$ is a non trivial character, then one has (\cite[\S 8, Theorem 1]{Ireland-Rosen}):
%\cite[Theorem 2.1.3]{Berndt-Evans-Williams}:
$$J(\theta,\theta')=\frac{G(\theta)G(\theta')}{G(\theta\theta')}\enspace.$$
We set $J_{e'}=J(\left(\frac{}{\pp_{e'}}\right)^{-1},\left(\frac{}{\pp_{e'}}\right)^{-1})$. Since $e$ is odd, one has $\left(\frac{}{\pp_{e'}}\right)^2\not=1$, hence 
$$J_{e'}={G_{e'}}^{2-\sigma_{{e'},2}}$$
(where, with a slight abuse of notation, $\sigma_{{e'},2}$ is lifted to $\mathrm{Gal}(\Q(\zeta_{e'},\xi)/\Q)$ in such a way that $\sigma_{{e'},2}(\xi)=\xi$) and, using (\ref{galois-gauss}),
$$J_{e'}\ \in\ \oo_{e'}\enspace.$$

As it is well-known, the factorization of Gauss and Jacobi sums can be written in a nice way using the Stickelberger element. More precisely, we have the following classical result about the ideals of $\oo_{e'}$ generated respectively by ${G_{e'}}^{e'}$ and $J_{e'}$. For details and for a proof see for instance \cite[Theorem 2 in Chapter 14, Proposition 15.3.2 and its proof in Chapter 15]{Ireland-Rosen}.  

\begin{theorem}[Stickelberger]\label{stickel}
%We have that 
%${G_{e'}}^{e'}$ and $J_{e'}$ belong to $\Q(\zeta_{e'})$, ${e'}\Theta_{e'}$ and 
One has
$(2-\sigma_{{e'},2})\Theta_{e'}\in\Z[\mathrm{Gal(\Q(\zeta_{e'})/\Q)}]$, and  
$$
({G_{e'}}^{e'})={\pp_{e'}}^{{e'}\Theta_{e'}}\quad,\qquad (J_{e'})={\pp_{e'}}^{(2-\sigma_{{e'},2})\Theta_{e'}}\enspace.
$$
\end{theorem}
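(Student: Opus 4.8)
This is Stickelberger's classical theorem, stated here for the $e'$-th power residue symbol; a complete proof can be quoted from \cite[Chapters 14--15]{Ireland-Rosen}, and I sketch the line of argument. The plan is to determine the ideal $(G_{e'})$ of $\oo_{e'}[\xi]$ prime by prime above $p$, descend to $\oo_{e'}$, and read off the exponent $e'\Theta_{e'}$; the Jacobi-sum statement then follows formally from $J_{e'}=G_{e'}^{2-\sigma_{e',2}}$. We may assume $e'>1$, which is the only case used in the sequel. Set $L=\Q(\zeta_{e'},\xi)$, a Galois extension of $\Q$ with group $(\Z/e'\Z)^\times\times(\Z/p\Z)^\times$ since $\gcd(e',p)=1$. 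First I would record the soft ingredients. (a) For a nontrivial multiplicative character $\theta$ of $\oo_{e'}/\pp_{e'}$ one has $G(\theta)\overline{G(\theta)}=p^{f_{e'}}$ under every complex embedding, so $(G_{e'})$ — and hence the ideal $(G_{e'}^{e'})$ of $\oo_{e'}$, which is well defined by \eqref{invarianceGdd} — is supported only on primes above $p$. (b) Since $\gcd(e',p)=1$, the prime $\pp_{e'}$ is totally ramified in $L/\Q(\zeta_{e'})$, so there is a unique prime $\mathfrak P$ of $L$ above it and $\lambda:=\xi-1$ is a uniformizer at $\mathfrak P$. (c) The Galois behaviour of Gauss sums: \eqref{galois-gauss} handles $\Gal(L/\Q(\zeta_{e'}))$, while a lift of $\sigma_{e',a}\in\Gal(\Q(\zeta_{e'})/\Q)$ fixing $\xi$ carries $G_{e'}$ to the Gauss sum attached to the $a$-th power of the inverse $e'$-th power residue symbol, because $\sigma_{e',a}$ raises $e'$-th roots of unity to their $a$-th powers. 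Together (a)--(c) reduce the determination of $(G_{e'}^{e'})$ to the single nonnegative integer $\val_{\mathfrak P}(G_{e'})$ and the action of $\Gal(\Q(\zeta_{e'})/\Q)$ on $\pp_{e'}$.

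The substantive input is Stickelberger's congruence. Let $\omega$ be the Teichmüller character of $(\oo_{e'}/\pp_{e'})^\times$, of order $p^{f_{e'}}-1$, normalized by $\omega(x)\equiv x\pmod{\mathfrak P}$. Writing $0\le t<p^{f_{e'}}-1$ in base $p$ as $t=\sum_{i=0}^{f_{e'}-1}a_ip^i$ with $0\le a_i\le p-1$, and putting $s(t)=\sum_i a_i$, the congruence asserts that $\val_{\mathfrak P}(G(\omega^{-t}))=s(t)$, and more precisely that $G(\omega^{-t})/\lambda^{s(t)}$ is a $\mathfrak P$-adic unit with residue $\pm\prod_i(a_i!)^{-1}$ modulo $\mathfrak P$. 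I would establish this by the classical $\mathfrak P$-adic computation — expanding $\xi^{\mathrm{Tr}_{e'}(x)}=(1+\lambda)^{\mathrm{Tr}_{e'}(x)}$ and reducing modulo powers of $\lambda$, starting from $\val_{\mathfrak P}(G(\omega^{-1}))=1$ and bootstrapping via $G(\omega^{-a})G(\omega^{-b})=J(\omega^{-a},\omega^{-b})G(\omega^{-a-b})$ — or simply quote it. Since the inverse $e'$-th power residue symbol equals $\omega^{-(p^{f_{e'}}-1)/e'}$, the congruence gives $\val_{\mathfrak P}(G_{e'})=s\bigl((p^{f_{e'}}-1)/e'\bigr)$; combining this with the elementary identity $s(t)=(p-1)\sum_{i=0}^{f_{e'}-1}\langle p^{i}t/(p^{f_{e'}}-1)\rangle$ (fractional parts) and the ramification of $\pp_{e'}$ in $\oo_{e'}[\xi]$, and assembling over the conjugate primes via (c), produces exactly the exponent $e'\Theta_{e'}=\sum_{\substack{1\le j\le e'-1\\\gcd(j,e')=1}}j\,\sigma_{e',j}^{-1}$, which has nonnegative integer coefficients; this is the first displayed equality $(G_{e'}^{e'})=\pp_{e'}^{e'\Theta_{e'}}$.

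For the Jacobi sum I would use $J_{e'}=G_{e'}^{2-\sigma_{e',2}}$ (valid because $e'$ is odd, so the square of the residue symbol is nontrivial), whence $(J_{e'})^{e'}=(G_{e'}^{e'})^{2-\sigma_{e',2}}=\pp_{e'}^{e'(2-\sigma_{e',2})\Theta_{e'}}$. Granting that $(2-\sigma_{e',2})\Theta_{e'}$ has integer coefficients, unique factorization of ideals of $\oo_{e'}$ lets me take $e'$-th roots to obtain $(J_{e'})=\pp_{e'}^{(2-\sigma_{e',2})\Theta_{e'}}$. The integrality of $(2-\sigma_{e',2})\Theta_{e'}$ is the elementary computation already performed in Lemma \ref{contents2}: collecting the coefficient of $\sigma_{e',j}^{-1}$ in $(2-\sigma_{e',2})\Theta_{e'}$ one gets $0$ when $2j<e'$ and $1$ when $2j>e'$, so that $(2-\sigma_{e',2})\Theta_{e'}=H_{e'}\in\Z[\Gal(\Q(\zeta_{e'})/\Q)]$; this is also the first assertion of the theorem.

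The single genuine obstacle is Stickelberger's congruence itself — the identification of $\val_{\mathfrak P}$ of a Gauss sum with a base-$p$ digit sum. Everything downstream (the absolute value estimate, the ramification facts, the Galois bookkeeping, the passage from digit sums to fractional parts and then to $\Theta_{e'}$, and the reduction of the Jacobi-sum case to the Gauss-sum case) is routine once that input is available, which is exactly why citing \cite[Chapters 14--15]{Ireland-Rosen} for it is reasonable.
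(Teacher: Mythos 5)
Your proposal is correct, and it matches the paper's treatment: the paper gives no proof of its own but simply quotes this classical result from Ireland--Rosen (Chapter 14, Theorem 2 and Proposition 15.3.2), which is exactly the source and the standard argument (Stickelberger's congruence via the Teichm\"uller character, plus the relation $J_{e'}=G_{e'}^{2-\sigma_{e',2}}$ and the integrality of $(2-\sigma_{e',2})\Theta_{e'}$ as in Lemma \ref{contents2}) that you sketch. Nothing further is needed.
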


\subsubsection{}
In view of Propositions \ref{contentr} and \ref{contents} we deduce, since $\pp^{N_{e,e'}}=\pp_{e'}^{f/f_{e'}}$, that the contents of the representative homomorphisms $s$ and $r$, evaluated at $\chi^d$ with $d\mid e$ and $e=de'$, are principal ideals given by:
\begin{align*}
%cont(t(\chi^d))&=\Big(p^{-f(1-\updelta_{d,e})}\Big)\enspace,\\
cont(s(\chi^d))&=\Big({J_{e'}}^{f/f_{e'}}\Big)\enspace,\\
cont(r(\chi^d))&=\Big({G_{e'}}^{ef/f_{e'}}\Big)\enspace.
\end{align*}
%We want to show that the homomorphisms $t$, $r$ and $s$ defined in the
%previous section lie in the denominator of the Hom-description of
%$\Cl(\Z[\Delta])$, namely
%$$t,r,s\in \mathrm{Hom}_{\Omega_{\Q}}(R_{\Delta},
%\Q(\zeta)^\times)\mathrm{Det}(\mathcal{U}(\Z[\Delta]))\enspace.$$
%This is equivalent to showing that the classes $T$, $R$ and $S$ are
%trivial in $\Cl(\Z[\Delta])$. In order to do so, we now define elements
We now use the above generators of the content ideals to define elements $c_s$ and $c_r$ in $\mathrm{Hom}_{\Omega_{\Q}}(R_{\Delta},\Q(\zeta)^\times)$. We have to multiply $J_{e'}^{f/f_{e'}}$ and $G_{e'}^{ef/f_{e'}}$ by suitable units in order to ensure that $sc_s^{-1}$ and $rc_r^{-1}$ lie in $\mathrm{Det}(\mathcal{U}(\Z[\Delta]))$. 
%Recall that $\chi$ is the $\overline\Q$-valued character of $\Delta$
%such that $\chi(\delta)=\zeta$ (so $\chi$ generates $\hat\Delta$). 
Let $c_s$ and $c_r$ denote the only homomorphisms in $\mathrm{Hom}_{\Omega_{\Q}}(R_{\Delta},\Q(\zeta)^\times)$ such that, for any $d\mid e$, 
\begin{align}
%c_t(\chi^d)&=\Big({\textstyle\frac{1-\zeta_{e'}^{p^f}}{1-\zeta_{e'}}}\,p^{-f}\Big)^{(1-\updelta_{d,e})}\ ,\\
c_s(\chi^d)&=-(-J_{e'})^{f/f_{e'}}\ ,\label{cs2}\\
c_r(\chi^d)&=(-1)^{e}(-G_{e'})^{ef/f_{e'}}\label{cr2}\enspace,
\end{align}
where $e=de'$. 
%Note that the numbers $\frac{1-\zeta_{e'}^{p^f}}{1-\zeta_{e'}}$ are cyclotomic units.

The first step to our goal is easy. Let $\M$ denote the maximal order in $\Q[\Delta]$.
\begin{corollary}\label{ordmax}
The homomorphisms $sc_s^{-1}$ and $rc_r^{-1}$ belong to $\mathrm{Det}(\UU(\M))$.
\end{corollary}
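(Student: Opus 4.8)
The plan is to verify that the modified homomorphisms $sc_s^{-1}$ and $rc_r^{-1}$ take \emph{unit} idelic values at every finite place of $\oo$ and then invoke Fröhlich's result that an $\Omega_\Q$-equivariant homomorphism with values in the unit idèles of $\Q(\zeta)$ automatically comes from a determinant of a unit in the maximal order. The starting point is Corollary \ref{rs-rep} together with the content computations of Propositions \ref{contentr} and \ref{contents}: for each divisor $d\mid e$ with $e=de'$, the idèle $s(\chi^d)$ (resp. $r(\chi^d)$) has trivial component at every place not dividing $p$, and its content is the principal ideal generated by ${J_{e'}}^{f/f_{e'}}$ (resp. ${G_{e'}}^{ef/f_{e'}}$), by Stickelberger's theorem (Theorem \ref{stickel}) combined with the identity $\pp^{N_{e,e'}}=\pp_{e'}^{f/f_{e'}}$.

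First I would record that $c_s$ and $c_r$, being defined on the $\Omega_\Q$-orbit representatives $\chi^d$ by the formulas \eqref{cs2} and \eqref{cr2}, extend uniquely to elements of $\mathrm{Hom}_{\Omega_\Q}(R_\Delta,\Q(\zeta)^\times)$; this is the same mechanism already used for $c_v$ in the proof of Proposition \ref{swan}, and it uses that $J_{e'},G_{e'}^{e'}\in\oo_{e'}$ (from \eqref{invarianceGdd} and the discussion after it) so that the prescribed values actually lie in $\Q(\zeta_{e'})\subseteq\Q(\zeta)$ and are fixed by the appropriate decomposition group. Because $c_s$ and $c_r$ are global-valued (diagonally embedded), multiplying $s$ and $r$ by their inverses does not change the class represented, so $sc_s^{-1}$ and $rc_r^{-1}$ still represent $(S)$ and $(R)$.

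Next I would check the crucial unitarity statement: $sc_s^{-1}$ and $rc_r^{-1}$ lie in $\mathrm{Hom}_{\Omega_\Q}(R_\Delta,\UU(\Q(\zeta)))$. At a finite place $\qq\nmid p$ this is immediate since $s(\chi^d)_\qq=r(\chi^d)_\qq=1$ and the global element $c_s(\chi^d)$ (resp. $c_r(\chi^d)$) is, up to the sign $\pm(-1)^{?}$, a power of $J_{e'}$ (resp. $G_{e'}$), whose valuation at primes not above $p$ vanishes by Stickelberger. At a place $\qq\mid p$ one compares valuations: the content formula \eqref{cs} (resp. \eqref{cr}) shows precisely that $\val_\qq\big(s(\chi^d)\big)$ equals $\val_\qq\big({J_{e'}}^{f/f_{e'}}\big)=\val_\qq(c_s(\chi^d))$, since both sides are read off from the same Stickelberger exponent $(2-\sigma_{e',2})\Theta_{e'}N_{e,e'}$ (resp. $e\Theta_{e'}N_{e,e'}$). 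Hence every component of $sc_s^{-1}$ and $rc_r^{-1}$ has trivial valuation, i.e.\ is a unit. Finally, invoking \cite[(I.2.19)]{Frohlich-Alg_numb}---exactly as in the derivation of \eqref{max-order}, with the symplectic correction absent because $\Delta$ is abelian---we conclude $sc_s^{-1}, rc_r^{-1}\in\Det(\UU(\M))$.

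The main obstacle is bookkeeping with the several layers of group algebras and the $\pm$ signs: one must be careful that the sign factors $-(-J_{e'})^{f/f_{e'}}$ and $(-1)^e(-G_{e'})^{ef/f_{e'}}$ in \eqref{cs2}, \eqref{cr2} are chosen so that the resulting values at $\chi^e$ (the trivial character, $d=e$, $e'=1$) are genuinely $+1$---since $J_1$ and $G_1$ involve the trivial residue field $\F_p$ and degenerate to $-1$ or $\pm p$---and so that $c_s$, $c_r$ are consistent on all the orbit representatives simultaneously; this sign choice is what makes the norm/content match exactly rather than up to a root of unity. Everything else is a direct consequence of the content computations of Section \ref{section:content} and Stickelberger's theorem, so I expect the proof of the corollary itself to be short, essentially the two-line argument "unit idelic values $\Rightarrow$ in $\Det(\UU(\M))$" once the valuations have been matched.
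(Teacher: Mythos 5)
Your proposal is correct and takes essentially the same route as the paper: one observes that, by Stickelberger's theorem and the content computations of Section \ref{section:content}, the ideal generated by $c_s(\chi^d)$ (resp. $c_r(\chi^d)$) coincides with $cont\big(s(\chi^d)\big)$ (resp. $cont\big(r(\chi^d)\big)$) for every $d\mid e$, so that $sc_s^{-1}$ and $rc_r^{-1}$ take unit idelic values everywhere, and then one invokes \cite[(I.2.19)]{Frohlich-Alg_numb} exactly as in the proof of Proposition \ref{swan}, the symplectic condition being vacuous since $\Delta$ is abelian. The only divergence is your worry about the signs in \eqref{cs2}--\eqref{cr2}, which is immaterial for this corollary (a sign is a unit, so the contents match regardless, as the paper itself remarks right after the statement); those signs only become essential later, in the proof of Theorem \ref{desiderio} via the Hasse--Davenport identities.
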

\begin{proof}
%It is straightforward from Propositions \ref{contentt},
%\ref{contentr}, \ref{contents} and Theorem \ref{stickel}
It is clear from above that, for any divisor $d$ of $e$, one has the following equalities of ideals: 
$$
\big(c_s(\chi^d)\big)=cont\big(s(\chi^d)\big)\,,\ \big(c_r(\chi^d)\big)=cont\big(r(\chi^d)\big)\enspace.
$$
%Thanks to Proposition \ref{contentt} (resp. \ref{contentr}, \ref{contents}),
It follows that $(sc_s^{-1})_{\mathfrak{q}}$ (resp. $(rc_r^{-1})_{\mathfrak{q}}$) takes unit values for every place $\mathfrak{q}$. The result thus follows from \cite[(I.2.19)]{Frohlich-Alg_numb} as in the proof of Proposition \ref{swan}.
%\cite[(I.2.19)]{Frohlich-Alg_numb}, in which the $+$ sign disappears since the abelian group $\Delta$ has no symplectic character.
\end{proof}
The former result does not depend on the signs that appear in the
definitions of $c_s$ and $c_r$. The importance of the choice of
these signs will become clear in the proof of our next result, which
will occupy the rest of this subsection.
\begin{theorem}\label{desiderio}
The homomorphisms $sc_s^{-1}$ and $rc_r^{-1}$ belong to $\mathrm{Det}(\mathcal{U}(\Z[\Delta]))$. In particular $(S)$ and $(R)$ are trivial in $\Cl(\Z[\Delta])$. 
\end{theorem}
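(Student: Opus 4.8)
The plan is to follow the same three-step pattern that was used for $(T_\Z)$ in Proposition \ref{swan}: first exhibit an $\Omega_\Q$-equivariant correction morphism, then check that after correction the resulting morphism takes unit idelic values (this is Corollary \ref{ordmax}), and finally identify that morphism as the $\Det$ of an explicit unit idèle of $\Z[\Delta]$, thereby placing it in the denominator of the Hom-description. By Corollary \ref{ordmax} we already know $sc_s^{-1},rc_r^{-1}\in\Det(\UU(\M))$, so what remains is to descend from the maximal order $\M$ to $\Z[\Delta]$. Since $\M_q=\Z_q[\Delta]$ for every rational prime $q\nmid e$ and $\M_\infty=\R[\Delta]$, the only places that require work are the finite places $\qq$ of $\oo$ with $\qq\mid e$; at all other places the $\qq$-component is already a $\Det$ of a unit of $\Z_q[\Delta]$ (and can be further adjusted to $1$ exactly as at the end of the proof of Proposition \ref{swan}).

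First I would pin down the values of $sc_s^{-1}$ and $rc_r^{-1}$ at the places $\qq\mid e$. Using Corollary \ref{rs-rep} together with the explicit generators $c_s(\chi^d)=-(-J_{e'})^{f/f_{e'}}$ and $c_r(\chi^d)=(-1)^e(-G_{e'})^{ef/f_{e'}}$ and the content computations of Propositions \ref{contentr}, \ref{contents}, the $\qq$-components at $\qq\mid e$ are, up to the equivariance reduction \eqref{equivariance} to characters $\chi^d$ with $d\mid e$, of the shape $\Det(u_s^{-1})$ and $\Det(u_r^{-1})$ where $u_s=\sum_i n_i\delta^i$, $u_r=\sum_i m_i\delta^i$ are the group-algebra elements built from the Stickelberger factorizations $J=\sum_i n_i\chi(\delta)^i$, $G^e=\sum_i m_i\chi(\delta)^i$ of the Introduction. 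This is where the Hasse–Davenport relation enters: it is exactly what guarantees that the Galois-equivariant family of sign-adjusted Gauss/Jacobi sums, evaluated along the characters $\chi^d$, assembles into $\Det_{\chi^h}$ of a single group-algebra element with \emph{integer} (not merely $\oo$-) coefficients, so that $u_s,u_r\in\Z[\Delta]$. The sign choices in \eqref{cs2}, \eqref{cr2} are precisely tuned so that the Hasse–Davenport product formula for $-G(\theta)$ under norm maps comes out with the correct leading sign; getting these signs right is the delicate bookkeeping point of the argument.

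Next I would argue that $u_s,u_r\in\Z_q[\Delta]^\times$ for every rational prime $q\mid e$. For this one uses that $\qq\nmid p$, so the Gauss/Jacobi sums involved are $\qq$-units (their only possible prime divisors lie above $p$, by Stickelberger, Theorem \ref{stickel}); hence $\Det_{\chi^h}(u_s)$ and $\Det_{\chi^h}(u_r)$ are $\qq$-units for all $h$, and by Proposition \ref{prop:Det} (injectivity of $\Det$ on ${L_\qq}[\Delta]^\times$ for abelian $\Delta$) combined with the fact that $sc_s^{-1},rc_r^{-1}$ lie in $\Det(\M_q^\times)$, the elements $u_s,u_r$ must coincide with units of $\M_q$; intersecting with $\Z_q[\Delta]$ and using $\M_q^\times\cap\Z_q[\Delta]=\Z_q[\Delta]^\times$ (\cite[\S25, Exercise 4]{Reiner}) gives $u_s,u_r\in\Z_q[\Delta]^\times$. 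This is the exact analogue of the Lemma inside the proof of Proposition \ref{swan}, with the cyclotomic unit $u_t$ replaced by $u_r$ and $u_s$. Assembling: define $u\in\UU(\Z[\Delta])$ with $u_\qq=u_s^{-1}$ (resp. $u_r^{-1}$) at $\qq\mid e$, with a suitable unit at primes $\qq\nmid e$ that are ramified-irrelevant, and $u_\qq=1$ otherwise; then $sc_s^{-1}\Det(\text{such a }u)$ (resp. the $r$ version) is the trivial idèle, so $sc_s^{-1},rc_r^{-1}\in\Det(\UU(\Z[\Delta]))$ and thus represent the trivial class, i.e. $(S)=(R)=1$ in $\Cl(\Z[\Delta])$. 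Finally, to get the normalized representatives asserted in Theorem \ref{main}, multiply the $\qq$-components for $\qq\nmid e$ by their inverses (still $\Det$'s of units of $\Z_q[\Delta]$), leaving the $\qq$-components $\Det(u_r^{-1}),\Det(u_s^{-1})$ at $\qq\mid e$.

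The main obstacle will be the Hasse–Davenport step: one must verify that the collection of $e'$-th roots $G_{e'}$ (as $e'$ ranges over divisors of $e$) is coherent under the norm maps $N_{e,e'}$ in the precise form $G_{e'}^{f/f_{e'}}=\pm N_{e,e'}(G_e)$ up to an explicit sign, and that the Stickelberger exponents $e'\Theta_{e'}N_{e,e'}$ and $(2-\sigma_{e',2})\Theta_{e'}N_{e,e'}$ from Propositions \ref{contentr}, \ref{contents} match $e\Theta_e$ and $(2-\sigma_{e,2})\Theta_e$ after the reduction \eqref{equivariance}; only then does the equivariant family glue to a genuine $\Det_{\chi^h}$ of a $\Z[\Delta]$-element. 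Everything else (the unit check, the injectivity of $\Det$, the descent $\M_q\to\Z_q[\Delta]$, and the final cosmetic normalization) is a direct transcription of the $(T_\Z)$ argument in Proposition \ref{swan}, with cyclotomic units replaced by Jacobi sums for $S$ and by Gauss sums for $R$.
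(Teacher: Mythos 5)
Your proposal is correct and follows essentially the same route as the paper: Corollary \ref{ordmax} handles all places $\qq\nmid e$, and at $\qq\mid e$ one identifies $(sc_s^{-1})_\qq=c_s^{-1}$ and $(rc_r^{-1})_\qq=c_r^{-1}$ with $\Det(u_s^{-1})$ and $\Det(u_r^{-1})$, then descends from $\M_q^\times$ to $\Z_q[\Delta]^\times$ via Proposition \ref{prop:Det} and $\M_q^\times\cap\Z_q[\Delta]=\Z_q[\Delta]^\times$, exactly as in Proposition \ref{swan}. The ``main obstacle'' you flag is precisely what the paper's Lemmas \ref{us} and \ref{ur} carry out, namely the Hasse--Davenport theorem in its lifted-character form combined with \eqref{lifted-character} (and, for $R$, Proposition \ref{theta-d}), rather than a literal norm-compatibility of the $G_{e'}$, so no step in your plan would fail.
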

This result, together with Lemmas \ref{us} and \ref{ur}, is a reformulation of Theorem \ref{main}, as far as $(S)$ and $(R)$ are concerned.
%\begin{remark}\label{restriction}
%Note that $\M_\qq=\oo_\qq[\Delta]$ whenever $\qq\nmid e$ (and $\M_\infty={\mathbb C}[\Delta]$), so we only need to check that $(sc_s^{-1})_\qq$ and $(rc_r^{-1})_\qq$ belong to $\Det(\Z_q[\Delta]^\times)$ when $\qq\mid e$.
%\end{remark}

As in the proof of Proposition \ref{swan}, one deduces from Corollary \ref{ordmax} that whenever $\qq\nmid e$, the $\qq$-component of $sc_s^{-1}$ and $rc_r^{-1}$ belongs to $\Det(\Z_q[\Delta]^\times)$ where $\qq\cap\Z=q\Z$, so we focus on the case $\qq\mid e$. 
%Once more we use the embedding of $J(\Q)$ in $J(\Q(\zeta))$ and consider the morphisms $s$ and $r$ as elements of $\Hom_{\Omega_\Q}(R_\Delta,J(\Q(\zeta)))$.
Recall from Corollary \ref{rs-rep} that, when $\qq\mid e$,
\begin{equation}\label{qmide}
(sc_s^{-1})_\qq=c_s^{-1}\ ,\quad
(rc_r^{-1})_\qq=c_r^{-1}\enspace,
\end{equation}
where $c_s^{-1}$ (resp. $c_r^{-1}$) is seen as a
morphism with values in $\Q(\zeta)$, diagonally embedded in
$J_q(\Q(\zeta))=\prod_{\qq\mid q}\Q(\zeta)_\qq$.
\subsubsection{The proof for $S$.}\label{proofS}

For any $i=0,\,\ldots,\,e-1$, set
$$A_i=\left\{x\in \oo/\pp\,:\, \left(\frac{x}{\pp}\right)^{-1}\left(\frac{1-x}{\pp}\right)^{-1}=\zeta^i\right\}$$
and $n_i=\#A_i$, then
$$J_e=\sum_{i=0}^{e-1}n_i\zeta^i.$$
The main result is the following.
\begin{lemma}\label{us}
Let $u_s=\sum\limits_{i=0}^{e-1}n_i\delta^i\in \Z[\Delta]$. For any
prime ideal $\qq$ of $\oo$ above a rational prime $q$ such that $q\mid e$, $u_s\in\Z_q[\Delta]^\times$, and 
$$(sc_s^{-1})_\qq=\Det(u_s^{-1})\ \in\ \Det(\Z_q[\Delta]^\times)\enspace.$$
\end{lemma}
%Note that since $p$ and $e$ are coprime, $p^fu_s^{-1}\in\Z_q[\Delta]^\times$ if $q\mid e$.
\begin{proof}
We first show that, for any $d\mid e$ 
\begin{equation}\label{hd}
\Det_{\chi^d}(u_s)=-(-J_{e'})^{f/f_{e'}}\enspace,
\end{equation}
where $e=de'$ as usual. Thanks to the Davenport-Hasse theorem \cite[Theorem 1 and Exercice 18 in Chapter 11]{Ireland-Rosen} 
$$(-1)^{f/f_{e'}-1}{J_{e'}}^{f/f_{e'}}=\sum_{x\in \oo/\pp}\left(\frac{\overline{N}_{e,e'}(x)}{\pp_{e'}}\right)^{-1}\left(\frac{\overline{N}_{e,e'}(1-x)}{\pp_{e'}}\right)^{-1}\enspace,$$
where $\overline{N}_{e,e'}:\oo/\pp\to
\oo_{e'}/\pp_{e'}$ is the residual relative norm map. For any $x\in \oo/\pp$,
$$\left(\frac{x}{\pp}\right)^d\equiv \left(x^{\frac{p^f-\,1}{e}}\right)^d\equiv  \left(x^{\sum_{t=0}^{f/f_{\!e'}-1}p^{tf_{\!e'}}}\right)^{\frac{p^{f_{\!e'}}\,-\,1}{e'}}\equiv \overline{N}_{e,e'}(x)^{\frac{p^{f_{\!e'}}\,-\,1}{e'}} \pmod{\pp}\enspace.$$
Thus
\begin{equation}\label{lifted-character}
\left(\frac{x}{\pp}\right)^d=\left(\frac{\overline{N}_{e,e'}(x)}{\pp_{e'}}\right)
\end{equation}
and therefore
$$-(-J_{e'})^{f/f_{e'}}=\sum_{x\in \oo/\pp}\left(\frac{x}{\pp}\right)^{-d}\left(\frac{1-x}{\pp}\right)^{-d}=\sum_{i=0}^{e-1} \sum_{x\in A_i}\zeta^{id}=\sum_{i=0}^{e-1} n_i\zeta^{id}=\Det_{\chi^d}(u_s)\enspace.$$
Assume that $\qq\mid e$. For any divisor $d$ of $e$ (once again $e=de'$), we deduce from (\ref{qmide}) and (\ref{hd}) that 
$$(sc_s^{-1})_\qq(\chi^d)=-(-J_{e'})^{-f/f_{e'}}=\Det_{\chi^d}(u_s^{-1})\enspace.$$
The assertion of Corollary \ref{ordmax} implies that there exists $w_q\in \mathcal{M}_q^\times$ such that  
$$(sc_s^{-1})_\qq=\Det(w_q^{-1})\enspace.$$ 
It follows that $\Det(w_q)=\Det(u_s)$, so by Proposition \ref{prop:Det} we must have $w_q=u_s\in\mathcal{M}_\qq^\times \cap\Z_q[\Delta]=\Z_q[\Delta]^\times$ (using again \cite[Exercise 4, Section 25]{Reiner}).
\end{proof}
The rest of the proof of Theorem \ref{desiderio} and Theorem \ref{main}, concerning $S$, is similar to that of Proposition \ref{swan} above.
%%%%%%%%%%%%%%%%%%%%%%%%%%%%%%%%%%%%%%%%%%%%%%%%%%%
\subsubsection{The proof for $R$.}
The proof for $R$ goes the same way as that for $S$, with the
additional difficulty that the expression of ${G_e}^e$ is not as
explicit as that of $J_e$. Nevertheless, we can make it explicit using
the multinomial formula. Let $\theta$ denote a multiplicative
character of $\oo/\pp$ and let $\CC_e$ denote the set of
$p^f$-uples of integers $(k_1,\cdots,k_{p^f})$ such that
$\sum_{h=1}^{p^f}k_h=e$. We number arbitrarily the elements of 
$\oo/\pp$ as $x_h$, $1\le h\le {p^f}$, then
$${G(\theta)}^e=\sum_{\CC_e}\frac{e!}{\prod_{h=1}^{p^f}k_h!}\,\theta\bigg(\prod_{h=1}^{p^f}x_h^{k_h}\bigg)\xi^{\Tr(\sum_{h=1}^{p^f}k_hx_h)}\enspace,$$
where $\Tr$ is the trace from $\oo/\pp$ to $\F_p$. For $j\in\F_p$ let
$\CC_{e,j}$ denote the set of $p^f$-uples $(k_1,\cdots,k_{p^f})$ of
$\CC_e$ such that $\Tr(\sum_{h=1}^{p^f}k_hx_h)=j$ and set
$$g_j(\theta)=\sum_{\CC_{e,j}}\frac{e!}{\prod_{h=1}^{p^f}k_h!}\,\theta\bigg(\prod_{h=1}^{p^f}x_h^{k_h}\bigg)\enspace,$$
then
$${G(\theta)}^e=\sum_{j\in\F_p}g_j(\theta)\xi^j=\sum_{j\in\F_p\setminus\{1\}}\big(g_j(\theta)-g_{1}(\theta)\big)\xi^j\enspace.$$ 

We now assume that $\theta$ takes values in $\{0\}\cup\mu_e$, so
${G(\theta)}^e\in\oo$ by (\ref{invarianceGdd}) and hence
$${G(\theta)}^e=g_0(\theta)-g_{1}(\theta)\enspace.$$
Note incidentally that the $g_j(\theta)$ for $j\not=0$ are all
equal. 
%Moreover, the multinomial coefficients ${e!}/{\prod_{h=1}^{p^f}k_h!}$
%are non negative integers and the values of the multiplicative
%character $\left(\frac{}{\pp}\right)$ belong to
%$\{0\}\cup\langle\zeta\rangle$, thus 
For $j\in\F_p$ and $i\in\{0,\ldots,e-1\}$, we let
$\CC_{e,j,i}(\theta)$ denote the set of $p^f$-uples
$(k_1,\cdots,k_{p^f})$ of $\CC_{e,j}$ such that 
$\theta\big(\prod_{h=1}^{p^f}x_h^{k_h}\big)=\zeta^i$, and we
  set 
$$m_i(\theta)=\sum_{\CC_{e,0,i}(\theta)}\frac{e!}{\prod_{h=1}^{p^f}k_h!}-\sum_{\CC_{e,1,i}(\theta)}\frac{e!}{\prod_{h=1}^{p^f}k_h!}\enspace,$$
so that $m_i(\theta)\in\Z$ for each $i$ and 
$${G(\theta)}^e=\sum_{i=0}^{e-1}m_i(\theta)\zeta^i\enspace.$$
The main interest of this construction lies in the following result. 
\begin{proposition}\label{theta-d}
Let $\theta$ denote a multiplicative character of $\oo/\pp$ taking
values in $\{0\}\cup\mu_e$. Let $d\mid e$, then
$$G\big(\theta^d\big)^e=\sum_{i=0}^{e-1}m_i(\theta)\zeta^{id}\enspace.$$
\end{proposition}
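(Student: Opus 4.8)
The plan is to reduce the claimed identity for $\theta^d$ to the already-established expansion of $G(\theta)^e = \sum_i m_i(\theta)\zeta^i$ by tracking how raising the character to the $d$th power permutes the combinatorial data indexing the multinomial expansion. First I would write out $G(\theta^d)^e$ using exactly the same multinomial expansion as for $G(\theta)^e$: since $\theta^d$ is again a multiplicative character of $\oo/\pp$ (taking values in $\{0\}\cup\mu_e$, because $d\mid e$ and $\theta$ does), formula preceding the statement gives
$$G(\theta^d)^e=\sum_{j\in\F_p}g_j(\theta^d)\xi^j=g_0(\theta^d)-g_1(\theta^d)\enspace,$$
the last equality because $G(\theta^d)^e\in\oo$ by \eqref{invarianceGdd}. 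Now the key observation is that the sets $\CC_e$, $\CC_{e,j}$ do not depend on $\theta$ at all, and that
$$\theta^d\Big(\prod_{h=1}^{p^f}x_h^{k_h}\Big)=\Big(\theta\Big(\prod_{h=1}^{p^f}x_h^{k_h}\Big)\Big)^d\enspace.$$
Hence a $p^f$-uple $(k_1,\ldots,k_{p^f})\in\CC_{e,j}$ lies in $\CC_{e,j,i}(\theta^d)$ precisely when $\theta(\prod_h x_h^{k_h})^d=\zeta^i$, i.e.\ when the corresponding value $\theta(\prod_h x_h^{k_h})$ equals some $e$th root of unity $\zeta^{i'}$ with $di'\equiv i \pmod e$.

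The next step is to compare coefficients of $\zeta^i$ on both sides. From $G(\theta)^e=\sum_{i'=0}^{e-1}m_{i'}(\theta)\zeta^{i'}$ one gets $G(\theta)^e$ as an element of $\oo$ whose expression in the $\zeta$-basis is explicit; evaluating the Galois-equivariant statement we want, $G(\theta^d)^e=\sum_{i=0}^{e-1}m_i(\theta)\zeta^{id}$, amounts to showing that for each $i$,
$$m_i(\theta^d)\stackrel{?}{=}\sum_{\substack{0\le i'\le e-1\\ di'\equiv i\,(e)}}m_{i'}(\theta)\cdot(\text{something})\enspace,$$
but in fact the cleaner route avoids re-expanding $m_i(\theta^d)$ altogether: directly substitute. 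Write $G(\theta)^e=\sum_{i'}m_{i'}(\theta)\zeta^{i'}$ and apply the ring homomorphism $\oo\to\oo$ — no, $\zeta\mapsto\zeta^d$ is not a homomorphism unless $\gcd(d,e)=1$. So instead I would argue at the level of the defining sums: $g_0(\theta^d)-g_1(\theta^d)=\sum_{i'=0}^{e-1}\big(\sum_{\CC_{e,0,i'}(\theta)}-\sum_{\CC_{e,1,i'}(\theta)}\big)\zeta^{di'}$, because $(k_1,\ldots,k_{p^f})\in\CC_{e,j,i'}(\theta)$ forces $\theta^d(\prod x_h^{k_h})=\zeta^{di'}$, so the contribution of each such tuple to $g_j(\theta^d)$ is its multinomial coefficient times $\zeta^{di'}$. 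Collecting the tuples according to the value of $i'$ (which ranges over $0,\ldots,e-1$ and partitions all of $\CC_{e,j}$) gives exactly $\sum_{i'=0}^{e-1}(\text{sum over }\CC_{e,0,i'}(\theta)-\text{sum over }\CC_{e,1,i'}(\theta))\zeta^{di'}=\sum_{i'=0}^{e-1}m_{i'}(\theta)\zeta^{di'}$, which is the assertion after renaming $i'$ to $i$.

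The main (and really only) subtlety is bookkeeping: one must be careful that the partition $\CC_{e,j}=\bigsqcup_{i'=0}^{e-1}\CC_{e,j,i'}(\theta)$ is genuinely a partition, which holds because $\theta(\prod_h x_h^{k_h})$, being a value of $\theta$, is always in $\{0\}\cup\mu_e$ — and when it is $0$ the tuple contributes $0$ to every $g_j$, hence may be harmlessly assigned to any block or discarded; here one notes $\theta(\prod_h x_h^{k_h})=0$ iff some $x_h=0$ with $k_h>0$, and such tuples contribute nothing, so they can simply be dropped from both $\CC_{e,j}$ and all $\CC_{e,j,i'}(\theta)$ without affecting any sum. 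With that caveat handled, the identity is a formal rearrangement; there is no analytic input beyond $G(\theta^d)^e\in\oo$, which is \eqref{invarianceGdd} applied to the character $\theta^d$. I expect the write-up to be short, the only place demanding care being the explicit verification that substituting $\theta^d$ for $\theta$ in the definition of $m_i$ changes nothing except replacing the exponent $i$ by $di$ in the accompanying power of $\zeta$.
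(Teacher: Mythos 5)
Your argument is correct and is essentially the paper's own: both rest on the multinomial expansion of $G(\theta^d)^e$, the observation $\theta^d\big(\prod_h x_h^{k_h}\big)=\theta\big(\prod_h x_h^{k_h}\big)^d$, and the integrality $G(\theta^d)^e\in\oo$ from \eqref{invarianceGdd} to reduce to $g_0-g_1$. The only cosmetic difference is that the paper first records $m_i(\theta^d)=0$ for $d\nmid i$ and $m_{id}(\theta^d)=\sum_{k=0}^{d-1}m_{i+ke'}(\theta)$ via the set identity $\CC_{e,j,id}(\theta^d)=\bigcup_{k=0}^{d-1}\CC_{e,j,i+ke'}(\theta)$ and then resums, whereas you regroup $g_j(\theta^d)$ directly according to the value of $\theta$ (handling the zero-value tuples explicitly); this is the same rearrangement.
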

\begin{proof}
As usual we set $e=de'$. Let $i\in\{0,\ldots,e-1\}$. If
$d\nmid i$, then clearly $\CC_{e,j,i}(\theta^d)$ is empty for any
$j\in\F_p$, thus $m_i(\theta^d)=0$. Further, if
$i\in\{0,\ldots,e'-1\}$ and $j\in\F_p$, one easily checks the equality
of sets:   
$$\CC_{e,j,id}(\theta^d)=\bigcup_{k=0}^{d-1}\CC_{e,j,i+ke'}(\theta)\enspace.$$
It follows that 
$$m_{id}(\theta^d)=\sum_{k=0}^{d-1}m_{i+ke'}(\theta)\enspace,$$
hence
$$G\big(\theta^d\big)^e=\sum_{i=0}^{e'-1}m_{id}(\theta^d)\zeta^{id}=\sum_{i=0}^{e'-1}\sum_{k=0}^{d-1}m_{i+ke'}(\theta)\zeta^{id}=\sum_{i=0}^{e-1}m_i(\theta)\zeta^{id}\enspace.$$
\end{proof}
We can now show a statement which is similar to those of the previous
subsections. Recall that $G_e=G(\big(\frac{}{\pp}\big)^{-1})$. For each
$i\in\{0,\ldots,e-1\}$, we set $m_i=m_i(\big(\frac{}{\pp}\big)^{-1})$.  
\begin{lemma}\label{ur}
Let $u_r=\sum\limits_{i=0}^{e-1}m_i\delta^i\in \Z[\Delta]$. For any
prime ideal $\qq$ of $\oo$ above a rational prime $q$ such that $q\mid e$, $u_r\in\Z_q[\Delta]^\times$ and
$$(rc_r^{-1})_{\qq}=\Det(u_r^{-1})\ \in\ \Det(\Z_q[\Delta]^\times)\enspace.$$
\end{lemma}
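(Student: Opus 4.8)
The plan is to follow the pattern of the proof of Lemma \ref{us} (and, earlier, of Proposition \ref{swan}), simply replacing the explicit shape of $J_{e'}$ by Proposition \ref{theta-d} and the Jacobi-sum Hasse--Davenport identity by its Gauss-sum analogue \cite[Theorem~1, Chapter~11]{Ireland-Rosen}.

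First I would fix a divisor $d$ of $e$, write $e=de'$, and compute the determinant straight from the definition of $u_r$:
$$\Det_{\chi^d}(u_r)=\sum_{i=0}^{e-1}m_i\,\chi^d(\delta)^i=\sum_{i=0}^{e-1}m_i\zeta^{id}.$$
By Proposition \ref{theta-d} applied to $\theta=\bigl(\tfrac{}{\pp}\bigr)^{-1}$, the right-hand side is $G(\theta^d)^e$. Now $\theta^d=\bigl(\tfrac{}{\pp}\bigr)^{-d}$ is, by \eqref{lifted-character}, the composite of $\bigl(\tfrac{}{\pp_{e'}}\bigr)^{-1}$ with the residual relative norm $\overline N_{e,e'}\colon\oo/\pp\to\oo_{e'}/\pp_{e'}$, that is, the character lifted along the norm; since $[\oo/\pp:\oo_{e'}/\pp_{e'}]=f/f_{e'}$, the Hasse--Davenport relation for Gauss sums gives $-G(\theta^d)=(-G_{e'})^{f/f_{e'}}$. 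Raising to the $e$th power and comparing with \eqref{cr2},
$$\Det_{\chi^d}(u_r)=G(\theta^d)^e=(-1)^e(-G_{e'})^{ef/f_{e'}}=c_r(\chi^d).$$
The extreme case $d=e$, where $e'=1$ and $\theta^e$ is the trivial character of $\oo/\pp$, is included: there $G_1=-1$ and both sides equal $(-1)^e$, the Gauss-sum Hasse--Davenport identity still being valid for the trivial character.

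Since $u_r\in\Z[\Delta]$ the morphism $\Det(u_r)$ is $\Omega_\Q$-equivariant, as is $c_r$ by construction, so agreement on the orbit representatives $\chi^d$, $d\mid e$, forces $\Det(u_r)=c_r$ on all of $R_\Delta$, exactly as in \eqref{equivariance}. Together with \eqref{qmide} this yields, for every prime $\qq$ of $\oo$ above a rational prime $q\mid e$,
$$(rc_r^{-1})_\qq=c_r^{-1}=\Det(u_r^{-1}),$$
which is the asserted identity (and in particular exhibits $u_r$ as invertible). For the unit statement, Corollary \ref{ordmax} supplies $w_q\in\M_q^\times$ with $(rc_r^{-1})_\qq=\Det(w_q^{-1})$, whence $\Det(w_q)=\Det(u_r)$ in $\Hom_{\Omega_{\Q_q}}(R_{\Delta,q},\Q(\zeta)_\qq^\times)$; the injectivity of $\Det$ on abelian group algebras (Proposition \ref{prop:Det}) forces $w_q=u_r$, so $u_r\in\M_q^\times\cap\Z_q[\Delta]=\Z_q[\Delta]^\times$ by \cite[\S 25, Exercise~4]{Reiner}. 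Nothing here is genuinely hard once Proposition \ref{theta-d} is in hand --- the substantive work was done in building the integers $m_i$ --- and the one point demanding care is that the signs $(-1)^e$ and $(-1)^{ef/f_{e'}}$ thrown off by the Hasse--Davenport normalization match precisely those deliberately inserted in the definition \eqref{cr2} of $c_r$; this matching is exactly the reason those signs were chosen, and it is the direct analogue of the role of the signs of $c_s$ in \eqref{cs2}.
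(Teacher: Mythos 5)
Your proof is correct and follows essentially the same route as the paper's: Hasse--Davenport for the lifted Gauss sum combined with \eqref{lifted-character} and Proposition \ref{theta-d} to get $\Det_{\chi^d}(u_r)=(-1)^e(-G_{e'})^{ef/f_{e'}}=c_r(\chi^d)$, then \eqref{qmide}, Corollary \ref{ordmax}, Proposition \ref{prop:Det} and $\M_q^\times\cap\Z_q[\Delta]=\Z_q[\Delta]^\times$ to conclude. Your explicit check of the case $d=e$ (trivial character, $G_1=-1$) is a harmless addition that the paper leaves implicit.
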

\begin{proof}
Let $d\mid e$ and set $e=de'$.
The Davenport-Hasse theorem for the lifted Gauss sum \cite[Theorem 1 in Chapter 11]{Ireland-Rosen} states that
$$-G(\bigg(\frac{\overline{N}_{e,e'}(\ )}{\pp_{e'}}\bigg)^{-1})=(-G_{e'})^{f/f_{e'}}\enspace.$$
Raising to the power $e$, using (\ref{lifted-character}) and Proposition \ref{theta-d}, we get
%$${G_{e'}}^{ef^2/f_{e'}}=(-1)^{ef^2/f_{e'}-ef}G\big(\Big(\frac{}{\pp}\Big)^d\big)^{ef}$$
%In view of Proposition \ref{theta-d}, one has
%$$G\big(\Big(\frac{}{\pp}\Big)^d\big)^e=\sum_{i=0}^{e-1}m_i\zeta^{id}=\Det_{\chi^d}(u_r)\enspace,$$
%hence
$$(-1)^{e}(-G_{e'})^{ef/f_{e'}}=G\big(\Big(\frac{}{\pp}\Big)^{-d}\big)^{e}=\sum_{i=0}^{e-1}m_i\zeta^{id}=\Det_{\chi^d}(u_r)\enspace,$$
namely
$$\Det_{\chi^d}(u_r^{-1})=c_r^{-1}(\chi^d)=(rc_r^{-1})_\qq(\chi^d)$$
whenever $\qq\mid e$. The end of the proof is as in \S\ref{proofS}.
\end{proof}

Using Lemma \ref{ur} we easily conclude the proof of Theorem \ref{desiderio} and Theorem \ref{main} (as far as $R$ is concerned), as in the proof of Proposition \ref{swan}.

%%%%%%%%%%%%%%%%%%%%%%%%%%%%%%%%%%%%%%%%%%%%%%%%%%%%%%%%%%%%%%
%
%%%%%%%%%%%%%%%%%%%%%%%%%%%%%%%%%%%%%%%%%%%%%%%%%%%%%%%%%%%%%%
%
%%%%%%%%%%%%%%%%%%%%%%%%%%%%%%%%%%%%%%%%%%%%%%%%%%%%%%%%%%%%%%

\section{The class of the square root of the inverse different in locally abelian extensions}\label{analytic}
In this section we put ourselves in the global setting described in the Introduction. In particular $N/E$ is a tame $G$-Galois extension of number fields. When $N/E$ is locally abelian and $\C_{N/E}$ is a square, Theorem \ref{maincor}, together with Taylor's theorem, implies that $(\A_{N/E})\in\Cl(\Z[G])$ is determined by the Artin root numbers of symplectic representations of $G$ (see Corollary \ref{A=tW}). Using this, one immediately deduces that $(\A_{N/E})=1$ if $N/E$ has odd degree (in fact this is true even without assuming that $N/E$ is locally abelian by a result of Erez, \cite[Theorem 3]{Erez2}) or is abelian. We will also prove that $(\A_{N/E})=1$ if no real place of $E$ becomes complex in $N/E$. We will then show that this hypothesis is necessary and get Theorem \ref{surprise} as a consequence. In fact, in the case where $G$ is the binary tetrahedral group, we will exhibit a totally complex tame $G$-Galois extension $N/\Q$ such that $\C_{N/\Q}$ is a square and $(\A_{N/\Q})\ne 1$. To explain how we found this example, we shall first verify that $(\A_{N/\Q})= 1$ if $N/\Q$ is a tame locally abelian $G$-Galois extension such that $\C_{N/\Q}$ is a square and $G$ is a group of order at most $24$ which is not isomorphic to the binary tetrahedral group. 

\subsection{Root numbers of extensions unramified at infinity}
In order to describe explicitly the relation between $(\A_{N/E})$ and the Artin root numbers, we need to recall some properties of these numbers and the definition of the Fr\"ohlich--Cassou-Noguès class $t_GW_{N/E}\in \Cl(\Z[G])$.  

\subsubsection{}\label{rnp}
We will omit the definition of the root numbers and just recall some of their standard properties (see \cite{Martinet}). Let $\Gamma$ be a finite group and let $K/k$ be a $\Gamma$-extension of local or global fields of characteristic $0$. Let $\chi:\Gamma\to \mathbb{C}$ be a complex virtual character and let $W(K/k, \chi)\in \mathbb{C}$ denote the root number of $\chi$. Then:
\begin{itemize}
\item if $\chi_1, \chi_2:\Gamma\to\mathbb{C}$ are virtual characters, then $W(K/k, \chi_1+\chi_2)=W(K/k, \chi_1)W(K/k, \chi_2)$; 
\item if $\Gamma'$ is a subgroup of $\Gamma$, corresponding to the subextension $K/k'$, and $\phi:\Gamma'\to \mathbb{C}$ is a virtual character, then $W(K/k, \mathrm{Ind}_{\Gamma'}^{\Gamma}\phi)=W(K/k', \phi)$;
\item if $\overline{\Gamma}$ is a quotient of $\Gamma$, corresponding to the subextension $K'/k$, and $\phi:\overline{\Gamma}\to \mathbb{C}$ is a virtual character, then $W(K/k, \mathrm{Inf}_{\overline{\Gamma}}^{\Gamma}\phi)=W(K'/k, \phi)$.
\end{itemize}

If $K/k$ is an extension of local fields, we have
\begin{equation}\label{conjdet}
W(K/k,\chi)W(K/k,\overline{\chi})=\mathrm{det}_\chi(-1).
\end{equation} 
Here $\overline{\chi}$ denotes the conjugate character of $\chi$ and $\mathrm{det}_{\chi}:k^\times\to \mathbb{C}^\times$ is the map obtained by composing determinant of $\chi$ with the local reciprocity map $k^\times\to \Gamma$. 

If $K/k$ is an extension of number fields, there is a decomposition 
\begin{equation}\label{locdec}
W(K/k,\chi)=\prod_{P}W(K_{\PP}/k_P,\chi_{_{D_\PP}}).
\end{equation}
Here the product is taken over all places $P$ of $k$; for a such $P$, $\PP$ is any fixed place of $K$ above $P$ and $\chi_{_{D_\PP}}$ denotes the restriction of $\chi$ to the decomposition group $D_{\PP}$ of $\PP$ (which we view as a character of $\mathrm{Gal}(K_{\PP}/k_P)$).

\subsubsection{}\label{sympltG}
We come back to the global setting where $N/E$ is a tame $G$-Galois extension of number fields. Let $S_G$ denote the group of symplectic characters, namely the free abelian group generated by the characters of the irreducible symplectic representations of $G$. Recall  (see \cite[Section 13.2]{Serre-reprlin}, \cite[III]{Martinet}) that an irreducible representation $\rho:G\to GL(V)$ on a complex vector space $V$ is called symplectic if $V$ admits a nontrivial $G$-invariant alternating bilinear form. Symplectic characters are real-valued and their degree is even. Moreover an irreducible character $\chi:G\to \mathbb{C}$ is symplectic if and only if its Frobenius-Schur indicator is $-1$:
$$\frac{1}{\#G}\sum_{g\in G}\chi(g^2)=-1.$$

Let $W_{N/E}\in \mathrm{Hom}(S_G,\mathbb{C}^\times)$ be defined by $W_{N/E}(\theta)=W(N/E,\theta)$ for $\theta\in S_G$. As shown by Fr\"ohlich, we actually have 
$W_{N/E}\in \mathrm{Hom}_{\Omega_\mathbb{Q}}(S_G,\{\pm 1\})$ (\cite[I, Proposition 6.2]{Frohlich-Alg_numb}).

Let $L'/\mathbb{Q}$ be a Galois extension containing all the values of the characters of $G$. We now recall the definition of the map
$$t_G: \mathrm{Hom}_{\Omega_\mathbb{Q}}(S_G,\{\pm 1\})\to \Cl(\Z[G])$$ 
which is due to Ph. Cassou-Nogu\`es. One first defines a map 
$$t'_G:\mathrm{Hom}_{\Omega_\mathbb{Q}}(S_G,\{\pm 1\})\to\mathrm{Hom}_{\Omega_\mathbb{Q}}(R_G,J(L'))$$ 
as follows. Let $f\in \mathrm{Hom}_{\Omega_\mathbb{Q}}(S_G,\{\pm 1\})$ and let $\theta$ be an irreducible character of $G$. If $\mathfrak{l}$ is a place of $L'$, the id\`ele $t'_G(f)(\theta)\in J(L')$ has $\mathfrak{l}$-component
$$\tilde{f}(\theta)_{\mathfrak{l}}=\left\{\begin{array}{ll}f(\theta)&\textrm{if $\mathfrak{l}$ is finite and $\theta$ is symplectic}\\1&\textrm{otherwise.}\end{array}\right.$$
The map $t_G$ is then obtained by composing $t'_G$ with the projection 
$$\Hom_{\Omega_\Q}(R_G,J(L'))\to \Cl(\Z[G])$$
induced by the Hom-description (see Section \ref{hdcg}).

The class $t_GW_{N/E}$ appears in the following celebrated result of M. Taylor (\cite[Theorem 1]{tay2}). 

\begin{theorem}[M. Taylor]\label{pearl}
Let $N/E$ be a tame $G$-Galois extension of number fields. Then 
$$(\OO_N)=t_GW_{N/E}\quad \textrm{in $\Cl(\Z[G])$}.$$
\end{theorem}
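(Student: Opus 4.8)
This is M. Taylor's theorem \cite{tay2}, and the plan is to follow his strategy, built on Fr\"ohlich's Hom-description machinery recalled in Section \ref{hdcg}. The starting point is a Hom-representative of $(\OO_N)$: since $N/E$ is tame, Noether's theorem makes $\OO_N$ locally free over $\OO_E[G]$, so one chooses a local normal basis generator $a_\mathfrak{l}$ at each place $\mathfrak{l}$ of $E$, forms the Galois resolvents $(\theta\mid a_\mathfrak{l})=\sum_{g\in G}\theta(g)g(a_\mathfrak{l})$, and takes the associated norm-resolvent morphism $f_{N/E}\in\Hom_{\Omega_\Q}(R_G,J(L'))$; by Fr\"ohlich's theorem $f_{N/E}$ represents $(\OO_N)$. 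Fr\"ohlich moreover showed that the displayed identity $(\OO_N)=t_GW_{N/E}$ is equivalent to the statement --- his conjecture, in the form proved by Taylor --- that $(\OO_N)$ is represented by the \emph{Galois Gauss sum morphism} $\tau_{N/E}\colon\theta\mapsto\tau(N/E,\theta)$, the constant in the functional equation of the Artin $L$-function of $\theta$; the passage between the two forms uses the relation between Galois Gauss sums, Artin conductors and root numbers, together with the Fr\"ohlich--Queyrut vanishing of root numbers of real orthogonal characters. So the task reduces to proving that $f_{N/E}\,\tau_{N/E}^{-1}$, corrected by an explicit factor accounting for the different of $N/E$, lies in $\Hom_{\Omega_\Q}(R_G,(L')^\times)\cdot\Det(\UU(\Z[G]))$ --- that is, to a congruence between norm resolvents and Galois Gauss sums.

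Next come the standard reductions. The module class, the Gauss sum morphism and the root-number class $t_GW$ all behave compatibly with induction and inflation of characters and with change of base field (for root numbers this is recorded in \S\ref{rnp}, and for the rest it is Fr\"ohlich's functoriality). A Brauer--Witt type induction argument on the group $S_G$ of symplectic characters then reduces the problem to the case $E=\Q$ and to $G$ in a restricted family of groups ($\Q$-$\ell$-elementary and generalised-quaternion-type groups). With $E=\Q$, one localises via \eqref{hom-des}: it is enough to verify, one rational prime $q$ at a time, that the $q$-component of $f_{N/\Q}\,\tau_{N/\Q}^{-1}$ (times the different factor) lies in $\Det(\Z_q[G]^\times)$, or is absorbed into the global-valued morphisms. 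Since $N/\Q$ is tame, at each ramified prime $q$ the local Galois Gauss sum $\tau_q(\theta)$ is, up to an explicit root of unity, a classical Gauss sum over a finite residue field, and it is trivial at unramified primes; the delicate part is therefore concentrated at the finitely many ramified primes.

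The heart of the argument is then this finite-place congruence, and here I would proceed exactly by the mechanism of Sections \ref{section:hom-des}--\ref{section:stickelberger}. At a ramified prime $q$, $\tau_q$ is a classical Gauss sum and $f_q$ a resolvent of a local normal basis generator; after dividing out their contents the quotient morphism $f_q\tau_q^{-1}$ is $\Omega_\Q$-equivariant with idelic unit values. Stickelberger's theorem (Theorem \ref{stickel}) computes those contents as principal ideals with explicit generators built from a Stickelberger element, and the Hasse--Davenport formula identifies the resulting unit-valued morphism with $\Det$ of an explicit element of $\UU(\Z[G])$ --- precisely the phenomenon exhibited for $(T_\Z)$, $(R)$ and $(S)$ in this paper (compare the proofs of Proposition \ref{swan} and Theorem \ref{desiderio}). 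The non-symplectic characters contribute only global (conductor and positive-real) factors, which are swallowed by the numerator $\Hom_{\Omega_\Q}(R_G,(L')^\times)$ using $\Omega_\Q$-equivariance and the Fr\"ohlich--Queyrut sign results; this completes the proof.

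The main obstacle I anticipate is not the algebra at finite primes but the control of signs and global factors. One has to check that the discrepancy between $\tau_{N/\Q}$ and $W_{N/\Q}$ arising from Artin conductors and from archimedean absolute values is exactly a morphism with values in $(L')^\times$, and that the $\pm1$ produced by the Hasse--Davenport formula and the Stickelberger element matches the sign prescribed by $t_G$ on symplectic characters --- delicate bookkeeping, in which the distinction between ramified and unramified primes, and between symplectic and orthogonal characters, must be handled with care. In the fully general, wildly ramified version of Taylor's theorem the Stickelberger factorisation used above is unavailable and is replaced by Taylor's $p$-adic congruences for Galois Gauss sums, proved via Lubin--Tate formal groups; in the tame case considered here those congruences reduce to the Stickelberger--Hasse--Davenport computation just described.
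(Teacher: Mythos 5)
The paper does not prove this statement at all: Theorem \ref{pearl} is quoted as M. Taylor's theorem with a citation to \cite{tay2}, and it is used as a black box (e.g.\ in Corollary \ref{A=tW} and in Section \ref{analytic}). So there is no internal proof to compare yours with; what you have written is an outline of Taylor's published argument, and it should be judged as such.

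As a roadmap it is broadly faithful (resolvent Hom-representative of $(\OO_N)$, comparison with adjusted Galois Gauss sums, Fr\"ohlich--Queyrut for orthogonal characters, Brauer-type induction to $\Q$-$\ell$-elementary groups, localisation via \eqref{hom-des}), but there is a genuine gap at what you call the heart of the argument. You claim that at a ramified prime the congruence between the resolvent morphism and the Gauss-sum morphism can be settled ``exactly by the mechanism of Sections \ref{section:hom-des}--\ref{section:stickelberger}'', i.e.\ Stickelberger plus Hasse--Davenport. In this paper that mechanism works because the group is a cyclic $\Delta$: unit-valued equivariant morphisms land in $\Det(\UU(\M))$ by \cite[(I.2.19)]{Frohlich-Alg_numb}, and the descent from $\Det(\UU(\M))$ to $\Det(\UU(\Z[\Delta]))$ is achieved by exhibiting explicit elements $u_t,u_r,u_s\in\Z_q[\Delta]^\times$ and invoking the injectivity of $\Det$ for abelian groups (Proposition \ref{prop:Det}). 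For the nonabelian groups that survive Brauer induction neither step is available in this form: Galois Gauss sums of irreducible characters are only related to classical Gauss sums through induction constants, $\Det$ is no longer injective, and the passage from the maximal order to $\Z_q[G]$ is precisely where Taylor's group-ring logarithm and his fixed-point theorem for group determinants are indispensable --- none of which appears in your outline, and without which the step ``identifies the resulting unit-valued morphism with $\Det$ of an explicit element of $\UU(\Z[G])$'' would fail. Two smaller points: the symplectic positivity condition (the ``$+$'' sign suppressed in this paper only because $\Delta$ has no symplectic characters) must be tracked when absorbing factors into the numerator, and your closing remark about a ``wildly ramified version of Taylor's theorem'' is off the mark, since $\OO_N$ is not even locally free over $\Z[G]$ outside the tame case.
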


Combining the above theorem with Theorem \ref{maincor} we get the following result.

\begin{corollary}\label{A=tW}
Let $N/E$ be a tame locally abelian $G$-Galois extension of number fields such that $\C_{N/E}$ is a square. Then
$$(\A_{N/E})=t_GW_{N/E}\quad \textrm{in $\Cl(\Z[G])$}.$$
In particular, if $G$ has no symplectic representations (for instance if $G$ is abelian or has odd order), then $(\A_{N/E})$ is trivial.
\end{corollary}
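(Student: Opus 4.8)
The plan is to deduce the corollary directly from the two structural results already at our disposal: the vanishing of $(\SSS_{N/E})$ provided by Theorem \ref{maincor}, and Taylor's theorem (Theorem \ref{pearl}). Before combining them, I would first make sure that both $(\A_{N/E})$ and $(\SSS_{N/E})$ are meaningful classes in $\Cl(\Z[G])$: since $\A_{N/E}$ exists (because $\C_{N/E}$ is assumed to be a square) and is a $G$-stable fractional ideal of $N$, it is $\Z[G]$-locally free, just as in the discussion of \S\ref{ctglob}; consequently $\SSS_{N/E}=\A_{N/E}/\OO_N$ is $G$-cohomologically trivial and, by Lemma \ref{cohomtriv}, one has $(\SSS_{N/E})=(\A_{N/E})(\OO_N)^{-1}$ in $\Cl(\Z[G])$.

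Next I would invoke Theorem \ref{maincor}: since $N/E$ is tame and locally abelian, $(\SSS_{N/E})=1$, and therefore $(\A_{N/E})=(\OO_N)$. Applying Taylor's theorem (Theorem \ref{pearl}), which gives $(\OO_N)=t_GW_{N/E}$, I obtain $(\A_{N/E})=t_GW_{N/E}$, which is the first claim.

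For the final assertion, suppose $G$ has no irreducible symplectic representation. Then $S_G=0$, so $\Hom_{\Omega_\Q}(S_G,\{\pm 1\})$ is the trivial group, $W_{N/E}$ is the trivial homomorphism, and by the very definition of $t_G$ recalled in \S\ref{sympltG} we get $t_GW_{N/E}=1$, hence $(\A_{N/E})=1$. It then remains only to record why abelian groups and groups of odd order carry no symplectic representations: a symplectic irreducible character has even degree, which excludes the abelian case (all irreducible characters there having degree $1$), and is real-valued with Frobenius--Schur indicator $-1$, which excludes the odd-order case (where the trivial character, of indicator $+1$, is the only real-valued irreducible character).

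There is really no serious obstacle in this argument --- it is a formal consequence of Theorems \ref{maincor} and \ref{pearl} --- and the only point deserving attention is the preliminary verification that $\A_{N/E}$ and $\SSS_{N/E}$ define classes in $\Cl(\Z[G])$, which is precisely what the cohomological-triviality machinery of Section \ref{ctsec} supplies.
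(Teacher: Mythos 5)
Your argument is exactly the paper's: the corollary is obtained by combining Theorem \ref{maincor} (which gives $(\SSS_{N/E})=1$, hence $(\A_{N/E})=(\OO_N)$, in the tame locally abelian case) with Taylor's theorem $(\OO_N)=t_GW_{N/E}$, and the final assertion follows since a group with no symplectic characters has $S_G=0$. Your preliminary remarks on local freeness and cohomological triviality, and the justification for the abelian and odd-order cases, are correct and consistent with what the paper already records in Section \ref{ctsec} and \S\ref{sympltG}.
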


\subsubsection{}\label{quaternion}
We now want to prove that $(\A_{N/E})$ is also trivial if $N/E$ is a tame locally abelian $G$-Galois extension which is unramified at infinite places, namely if real places of $E$ do not become complex in $N$. Under this assumption we will show that $W_{N/E}$ is in fact trivial and for this we need some results on symplectic characters, which we now recall. 

We dispose of a useful induction theorem for symplectic characters which is due to Martinet (see \cite[III, Theorem 5.1]{Martinet}). Before giving its statement, recall that an irreducible complex character of $G$ is said to be quaternionic if it has degree $2$ and is lifted from a symplectic character of a quotient of $G$ isomorphic to the generalized quaternion group $H_{4n}$ for some $n\geq 2$. For every natural number $n\geq 2$, the quaternion group $H_{4n}$ of order $4n$ is given by the following presentation 
\begin{equation}\label{h4npres}
H_{4n}=\langle \sigma,\tau\,|\, \sigma^n=\tau^2, \tau^4=1, \tau^{-1}\sigma\tau=\sigma^{-1}\rangle.\end{equation}

\begin{theorem}[Martinet]\label{indsym}
A symplectic character of $G$ can be written as a $\Z$-linear combination of cha\-racters of the form $\mathrm{Ind}_H^G\theta$ for some subgroups $H$ of $G$ where either 
\begin{itemize}
\item$\theta=\psi+\overline{\psi}$ with $\psi:H\to \mathbb{C}$ an irreducible character of degree $1$ or
\item $\theta$ is a quaternionic character of $H$.
\end{itemize}
\end{theorem}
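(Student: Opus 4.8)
The group $S_G$ is free abelian on irreducible symplectic characters, so it is enough to treat a single irreducible symplectic character $\chi$ of $G$; I would prove the statement by induction on $\#G$, using Brauer's induction theorem to cut the group down and then analysing a residual $2$-group case.

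\emph{Step 1: reduction to $2$-groups.} Writing $1_G$ as an integral combination $\sum_i a_i\,\mathrm{Ind}_{H_i}^G(\lambda_i)$ with $H_i$ elementary (Brauer) and multiplying by $\chi$ via the projection formula yields $\chi=\sum_i a_i\,\mathrm{Ind}_{H_i}^G(\lambda_i\cdot\mathrm{Res}_{H_i}\chi)$; symmetrizing the inducing data over complex conjugation lets one assume each inducing character is real-valued, at the cost of having to prove the analogous (easier) assertion for orthogonal characters in parallel. A Frobenius--Schur indicator computation on a $2$-elementary group $H=C\times P$ (with $C$ of odd order and $P$ a $2$-group) shows that the symplectic irreducibles of $H$ are exactly the inflations of the symplectic irreducibles of $P$, and that the remaining real-valued characters reduce, by induction on $\#H$, to inflations from $P$ together with inductions of characters $\psi+\overline\psi$ with $\psi$ linear. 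This brings the problem down to the case of a $2$-group $P$, where the genuine obstruction shows itself: the naive symmetrization $\chi=\tfrac12(\chi+\overline\chi)$ carries a factor $\tfrac12$, so one must manufacture the inductions with honest $\mathbb Z$-coefficients.

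\emph{Step 2: the case of a $2$-group.} For a $2$-group $P$ I would induct on $\#P$. If an irreducible symplectic character $\rho$ of $P$ is imprimitive, write $\rho=\mathrm{Ind}_Q^P(\sigma)$ with $Q<P$ proper; because $\rho=\overline\rho$ one can, after correction by inductions of characters $\psi+\overline\psi$ ($\psi$ linear), replace $\sigma$ by the real-valued virtual character $\tfrac12(\sigma+\overline\sigma)$ of $Q$ and conclude by the inductive hypothesis applied to its symplectic and orthogonal parts. If $\rho$ is primitive, then $P/\ker\rho$ is a $2$-group with a faithful primitive irreducible representation; Roquette's theorem forces its centre to be cyclic, and the classification of such $2$-groups shows that $P/\ker\rho$ is either a quotient of a generalized quaternion group $H_{4n}$ --- so that $\rho$, up to inflation and restriction to the relevant subgroup, is one of the quaternionic generators --- or is of extraspecial central-product type, in which case $\rho$ decomposes under restriction to the factors into tensor products that the inductive hypothesis handles.

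\emph{Main obstacle.} The hard part is the primitive case of Step 2: controlling which $2$-groups carry a faithful primitive symplectic representation and extracting from them exactly the quaternionic characters. Equally delicate is the $\mathbb Z$-coefficient bookkeeping: proving the analogue of the theorem with $\mathbb Z[\tfrac12]$-coefficients is routine, and the whole point of admitting the quaternionic generators $H_{4n}$ is that they absorb the $2$-torsion obstruction that prevents the naive symmetrizing argument from being integral. A more conceptual route to the primitive case would go through the quadratic and alternating forms over $\mathbb F_2$ attached to extraspecial $2$-groups, which packages the relevant combinatorics of central products efficiently.
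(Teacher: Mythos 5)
You should first be aware that the paper does not prove this statement at all: Theorem \ref{indsym} is Martinet's induction theorem, quoted with the citation \cite[III, Theorem 5.1]{Martinet} and used as a black box (in the proof of Proposition \ref{infnonram}). So there is no internal proof to compare yours with, and your attempt has to stand on its own; as written, it does not, because the two decisive steps are asserted rather than proved.

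First, the whole content of the theorem is the integrality of the coefficients, and at both points where a denominator $2$ appears in your argument --- symmetrizing the Brauer expansion of $\chi$ over complex conjugation, and replacing $\sigma$ by $\tfrac12(\sigma+\overline{\sigma})$ in the imprimitive $2$-group step --- you dispose of it by saying the error can be ``corrected by inductions of characters $\psi+\overline{\psi}$''. That correction term is exactly an instance of the statement to be proved, so the argument is circular at its crux; the known proofs do not average at all, but first establish an integral induction theorem for \emph{real-valued} virtual characters (a Witt--Berman/Serre-type theorem, with inductions from suitable $2$-quasi-elementary and quaternion-modified subgroups) and only then separate the orthogonal and symplectic parts. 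Second, your primitive $2$-group case is misstated: Roquette's theorem says that a $p$-group with a faithful primitive irreducible representation has all its abelian normal subgroups cyclic, and for $p=2$ this leaves precisely the cyclic, dihedral, semidihedral and generalized quaternion groups. There is no ``extraspecial central-product'' alternative --- the faithful irreducible of an extraspecial $2$-group is induced from a maximal abelian subgroup, hence imprimitive --- and, in the cases that do occur, you must still eliminate the dihedral and semidihedral quotients, whose faithful degree-$2$ characters have Frobenius--Schur indicator $+1$ (orthogonal, not symplectic), before concluding that the character is quaternionic; that computation is missing. The overall strategy (reduction to $2$-groups, quaternion quotients as the only source of symplectic generators) is the right flavour, but these gaps are precisely where the theorem lives, so what you have is an outline rather than a proof.
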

 
In order to use the above theorem, we will need some facts about generalized quaternions $H_{4n}$ and their complex characters. Note first that $\langle\sigma\rangle$ is normal in $H_{4n}$ since it has index $2$. One easily sees that the maximal abelian quotient of $H_{4n}$ is $H_{4n}/\langle\sigma^{2}\rangle$, which has order $4$. Thus $H_{4n}$ has precisely four irreducible characters of degree $1$. As for the remaining irreducible characters of $H_{4n}$, let $\varphi:\langle\sigma\rangle\to\mathbb{C}^\times$ be an injective homomorphism. Thus $\varphi$ can be also thought as an irreducible character of $\langle\sigma\rangle$ of degree $1$. Using Mackey's criterion (\cite[Proposition 23]{Serre-reprlin}), it is easy to show that, if $1\leq k\leq 2n-1$ and $k\ne n$, then  $\mathrm{Ind}_{\langle\sigma\rangle}^{H_{4n}}\varphi^k$ is an irreducible character of $G$ of degree $2$. Computing explicitly the values $\mathrm{Ind}_{\langle\sigma\rangle}^{H_{4n}}\varphi^k$ (via \cite[Th\'eor\`eme 12]{Serre-reprlin}), one finds that $\mathrm{Ind}_{\langle\sigma\rangle}^{H_{4n}}\varphi^k$ is always real-valued and
$$\mathrm{Ind}_{\langle\sigma\rangle}^{H_{4n}}\varphi^k=\mathrm{Ind}_{\langle\sigma\rangle}^{H_{4n}}\varphi^h$$  
if and only if $\varphi^h=\overline{\varphi}^k$, \textit{i.e.} $h=2n-k$. Hence we have $n-1$ distinct irreducible characters of degree $2$ and a standard counting argument (see \cite[Corollary 2 to Proposition 5]{Serre-reprlin}) shows that these, together with the four degree-one characters mentioned above, give the set of all irreducible complex characters of $H_{4n}$. 

\begin{remark}\label{H8char}
When $n=2$, we recover the classical quaternion group $H_8$. In particular $H_8$ has four one-dimensional characters, which we denote by $\psi_i$, $i=1,2,3,4$ (with $\psi_1$ denoting the trivial character) and one two-dimensional irreducible character, which we denote by $\phi$. It is well-known that $\phi$ is symplectic (this can be shown by computing its Frobenius-Schur indicator, using the explicit values of $\phi$ given for instance in \cite[Exercice 3, Section 12.2]{Serre-reprlin}).
\end{remark}

%The irreducible characters of degree $2$ of $H_{4n}$  all take real values. Those which factors through a dihedral quotient are orthogonal (\textit{i.e.} their Frobenius-Schur indicator is $1$) and those which do not are symplectic (\textit{i.e.} their Frobenius-Schur indicator is $-1$). 

In the following lemma, which will be useful in the proof of the next proposition, we describe the abelian subgroups of $H_{4n}$ and the restriction to such subgroups of an irreducible character of $H_{4n}$ of degree $2$.  

\begin{lemma}\label{quatrescyclic}
Let $\theta:H_{4n}\to \mathbb{C}$ be an irreducible character of degree $2$. Let $H'$ be an abelian subgroup of $H_{4n}$. Then either
\begin{itemize}
\item $H'\subseteq \langle\sigma\rangle$ and there exists a character $\rho:H'\to \mathbb{C}$ of degree $1$ such that $\mathrm{Res}^{H_{4n}}_{H'}\theta=\rho+\overline{\rho}$ or
\item  $H'$ is cyclic of order $4$, contains $\langle\tau^2\rangle$ and $\mathrm{Res}^{H_{4n}}_{H'}\theta=\mathrm{Ind}_{\langle\tau^2\rangle}^{H'}\rho$ where $\rho:\langle\tau^2\rangle\to \mathbb{C}$ is a character of degree $1$.
\end{itemize}
\end{lemma}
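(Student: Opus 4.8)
The plan is to analyze the abelian subgroups of $H_{4n}$ directly from the presentation \eqref{h4npres} and then restrict the character $\theta$ to each type. First I would recall that $H_{4n}=\langle\sigma,\tau\rangle$ with $\langle\sigma\rangle$ cyclic of order $2n$, normal of index $2$, and that every element outside $\langle\sigma\rangle$ has the form $\sigma^j\tau$ with $(\sigma^j\tau)^2=\tau^2=\sigma^n$, an element of order $2$ generating the unique central subgroup $\langle\tau^2\rangle=\langle\sigma^n\rangle$ of order $2$. Hence if $H'$ is an abelian subgroup of $H_{4n}$, either $H'\subseteq\langle\sigma\rangle$, or $H'$ contains some $\sigma^j\tau$; in the latter case, since $\sigma^j\tau$ has order $4$ and its square is the central involution, and since $\langle\sigma^j\tau\rangle$ already has order $4$, abelianness forces $H'=\langle\sigma^j\tau\rangle$ (any further element of $\langle\sigma\rangle$ would fail to commute with $\sigma^j\tau$ because $\tau$ inverts $\sigma$). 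Thus $H'$ is cyclic of order $4$ containing $\langle\tau^2\rangle$. This disposes of the classification of abelian subgroups.

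Next I would compute the restriction of $\theta$ in each case. Write $\theta=\mathrm{Ind}_{\langle\sigma\rangle}^{H_{4n}}\varphi^k$ for a faithful character $\varphi$ of $\langle\sigma\rangle$ and suitable $k$ with $1\le k\le 2n-1$, $k\ne n$, as recalled in the excerpt just before Remark \ref{H8char}. For $H'\subseteq\langle\sigma\rangle$: by Mackey's formula (or simply because $\langle\sigma\rangle$ is normal of index $2$ and $\tau$ acts by inversion), $\mathrm{Res}^{H_{4n}}_{\langle\sigma\rangle}\theta=\varphi^k+\varphi^{-k}$, so taking $\rho=\mathrm{Res}^{\langle\sigma\rangle}_{H'}\varphi^k$ we get $\mathrm{Res}^{H_{4n}}_{H'}\theta=\rho+\overline\rho$, which is the first alternative. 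For $H'=\langle\sigma^j\tau\rangle$ cyclic of order $4$: since $[H_{4n}:H']=n$ and $\theta$ has degree $2$, the restriction $\mathrm{Res}^{H_{4n}}_{H'}\theta$ has degree $2$, i.e. it is a sum of two linear characters of $H'$; I would check, using the character values of $\theta$ on $H_{4n}$ (namely $\theta(\sigma^m)=\varphi^k(\sigma^m)+\varphi^{-k}(\sigma^m)$ and $\theta$ vanishes on the coset $\langle\sigma\rangle\tau$), that $\mathrm{Res}^{H_{4n}}_{H'}\theta$ vanishes on the two generators of $H'$ and takes the value $2\theta(\tau^2)/2=-2$ on the central involution, hence equals the unique degree-$2$ character of $H'$ that is induced from the nontrivial character $\rho$ of $\langle\tau^2\rangle$; equivalently $\mathrm{Res}^{H_{4n}}_{H'}\theta=\mathrm{Ind}_{\langle\tau^2\rangle}^{H'}\rho$ by comparing characters (the induced character of a nontrivial character of an order-$2$ subgroup of a cyclic group of order $4$ is supported on the order-$2$ subgroup with value $-2$ at the nontrivial element, which matches). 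This gives the second alternative.

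The bookkeeping is entirely elementary; the only point requiring a little care is making sure that in the second case one identifies $\mathrm{Res}^{H_{4n}}_{H'}\theta$ correctly as an \emph{induced} character rather than a sum $\rho'+\overline{\rho'}$ of linear characters of $H'$ — but these coincide precisely when $\rho'$ is one of the two characters of $H'$ inflated from $H'/\langle\tau^2\rangle$, and here instead $\theta$ is nontrivial on $\langle\tau^2\rangle$ (it sends the central involution $\sigma^n$ to $-2$, since $\varphi^k(\sigma^n)=(-1)^k$ with $k$ odd, as $k\ne n$ and $\gcd$ considerations force $k$ odd when $\theta$ is the relevant irreducible), so the restriction does not factor through $H'/\langle\tau^2\rangle$ and must be the induced character. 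I expect this verification — pinning down that $\theta$ is faithful on the center, equivalently that $k$ is odd — to be the only mildly delicate step; everything else is a direct computation from the presentation and the standard formulas for induced characters.
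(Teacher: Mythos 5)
Your classification of the abelian subgroups is essentially the paper's (modulo a small slip: $\sigma^n=\tau^2$ \emph{does} commute with $\sigma^j\tau$, but since it already lies in $\langle\sigma^j\tau\rangle$ the conclusion $H'=\langle\sigma^j\tau\rangle$ still stands, because $C_{H_{4n}}(\sigma^j\tau)=\langle\sigma^j\tau\rangle$), and your first case is fine. The genuine problem is the step you yourself flag as the delicate one: the claim that $\theta$ is faithful on the centre, i.e.\ that $k$ is odd, ``as $k\ne n$ and gcd considerations force $k$ odd''. This is false for every $n\ge 3$: take $n=3$ and $k=2$; then $2k\not\equiv 0\pmod{6}$, so $\theta=\mathrm{Ind}_{\langle\sigma\rangle}^{H_{12}}\varphi^2$ is an irreducible character of degree $2$ with $\theta(\tau^2)=2(-1)^2=+2$ (it is inflated from the $2$-dimensional character of the dihedral quotient $H_{12}/\langle\tau^2\rangle$); likewise $k=2$ works in $H_{16}$. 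Only for $H_8$ are all degree-$2$ irreducibles faithful on the centre. Hence your asserted value $-2$ at the central involution, and the nontriviality of $\rho$, are wrong in general.

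Fortunately the lemma does not claim $\rho$ is nontrivial --- it only asserts that the restriction is induced from \emph{some} degree-$1$ character of $\langle\tau^2\rangle$, and indeed the paper's later use of the lemma (in the proof of Proposition \ref{infnonram}) explicitly treats the case where $\rho$ is trivial. Your argument is repaired in either of two ways: keep your value computation but record $\theta(\tau^2)=2(-1)^k$, and observe that the class function on $H'$ equal to $2$ at $1$, to $2(-1)^k$ at $\tau^2$, and to $0$ on the two generators is precisely $\mathrm{Ind}_{\langle\tau^2\rangle}^{H'}\rho$ with $\rho(\tau^2)=(-1)^k$; or, as the paper does, apply Mackey's restriction-of-induction formula directly: since $H'\not\subseteq\langle\sigma\rangle$ there is a single double coset in $H'\backslash H_{4n}/\langle\sigma\rangle$, so
$$\mathrm{Res}^{H_{4n}}_{H'}\theta=\mathrm{Ind}_{H'\cap\langle\sigma\rangle}^{H'}\big(\mathrm{Res}_{H'\cap\langle\sigma\rangle}\varphi^k\big)\quad\text{with}\quad H'\cap\langle\sigma\rangle=\langle\tau^2\rangle\enspace,$$
which gives the induced shape at once, with no character-value matching and no discussion of faithfulness on the centre.
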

\begin{proof}
By the above discussion, we know that there exists a character $\chi:\langle\sigma\rangle\to \mathbb{C}$ of degree $1$ such that $\theta=\mathrm{Ind}_{\langle\sigma\rangle}^{H_{4n}}\chi$. Then we have (see \cite[Proposition 22]{Serre-reprlin})
$$ \mathrm{Res}^{H_{4n}}_{H'}\theta =\mathrm{Res}^{H_{4n}}_{H'}\mathrm{Ind}_{\langle\sigma\rangle}^{H_{4n}}\chi=\sum_{s\in \mathscr{S}} \mathrm{Ind}_{H'\cap\langle\sigma\rangle}^{H'}\chi_s$$
where $\mathscr{S}$ is a set of representatives of $H'\backslash H_{4n}/\langle\sigma\rangle$ and, for $s\in \mathscr{S}$, $\chi_s:H'\cap\langle\sigma\rangle\to \mathbb{C}$ is the character defined by $\chi_s(x)=\chi(s^{-1}xs)$ for every $x\in H'\cap\langle\sigma\rangle$. 

We now distinguish two cases. First suppose that $H'\subset \langle\sigma\rangle$, \textit{i.e.} $H'\cap\langle\sigma\rangle=H'$. Then we can take $\mathscr{S}=\{\mathrm{id},\tau\}$ and $\chi_{\mathrm{id}}=\mathrm{Res}^{\langle\sigma\rangle}_{H'}\chi$, $\chi_{\tau}=\mathrm{Res}^{\langle\sigma\rangle}_{H'}\overline{\chi}$ (since $\chi(\tau^{-1}x\tau)=\chi(x^{-1})=\overline{\chi(x)}$ for $x\in H'$). Thus we can take $\rho=\mathrm{Res}^{\langle\sigma\rangle}_{H'}\chi$ to get the result. 

Now suppose that $H'\not\subset \langle\sigma\rangle$. We first show that $H'$ contains $\langle\tau^2\rangle$ and is cyclic of order $4$. Let $x\in H'$ and $x\not \in  \langle\sigma\rangle$. One easily sees that $x\sigma x^{-1}=\sigma^{-1}$, in particular $x$ acts as $-1$ on the cyclic group $H'\cap\langle\sigma\rangle$. Since $H'$ is abelian we also have $xyx^{-1}=y$ for every $y\in H'\cap\langle\sigma\rangle$. This shows that $H'\cap\langle\sigma\rangle$ is a subgroup of exponent $2$ of $\langle\sigma\rangle$. Therefore we have $H'\cap\langle\sigma\rangle=\langle\tau^2\rangle$. Note that $H'\cap\langle\sigma\rangle$ has index $2$ in $H'$ because 
$$4n=\#(H'\cdot \langle\sigma\rangle)=\frac{\#\langle\sigma\rangle \#H'}{\#(H'\cap\langle\sigma\rangle)}=\frac{2n\#H'}{\#(H'\cap\langle\sigma\rangle)}.$$
Hence $H'$ has order $4$ and is cyclic since $\tau^2$ is the only element of order $2$ in $H_{4n}$ (as it follows by an easy calculation using the presentation of $H_{4n}$ given in (\ref{h4npres})). We now come to the character $\mathrm{Res}^{H_{4n}}_{H'}\theta$. Since $H'\not\subset \langle\sigma\rangle$, we can choose the set of representatives $\mathscr{S}$ to be $\{\mathrm{id}\}$, obtaining 
$$ \mathrm{Res}^{H_{4n}}_{H'}\theta= \mathrm{Ind}_{\langle\tau^2\rangle}^{H'}\chi_{\mathrm{id}}.$$
Hence taking $\rho=\chi_{\mathrm{id}}=\mathrm{Res}_{\langle\tau^2\rangle}^{\langle\sigma\rangle}\chi$ we get the result.
\end{proof}

Recall from the Introduction that in a tame Galois extension the inverse different is a square if and only if the inertia subgroups all have odd order. 

\begin{proposition}\label{infnonram}
Let $N/E$ be a tame locally abelian $G$-Galois extension of number fields whose inverse different is a square. Suppose moreover that no archi\-me\-dean place of $E$ ramify in $N$ (\textit{i.e.} real places stay real). Then $W_{N/E}=1$.   
\end{proposition}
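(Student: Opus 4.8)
The goal is to show that $W_{N/E}=1$ under the stated hypotheses, i.e.\ that $W(N/E,\theta)=1$ for every symplectic character $\theta$ of $G$. Since $W_{N/E}\in\mathrm{Hom}_{\Omega_\Q}(S_G,\{\pm1\})$ is multiplicative and $S_G$ is generated by irreducible symplectic characters, and since by Martinet's induction theorem (Theorem \ref{indsym}) every symplectic character is a $\Z$-linear combination of characters induced from subgroups $H\leq G$ of one of two types, the inductivity of root numbers in the bottom variable (the second property recalled in \S\ref{rnp}) reduces the problem to computing $W(N/N^H,\theta)$ for $\theta$ of the two allowed shapes. The first step, then, is this reduction: write a general $\theta\in S_G$ via Theorem \ref{indsym}, apply $W(N/E,\mathrm{Ind}_H^G\psi)=W(N/N^H,\psi)$, and observe that $N/N^H$ is still a tame locally abelian Galois extension which is unramified at archimedean places (archimedean places of $N^H$ lying below a real place of $E$ are still real, since $N/E$ is unramified at infinity), so we may as well assume $G=H$ and treat the two cases directly.

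\textbf{Case 1: $\theta=\psi+\overline\psi$ with $\psi$ abelian of degree $1$.} Here I would use the decomposition \eqref{locdec} of the global root number into local factors $W(N_\PP/E_P,\psi_{D_\PP})W(N_\PP/E_P,\overline\psi_{D_\PP})$ over places $P$ of $E$. At archimedean $P$ the local extension is trivial by hypothesis, so that factor is $1$. At each finite $P$, the local formula \eqref{conjdet} gives $W(N_\PP/E_P,\psi_{D_\PP})W(N_\PP/E_P,\overline\psi_{D_\PP})=\det_{\psi_{D_\PP}}(-1)$, where $\det_{\psi_{D_\PP}}$ is obtained by composing $\det\psi_{D_\PP}=\psi_{D_\PP}$ (degree one) with the local reciprocity map $E_P^\times\to D_\PP$. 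One then checks $\det_{\psi_{D_\PP}}(-1)=1$: since $N/E$ is tame, the local extension $N_\PP/E_P$ is tamely ramified, so the inertia image has order prime to the residue characteristic; the element $-1\in E_P^\times$ is either a unit whose image under reciprocity lands in inertia of odd order (because $\C_{N/E}$ a square forces all inertia groups to have odd order, hence $\det_{\psi_{D_\PP}}$ of any unit is an odd-order root of unity, but it is also $\pm1$ by $\Omega_\Q$-rationality, hence $1$), and the finitely many exceptional primes are handled by the product formula $\prod_P\det_{\psi_{D_\PP}}(-1)=\det_\psi(\text{global }-1)$ combined with the fact that the global Artin map sends $-1$ appropriately. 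The cleanest route is probably: $\prod_{P\text{ finite}}\det_{\psi_{D_\PP}}(-1)$ equals (by the product formula for the idelic $\det$ map and Hasse's reciprocity) the reciprocal of the archimedean contributions, which are trivial since no real place ramifies; so $W(N/E,\theta)=\prod_{P\text{ finite}}\det_{\psi_{D_\PP}}(-1)=1$.

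\textbf{Case 2: $\theta$ quaternionic,} i.e.\ lifted from a symplectic character of a quotient $\bar G$ of $G$ isomorphic to $H_{4n}$. By inflation-invariance of root numbers (third property of \S\ref{rnp}) we reduce to $G=H_{4n}$ and $\theta$ the $2$-dimensional symplectic character. Now I expect to invoke \eqref{locdec} again and reduce to decomposition groups $D_\PP$, which are abelian by the locally abelian hypothesis; so I need the restriction of $\theta$ to each abelian subgroup $H'\leq H_{4n}$, which is exactly what Lemma \ref{quatrescyclic} provides: either $\theta|_{H'}=\rho+\overline\rho$ with $\rho$ of degree $1$ (reducing the local factor to Case 1's computation, giving $\det_\rho(-1)=1$), or $H'$ is cyclic of order $4$ containing $\langle\tau^2\rangle$ and $\theta|_{H'}=\mathrm{Ind}_{\langle\tau^2\rangle}^{H'}\rho$. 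In the second subcase, inductivity gives $W(N_\PP/E_P,\theta|_{D_\PP})=W(N_\PP/E_P^{\prime},\rho)$ for the corresponding quadratic subextension, and since $\rho$ has degree $1$ one again applies \eqref{conjdet}-type reasoning (note $\rho=\overline\rho$ here as $\langle\tau^2\rangle$ has order $2$, so the single local root number of a quadratic character of a tame extension unramified at infinity is $1$ by the known evaluation of root numbers of quadratic characters). Assembling the local factors via the product formula, and using that the archimedean places contribute trivially, yields $W(N/E,\theta)=1$.

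\textbf{Main obstacle.} The genuinely delicate point is the precise evaluation of the finite local root numbers / $\det_\psi(-1)$ factors and the bookkeeping of the product formula in Case 1 (and its reappearance inside Case 2): one must be careful that ``no real place ramifies'' is exactly the input that kills the archimedean contributions and forces the finite product to be $1$, and that the oddness of the inertia groups (equivalent to $\C_{N/E}$ being a square) is what makes each finite factor trivial rather than merely $\pm1$. I would expect the argument to lean on the standard fact that for a tamely ramified abelian character the local root number is a root of unity of order dividing the tame ramification, combined with $\Omega_\Q$-rationality forcing it into $\{\pm1\}$, hence to $1$ when the order is odd; making this uniform across all the auxiliary subextensions produced by Martinet's theorem and Lemma \ref{quatrescyclic} is where the care is needed.
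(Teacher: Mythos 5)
Your overall skeleton (Martinet's Theorem \ref{indsym} to reduce to the two shapes of characters, then the local decomposition \eqref{locdec}, the locally abelian hypothesis plus Lemma \ref{quatrescyclic} to control restrictions to decomposition groups, and the identity \eqref{conjdet} combined with the fact that $r_P(-1)$ is a unit image landing in odd-order inertia) is exactly the paper's strategy, and your Case 1 goes through — though the correct reason the value $\det_{\psi_{D_\PP}}(-1)=\psi(r_P(-1))$ is forced to be $1$ is that $r_P(-1)$ has order dividing $2$ \emph{and} lies in the odd-order inertia group, not ``$\Omega_\Q$-rationality''; your alternative product-formula route over all places is also legitimate.

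The genuine gap is in the quaternionic case, subcase (b) of Lemma \ref{quatrescyclic} with $\rho$ the nontrivial character of $\langle\tau^2\rangle$. There you reduce by inductivity to the single local root number $W(N'_\PP/E'_{P'},\rho)$ of a quadratic character and assert it equals $1$ ``by the known evaluation of root numbers of quadratic characters''. This is unjustified and false in general: a lone local root number of a quadratic character need not be $1$ (it depends on the conductor of the implicit additive character and can be $-1$, or $\pm i$ in the ramified case), and your own \eqref{conjdet}-type computation only yields $W(\rho)^2=\det_\rho(-1)=1$, i.e.\ $W(\rho)=\pm1$, which is not enough for a place-by-place argument. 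The paper sidesteps this precisely: since $D_\PP$ is cyclic of order $4$ containing $\langle\tau^2\rangle$, one has $\mathrm{Ind}_{\langle\tau^2\rangle}^{D_\PP}\rho=\nu+\overline\nu$ with $\nu$ of order $4$, so the local factor is again a conjugate pair and \eqref{conjdet} gives $\det_\nu(-1)=\nu(r_P(-1))=1$ by the same odd-inertia argument as in case (a). You should replace your appeal to a direct evaluation of $W(\rho)$ by this rewriting; note also that when $\rho$ is trivial the paper's (and your) conclusion is immediate, so only the nontrivial subcase needs the fix.
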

\begin{proof}
We have to prove that $W(N/E,\chi)=1$ for every symplectic character $\chi$ of $G$. Thanks to Theorem \ref{indsym}, $\chi$ can be written as
$$\theta=\sum_{H<G}n_H\mathrm{Ind}_H^G\theta_H$$
where $n_H\in\Z$ and $\theta_H:H\to \mathbb{C}$ is either a quaternionic character of $H$ or can be written as $\theta_H=\psi+\overline{\psi}$ for some irreducible character $\psi:H\to \mathbb{C}$ of degree $1$.
Thanks to the properties of the Artin root number recalled in \S\ref{rnp}, we have
$$W(N/E,\chi)=\prod_{H<G}W(N/N^H,\theta_H)^{n_H}.$$
Of course, to prove that $W(N/E,\chi)=1$, it will be sufficient to prove that $W(N/N^H,\theta_H)=1$ for every subgroup $H<G$. Observe that, for every subgroup $H<G$, $N/N^H$ is a tame locally abelian extension whose inverse different is a square (since its inertia subgroups have odd order) and no archimedean place of $N^H$ ramify in $N$ (in other words $N/N^H$ satisfy the hypotheses of the proposition). Therefore, replacing if necessary $G$ by one of its subgroup $H$ and $N/E$ by $N/N^H$, it suffices to show that $W(N/E,\theta)=1$ where $\theta:G\to \mathbb{C}$ is either a quaternionic character of $G$ or $\theta=\psi+\overline{\psi}$ for some irreducible character $\psi:G\to \mathbb{C}$ of degree $1$. 

Suppose first that $\theta$ is quaternionic. In particular,  for some $n\geq 2$, there exists a surjection $G\to H_{4n}$ and a symplectic character $\theta':H_{4n}\to \mathbb{C}$ such that $\theta=\mathrm{Inf}_{H_{4n}}^G\theta'$. Then, if $N/N'$ is the subextension corresponding to the kernel of $G\to H_{4n}$ (thus $H_{4n}\cong \mathrm{Gal}(N'/E)$), we have $W(N/E,\theta)=W(N'/E,\theta')$. So we are reduced to show that $W(N'/E,\theta')=1$ and to do this we can assume that $\theta'$ is irreducible. In particular $\theta'$ has degree $2$ (see the list of irreducible characters of $H_{4n}$ given in \S\ref{quaternion}). By (\ref{locdec}), it is sufficient to show that, for every place $P$ of $E$, we have $W(N'_{\PP}/E_P,\theta'_{D_\PP})=1$, where $\theta'_{D_\PP}$ denotes the restriction of $\theta'$ to the decomposition group $D_{\PP}$ of a fixed place $\PP$ of $N'$ above $P$. Note that $N'/E$ is locally abelian since $N/E$ is. In particular $D_{\PP}$ is abelian and by Lemma \ref{quatrescyclic} we have either 
\begin{itemize}
\item[(a)] $\theta'_{D_\PP}=\rho+\overline{\rho}$ for some character $\rho:D_{\PP}\to \mathbb{C}$ of degree $1$ or 
\item[(b)] $D_{\PP}$ is cyclic of order $4$, contains $\langle\tau^2\rangle$ and $\theta'_{D_\PP}=\mathrm{Ind}_{\langle\tau^2\rangle}^{D_{\PP}}\rho$ for some character $\rho:\langle\tau^2\rangle\to\mathbb{C}$ of degree $1$. 
\end{itemize}

In case (a) we have 
$$W(N'_{\PP}/E_P,\theta'_{D_\PP})=W(N'_{\PP}/E_P,\rho)W(N'_{\PP}/E_P,\overline{\rho})=\mathrm{det}_\rho(-1).$$
Let $r_{_P}:E_P^\times\to D_{\PP}$ denote the local reciprocity map so that $\rho\circ r_P=\mathrm{det}_\rho$ (since $\rho$ has degree $1$). If $P$ is archimedean, then $N'_\PP/E_P$ is trivial by hypothesis and in particular $r_{_P}(-1)=1$ and $\mathrm{det}_\rho(-1)=1$. If instead $P$ is a finite place, then, by class field theory, $r_{_P}(-1)$ belongs to the inertia subgroup $I_{\PP}$ of $\PP$ in $N'/E$, since $-1$ is a unit of $E_P$. In particular, $r_P(-1)^{e_P}=1$ and $e_P=\#I_\PP$ is odd ($N'/E$ has inertia subgroups of odd order since the same holds for $N/E$). But we also have $r_{_P}(-1)^2=1$ and therefore $r_{_P}(-1)=1$, which implies that $\mathrm{det}_\rho(-1)=1$ also in this case.  

In case (b), if $\rho$ is trivial, then $W(N'_{\PP}/E_P,\theta'_{_{D_\PP}})=W(N'_{\PP}/E'_{P'},\rho)=1$ (here $N'_{\PP}/E'_{P'}$ is the subextension corresponding to $\langle\tau^2\rangle\subset D_{\PP}$), since the local root number of the trivial character is trivial. If instead $\rho$ is nontrivial (\textit{i.e.} $\rho$ is the sign character of $\langle\tau^2\rangle$), then one easily sees that $\theta'_{_{D_\PP}}=\mathrm{Ind}_{\langle\tau^2\rangle}^{D_{\PP}}\rho=\nu+\overline\nu$, where $\nu:D_{\PP}\to \mathbb{C}^\times$ is a character of order $4$. Thus we conclude as in case (a).

The case where $\theta=\psi+\overline{\psi}$ for some irreducible character $\psi:H\to \mathbb{C}^\times$ is also similar to the above case (a). 
\end{proof}

\begin{remark}
Let $\PP$ be a non-real archimedean place of $N$ and suppose that $\PP$ lies above a real place $P$ of $E$. Then $E_P=\R$ and $N_\PP=\mathbb{C}$ and the reciprocity map $r_{_P}:\R^\times\to \mathrm{Gal}(\mathbb{C}/\R)$ is nontrivial on $-1$ (in fact $\mathrm{Ker}(r_{_P})=\{x\in\R^\times,x>0\}$). For this reason the arguments of the proof of the above proposition do not apply in the case where real places of $E$ are allowed to become complex in $N$. In fact, as we shall see, the hypothesis on real places of Proposition \ref{infnonram} is necessary. 
\end{remark}

Combining Proposition \ref{infnonram} with Corollary \ref{A=tW} and Theorem \ref{maincor}, we get the following result.
 
\begin{theorem}
Let $N/E$ be a tame locally abelian $G$-Galois extension of number fields whose inverse different is a square. Suppose moreover that no archi\-me\-dean place of $E$ ramifies in $N$. Then 
$$(\A_{N/E})=(\OO_N)=1$$
in $\Cl(\Z[G])$.
\end{theorem}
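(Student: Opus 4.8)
The plan is to deduce the statement directly from Proposition~\ref{infnonram}, Corollary~\ref{A=tW} and Taylor's theorem (Theorem~\ref{pearl}), so that the work to be done here is essentially bookkeeping; the genuine content has already been isolated upstream. First I would observe that the hypotheses of the theorem — $N/E$ tame and locally abelian, $\C_{N/E}=\A_{N/E}^2$ a square (equivalently, all inertia subgroups of odd order), and no archimedean place of $E$ ramifying in $N$ — are exactly those of Proposition~\ref{infnonram}. Applying that proposition yields $W_{N/E}=1$, i.e.\ $W_{N/E}$ is the trivial element of $\Hom_{\Omega_\Q}(S_G,\{\pm1\})$.

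Next I would check that $t_G$ sends this trivial homomorphism to the trivial class of $\Cl(\Z[G])$. This is immediate from the construction of $t_G$ recalled in \S\ref{sympltG}: by definition $t'_G(W_{N/E})$ has all idelic components equal to $1$ on every irreducible character of $G$, so $t'_G(W_{N/E})$ is the trivial homomorphism $R_G\to J(L')$, and its image under the projection $\Hom_{\Omega_\Q}(R_G,J(L'))\to\Cl(\Z[G])$ induced by Fröhlich's Hom-description is the identity class. Hence $t_GW_{N/E}=1$ in $\Cl(\Z[G])$.

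Finally I would invoke the two identifications of this class. Since $N/E$ is tame and locally abelian with $\C_{N/E}$ a square, Corollary~\ref{A=tW} gives $(\A_{N/E})=t_GW_{N/E}=1$; on the other hand Taylor's theorem (Theorem~\ref{pearl}) gives $(\OO_N)=t_GW_{N/E}=1$. Combining, $(\A_{N/E})=(\OO_N)=1$ in $\Cl(\Z[G])$, as claimed. (As a cross-check, once $(\OO_N)=1$ is known one also recovers $(\A_{N/E})=1$ from the equality $(\OO_N)=(\A_{N/E})$ of Theorem~\ref{maincor}; I would keep this alternative route in mind, but the argument through Corollary~\ref{A=tW} is the most direct.) There is no real obstacle internal to this proof: the only step needing more than a citation is the triviality of $t_G$ on the trivial symplectic homomorphism, and that is a one-line consequence of the definition. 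The substance — the vanishing of the relevant symplectic Artin root numbers — has been concentrated in Proposition~\ref{infnonram}, whose proof via Martinet's induction theorem and the reciprocity-map computation in cases (a) and (b) is where the real difficulty lies.
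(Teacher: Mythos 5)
Your proof is correct and follows the same route as the paper: Proposition \ref{infnonram} gives $W_{N/E}=1$, hence $t_GW_{N/E}=1$, and then Corollary \ref{A=tW} together with Taylor's theorem (equivalently, Theorem \ref{maincor}) identifies both $(\A_{N/E})$ and $(\OO_N)$ with this trivial class.
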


%In view of the above theorem it seems reasonable to ask whether the above result holds without assuming that $N/E$ is locally abelian. The assumption on the archimedean primes cannot be suppressed, as we will show in the next section.

\subsection{An inverse different whose square root has nontrivial class}\label{explicitexample}
In this section we want to find a group $G$ and a tame locally abelian $G$-Galois extension $N/\mathbb{Q}$ whose inverse different is a square and $(\A_{N/\Q})\ne 1$ in $\Cl(\Z[G])$. The first step is to find a good candidate for the group $G$, which we would also like to be of smallest possible order. For this reasons, we deduce some conditions $G$ has to satisfy in order for an extension with the above properties to exist. 

\begin{lemma}\label{properties}
Suppose that $N/\Q$ is a tame locally abelian $G$-Galois extension whose inverse different is a square and $(\A_{N/\Q})\ne 1$ in $\Cl(\Z[G])$. Then
\begin{enumerate}
\item[(i)] $G$ is generated by elements of odd order;
\item[(ii)] $G$ has an irreducible symplectic representation.
\end{enumerate}
\end{lemma}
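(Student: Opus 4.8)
\textbf{Proof proposal for Lemma \ref{properties}.}

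The plan is to extract both conditions from results already proved in the paper, essentially by contraposition: if either fails, then $(\A_{N/\Q})$ must be trivial.

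For assertion (i), I would argue as follows. Let $H$ be the (normal) subgroup of $G$ generated by all elements of odd order, and suppose $H \neq G$. Since $N/\Q$ is tame, every inertia subgroup $I_\PP$ is cyclic of order $e_P$ coprime to the residual characteristic; because the inverse different is a square, every $e_P$ is odd. Hence each $I_\PP$ is generated by an element of odd order, so $I_\PP \subseteq H$ for every prime $\PP$. This means $N/N^H$ is unramified at all finite primes; being also unramified at infinity (as $N^H/\Q$ has a real place only if $\Q$ does, and in any case the ramification at infinity is controlled by the $2$-part of decomposition, which lands in $H$ — more carefully, one checks directly that the fixed field $N^H$ can be taken so that $N/N^H$ is everywhere unramified), the extension $N/N^H$ is unramified everywhere over $N^H$. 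By Minkowski's theorem applied over $\Q$ — or rather by noting that $N^H/\Q$ and then $N/N^H$ being unramified forces $N = N^H$ by class field theory / discriminant considerations — we would get $H = \Gal(N/N^H)$ trivial, i.e. $H = G$, a contradiction. Actually the cleanest route: the different $\D_{N/\Q}$ is the product of the local differents, each supported on ramified primes; since all inertia groups have odd order, the ramified primes all have odd ramification, and the subgroup of $G$ generated by the inertia groups is contained in $H$. But the subgroup generated by all inertia subgroups of $N/\Q$ is all of $G$ (there is no unramified subextension of $\Q$), whence $G = H$. This is the argument I would write out carefully; the only subtlety is handling ramification at the infinite place, which is excluded here since infinite places contribute elements of order $2$ only through complex conjugation and one must check these still lie in the subgroup generated by inertia — but over $\Q$ the base has a unique archimedean place and the ``inertia at infinity'' is precisely complex conjugation, an element of order $2$; the point is that $G$ is generated by \emph{all} its inertia subgroups (finite and infinite), and if the inverse different is a square all the \emph{finite} inertia subgroups have odd order. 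So the correct statement is that $G$ is generated by its finite inertia subgroups together with complex conjugation; to force assertion (i) as stated one uses instead that if $G$ were generated by elements of even order only in an essential way, the relevant symplectic root numbers would still vanish — see below.

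For assertion (ii), the argument is immediate from Corollary \ref{A=tW}: since $N/\Q$ is tame and locally abelian and $\C_{N/\Q}$ is a square, that corollary gives $(\A_{N/\Q}) = t_G W_{N/\Q}$ in $\Cl(\Z[G])$. If $G$ had no irreducible symplectic representation, then $S_G = 0$, so $W_{N/\Q}$ is the trivial homomorphism and hence $t_G W_{N/\Q} = 1$, forcing $(\A_{N/\Q}) = 1$, contrary to hypothesis. Thus $G$ must admit an irreducible symplectic representation.

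For assertion (i) reconsidered in light of this: the honest mechanism is that if $G$ is \emph{not} generated by elements of odd order, then $G$ has a quotient $\bar G$ of even order that is generated by its $2$-part in a way that kills all odd-order inertia; more precisely, writing $H$ for the subgroup generated by odd-order elements, the inertia subgroups of $N/\Q$ at finite primes all lie in $H$ (by tameness and the square-different hypothesis), so $N/N^H$ is unramified at all finite primes and $\Gal(N/N^H) = G/H$ is a quotient of the narrow class group of $N^H$ which, by considering that $N^H/\Q$ and the norm map, must be trivial unless $N^H$ has a nontrivial such quotient — this requires a genuine number-theoretic input, namely that over $\Q$ one can always arrange the extension so this is trivial, or one simply concludes $G = H$ directly. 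I expect \textbf{this first assertion to be the main obstacle}: making the reduction to ``$G$ is generated by its finite inertia groups'' fully rigorous, while the square-root/symplectic part (ii) is a one-line consequence of Corollary \ref{A=tW}. I would resolve the obstacle by invoking that a Galois extension of $\Q$ has no nontrivial everywhere-unramified subextension of $\Q$ (Minkowski), so the subgroup of $G = \Gal(N/\Q)$ generated by all inertia subgroups — finite and infinite — is all of $G$; combining with the oddness of the finite inertia orders and noting that the infinite inertia (complex conjugation) together with the odd-order finite inertia still must generate, one deduces $G$ is generated by odd-order elements provided one also accounts for the $2$-torsion from complex conjugation — which is why the hypothesis of Proposition \ref{infnonram} (no real place becoming complex) is precisely what is \emph{not} assumed here, and indeed in the sought example complex conjugation will be present, consistent with $\tilde A_4$ being generated by its elements of order $3$.
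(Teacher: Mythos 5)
Your argument for assertion (ii) is correct and coincides with the paper's: by Corollary \ref{A=tW} we have $(\A_{N/\Q})=t_GW_{N/\Q}$, and if $G$ had no irreducible symplectic representation then $S_G=0$, so $t_GW_{N/\Q}=1$, contradicting $(\A_{N/\Q})\ne 1$. Assertion (i), however, is where your proposal has a genuine gap, and you flag it yourself without closing it: you only allow yourself the fact that $\Q$ has no nontrivial extension unramified \emph{everywhere}, which merely says that $G$ is generated by all of its inertia subgroups, finite and infinite, and then you are stuck with the order-$2$ contribution of complex conjugation; your fallback ("the relevant symplectic root numbers would still vanish --- see below") is never carried out.

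The missing input is the stronger, standard form of Minkowski's theorem: every number field different from $\Q$ has absolute discriminant of absolute value $>1$, hence has at least one ramified \emph{finite} prime; equivalently, $\Q$ admits no nontrivial extension unramified at every finite prime, ramification at infinity being allowed. This is exactly what the paper invokes. Granting it, let $H$ be the normal subgroup of $G$ generated by the inertia subgroups $I_\PP$ at finite primes (or by all odd-order elements, as you set up). The images of all the $I_\PP$ in $G/H=\Gal(N^H/\Q)$ are trivial, so $N^H/\Q$ is unramified at every finite prime, hence $N^H=\Q$ and $H=G$: thus $G$ is generated by its finite inertia subgroups, which are cyclic (tameness) of odd order (the inverse different is a square), and (i) follows with no need to discuss the infinite place at all. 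Note also that your intermediate claim "$I_\PP\subseteq H$ implies $N/N^H$ is unramified at all finite primes" is backwards: the inertia groups of $N/N^H$ are $I_\PP\cap H=I_\PP$ themselves, so $N/N^H$ is just as ramified as $N/\Q$; what the inclusion gives is that $N^H/\Q$ is unramified at finite primes, which is precisely what lets Minkowski be applied over the base $\Q$ (applying it over $N^H$, as your first attempt does, indeed fails, since $N^H$ may well have everywhere-unramified extensions).
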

\begin{proof}
Since $\mathbb{Q}$ has no nontrivial extension unramified at every finite prime, $G$ is generated by the inertia subgroups of finite primes. As recalled before Proposition \ref{infnonram}, these subgroups have odd order in our situation. Furthermore, if $G$ has no symplectic representation, then $(\A_{N/\Q})=1$ by Corollary \ref{A=tW}. 
\end{proof} 

\subsubsection{}
We now show that there is no group of order smaller than $24$ satisfying properties (i) and (ii) of Lemma \ref{properties}.

\begin{lemma}\label{2groups}
Let $H$ be a group. Then the following are equivalent:
\begin{itemize}
\item $H$ has no proper normal subgroup of index a power of $2$;
\item $H$ is generated by elements of odd order.
\end{itemize}
\end{lemma}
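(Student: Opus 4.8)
The statement to prove is Lemma \ref{2groups}: for a finite group $H$, having no proper normal subgroup of $2$-power index is equivalent to $H$ being generated by its elements of odd order. I would prove this by showing both implications via the characteristic subgroup $H^{\mathrm{odd}}$ generated by all elements of odd order. The key observation is that $H^{\mathrm{odd}}$ is normal in $H$ (indeed characteristic, being generated by a conjugation-stable set), so the whole question reduces to understanding the quotient $Q = H/H^{\mathrm{odd}}$.

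First I would establish that $Q$ is a $2$-group. Every element of $H$ of odd order maps to the identity in $Q$, so $Q$ has no nontrivial elements of odd order; by Cauchy's theorem (or Lagrange plus Cauchy), a finite group whose order is divisible by an odd prime $p$ contains an element of order $p$, so $|Q|$ must be a power of $2$. This gives one direction essentially for free: if $H$ is generated by its odd-order elements then $H^{\mathrm{odd}} = H$, so $Q$ is trivial; and conversely one sees that any proper normal subgroup of $2$-power index would have to contain $H^{\mathrm{odd}}$ (since the quotient by it is a $2$-group in which the images of odd-order elements vanish), contradicting $H^{\mathrm{odd}} = H$.

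For the other direction I would argue contrapositively. Suppose $H$ is \emph{not} generated by its odd-order elements, i.e. $H^{\mathrm{odd}} \subsetneq H$. Then $Q = H/H^{\mathrm{odd}}$ is a nontrivial finite $2$-group, hence has a subgroup of index $2$ (every nontrivial finite $p$-group has a maximal subgroup of index $p$, e.g. via the Frattini quotient), which is automatically normal. Pulling this subgroup back along $H \to Q$ yields a proper normal subgroup of $H$ of index $2$, hence of $2$-power index; so $H$ does have a proper normal subgroup of $2$-power index. Combining the two directions gives the equivalence.

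\textbf{Main obstacle.} There is no real obstacle here — the argument is elementary group theory. The only point requiring a modicum of care is making the reductions crisp: one must note that a proper normal subgroup $M \trianglelefteq H$ of $2$-power index necessarily contains every odd-order element of $H$ (because $H/M$ is a $2$-group, so the image of an odd-order element, having odd order, is trivial), hence contains $H^{\mathrm{odd}}$; this is what lets one pass freely between ``no proper normal subgroup of $2$-power index'' and ``$H^{\mathrm{odd}} = H$''. Once that dictionary is in place, both implications are immediate from Cauchy's theorem and the existence of index-$p$ subgroups in finite $p$-groups.
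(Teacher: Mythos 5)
Your proof is correct and takes essentially the same route as the paper: both pass through the normal (indeed characteristic) subgroup generated by the odd-order elements, show the quotient is a $2$-group, and note that any normal subgroup of $2$-power index must contain that subgroup, from which the equivalence follows. Two cosmetic remarks: your detour through an index-$2$ subgroup of the nontrivial $2$-group $Q$ is unnecessary, since $H^{\mathrm{odd}}$ itself is already a proper normal subgroup of $2$-power index; and the assertion that $Q$ has no nontrivial odd-order elements is cleanest justified as in the paper, by observing that for $h\in H$ of order $2^{a}b$ with $b$ odd the element $h^{2^{a}}$ has odd order and so lies in $H^{\mathrm{odd}}$, whence the image of $h$ in $Q$ has $2$-power order.
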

\begin{proof}
Let $H'$ be the subgroup generated by the elements of odd order of $H$. Then $H'$ is normal since conjugation preserves the order of elements. We claim that $H/H'$ has order a power of $2$: let $x\in H/H'$ be a class represented by $h\in H$. Write the order of $h$ as $2^ab$ with $a,b\in\mathbb{N}$ and $b$ odd. Then $x$ has order dividing $2^a$ since $h^{2^a}$ has odd order, hence it belongs to $H'$. This shows our claim. Moreover any normal subgroup $H''$ of $H$ of index a power of $2$ contains $H'$, since any element of odd order of $H$ has trivial image in $H/H''$. From this we easily deduce the equivalence of the two assertions in the statement.
%Suppose that $H$ is generated by elements of odd order. We first show that $H$ has no subgroup of index $2$: if $H'$ is such a subgroup, then $H'$ is normal in $H$ and the image of an odd-order element of $H$ in $H/H'$ is trivial. In particular $H=H'$, a contradiction. Suppose now that $H'$ is a normal subgroup of $H$ of order $2^a$ with $a>0$. Then $H/H'$ is a nontrivial $2$-group and hence has a normal subgroup of index $2$. This implies that $H$ has a normal subgroup of index $2$ (containing $H'$), giving a contradiction by what we have previously shown.
%
%Suppose now that $H$ has no proper normal subgroup of index a power of $2$. Let $H'$ be the subgroup generated by the elements of odd order of $H$. Then $H'$ is normal since conjugation preserves the order of elements. We claim that $H/H'$ has order a power of $2$: let $h\in H$ have order $2^ab$ with $a,b\in\mathbb{N}$ and $b$ odd. Then the class of $h$ in $H/H'$ has order dividing $2^a$ since $h^{2^a}$ has odd order, hence it belongs to $H'$. This shows that $H/H'$ has order a power of $2$ and hence $H=H'$.
\end{proof}

\begin{lemma}\label{order20}
Let $H$ be a group of order $20$. Then $H$ is not generated by elements of odd order. 
\end{lemma}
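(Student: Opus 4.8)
The plan is to classify, up to isomorphism, all groups of order $20$ and check that in each case the subgroup generated by elements of odd order is all of $H$ — equivalently, by Lemma \ref{2groups}, that $H$ has no proper normal subgroup of $2$-power index. First I would invoke Sylow's theorems: writing $20 = 2^2 \cdot 5$, the number $n_5$ of Sylow $5$-subgroups satisfies $n_5 \equiv 1 \pmod 5$ and $n_5 \mid 4$, so $n_5 = 1$ and the Sylow $5$-subgroup $P_5 \cong \Z/5\Z$ is normal. Thus $H$ is a semidirect product $(\Z/5\Z) \rtimes Q$ where $Q$ is a Sylow $2$-subgroup of order $4$, so $Q$ is either $\Z/4\Z$ or $(\Z/2\Z)^2$, and the action is given by a homomorphism $\varphi\colon Q \to \mathrm{Aut}(\Z/5\Z) \cong \Z/4\Z$.

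Next I would go through the possible actions. Since $\mathrm{Aut}(\Z/5\Z)$ is cyclic of order $4$, the image of $\varphi$ has order $1$, $2$, or $4$. The key observation is that in every case $H$ is generated by its elements of odd order: the normal subgroup $P_5$ is generated by an element of order $5$, so it suffices to show that $H/P_5 \cong Q$ is covered by the images of odd-order elements of $H$, i.e. that every element of $Q$ lifts to an odd-order element of $H$. Here one uses that $\gcd(5, 4) = 1$: by Schur–Zassenhaus (or directly, since $|Q|$ is coprime to $|P_5|$) a complement $Q$ embeds in $H$, and any element $q \in Q$ of $2$-power order, viewed inside $H$, already has $2$-power order — that is not immediately odd. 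So instead I would argue via Lemma \ref{2groups} directly: a proper normal subgroup $M \trianglelefteq H$ of index a power of $2$ must contain $P_5$ (as $P_5$ is the unique Sylow $5$-subgroup, hence characteristic, and maps to a $2$-group, forcing its image to be trivial), so $M/P_5$ is a proper subgroup of $Q$ of $2$-power index in $Q$ — which certainly can exist. Hence this line needs the extra input that $H$ itself has \emph{no} such $M$, which fails for, e.g., $H = \Z/20\Z$.

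This means the statement as literally read cannot be proved by that route, so I would reinterpret: the lemma is surely being applied in context where $H$ is assumed to satisfy property (i) of Lemma \ref{properties} vacuously is false — rather, the intended content is that a group of order $20$ satisfying \emph{both} (i) and (ii) does not exist, and the proof should show that any group of order $20$ generated by elements of odd order already fails to have an irreducible symplectic representation, or more simply that the hypothesis forces a contradiction with $|H| = 20$. Concretely, I expect the real argument to be: if $H$ of order $20$ is generated by elements of odd order, then by Lemma \ref{2groups} $H$ has no normal subgroup of index $2$ or $4$; but $H/[H,H]$ has order divisible by $|Q/[Q,Q]\varphi(\cdots)|$, and a short computation of the abelianization in each of the (at most three) isomorphism types — $\Z/20\Z$, $\Z/5\Z \rtimes \Z/4\Z$ (faithful action, the dicyclic-type group $\mathrm{Dic}_5$ alias $F_{20}$ when the action has order $4$), $\Z/5\Z \rtimes (\Z/2\Z)^2 \cong D_{10} \times \Z/2\Z$, and $\Z/5\Z \rtimes \Z/4\Z$ with order-$2$ action $\cong D_{10}\times\ldots$ — shows the abelianization always has even order, giving a normal subgroup of index $2$, contradiction. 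Thus no group of order $20$ is generated by elements of odd order.

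So the skeleton of the proof I would write is: (1) by Sylow, $P_5 \trianglelefteq H$ and $H = P_5 \rtimes Q$ with $|Q| = 4$; (2) $P_5$ is characteristic, so any normal subgroup of $2$-power index contains $P_5$, hence corresponds to a subgroup of $Q$; since $Q$ is a nontrivial $2$-group, $Q$ has a subgroup of index $2$, and its preimage is a normal subgroup of $H$ of index $2$ (normality holds because $H/P_5 = Q$ is abelian, so \emph{every} subgroup of $Q$ pulls back to a normal subgroup of $H$); (3) therefore $H$ has a proper normal subgroup of index $2$, so by Lemma \ref{2groups} $H$ is not generated by elements of odd order. The main obstacle is step (2)'s normality claim — one must check that subgroups of $Q \cong H/P_5$ correspond to \emph{normal} subgroups of $H$, which holds precisely because $H/P_5$ is abelian of order $4$; this is the crux and is clean, so in fact the whole argument is short and the "classification" is only needed implicitly through $|Q| = 4$ being a $2$-group with a subgroup of index $2$.
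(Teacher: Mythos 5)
Your final skeleton is correct and is essentially the paper's argument: Sylow's theorems force the Sylow $5$-subgroup $P_5$ to be normal, and Lemma \ref{2groups} then gives the conclusion. Two remarks. First, your step (2) is an unnecessary detour: $P_5$ itself is already a proper normal subgroup of index $4=2^2$, a power of $2$, so Lemma \ref{2groups} applies immediately without constructing an index-$2$ subgroup via the abelian quotient $H/P_5$ (this is exactly what the paper does). Second, the long middle portion of your proposal — where you try to show that every group of order $20$ \emph{is} generated by elements of odd order, conclude that ``the statement as literally read cannot be proved,'' and propose reinterpreting the lemma — rests on momentarily inverting the statement; the lemma asserts precisely that no group of order $20$ is generated by elements of odd order, which is what your closing argument proves, so no reinterpretation, classification of the groups of order $20$, or abelianization computation is needed.
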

\begin{proof}
Observe that, by Sylow's theorem, $H$ acts transitively by conjugation on the set of Sylow $5$-subgroups. Let $a$ denote the cardinality of this set. Then $a$ equals the index of the normalizer of any Sylow $5$-subgroup in $H$ and thus divides $4$, which is the index of any Sylow $5$-subgroup in $H$. We also know that $a$ is congruent to $1$ modulo $5$, by Sylow's theorem. Then $a=1$, that is there is only one Sylow $5$-subgroup which is therefore normal in $H$ and has index $4$. We conclude using Lemma \ref{2groups}.
%Note that $H$ is solvable, by Burnside's theorem (see \cite[Section 8.3, Exercice 4]{Serre-reprlin}). We suppose now that $H$ is generated by elements of odd order and we will show that this gives a contradiction.  Since $H$ has no normal subgroup of index a power of $2$ by Lemma \ref{2groups}, then $H$ must have a normal subgroup $H'$ of index $5$ or $10$. 
%
%Suppose that $H/H'$ has order $10$: then it is solvable (again by Burnside's theorem) and it cannot have a normal subgroup of index $2$ (otherwise $H$ would have one). Then $H/H'$ has a normal subgroup of index $5$ and therefore the same holds for $H$. 
%
%We are thus reduced to the case where the normal subgroup $H'$ has index $5$ in $H$. This means that $H'$ has order $4$ and is therefore abelian. Moreover we have $H/H'$ acts by conjugation on $H'$. We claim that this action is be trivial. Suppose that this is not the case: then $5\mid \#\mathrm{Aut}(H')$. But this is not possible, since $\mathrm{Aut}(H')$ is of order $3$ or $6$, according to whether $H'$ has exponent $4$ or $2$. Then the action of $H/H'$ on $H'$ is trivial. Since $H/H'$ is cyclic, this implies that $H=H/H'\times H'$ and therefore $H$ is abelian. Now note that the set of elements of odd order in an abelian group is in fact a subgroup having odd order. Thus the subgroup of elements of odd order of $H$ is $5$, which contradicts the fact that $H$ is generated by elements of odd order.   
\end{proof}
We shall use the following fact in the proof of the next result.
\begin{remark}\label{sympquat}
Let $\rho:G\to GL(V)$ be an irreducible symplectic representation on a complex vector space $V$ of dimension $2m$. By scalar restriction, $\rho$ defines a real representation $V_\mathbb{R}$ of $G$, of dimension $4m$. The commuting algebra 
$$D=\{f\in \mathrm{End}(V_\mathbb{R})\, :\, \textrm{$f$ is $G$-invariant}\}$$ 
clearly contains $\mathbb{C}$ (\textit{i.e.} scalar multiplications by elements of $\mathbb{C}$) and is in fact isomorphic to the division algebra of quaternions $\mathbb{H}$ (see \cite[Remarque 2, \S13.2]{Serre-reprlin}). Thus $V$ becomes a $\mathbb{H}$-vector space of dimension $m$, which we denote by $V_\mathbb{H}$, and we get a homomorphism $\rho_{_\mathbb{H}}:G\to GL(V_\mathbb{H})$ which fits in a commutative diagram
$$\xymatrix{
G \ar@{->}[rd]_{\rho_{_\mathbb{H}}} \ar@{->}[rr]^\rho
&&GL(V)\\
&GL(V_{\mathbb{H}})\ar@{^{(}->}[ru]}$$
\end{remark}

\begin{proposition}\label{<24}
Let $H$ be a group of order less than $24$ which is generated by elements of odd order. Then $H$ has no irreducible symplectic representations.
\end{proposition}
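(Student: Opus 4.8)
The statement to prove is: if $H$ is a group of order less than $24$ generated by elements of odd order, then $H$ has no irreducible symplectic representation. The strategy is to combine two reductions. First, by Lemma \ref{2groups}, being generated by elements of odd order is equivalent to having no proper normal subgroup of $2$-power index; in particular the order of $H$ cannot be a power of $2$, and more generally the largest normal subgroup of $2$-power index is $H$ itself. Second, a group possessing an irreducible symplectic representation must have even order (symplectic characters have even degree and are real-valued, and a group of odd order has no nontrivial real-valued irreducible character with Frobenius--Schur indicator $-1$). So I would immediately restrict attention to the even orders in $\{2,4,\dots,22\}$ that are not $2$-powers, namely $6,10,12,14,18,20,22$.

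Next I would prune this list using the constraint from Lemma \ref{2groups}. For $|H|=2r$ with $r$ an odd prime ($r=3,5,7,11$, i.e. orders $6,10,14,22$), a Sylow $r$-subgroup has index $2$, hence is normal, contradicting the ``no normal subgroup of $2$-power index'' condition; so no such $H$ is generated by elements of odd order. For $|H|=20$, Lemma \ref{order20} already rules it out. For $|H|=18$: a Sylow $3$-subgroup has index $2$ and is therefore normal, so again Lemma \ref{2groups} excludes it. This leaves only $|H|=12$. For order $12$, the condition of being generated by elements of odd order forces (via Lemma \ref{2groups}, no normal subgroup of index $2$ or $4$) $H\cong A_4$: indeed the other groups of order $12$ ($C_{12}$, $C_6\times C_2$, $D_{12}$, $\mathrm{Dic}_3=Q_{12}$) all have a normal subgroup of index $2$, while $A_4$ has none. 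So the whole proposition reduces to checking that $A_4$ has no irreducible symplectic representation.

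For $A_4$ this is a direct character-table computation: the irreducible characters of $A_4$ are the trivial character, two nontrivial characters of degree $1$ (with values in $\mu_3$, hence not real-valued, hence not symplectic), and one character $\chi$ of degree $3$. Since symplectic characters have even degree, the degree-$3$ character cannot be symplectic either (alternatively, one computes its Frobenius--Schur indicator $\frac{1}{12}\sum_{g\in A_4}\chi(g^2)=+1$, so it is orthogonal, not symplectic). Hence $A_4$ has no irreducible symplectic representation, completing the proof.

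\textbf{Main obstacle.} There is no serious obstacle: the argument is a finite case analysis, and the only mildly delicate point is making sure the classification of groups of small order is invoked correctly, in particular identifying that the only order-$12$ group with no normal subgroup of $2$-power index is $A_4$ (which one can see from the fact that a group of order $12$ with no normal Sylow $3$-subgroup has four Sylow $3$-subgroups, giving a faithful transitive action on them and hence an embedding into $S_4$ whose image, having order $12$, is $A_4$). Everything else is bookkeeping already supplied by Lemmas \ref{2groups}, \ref{order20} and the parity observations, together with Remark \ref{H8char}-style character computations.
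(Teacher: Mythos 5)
Your proof is correct, but it takes a genuinely different route from the paper's. The paper argues on the degree of a hypothetical irreducible symplectic representation: the degree is even and satisfies $d^2\le\#H<24$ with $d\mid\#H$, so $d\in\{2,4\}$; the case $d=4$ forces $\#H\in\{16,20\}$, excluded by the $2$-group observation and Lemma \ref{order20}, while the case $d=2$ uses Remark \ref{sympquat} to factor the representation through a finite subgroup of $\mathbb{H}^\times$, so $H$ would have a quotient that is cyclic (contradicting irreducibility in degree $2$) or generalized quaternion (giving an index-$2$ subgroup, contradicting Lemma \ref{2groups}). You instead run a case analysis on $\#H$: even but not a $2$-power, so $\#H\in\{6,10,12,14,18,20,22\}$; the orders $2p$ and $18$ are killed because an index-$2$ Sylow subgroup is normal, $20$ by Lemma \ref{order20}, and order $12$ is forced to be $A_4$ (either by the classification of order-$12$ groups or by your Sylow/embedding-into-$S_4$ argument), after which the character table of $A_4$ (degrees $1,1,1,3$, the nontrivial linear characters non-real, the degree-$3$ character of odd degree) shows there is no symplectic irreducible. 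What each approach buys: yours is more elementary, needing only Sylow theory plus a small-group identification and a character-table check, and it avoids the quaternion-algebra input of Remark \ref{sympquat}; the paper's avoids any enumeration of groups of a given order (beyond $2$-groups and order $20$) and handles the degree-$2$ case uniformly, which is also why Remark \ref{sympquat} is introduced there and reused in the sequel. One small streamlining for your write-up: the fact that a group with a symplectic irreducible has even order follows immediately from ``even degree divides $\#H$'', so you do not need the Burnside-type fact about real-valued characters of odd-order groups, though your justification is not wrong.
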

\begin{proof}
Recall that, for any irreducible representation $\rho$ of $H$, we have $\deg \rho \mid \# H$ (see \cite[Corollaire 2 to Proposition 16]{Serre-reprlin}) and $\deg \rho \leq 4$ since $(\deg \rho)^2\leq \#H<24$ (see \cite[Corollaire 2 to Proposition 5]{Serre-reprlin}). 

We argue by contradiction. Suppose that $H$ has an irreducible symplectic representation $\rho_s:H\to GL_d(\mathbb{C})$ where $d=\deg \rho_s$. Then $d=2$ or $4$, since irreducible symplectic representations have even degree, and $\#H$ is even.

Suppose that $d=4$, then $\# H\geq d^2=16$ and $d=4 \mid \# H$. This implies that $\# H=16$ or $20$. The first possibility is trivially excluded (a nontrivial $2$-group is certainly not generated by elements of odd order), while the second is ruled out by Lemma \ref{order20}. 

Then we must have $d=2$. By Remark \ref{sympquat}, this implies that $\rho_s$ factors through an homomorphism $\rho_{s,\mathbb{H}}:H\to \mathbb{H}^\times\subset GL_2(\mathbb{C})$. In particular $H$ has a quotient $\bar{H}$ isomorphic to a finite subgroup of $\mathbb{H}^\times$. The finite subgroups of $\mathbb{H}^\times$ are well-known (see for instance \cite[p. 305]{CR2}): since $\# \bar{H}< 24$, $\bar{H}$ is either cyclic or generalized quaternion. On the one hand, $\bar{H}$ cannot be cyclic, since otherwise $\rho_s$ would be the inflation of a representation of a cyclic group and would not be irreducible since $d=2$. On the other hand $\bar{H}$ cannot be isomorphic to $H_{4n}$ for any $n\geq 2$, since otherwise $\bar{H}$ (and hence $H$) would have a subgroup of index $2$ ($\langle\sigma\rangle$ has index $2$ in $H_{4n}$, with notation as in \S\ref{quaternion}), which is forbidden by Lemma \ref{2groups}.
\end{proof}

\subsubsection{}\label{btg}
We now recall the definition and some properties of the binary tetrahedral group $\tilde{A}_4$. We will then show that $\tilde{A}_4$ is the only group of order $24$ satisfying property (i) of Lemma \ref{properties} (we will see in \S \ref{tA4rep} that it also satisfies property (ii)). We refer the reader to \cite[Section 8.2]{RoseCFG} for any unproven assertion concerning $\tilde{A}_4$. This group is isomorphic to $SL_2(\mathbb{F}_3)$ and can be presented as
$$\tilde{A}_4=\langle\alpha, \beta\,|\, \alpha^3=\beta^3=(\alpha\beta)^2\rangle.$$
The center $Z(\tilde{A}_4)=\langle\alpha^3\rangle$ is the only subgroup of order $2$ of $\tilde{A}_4$ and $\tilde{A}_4/Z(\tilde{A}_4)$ is isomorphic to $A_4$, the alternating group on four elements. We thus have an exact sequence
\begin{equation}\label{fundext}
1\to Z(\tilde{A}_4)\to \tilde{A}_4\to A_4\to 1.
\end{equation}
Moreover, $\tilde{A}_4$ has no subgroup of index $2$: in particular, thanks to Lemma \ref{2groups}, it satisfies property (i) of Lemma \ref{properties} (and the above exact sequence is non-split).
The group $\tilde{A}_4$ has other properties which will be useful later: every subgroup of $\tilde{A}_4$ is cyclic, except for its Sylow $2$-subgroup, which is normal and isomorphic to the quaternion group $H_8$. For simplicity we will denote the Sylow $2$-subgroup of $\tilde{A}_4$ by $H_8$. In fact, $\tilde{A}_4$ is also isomorphic to $H_8\rtimes_{\eta} \Z/3\Z$ where $\eta:\Z/3\Z\to\mathrm{Aut}(H_8)\cong S_4$ is any nontrivial homomorphism and $S_4$ is the permutation group on four elements. 

\begin{remark}\label{repgroup} 
The extension (\ref{fundext}) is a \textit{representation group} for $A_4$, in the sense of Schur, as we now recall. Let $H$ be a finite group and let $M(H)$ denote its Schur multiplier (thus $M(H)=H^2(H,\mathbb{C}^{\times})\cong H_2(H,\mathbb{Z})$, see \cite[Definition 9.6, Chapter 2]{SuzukiGT1}). A representation group for $H$ (see \cite[Definition 9.10, Chapter 2]{SuzukiGT1}) is a central extension of $H$ by a group $Z$   
$$1\to Z\to \tilde{H} \to H\to 1$$
such that
\begin{enumerate}
\item[(RG1)] $\tilde{H}$ has no proper subgroup $H'$ such that $\tilde{H}=H'Z$;
\item[(RG2)] $\#M(H)=\#\big(Z\cap [\tilde{H},\tilde{H}]\big)$, where $[\tilde{H},\tilde{H}]$ denotes the commutator subgroup of $\tilde{H}$; 
\item[(RG3)] $\#\tilde{H}=\#H\cdot\#M(H)$.
\end{enumerate} 
One can show that if the above properties are satisfied, then $Z\cong M(H)$ (see \cite[(9.15), Chapter 2]{SuzukiGT1}). Schur showed that $M(A_4)$ has order $2$ (see \cite[(2.22), Chapter 3]{SuzukiGT1}) and it is easy to check that the extension (\ref{fundext}) is indeed a representation group for $A_4$. In fact, Schur also showed that the extension (\ref{fundext}) is the only representation group for $A_4$ up to isomorphism (see \cite[Exercise 5, Chapter 2, \S 9]{SuzukiGT1}). One often says briefly that $\tilde{A}_4$ is the representation group of $A_4$.
\end{remark}
 
\begin{proposition}\label{24}
A group of order $24$ which is generated by elements of odd order is isomorphic to $\tilde{A}_4$.
\end{proposition}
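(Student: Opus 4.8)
Let $H$ be a group of order $24$ generated by elements of odd order. The plan is to use the Sylow theory to pin down the structure of $H$, exploiting crucially the fact (Lemma \ref{2groups}) that $H$ has no proper normal subgroup of index a power of $2$.

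First I would look at the Sylow $2$-subgroup $H_2$, which has order $8$. If $H_2$ were normal, then $H/H_2$ would be a quotient of index $8$, which is a power of $2$; this is forbidden by Lemma \ref{2groups} (since $H$ is generated by elements of odd order, it has no proper normal subgroup of $2$-power index). Hence $H_2$ is not normal, so the number $n_2$ of Sylow $2$-subgroups is not $1$; since $n_2 \equiv 1 \pmod 2$ and $n_2 \mid 3$, we get $n_2 = 3$. Then the conjugation action of $H$ on the three Sylow $2$-subgroups gives a homomorphism $H \to S_3$. The image is a transitive subgroup of $S_3$, so it has order $3$ or $6$; in either case the kernel $K$ is a normal subgroup of order $8$ or $4$. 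If $\#K = 4$, then $K$ is a normal $2$-subgroup and $H/K$ has order $6$. But a normal subgroup of order $4$ is contained in every Sylow $2$-subgroup (being normal, it lies in the intersection of all of them, which has order dividing $4$, forcing equality), contradicting $n_2 = 3$ (three distinct subgroups of order $8$ cannot all contain the same subgroup of order $4$ unless... actually one must check: three subgroups of order $8$ in a group of order $24$; their pairwise intersections have order $4$; if a fixed subgroup of order $4$ lay in all three, the Sylow $2$-subgroups would not be generated freely enough — I would verify this counting directly, or more cleanly argue that a normal subgroup of order $4$ together with $H/K$ of order $6$ would give $H$ a normal, hence characteristic-enough, structure producing a quotient of $2$-power order). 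The cleanest route: $\#K = 8$ forces $K = H_2$ normal, already excluded; so $\#K = 4$. To finish this branch I would show $\#K=4$ also leads to a $2$-power index normal subgroup, a contradiction, so that in fact the map $H\to S_3$ must be studied differently — here I will instead pivot to the Sylow $3$-subgroup.

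The more robust approach is via the Sylow $3$-subgroup $H_3$ of order $3$: the number $n_3$ satisfies $n_3 \equiv 1 \pmod 3$ and $n_3 \mid 8$, so $n_3 \in \{1, 4\}$. If $n_3 = 1$, then $H_3$ is normal, $H/H_3$ has order $8$, a $2$-power index, contradicting Lemma \ref{2groups}. Hence $n_3 = 4$, and the conjugation action on the four Sylow $3$-subgroups gives $\rho: H \to S_4$ with image transitive. The kernel is the intersection of the normalizers of the Sylow $3$-subgroups; its order divides $24/4 = 6$ (since the image is transitive of order divisible by $4$) — more precisely, the image has order divisible by $4$ and dividing $24$, and one checks the image is $A_4$ or $S_4$. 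If the image is $S_4$, then $H \cong S_4$, but $S_4$ has the normal subgroup $A_4$ of index $2$, contradicting Lemma \ref{2groups}; so the image is $A_4$ and the kernel $Z$ has order $2$. Thus $H$ is a central extension $1 \to Z \to H \to A_4 \to 1$ with $\#Z = 2$ (central because a normal subgroup of order $2$ is automatically central). It remains to identify which such extension $H$ is. There are two central extensions of $A_4$ by $\Z/2\Z$ up to equivalence: the split one $\Z/2\Z \times A_4$ and the non-split one $\tilde{A}_4$ (using that $M(A_4) \cong \Z/2\Z$, as recalled in Remark \ref{repgroup}). The split extension $\Z/2\Z \times A_4$ has the normal subgroup $\Z/2\Z \times V$ of index $3$ — no wait, I need a $2$-power index subgroup: $\Z/2\Z \times A_4$ projects onto $A_4$ which has a normal subgroup $V_4$ of... $A_4$ itself has no subgroup of index $2$. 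Let me instead argue: $\Z/2\Z \times A_4$ is generated by elements of odd order only if the $\Z/2\Z$ factor is generated by odd-order elements, which is absurd since its only nontrivial element has order $2$, and it is a direct factor, so it cannot be hit by any product of odd-order elements landing outside it. Hence the split extension is excluded, and $H \cong \tilde{A}_4$.

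The main obstacle I anticipate is the final identification step: cleanly ruling out $\Z/2\Z \times A_4$ and making precise that the two central extensions of $A_4$ by $\Z/2\Z$ exhaust all possibilities. The first point is handled by the direct-factor observation above (any element of $\Z/2\Z \times A_4$ has the form $(\epsilon, g)$; a product of odd-order elements has first coordinate a product of odd-order elements of $\Z/2\Z$, hence trivial, so odd-order elements generate only $\{0\} \times A_4 \neq H$). The second point follows from the classification of central extensions by $H^2(A_4, \Z/2\Z)$, together with $\#M(A_4) = 2$: since $A_4$ has a unique nontrivial representation group, namely $\tilde{A}_4$, any non-split central extension of $A_4$ by $\Z/2\Z$ in which the central factor meets the commutator subgroup is isomorphic to $\tilde{A}_4$; and one checks directly that in our $H$ the order-$2$ subgroup $Z$ does lie in $[H,H]$ (otherwise $H/[H,H]$ would have even order with a $\Z/2$ direct factor, and pulling back a suitable subgroup would again produce $2$-power index, contradicting Lemma \ref{2groups}). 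Alternatively, and perhaps most economically, I would simply invoke that the non-split central extension is unique (Remark \ref{repgroup}) and that the split one fails the odd-generation hypothesis, concluding $H \cong \tilde{A}_4$.
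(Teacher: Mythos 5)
Your proposal is correct in substance and, for its main body, follows the paper's route exactly: rule out a normal Sylow $3$-subgroup via Lemma \ref{2groups}, let $H$ act on its four Sylow $3$-subgroups to get $H\to S_4$, exclude image $S_4$ because $A_4$ has index $2$ in it, and arrive at a central extension $1\to Z\to H\to A_4\to 1$ with $\#Z=2$. Two local points to tighten there: the opening Sylow-$2$ paragraph is an abandoned dead end and should simply be removed, and the phrase that one checks the image is $A_4$ or $S_4$ hides a step you should make explicit -- a kernel of order $3$ or $6$ would be a normal subgroup of index $8$ or $4$, contradicting Lemma \ref{2groups} again (this is precisely how the paper excludes those cases).

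Where you genuinely diverge is the identification step, and here your most economical version over-reads Remark \ref{repgroup}: that remark asserts uniqueness of the \emph{representation group} of $A_4$, i.e.\ of central extensions satisfying (RG1)--(RG3), not uniqueness of an arbitrary non-split central extension by $\Z/2\Z$. To run your cohomological variant honestly you need $H^2(A_4,\Z/2\Z)\cong\Z/2\Z$; this does follow from $\#M(A_4)=2$, but only together with the universal-coefficient observation that $\mathrm{Ext}^1(A_4^{\mathrm{ab}},\Z/2\Z)=0$ because $A_4^{\mathrm{ab}}\cong\Z/3\Z$ has odd order -- a step worth stating. Your middle route is the sound one and is essentially the paper's: your argument that $Z\subseteq[H,H]$ (otherwise $H/[H,H]$ has even order and pulling back an index-$2$ subgroup contradicts Lemma \ref{2groups}) is the paper's verification of (RG2); to conclude via Schur's uniqueness you should also record (RG1), which is immediate (if $H'Z=H$ with $H'$ proper, then $H'\cap Z=1$, so $H'$ has index $2$, hence is normal and contains $[H,H]\supseteq Z$, a contradiction), and (RG3), which is trivial. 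A nice by-product of your approach, not needed in the paper's, is the direct exclusion of the split extension $\Z/2\Z\times A_4$ by the odd-generation hypothesis, since all odd-order elements lie in $1\times A_4$.
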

\begin{proof}
Let $\tilde H$ be a group of order $24$ which is generated by elements of odd order. We will first prove that the center $Z(\tilde H)$ of $\tilde H$ has order $2$ and $\tilde H$ fits into an exact sequence 
\begin{equation}\label{nses}
1\to Z(\tilde H)\to \tilde H\to A_4\to 1.
\end{equation}
The second step will consist in showing that the above sequence is a representation group for $A_4$, which implies in particular that $\tilde H$ is isomorphic to $\tilde{A}_4$ by Remark \ref{repgroup}. 

In order to prove the first step, we argue as in the proof of Lemma \ref{order20}. Observe that, by Sylow's theorem, $\tilde H$ acts transitively by conjugation on the set of its Sylow $3$-subgroups, which has cardinality, say, $a$. Then $a$ equals the index of the normalizer of any Sylow $3$-subgroup in $\tilde H$ and thus divides the index of any Sylow $3$-subgroup of $\tilde H$, which is $8$. We also know that $a$ is congruent to $1$ modulo $3$, by Sylow's theorem. Then $a=1$ or $4$ but the case $a=1$ is excluded since otherwise $\tilde H$ would have a normal Sylow $3$-subgroup of index a power of $2$, which is impossible by Lemma \ref{2groups}. Hence $a=4$.

Since conjugation permutes the Sylow $3$-subgroups of $\tilde H$, we get an homomorphism $\varphi:\tilde H\to S_4$ and we want to determine $\# \mathrm{ker}(\varphi)$. Observe that, by the definition of $\varphi$, $\mathrm{ker}(\varphi)$ is contained in the normalizer of any Sylow $3$-subgroup of $\tilde H$, which has index $a=4$ in $\tilde H$. Thus $\#\mathrm{ker}(\varphi)$ divides $6$. Since $\mathrm{ker}(\varphi)$ is normal, $\#\mathrm{ker}(\varphi)$ is not divisible by $3$, otherwise $\tilde H$ would have a normal subgroup of index a power of $2$, which is impossible by Lemma \ref{2groups}. The case $\#\mathrm{ker}(\varphi)=1$ is also excluded because otherwise $\tilde H\cong S_4$ and $S_4$ has a subgroup of index $2$ (namely $A_4$), again contradicting Lemma \ref{2groups}. Thus $\mathrm{ker}(\varphi)$ is a normal subgroup of order $2$ of $\tilde H$ and $\tilde H/\mathrm{ker}(\varphi)$ is isomorphic to a subgroup of index $2$ of $S_4$. It is well-known (and easy to check) that $A_4$ is the only subgroup of index $2$ of $S_4$, hence $\tilde H/\mathrm{ker}(\varphi)\cong A_4$. In other words $\tilde H$ fits into an exact sequence 
$$1\to \mathrm{ker}(\varphi)\to \tilde H\to A_4\to 1\enspace.$$
Of course $\mathrm{ker}(\varphi)$ is contained in $Z(\tilde H)$, being a normal subgroup of order $2$. Furthermore the image of $Z(\tilde H)$ in $A_4$ is trivial since $Z(A_4)$ is trivial. Hence $\mathrm{ker}(\varphi)=Z(\tilde H)$ and we get (\ref{nses}), completing the first step of the proof.

To prove the second step, we have to show that (\ref{nses}) satisfies the properties of a representation group for $A_4$ (see Remark \ref{repgroup}). Let $\tilde H'$ be a subgroup of $\tilde H$ such that $\tilde H'Z(\tilde H)=\tilde H$. Then, since $Z(\tilde H)$ is normal in $\tilde H$, we have
$$\#\tilde H'=\frac{\#\tilde H \cdot\#(\tilde H'\cap Z(\tilde H))}{\#Z(\tilde H)}=12\cdot\#(\tilde H'\cap Z(\tilde H))\enspace.$$ 
Thus $\#\tilde H'$ is either $12$ or $24$. Since the case $\#\tilde H'=12$ is excluded by Lemma \ref{2groups}, we deduce that $\tilde H$ has no proper subgroup $\tilde H'$ such that $\tilde H'Z(\tilde H)=\tilde H$. Thus $\tilde H$ satisfies property (RG1).

Observe that the surjection $\tilde H\to A_4$ induces a surjection $[\tilde H,\tilde H]\to [A_4,A_4]$ on commutator subgroups, whose kernel is $Z(\tilde H)\cap [\tilde H,\tilde H]$. In particular
$$4=\#[A_4,A_4]=\frac{\#[\tilde H,\tilde H]}{\#(Z(\tilde H)\cap[\tilde H,\tilde H])}\enspace.$$
This shows that $\#[\tilde H,\tilde H]$ is either $8$ or $4$, according to whether $Z(\tilde H)\cap[\tilde H,\tilde H]= Z(\tilde H)$ or not. We claim that $\#[\tilde H,\tilde H]$ cannot be $4$. For, if $\#[\tilde H,\tilde H]=4$, then $\tilde H$ has a quotient of order $6$ (note that $[\tilde H,\tilde H]$ is normal in $\tilde H$) and any group of order $6$ has a subgroup of index $2$. Thus, by the homomorphism theorem, $\tilde H$ has a subgroup of index $2$ which is a contradiction by Lemma \ref{2groups}. Thus $\#[\tilde H,\tilde H]=8$ and $Z(\tilde H)\cap[\tilde H,\tilde H]= Z(\tilde H)$, so that $\tilde H$ satisfies (RG2).  

Finally since $\#Z(\tilde H)=2=\#M(A_4)$, $\tilde H$ also satisfies (RG3) and hence (\ref{nses}) is a representation group for $A_4$. In particular by Schur's uniqueness result recalled in Remark \ref{repgroup}, $\tilde H\cong \tilde{A}_4$. 
\end{proof} 

\subsubsection{}\label{tA4rep}
We now want to verify that $\tilde{A}_4$ also satisfies property (ii) of Lemma \ref{properties}. Let us first recall the list and some properties of the irreducible complex characters of $\tilde{A}_4$. 

We start with the group $A_4$, which has four irreducible complex characters (see \cite[\S 5.7]{Serre-reprlin}). Three have degree $1$ and are inflated from characters of the maximal abelian quotient of $A_4$ (which is cyclic of oder $3$), while the remaining one has degree $3$ and is real. Inflating these characters to $\tilde{A}_4$, we get three characters $\chi_1$, $\chi_2$ and $\chi_3$ of degree $1$ (one of these, say $\chi_1$, is the trivial character, the other two are non-real) and a real character $\chi_4$ of degree $3$. Now $\tilde{A}_4$ has seven conjugacy classes (see \cite[Section 8.2]{RoseCFG}), so $\tilde{A}_4$ must have three more irreducible characters $\chi_5$, $\chi_6$ and $\chi_7$ (see \cite[Th\'eor\`eme 7]{Serre-reprlin}). A standard counting argument (see \cite[Corollary 2 to Proposition 5]{Serre-reprlin}) shows that these three characters all have degree $2$. Moreover, using \cite[Exercice 1, \S13.2]{Serre-reprlin} and the explicit description of the conjugacy classes of $\tilde{A}_4$ given in \cite[Section 8.2]{RoseCFG}, one can easily show that $\tilde{A}_4$ has precisely three irreducible real-valued characters. Therefore, since $\chi_1$ and $\chi_4$ are real while $\chi_2$ and $\chi_3$ are not,  one of $\chi_5$, $\chi_6$ and $\chi_7$ must be real-valued. Renumbering these characters if necessary, we may suppose that $\chi_5$ is real-valued and $\chi_6$ and $\chi_7$ are non-real. Since complex conjugation acts on the set of irreducible complex characters of $\tilde{A}_4$ of degree $2$, we must have $\chi_7=\overline{\chi_6}$.
 
%character table of $\tilde{A}_4$ (see \cite[p. 440]{JamesLiebeckRCG}):
%\begin{center}
%\begin{tabular}{c|ccccccc}
%\hline
%&$C_{\tilde{A}_4}(\mathrm{id})$&$C_{\tilde{A}_4}(\alpha^3)$&$C_{\tilde{A}_4}(\alpha\beta)$& $C_{\tilde{A}_4}(\mathrm{\alpha^2})$&$C_{\tilde{A}_4}(\alpha^4)$&$C_{\tilde{A}_4}(\alpha)$&$C_{\tilde{A}_4}(\alpha^5)$\\
%\hline
%$\chi_1$&$1$&$1$&$1$&$1$&$1$&$1$&$1$\\
%$\chi_2$&$1$&$1$&$1$&$\omega$&$\omega^2$&$\omega^2$&$\omega$\\
%$\chi_3$&$1$&$1$&$1$&$\omega^2$&$\omega$&$\omega$&$\omega^2$\\
%$\chi_4$&$3$&$3$&$-1$&$0$&$0$&$0$&$0$\\
%$\chi_5$&$2$&$-2$&$0$&$-1$&$-1$&$1$&$1$\\
%$\chi_6$&$2$&$-2$&$0$&$-\omega$&$-\omega^2$&$\omega^2$&$\omega$\\
%$\chi_7$&$2$&$-2$&$0$&$-\omega^2$&$-\omega$&$\omega$&$\omega^2$\\
%\hline
%\end{tabular}
%\end{center}
%where for an element $g\in \tilde{A}_4$, $C_{\tilde{A}_4}(g)$ denotes the conjugacy class of $g$ in $G$ and $\omega$ is a fixed third root of unity in $\overline{\Q}$.
%In particular $\tilde{A}_4$ has
%\begin{itemize}
%\item three irreducible characters $\chi_1$, $\chi_2$ and $\chi_3$ of degree $1$; 
%\item one irreducible real character $\chi_4$ of degree $3$; 
%\item one irreducible real character $\chi_5$ of degree $2$;
%\item two irreducible non real characters $\chi_6$ and $\chi_7$ of degree $2$ which are conjugate, \textit{i.e.} $\chi_7=\overline{\chi}_6$. 
%\end{itemize}

The character $\chi_5$ is in fact symplectic, as we shall show in the next proposition. We first need a lemma, which will also be useful later. We are going to use some standard notation and results on complex characters (see \cite{Serre-reprlin}). For a group $H$, we denote by $(\--,\--)_{_H}$ the scalar product defined on the set of complex-valued function on $H$. Then any complex character $\chi$ of $H$ can be written uniquely as a linear combination with integral coefficients of the irreducible characters of $H$ and in fact 
\begin{equation}\label{chardec}
\chi=\sum_\rho(\chi,\rho)_{_H}\rho
\end{equation}
where the sum is taken over the set of irreducible complex character of $H$. In particular \begin{equation}\label{degree}
\deg \chi=\sum_\rho (\chi,\rho)_{_H}\deg \rho.
\end{equation}
and, if $\chi$ is the character of a representation of $H$, then $(\chi,\rho)_{_H}\geq 0$ for any irreducible character $\rho$. If $H'$ is a subgroup of $H$, $\chi'$ is a character of $H'$ and $\chi$ is a character of $H$, then we have
$$(\mathrm{Ind}_{H'}^H\chi',\chi)_H=(\chi',\mathrm{Res}_{H'}^H\chi)_{H'}$$
(Frobenius reciprocity). If $\chi_{_{H'}}$ (resp. $\chi_{_H}$) denotes the regular representation of $H'$ (resp. $H$), we have
\begin{equation}\label{indreg}
\chi_{_H}=\mathrm{Ind}_{H'}^H \chi_{_{H'}}
\end{equation} 
and in particular 
$$(\chi_{_H},\rho)_{_H}=(\chi_{_{H'}},\mathrm{Res}_{H'}^{H}\rho)_{H'}$$
for any character $\rho$ of $H$. 
Applying this to $H'=\langle\mathrm{id}\rangle$, we get
\begin{equation}\label{degreg}
(\chi_{_H},\rho)_{_H}=\deg \rho.
\end{equation}

We shall now describe the induction of an irreducible character of $H_8$ in terms of the $\chi_i$ (see Remark \ref{H8char} for notation). This will be used not only to show that $\chi_5$ is the only irreducible symplectic character of $\tilde{A}_4$ but also to compute root numbers (see Proposition \ref{WNQ}).

\begin{lemma}\label{indphi}
We have  
\begin{eqnarray*}
\mathrm{Ind}_{H_8}^{\tilde{A}_4}\psi_1&=&\chi_1+\chi_2+\chi_3\enspace,\\
\mathrm{Ind}_{H_8}^{\tilde{A}_4}\psi_i&=&\chi_4\qquad\textrm{for $i=2,3,4$}\enspace,\\
\mathrm{Ind}_{H_8}^{\tilde{A}_4}\phi&=&\chi_5+\chi_6+\overline{\chi}_6\enspace.
\end{eqnarray*}
\end{lemma}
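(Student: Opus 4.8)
The statement to prove is Lemma~\ref{indphi}, giving the decompositions of the induced characters $\mathrm{Ind}_{H_8}^{\tilde{A}_4}\psi_i$ and $\mathrm{Ind}_{H_8}^{\tilde{A}_4}\phi$ in terms of the irreducible characters $\chi_1,\dots,\chi_7$ of $\tilde{A}_4$. The plan is to compute each induced character by Frobenius reciprocity, using the scalar product $(\mathrm{Ind}_{H_8}^{\tilde{A}_4}\rho,\chi_j)_{\tilde{A}_4}=(\rho,\mathrm{Res}_{H_8}^{\tilde{A}_4}\chi_j)_{H_8}$, together with a few structural facts already recorded in the excerpt: that $H_8$ is the (normal) Sylow $2$-subgroup of $\tilde{A}_4$ with $[\tilde{A}_4:H_8]=3$, that $\chi_1,\chi_2,\chi_3$ are the degree-one characters inflated from $\tilde{A}_4/H_8\cong\Z/3\Z$, that $\chi_4$ has degree $3$ and is inflated from $A_4$, and that $\chi_5,\chi_6,\chi_7=\overline{\chi_6}$ have degree $2$ with $\chi_5$ real. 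First I would pin down $\mathrm{Res}_{H_8}^{\tilde{A}_4}\chi_j$ for each $j$: since $\chi_1,\chi_2,\chi_3$ are trivial on $H_8$, each restricts to $\psi_1$; $\chi_4$ has degree $3$ and, being inflated from the degree-$3$ irreducible of $A_4$ whose restriction to the Klein four-subgroup $H_8/Z(\tilde A_4)$ is the sum of the three nontrivial characters, restricts to $\psi_2+\psi_3+\psi_4$; and each of $\chi_5,\chi_6,\chi_7$ has degree $2$, hence restricts either to $\phi$ (the unique $2$-dimensional irreducible of $H_8$) or to a sum of two one-dimensional characters of $H_8$.

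\textbf{Key steps.} Step 1: establish the restrictions as above; the only nontrivial point is to show $\mathrm{Res}_{H_8}^{\tilde A_4}\chi_5=\mathrm{Res}_{H_8}^{\tilde A_4}\chi_6=\mathrm{Res}_{H_8}^{\tilde A_4}\chi_7=\phi$. For this I would argue that a degree-$2$ character of $\tilde A_4$ restricting to a sum of linear characters of $H_8$ would have $H_8$ acting through an abelian quotient, forcing the $\Z/3\Z$-action on that quotient to be trivial (as $\mathrm{Aut}$ of a cyclic or Klein group of the relevant size admits no order-$3$ automorphism compatible with a faithful $2$-dimensional picture), contradicting irreducibility over $\tilde A_4$; alternatively one invokes that $\tilde A_4=SL_2(\F_3)$ and its three faithful degree-$2$ representations are exactly those not trivial on $Z(\tilde A_4)$, whose restriction to $H_8$ is the faithful $2$-dimensional $\phi$. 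Step 2: compute $(\mathrm{Ind}_{H_8}^{\tilde A_4}\psi_1,\chi_j)=(\psi_1,\mathrm{Res}\,\chi_j)$, which is $1$ for $j=1,2,3$ and $0$ for $j=4$ (since $(\psi_1,\psi_2+\psi_3+\psi_4)=0$) and $0$ for $j=5,6,7$ (since $(\psi_1,\phi)=0$); as $\mathrm{Ind}\,\psi_1$ has degree $3=1+1+1$, this gives $\mathrm{Ind}_{H_8}^{\tilde A_4}\psi_1=\chi_1+\chi_2+\chi_3$. Step 3: for $i=2,3,4$, $(\mathrm{Ind}\,\psi_i,\chi_j)=(\psi_i,\mathrm{Res}\,\chi_j)$ vanishes for $j=1,2,3$ (restrictions are $\psi_1$) and for $j=5,6,7$ (restrictions are $\phi$), and equals $(\psi_i,\psi_2+\psi_3+\psi_4)=1$ for $j=4$; degree $3=3$ forces $\mathrm{Ind}_{H_8}^{\tilde A_4}\psi_i=\chi_4$. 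Step 4: for $\phi$, $(\mathrm{Ind}\,\phi,\chi_j)=(\phi,\mathrm{Res}\,\chi_j)$ is $0$ for $j=1,2,3$ and for $j=4$ (orthogonality of $\phi$ with sums of linear characters), and equals $(\phi,\phi)=1$ for $j=5,6,7$; since $\mathrm{Ind}\,\phi$ has degree $6=2+2+2$, we get $\mathrm{Ind}_{H_8}^{\tilde A_4}\phi=\chi_5+\chi_6+\chi_7=\chi_5+\chi_6+\overline{\chi_6}$, using $\chi_7=\overline{\chi_6}$.

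\textbf{Main obstacle.} The one genuinely non-routine point is Step~1, specifically verifying that all three degree-$2$ irreducibles of $\tilde A_4$ restrict to the faithful $2$-dimensional representation $\phi$ of $H_8$ rather than to a sum of linear characters; once that is settled, everything else is a short Frobenius-reciprocity bookkeeping with degree counts (each degree check pins down the multiplicities uniquely because all the relevant inner products are $0$ or $1$). I would handle Step~1 by the $SL_2(\F_3)$ model: the degree-$2$ irreducibles of $SL_2(\F_3)$ are precisely the faithful ones, they are nontrivial on the center $Z=\langle -I\rangle\subset H_8$, and $\phi$ is the unique $2$-dimensional irreducible of $H_8$ (the others being linear and trivial on $Z$), so by comparing central characters and degrees the restriction must be $\phi$. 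An alternative, purely character-theoretic route is to note that $\mathrm{Res}_{H_8}^{\tilde A_4}(\chi_5+\chi_6+\chi_7)$ has degree $6$ and is $\Z/3\Z$-stable under the conjugation action of $\tilde A_4/H_8$ on characters of $H_8$; since $\Z/3\Z$ fixes $\psi_1$ and $\phi$ and permutes $\{\psi_2,\psi_3,\psi_4\}$ cyclically, the only $\Z/3\Z$-stable degree-$6$ combination not already accounted for by $\chi_1+\chi_2+\chi_3$ and $\chi_4$ (which exhaust $\psi_1$ and $\psi_2+\psi_3+\psi_4$ respectively, by Steps 2--3 and $(\chi_{H_8},\mathrm{Res}\,\rho)$-counting) is $3\phi$, forcing each $\mathrm{Res}\,\chi_j=\phi$ for $j=5,6,7$.
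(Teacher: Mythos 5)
Your proof is correct, but it takes a genuinely different route from the paper's on the second and third equalities. You determine all the restrictions $\mathrm{Res}_{H_8}^{\tilde{A}_4}\chi_j$ explicitly — in particular that $\mathrm{Res}_{H_8}^{\tilde{A}_4}\chi_4=\psi_2+\psi_3+\psi_4$ (via the standard representation of $A_4$ restricted to the Klein subgroup) and that $\mathrm{Res}_{H_8}^{\tilde{A}_4}\chi_j=\phi$ for $j=5,6,7$ (since these characters are not inflated from $A_4$, hence nontrivial on the centre $Z$, while every linear character of $H_8$ kills $Z=[H_8,H_8]$ and $\phi$ is the unique degree-$2$ irreducible of $H_8$) — and then everything follows from Frobenius reciprocity and degree counts. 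The paper avoids computing any of these restrictions beyond those of $\chi_1,\chi_2,\chi_3$: it gets the first equality by the same reciprocity-plus-degree argument as you, but proves $\mathrm{Ind}_{H_8}^{\tilde{A}_4}\psi_i=\chi_4$ ($i=2,3,4$) via Mackey's irreducibility criterion (the kernel of $\psi_i$ has order $4$, hence is not normal in $\tilde{A}_4$, so $\psi_i^g\neq\psi_i$ for $g\notin H_8$), and then obtains $\mathrm{Ind}_{H_8}^{\tilde{A}_4}\phi=\chi_5+\chi_6+\overline{\chi}_6$ for free by inducing the regular character of $H_8$ and subtracting the first two identities from $\chi_{\tilde{A}_4}$. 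Your approach buys a uniform treatment and the extra information about the restrictions (which is of independent use), at the cost of the central-character argument for $\chi_5,\chi_6,\chi_7$; the paper's approach buys economy, never needing the character tables beyond degrees. One caveat: your first suggested justification in Step 1 (that an abelian image of $H_8$ would admit no compatible order-$3$ action — note $\mathrm{Aut}$ of the Klein group does contain order-$3$ automorphisms) is not sound as stated, but the $SL_2(\F_3)$/central-character argument you actually commit to, as well as your alternative $\Z/3\Z$-stability argument, are both fine.
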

\begin{proof}
Note that $\chi_1$, $\chi_2$ and $\chi_3$ are trivial on $H_8$ (they are inflated from the maximal abelian quotient of $\tilde{A}_4$ which is $\tilde{A}_4/H_8$). Hence if $i\in\{1,2,3\}$, then $\mathrm{Res}_{H_8}^{\tilde{A}_4}\chi_i=\psi_1$ and 
$$(\mathrm{Ind}_{H_8}^{\tilde{A}_4}\psi_1,\chi_i)_{\tilde{A}_4}=(\psi_1,\mathrm{Res}_{H_8}^{\tilde{A}_4}\chi_i)_{H_8}= 1\quad\textrm{for $i=1,2,3$}$$
by Frobenius reciprocity. Moreover $\deg\big(\mathrm{Ind}_{H_8}^{\tilde{A}_4}\psi_1\big)=[\tilde{A}_4:H]\deg\psi_1=3$ and $(\mathrm{Ind}_{H_8}^{\tilde{A}_4}\psi_1,\chi_i)_{\tilde{A}_4}\geq 0$ for any $1\leq i\leq 7$, since $\mathrm{Ind}_{H_8}^{\tilde{A}_4}\psi_1$ is the character of a representation of $\tilde{A}_4$. We deduce from (\ref{degree}) that $(\mathrm{Ind}_{H_8}^{\tilde{A}_4}\psi_1,\chi_i)_{\tilde{A}_4}=0$ if $4\leq i\leq 7$. Then we get the first equality of the lemma by (\ref{chardec}).

Next we show the second equality. We fix $i\in\{2,3,4\}$. It is sufficient to show that $\mathrm{Ind}_{H_8}^{\tilde{A}_4}\psi_i$ is irreducible, since it has degree $3$ and $\chi_4$ is the only irreducible character of $\tilde{A}_4$ of degree $3$. To prove that $\mathrm{Ind}_{H_8}^{\tilde{A}_4}\psi_i$ is irreducible, we shall use Mackey's irreducibility criterion (see \cite[Proposition 23]{Serre-reprlin}). Since $H_8$ is normal in $\tilde{A}_4$, we have to check that, for every $g\in\tilde{A}_4\setminus H_8$, $\psi_i$ is different from the representation $\psi_i^g:H_8\to \mathbb{C}^{\times}$ defined by $\psi_i^g(x)=\psi_i(g^{-1}xg)$ for every $x\in H_8$. Observe that the kernel of $\psi_i$ has order $4$ and therefore is not normal in $\tilde{A}_4$ (see \S\ref{btg}). We deduce that the normalizer of $\mathrm{ker}\psi_i$ in $\tilde{A}_4$ is $H_8$. Hence, for every $g\in\tilde{A}_4\setminus H_8$, there exists $y\in \mathrm{ker}\psi_i$ such that $g^{-1}yg\notin\ker\psi_i$ and in particular $\psi_i^g(y)=\psi_i(g^{-1}yg)\ne 1$. Thus $\psi_i$ is trivial on $y$, while $\psi_i^g$ is not. This shows that, for every $g\in\tilde{A}_4\setminus H_8$, $\psi_i^g$ and $\psi_i$ are different and hence $\mathrm{Ind}_{H_8}^{\tilde{A}_4}\psi_i$ is irreducible and equals $\chi_4$.     

To prove the last equality of the lemma, let $\chi_{_{H_8}}$ (resp. $\chi_{\tilde{A}_4}$) be the regular representation of $H_8$ (resp. $\tilde{A}_4$). Then, combining (\ref{chardec}) and (\ref{degreg}), we have
\begin{eqnarray*}
\chi_{_{H_8}}&=&\psi_1+\psi_2+\psi_3+\psi_4+2\phi,\\
\chi_{\tilde{A}_4}&=&\chi_1+\chi_2+\chi_3+3\chi_4+2\chi_5+2\chi_6+2\chi_7.
\end{eqnarray*}
Using the first two equalities of the statement of the present lemma and (\ref{indreg}), we deduce that
\begin{eqnarray*}
\chi_1+\chi_2+\chi_3+3\chi_4+2\mathrm{Ind}_{H_8}^{\tilde{A}_4}\phi
&=&\mathrm{Ind}_{H_8}^{\tilde{A}_4}\chi_{_{H_8}}\\
&=&\chi_{\tilde{A}_4}\\
&=&\chi_1+\chi_2+\chi_3+3\chi_4+2\chi_5+2\chi_6+2\chi_7
\end{eqnarray*}
and therefore $\mathrm{Ind}_{H_8}^{\tilde{A}_4}\phi=\chi_5+\chi_6+\chi_7$.
%Let $V$ (resp. $V_i$ for every $1\leq i\leq 7$) be a $\mathbb{C}[H_8]$-module (resp. a $\mathbb{C}[\tilde{A}_4]$-module) whose character is $\phi$ (resp. $\chi_i$). Then we have an isomorphism of $\mathbb{C}[\tilde{A}_4]$-modules 
%$$\mathrm{Ind}_{H_8}^{\tilde{A}_4}V\cong\oplus_{i=1}^7V_i^{\oplus m_i}$$ 
%for some $m_i\geq 0$ and the equality of the statement is equivalent to $m_i=0$ for $i=1,2,3,4$ and $m_i=1$ for $i=5,6,7$. From the above isomorphism we deduce in particular that
%\begin{eqnarray*}
%6=[\tilde{A}_4:H_8]\dim_{\mathbb{C}}V&=&\dim_{\mathbb{C}}\mathrm{Ind}_{H_8}^{\tilde{A}_4}V\\
%&=&\sum_{i=1}^7m_i\dim_{\mathbb{C}}V_i\\
%&\geq&2\sum_{i=5}^7m_i.
%\end{eqnarray*}
%and equality holds if and only if $m_i=0$ for $i=1,2,3,4$. Thus we only need to show that $m_i=1$ for $i=5,6,7$. In fact we have 
%$$m_i=\langle\mathrm{Ind}_{H_8}^{\tilde{A}_4}\phi,\chi_i\rangle_{\tilde{A}_4}=\langle\phi,\mathrm{Res}_{H_8}^{\tilde{A}_4}\chi_i\rangle_{H_8}$$
%the latter equality coming from Frobenius reciprocity. One verifies that $\phi$ has value $2$ on $\mathrm{id}$, $-2$ on $\alpha^3$ and $0$ on any other element of $H_8$ (see for instance \cite[p. 416]{JamesLiebeckRCG}). It is also easy to prove that $H_8=C_{\tilde{A}_4}(\mathrm{id})\cup C_{\tilde{A}_{4}}(\alpha^3)\cup C_{\tilde{A}_{4}}(\alpha\beta)$  (see \cite[Section 8.2]{RoseCFG}). Thus, by the character table of $\tilde{A}_4$, for every $i=5,6,7$ we have $\phi=\mathrm{Res}_{H_8}^{\tilde{A}_4}\chi_i$ and therefore $\langle\phi,\mathrm{Res}_{H_8}^{\tilde{A}_4}\chi_i\rangle_{H_8}=1$, as we wanted. 
\end{proof}

\begin{proposition}\label{chi5}
The character $\chi_5$ is the only irreducible symplectic character of $\tilde{A}_4$.
\end{proposition}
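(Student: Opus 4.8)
The plan is to use the character theory of $\tilde{A}_4$ assembled above, together with the Frobenius--Schur indicator criterion recalled in \S\ref{sympltG}, to pin down which irreducible characters are symplectic. First I would recall that among the seven irreducible characters $\chi_1,\ldots,\chi_7$, only three are real-valued, namely $\chi_1$, $\chi_4$ and $\chi_5$, while $\chi_2,\chi_3$ and $\chi_6,\chi_7=\overline{\chi_6}$ come in complex-conjugate pairs. Since a symplectic character must be real-valued, the only candidates are $\chi_1$, $\chi_4$ and $\chi_5$. The trivial character $\chi_1$ is orthogonal (Frobenius--Schur indicator $+1$), so it is not symplectic. The character $\chi_4$ has odd degree $3$, and symplectic characters have even degree, so $\chi_4$ is not symplectic either. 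Hence the only possible irreducible symplectic character is $\chi_5$.

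It then remains to check that $\chi_5$ \emph{is} symplectic, i.e.\ that its Frobenius--Schur indicator equals $-1$ (it cannot be $+1$ since a $2$-dimensional orthogonal representation of $\tilde{A}_4$ would have image in $O_2(\R)$, hence be reducible over $\C$ or factor through a dihedral quotient, which $\tilde{A}_4$ does not possess with the required properties; but cleaner is a direct computation). The most economical route is to use the relation from Lemma \ref{indphi}, $\mathrm{Ind}_{H_8}^{\tilde{A}_4}\phi=\chi_5+\chi_6+\overline{\chi_6}$, where $\phi$ is the $2$-dimensional irreducible character of $H_8$, which is symplectic by Remark \ref{H8char}. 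Restricting $\chi_5$ to $H_8$: by Frobenius reciprocity $(\mathrm{Res}^{\tilde{A}_4}_{H_8}\chi_5,\phi)_{H_8}=(\chi_5,\mathrm{Ind}_{H_8}^{\tilde{A}_4}\phi)_{\tilde{A}_4}=1$, and comparing degrees ($\deg\chi_5=2=\deg\phi$) gives $\mathrm{Res}^{\tilde{A}_4}_{H_8}\chi_5=\phi$. Since the Frobenius--Schur indicator of $\chi_5$ is $\frac{1}{\#\tilde{A}_4}\sum_{g\in\tilde{A}_4}\chi_5(g^2)$ and $\chi_5$ agrees with $\mathrm{Inf}$ of nothing nontrivial, I would instead argue directly: the indicator of $\chi_5$ can be computed from the values of $\chi_5$, which are determined by $\chi_5|_{H_8}=\phi$ on the central element and elements of $H_8$, and by the three remaining conjugacy classes (of order $3$ and $6$) on which $\chi_5$ takes the value $-1$ (forced by column orthogonality with $\chi_1,\chi_2,\chi_3,\chi_4,\chi_6,\chi_7$). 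Plugging into the Frobenius--Schur sum yields $-1$, so $\chi_5$ is symplectic.

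Alternatively, and perhaps more cleanly, I would avoid recomputing the indicator: it is a standard fact that if $H'\le H$ and $\psi$ is an irreducible symplectic character of $H'$ whose induction to $H$ contains an irreducible real character $\chi$ with multiplicity one and with $\mathrm{Res}^H_{H'}\chi=\psi$, then $\chi$ is symplectic (the $H$-invariant alternating form on the representation space of $\chi$ is obtained by averaging the $H'$-invariant one; real-valuedness guarantees the form is non-degenerate and alternating rather than symmetric). Applying this with $H'=H_8$, $\psi=\phi$, $\chi=\chi_5$, and using $\mathrm{Res}^{\tilde{A}_4}_{H_8}\chi_5=\phi$ established above, we conclude $\chi_5$ is symplectic. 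Combining with the first paragraph, $\chi_5$ is the unique irreducible symplectic character of $\tilde{A}_4$, which is the assertion of the proposition.

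The main obstacle is keeping the bookkeeping of the character table of $\tilde{A}_4$ straight --- in particular, making sure the count ``exactly three real-valued irreducible characters'' is justified (this is cited from \cite[Exercice 1, \S13.2]{Serre-reprlin} plus the conjugacy class description in \cite[Section 8.2]{RoseCFG}) and that the restriction $\mathrm{Res}^{\tilde{A}_4}_{H_8}\chi_5=\phi$ is correctly deduced from Lemma \ref{indphi}. Everything else is a short formal argument: ruling out $\chi_1$ (indicator $+1$), ruling out $\chi_4$ (odd degree), ruling out the non-real characters, and transporting symplecticity from $\phi$ to $\chi_5$ via induction.
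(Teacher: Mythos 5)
Your elimination step is the same as the paper's: every $\chi_i$ with $i\ne 5$ fails to be symplectic because it is either of odd degree or not real-valued (the paper disposes of this in one sentence). Where you differ is the positive part. The paper computes the Frobenius--Schur indicator of $\chi_5$ directly, writing $\chi_5=\mathrm{Ind}_{H_8}^{\tilde{A}_4}\phi-\chi_6-\overline{\chi}_6$, using that non-real irreducible characters have indicator $0$, and evaluating $\sum_g\big(\mathrm{Ind}_{H_8}^{\tilde{A}_4}\phi\big)(g^2)$, which only sees $g\in H_8$ and the central square $z$ with $\phi(z)=-2$. Your second, ``cleaner'' route is genuinely different and is sound: from Lemma \ref{indphi}, Frobenius reciprocity and a degree count give $\mathrm{Res}^{\tilde{A}_4}_{H_8}\chi_5=\phi$; since $\chi_5$ is real-valued it is self-dual, so the space of $\tilde{A}_4$-invariant bilinear forms on its representation space is one-dimensional, and this form is in particular $H_8$-invariant; because $\phi$ is irreducible and symplectic, the one-dimensional space of $H_8$-invariant forms consists of alternating forms, so the $\tilde{A}_4$-invariant form is alternating and $\chi_5$ is symplectic. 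Be careful, though, that the correct justification is this uniqueness-of-the-invariant-form argument, not ``averaging'': averaging the $H_8$-invariant alternating form over $\tilde{A}_4$ would simply give $0$ if $\chi_5$ were orthogonal, so averaging alone proves nothing. What each approach buys: the paper's computation is self-contained and only needs $\phi(z)=-2$, while your restriction argument avoids any indicator computation but leans on the (standard, and correct) representation-theoretic fact about invariant forms.

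Your first suggested computation, by contrast, has genuine gaps as written: outside $H_8$ there are four conjugacy classes (two of order $3$, two of order $6$), not three; $\chi_5$ takes the value $+1$, not $-1$, on the order-$6$ classes (only the order-$3$ values are $-1$, and those are indeed what enters $\sum_g\chi_5(g^2)$, since squares of order-$3$ and order-$6$ elements have order $3$); and ``forced by column orthogonality'' is not a proof at that stage, since the values of $\chi_6$ and $\overline{\chi}_6$ on those columns are equally undetermined. If you want the indicator route, the paper's device of computing the indicator of $\mathrm{Ind}_{H_8}^{\tilde{A}_4}\phi$, which vanishes off $H_8$, is precisely what lets one avoid ever knowing the values of $\chi_5$ outside $H_8$.
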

\begin{proof}
Observe first that if $i\ne 5$, then $\chi_i$ is not symplectic  because either it has odd degree or it takes non real values. So we are left to prove that $\chi_5$ is symplectic. 
We will use the fact that an irreducible representation is symplectic if and only if its Frobenius-Schur indicator is $-1$ (see \cite[Proposition 38]{Serre-reprlin}). So we have to prove that 
$$\frac{1}{24}\sum_{g\in\tilde{A}_4}\chi_5(g^2)=-1\enspace.$$
Thanks to Lemma \ref{indphi} we have 
$$\frac{1}{24}\sum_{g\in\tilde{A}_4}\chi_5(g^2)=\frac{1}{24}\sum_{g\in\tilde{A}_4}\big(\mathrm{Ind}_{H_8}^{\tilde{A}_4}\phi\big)(g^2)-\frac{1}{24}\sum_{g\in\tilde{A}_4}\chi_6(g^2)-\frac{1}{24}\sum_{g\in\tilde{A}_4}\overline{\chi}_6(g^2)\enspace.$$
The terms involving $\chi_6$ and $\overline{\chi}_6$ are trivial since the Frobenius-Schur indicator of an irreducible character which takes non real values is trivial (see \cite[Proposition 38]{Serre-reprlin}). As for the term involving $\mathrm{Ind}_{H_8}^{\tilde{A}_4}\phi$, since $H_8$ is normal in $\tilde{A}_4$, the formula for the character of an induced representation (see \cite[Th\'eor\`eme 12]{Serre-reprlin}) gives $\big(\mathrm{Ind}_{H_8}^{\tilde{A}_4}\phi\big)(g^2)=0$ if $g^2\notin H_8$. Now observe that, if $g\in \tilde{A}_4$, then $g\in H_8$ if and only if $g^2\in H_8$ (since $\tilde{A}_4/H_8$ has order coprime with $2$) and, if $g\in H_8$, then
$$g^2=\left\{\begin{array}{cc}\mathrm{id}&\textrm{if $g=\mathrm{id}, z$}\\
z&\textrm{otherwise,}
\end{array}\right.$$
where $z$ is the only nontrivial square of $H_8$ (thus $z=\tau^2$ in the presentation of $H_8$ given in \S\ref{quaternion}).
We deduce that 
\begin{eqnarray*}
\sum_{g\in\tilde{A}_4}\big(\mathrm{Ind}_{H_8}^{\tilde{A}_4}\phi\big)(g^2)&=&\sum_{g\in H_8}\big(\mathrm{Ind}_{H_8}^{\tilde{A}_4}\phi\big)(g^2)\\
&=&2\big(\mathrm{Ind}_{H_8}^{\tilde{A}_4}\phi\big)(\mathrm{id})+6\big(\mathrm{Ind}_{H_8}^{\tilde{A}_4}\phi\big)(z)\enspace .
\end{eqnarray*} 
We have 
$$\big(\mathrm{Ind}_{H_8}^{\tilde{A}_4}\phi\big)(\mathrm{id})=\deg\big(\mathrm{Ind}_{H_8}^{\tilde{A}_4}\phi\big) =[\tilde{A}_4:H_8]\deg \phi=6\enspace.$$
The formula for the character of an induced representation, together with the fact that $H_8$ is normal in $\tilde{A}_4$ and $z\in Z(\tilde{A}_4)$, gives 
$$\big(\mathrm{Ind}_{H_8}^{\tilde{A}_4}\phi\big)(z)=\frac{1}{\#H_8}\sum_{g\in \tilde{A}_4}\phi(g^{-1}zg)=3\phi(z)=-6\enspace,$$
where the last equality follows from the explicit computation of the values of $\phi$ (see for instance \cite[Exercice 3, Section 12.2]{Serre-reprlin}). It follows that 
$$\sum_{g\in\tilde{A}_4}\big(\mathrm{Ind}_{H_8}^{\tilde{A}_4}\phi\big)(g^2)=-24$$
as desired.
\end{proof}

Thus $\tilde{A}_4$ satisfies the properties of Lemma \ref{properties} and has smallest possible order. In fact we also have that a tame $\tilde{A}_4$-Galois extension whose inverse different is a square of $\Q$ is automatically locally abelian, even locally cyclic.

\begin{theorem}
The group $\tilde{A}_4$ is the group of smallest order satisfying properties (i) and (ii) of Lemma \ref{properties}. Moreover, if $N/\Q$ is a tame $\tilde{A_4}$-Galois extension whose inverse different is a square, then $N/\mathbb{Q}$ is locally cyclic.
\end{theorem}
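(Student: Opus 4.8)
### Plan of Proof

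The first assertion is immediate from the results already assembled: Proposition \ref{<24} shows that no group of order less than $24$ which is generated by elements of odd order can have an irreducible symplectic representation, so such a group fails property (ii) of Lemma \ref{properties}. On the other hand, Proposition \ref{24} identifies $\tilde{A}_4$ as the unique group of order $24$ generated by elements of odd order, and Proposition \ref{chi5} exhibits $\chi_5$ as an irreducible symplectic character of $\tilde{A}_4$ (while \S\ref{btg} records, via Lemma \ref{2groups}, that $\tilde{A}_4$ has no subgroup of index $2$, hence is generated by elements of odd order, i.e. satisfies (i)). So $\tilde{A}_4$ has the smallest order among groups satisfying both (i) and (ii).

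For the second assertion, let $N/\Q$ be a tame $\tilde{A}_4$-Galois extension whose inverse different is a square. The plan is to check local cyclicity prime by prime. At an unramified prime there is nothing to do: the decomposition group is cyclic. So fix a ramified prime $\PP$ of $N$ above a rational prime $p$; I want to show the decomposition group $D_\PP$ is cyclic. The key input is that $\C_{N/\Q}$ being a square forces every inertia subgroup to have odd order (as recalled just before Proposition \ref{infnonram}), so $I_\PP$ has odd order. In $\tilde{A}_4$ the subgroups of odd order are exactly the subgroups of order $1$ or $3$ (by Lagrange, since $24 = 2^3\cdot 3$, any odd-order subgroup has order $1$ or $3$). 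Since $\PP$ is ramified, $I_\PP$ has order $3$, so $I_\PP$ is one of the four Sylow $3$-subgroups of $\tilde{A}_4$.

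Next I use that $p$ is tamely ramified, so $D_\PP/I_\PP$ is cyclic, generated by the Frobenius $\varphi$, and the extension is governed by the relation $\varphi \sigma \varphi^{-1} = \sigma^{p}$ for a generator $\sigma$ of $I_\PP$ (i.e.\ conjugation by Frobenius acts on the tame inertia as raising to the $p$-th power). Here is where I invoke the structure of $\tilde{A}_4$ recorded in \S\ref{btg}: every subgroup of $\tilde{A}_4$ is cyclic except the Sylow $2$-subgroup $H_8$. Since $D_\PP$ contains the order-$3$ subgroup $I_\PP$, and $H_8$ contains no element of order $3$, $D_\PP$ cannot be contained in $H_8$, nor can it equal $\tilde{A}_4$ or have order $12$ — I should check which subgroup orders are possible: $\#D_\PP$ is a multiple of $3$, so $\#D_\PP \in \{3,6,12,24\}$. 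If $\#D_\PP = 24$ then $D_\PP = \tilde{A}_4$, which is non-abelian and non-cyclic but also cannot occur because $D_\PP/I_\PP$ would have order $8$ and be cyclic, forcing a cyclic quotient of order $8$ of $\tilde{A}_4$, impossible (the maximal abelian quotient of $\tilde{A}_4 \cong A_4$ has order $3$). If $\#D_\PP = 12$, then $D_\PP$ is an index-$2$ subgroup of $\tilde{A}_4$, which does not exist (\S\ref{btg}, Lemma \ref{2groups}). So $\#D_\PP \in \{3,6\}$. A subgroup of order $3$ is cyclic; a subgroup of order $6$ of $\tilde{A}_4$ is cyclic because every subgroup of $\tilde{A}_4$ other than $H_8$ is cyclic and $H_8$ has order $8 \ne 6$. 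Hence $D_\PP$ is cyclic in all cases, and $N/\Q$ is locally cyclic at every prime. In particular it is locally abelian.

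The main obstacle, such as it is, is purely bookkeeping: one must be careful to enumerate the possible orders of $D_\PP$ and rule out the cases of order $12$ and $24$ using the two structural facts about $\tilde{A}_4$ (no index-$2$ subgroup; maximal abelian quotient of order $3$), rather than hand-waving. No deep new idea is needed beyond the subgroup structure of $\tilde{A}_4$ and the tame ramification constraint $\#I_\PP$ odd coming from $\C_{N/\Q}$ being a square; once those are in place the argument is a short case analysis.
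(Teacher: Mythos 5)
Your proof is correct and takes essentially the same route as the paper: the first assertion is the same appeal to Propositions \ref{<24}, \ref{24} and \ref{chi5}, and for the second you use exactly the paper's ingredients (odd-order inertia at ramified finite primes since $\C_{N/\Q}$ is a square, cyclicity of $D_\PP/I_\PP$, the absence of index-$2$ subgroups, and the fact that $[\tilde{A}_4,\tilde{A}_4]=H_8$ so the abelianization has order $3$), merely organized as a case analysis on $\#D_\PP\in\{3,6,12,24\}$ instead of directly excluding the two noncyclic subgroups $H_8$ and $\tilde{A}_4$ as decomposition groups. The only omission is the archimedean places, which the paper dispatches in one line (their decomposition groups have order dividing $2$, hence are cyclic) and which you should add for completeness.
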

\begin{proof}
The first assertion follows by Propositions \ref{<24}, \ref{24} and \ref{chi5}. As for the last assertion, $\tilde{A}_4$ and $H_8$, the only noncyclic subgroups of $\tilde{A}_4$, cannot be the decomposition subgroup of an archimedean place of $N$, since the latter is of order dividing $2$. Suppose one of $\tilde{A}_4$ and $H_8$ is the decomposition subgroup of a finite place of $N$, then this place has to be ramified since decomposition subgroups of unramified places are cyclic. But finite primes have odd inertia degree in $N/\mathbb{Q}$, since $\C_{N/\Q}$ is a square, thus $H_8$ cannot be a decomposition subgroup. The same holds for $\tilde{A}_4$ itself: its quotient by the inertia subgroup would have to be cyclic, hence the commutator $[\tilde{A}_4,\tilde{A}_4]$  would have to be contained in the inertia subgroup. But $[\tilde{A}_4,\tilde{A}_4]=H_8$ (as we showed in the proof of Proposition \ref{24}) and therefore $\tilde{A}_4$ cannot be the decomposition group of a place of $N$. It follows that $N/\mathbb{Q}$ is locally cyclic. 
%the only proper noncyclic subgroup of $\tilde{A}_4$ is isomorphic to $H_8$, as remarked above. Suppose that a place of $N$ has $H_8$ as decomposition subgroup. Then this place has to be ramified, since decomposition subgroups of unramified places are cyclic, and archimedean because finite primes have odd inertia degree in $N/\mathbb{Q}$, since $\C_{N/\Q}$ is a square. However, $H_8$ cannot be the decomposition subgroup of an archimedean place, since the latter is of order dividing $2$. Thus $H_8$ cannot be a decomposition subgroup and $N/\mathbb{Q}$ is locally cyclic.
\end{proof}

\subsubsection{}
We now explicitly describe a tame $\tilde{A}_4$-Galois extension $N/\mathbb{Q}$ whose inverse different is a square. We use the results of Bachoc and Kwon \cite{BachocKwon}, who studied the embedding problem of $A_4$-extensions in $\tilde{A}_4$-extensions. 

We briefly recall how the $\tilde{A}_4$-Galois extension we are interested in is constructed, although this construction is not strictly necessary for us. We begin with the polynomial
$$X^4-2X^3-7X^2+3X+8$$
which is irreducible over $\Q$ and let $\gamma$ be any fixed root of it (in an algebraic closure of $\Q$). Then, up to conjugation, $K=\mathbb{Q}(\gamma)$ is the totally real field of degree four over $\Q$ of smallest discriminant having trivial class number and Galois closure with Galois group isomorphic to $A_4$ (see \cite[tables at pp. 395-396]{BPS}). An easy computation with PARI \cite{PARI2} reveals that the Galois closure $M/\mathbb{Q}$ of $K$ is explicitly given by the polynomial
$$
X^{12} - 23X^{10} + 125X^8 - 231X^6 + 125X^4 - 23X^2 + 1\enspace.
$$
A further computation gives that the discriminant of $M/\mathbb{Q}$ is $163^8$ (in particular $M/\Q$ is tame) and the four primes above $163$ in $M/\mathbb{Q}$ have ramification index $3$ (in particular $\C_{M/\mathbb{Q}}$ is a square). 
%%% Dans $K/\Q$, il y a deux premiers au-dessus de 163, l'un est ramifié avec indice 3, l'autre non ramifié avec degré résiduel 1.
Let $k$ denote the only degree $3$ subextension of $M/\Q$, it follows that $k/\Q$ is totally ramified above $163$ and unramified elsewhere, and $M/k$ is unramified at every finite place.
%%% PARI shows that $M/\Q$ is also unramified at infinite places.
%%% $k$ has defining polynomial $X^3 - 46X^2 + 488X - 1480$.

Using again PARI, we get that the narrow class number of $K$ equals $2$. In other words the narrow Hilbert class field of $K$, namely its maximal abelian extension which is unramified at finite places, is of degree $2$ over $K$. Moreover $K/\Q$ has the same discriminant as $k/\Q$.
%, the subextension of degree $3$ of $M/\Q$. 
Therefore $K$ satisfies the hypotheses of \cite[Proposition 3.1(1)]{BachocKwon} and there exists a unique number field $\tilde{K}$ of degree $8$ over $\Q$ with the following properties:
%satisfying the following hypotheses:
\begin{itemize}
\item $K\subset \tilde{K}$ and $\tilde{K}/K$ is unramified outside the primes ramifying in $M/k$ (\text{i.e.} $\tilde{K}$ is a \textit{pure} embedding of $K$);
\item the Galois closure $N/\mathbb{Q}$ of $\tilde{K}$ has Galois group $\tilde{A}_4$.
\end{itemize}
It follows from $K\subset \tilde{K}$ and the definition of $M$ and $N$ that $M\subset N$ (see Figure \ref{ext-dia}).
%It is clear that $k/\Q$ is totally ramified at $163$ and unramified elsewhere. In particular 
From the above we get that $\tilde{K}/K$ is unramified at every finite place, thus the same holds for $N/M$ and for $N/k$. This shows that $N/\mathbb{Q}$ is tame and that its inverse different is a square (since the same holds for $k/\mathbb{Q}$).

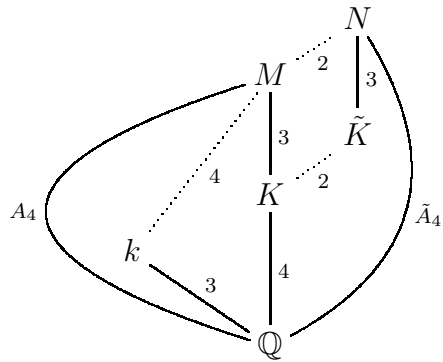
\begin{figure}[ht]
{
$$\xymatrix@=7pt@C15pt{
&&& N \\
&& M \ar@{.}@/0pc/[ur]_2 &\\
&&& \tilde{K} \ar@{-}@/0pc/[uu]_3 \\
&& K \ar@{-}@/0pc/[uu]_3 \ar@{.}@/0pc/[ur]_2 &\\
k \ar@{.}@/0pc/[uuurr]_4 &&&\\
&&&\\
&& \Q\ar@{-}@/0pc/[uuu]_4 \ar@{-}@/0pc/[uull]_3 \ar@/_3pc/@{-}[uuuuuur]_{\tilde{A}_4} \ar@/^7pc/@{-}[uuuuu]^{{A}_4} &
}$$
}
\caption{Extensions diagram}
\label{ext-dia}
\end{figure}

Since the class number of $K$ is trivial, $\tilde{K}/K$ is ramified at some archimedean place (in fact $\tilde{K}$ is the narrow Hilbert class field of $K$). In particular $\tilde{K}$ is not totally real and therefore $N$ is totally complex (being a non-totally real Galois extension of $\Q$). In other words the archimedean place of $\Q$ is ramified in $N$, so the extension $N/\Q$ does not satisfy the hypotheses of Proposition \ref{infnonram}.  

Bachoc and Kwon also compute a polynomial defining $\tilde{K}$ (see \cite[table of p. 9, first line]{BachocKwon}): 
$$X^8+14X^6+23X^4+9X^2+1.$$
With the help of PARI, we get that $N/\mathbb{Q}$ is explicitly given by the polynomial
$$\begin{array}{l}
X^{24} - 3X^{23} - 2X^{22} + 16X^{21} - 12X^{20} + 52X^{19} - 324X^{18} - 436X^{17} +\\ 3810X^{16} - 1638X^{15} - 8012X^{14} - 12988X^{13}+ 67224X^{12} - 76152X^{11} +\\ 41175X^{10} - 39587X^9 + 70068X^8 - 66440X^7 + 38488X^6 - 23248X^5\\ +16672X^4 - 6976X^3 + 2816X^2 - 1280X + 512.
\end{array}$$
%The discriminant of $N/\mathbb{Q}$ is $163^{16}$. Using the formula for discriminants in towers of extensions, we deduce that $N/M$ is unramified at every finite place. This shows that $N/\mathbb{Q}$ is tame (since $M/\Q$ is) and its inverse different is a square (since the same holds for $M/\mathbb{Q}$).

%Note that the archimedean place of $\Q$ is ramified in $N$ (since $N$ is totally imaginary), so the extension $N/\Q$ does not satisfy the hypotheses of Proposition \ref{infnonram}.  

\subsubsection{}
We now want to verify that $t_{\tilde{A}_4}W_{N/\Q}\in\Cl(\Z[\tilde{A}_4])$ is nontrivial. We first show that $W_{N/\Q}\in\mathrm{Hom}_{\Omega_\Q}(S_{\tilde{A}_4},\{\pm 1\})$ is not trivial. 

\begin{proposition}\label{WNQ} 
We have $W(N/\mathbb{Q},\chi_5)=-1$, \textit{i.e.} $W_{N/\Q}$ is not trivial.
\end{proposition}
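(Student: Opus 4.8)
The strategy is to reduce the computation of the global root number $W(N/\Q,\chi_5)$ to a product of local root numbers via the decomposition \eqref{locdec}, and to identify which local factors can be nontrivial. First I would use Lemma \ref{indphi}, which gives $\mathrm{Ind}_{H_8}^{\tilde A_4}\phi=\chi_5+\chi_6+\overline\chi_6$, so that by the inductivity property of root numbers in degree zero (more precisely, the standard Artin formalism recalled in \S\ref{rnp}, applied to the virtual character $\mathrm{Ind}_{H_8}^{\tilde A_4}\phi-(\text{lower pieces})$, or directly to the fact that $\chi_5=\mathrm{Ind}_{H_8}^{\tilde A_4}\phi-\chi_6-\overline\chi_6$) we get
$$W(N/\Q,\chi_5)=\frac{W(N/N^{H_8},\phi)}{W(N/\Q,\chi_6)\,W(N/\Q,\overline\chi_6)}\enspace.$$
Since $\chi_6$ and $\overline\chi_6$ are complex conjugate, $W(N/\Q,\chi_6)W(N/\Q,\overline\chi_6)=W(N/\Q,\chi_6+\overline\chi_6)$, and $\chi_6+\overline\chi_6$ is orthogonal (it is real and not symplectic, being a sum of a character with its conjugate), so its root number equals $\pm1$; in fact one expects it to be $+1$ because $\chi_6+\overline\chi_6$ is induced from abelian characters (note $\chi_6,\chi_7$ are the remaining degree-$2$ characters and by the same counting as in Lemma \ref{indphi} and Mackey one checks $\chi_6+\overline\chi_6$ decomposes through abelian subextensions). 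So the main content is the computation of $W(N/N^{H_8},\phi)$, where $N/N^{H_8}$ is an $H_8$-extension of a cubic field $k=N^{H_8}$ (which is the field denoted $k$ above, totally ramified at $163$).

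Next I would apply \eqref{locdec} to $W(N/k,\phi)$: it is a product over places $P$ of $k$ of $W(N_{\PP}/k_P,\phi_{D_\PP})$. Because $N/k$ is unramified at every finite place (this was established in the construction of $N$) and $\phi$ restricted to a cyclic (hence abelian) decomposition group $D_\PP$ at a finite place decomposes as a sum $\rho+\overline\rho$ by Lemma \ref{quatrescyclic} (applied with $H_{4n}=H_8$), the local factor at each finite $P$ equals $W(\rho)W(\overline\rho)=\det_\rho(-1)$, which is $1$ since $-1$ is a local unit and the extension is unramified there (the reciprocity image of $-1$ lands in the trivial inertia group) — this is exactly the argument used in case (a) of the proof of Proposition \ref{infnonram}. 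Hence only the archimedean places of $k$ contribute. Here is where the hypothesis fails for Proposition \ref{infnonram}: $N$ is totally complex while $k$ is totally real (it is a cubic subfield of the totally real $M$... wait, $k$ has degree $3$; since $\tilde A_4$ has order $24$ and $A_4$ order $12$, $k=N^{H_8}$ has degree $[\tilde A_4:H_8]=3$ over $\Q$, and $k\subset M$ which is... actually $M$ is the $A_4$-closure and need not be totally real; but $k$ here is the cubic field which, from the construction, is totally real since it has the same discriminant $163$ as the totally real $K$ and is the cubic resolvent). So $k$ has three real places, each of which becomes complex in $N$; the local extension at each is $\mathbb{C}/\R$ with Galois group of order $2$, and $\phi$ restricted to this order-$2$ group is $2\psi$ where $\psi$ is the nontrivial... no: $\phi$ has degree $2$ and $D=\Gal(\mathbb C/\R)$ has order $2$, so $\phi|_D$ is a sum of two one-dimensional characters of $D$; one computes from the character table of $H_8$ that $\phi$ vanishes on the central element $\tau^2$ and on all order-$4$ elements, so $\phi|_D = \psi_{\mathrm{triv}}+\psi_{\mathrm{sgn}}$ if $D=\langle\tau^2\rangle$... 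I need to be careful: the decomposition group at an infinite place is generated by complex conjugation, which corresponds to some order-$2$ element of $H_8$, necessarily $\tau^2$ (the unique involution). Then $\phi(\tau^2)=-2$, so $\phi|_{\langle\tau^2\rangle}=2\psi_{\mathrm{sgn}}$, and $W(\mathbb C/\R,\phi|_D)=W(\mathbb C/\R,\psi_{\mathrm{sgn}})^2$. The local root number of the sign character of $\Gal(\mathbb C/\R)$ is $-i$ (standard), so its square is $-1$.

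\textbf{Main obstacle and completion.} Therefore $W(N/k,\phi)=\prod_{P\mid\infty}W(\mathbb C/\R,\psi_{\mathrm{sgn}})^2=(-1)^3=-1$, whence $W(N/\Q,\chi_5)=-1$ provided the denominator $W(N/\Q,\chi_6+\overline\chi_6)=1$. The step I expect to be the real obstacle is pinning down this denominator and the precise local root number at the real places: one must (i) verify carefully that the decomposition group at the archimedean places of $k$ is $\langle\tau^2\rangle$ and compute $\phi(\tau^2)=-2$ from the explicit character values of $H_8$ (as in \cite[Exercice 3, Section 12.2]{Serre-reprlin}, already cited in Remark \ref{H8char}), so that $\phi$ restricts to twice the sign character; (ii) recall/justify that $W(\mathbb C/\R, \text{sgn}) = -i$, hence its square is $-1$ and there are an \emph{odd} number ($3$) of such places, giving the crucial sign; and (iii) dispose of $W(N/\Q,\chi_6+\overline\chi_6)$ by showing $\chi_6+\overline\chi_6$ is a $\Z$-combination of characters induced from degree-$1$ characters of subgroups (so that, by the same finite-place and $\mathbb C/\R$ analysis, the relevant local determinant factors $\det_\rho(-1)$ multiply to $1$ — here the parity works out to $+1$ because the contributions pair up, or more simply because $\chi_6+\overline\chi_6=\mathrm{Ind}_{H_8}^{\tilde A_4}\phi-\chi_5$ and one argues directly). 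Alternatively, and more robustly, one avoids the denominator entirely by working with the virtual character $\chi_5+\chi_6+\overline\chi_6=\mathrm{Ind}_{H_8}^{\tilde A_4}\phi$ and observing that $W(N/\Q,\chi_6)$ and $W(N/\Q,\overline\chi_6)$ are complex conjugates with $|W|=1$ whose product is a \emph{positive} real number (equal to $\det_{\chi_6}(-1)$ evaluated globally, which is $1$), so $W(N/\Q,\chi_5)=W(N/k,\phi)=-1$. I would write up this last version, as it sidesteps the most delicate bookkeeping.
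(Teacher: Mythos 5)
Your proposal is correct and follows essentially the same route as the paper: the identity $\chi_5=\mathrm{Ind}_{H_8}^{\tilde A_4}\phi-\chi_6-\overline\chi_6$ from Lemma \ref{indphi}, localization via \eqref{locdec}, triviality of all the finite local factors, and the decisive sign $(-1)^3=-1$ coming from the three real places of $k$ (where the decomposition group is the unique involution and $\phi$ restricts to twice the sign character). The only cosmetic differences are that the paper gets the archimedean contribution from $W(\nu)W(\overline\nu)=\det_\nu(-1)=\nu(r_P(-1))=-1$ rather than from the explicit value $W(\mathbb{C}/\mathbb{R},\mathrm{sgn})=-i$, and it kills the factor $W(N/\Q,\chi_6)W(N/\Q,\overline\chi_6)$ by a place-by-place determinant computation ($\det\chi_6$ trivial on $H_8$, odd inertia at finite places) instead of your global conjugate-pair argument — both of which are valid.
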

\begin{proof}
By Lemma \ref{indphi} and the properties of root numbers recalled in \S \ref{rnp}, we have
\begin{equation}\label{rndec}
W(N/\mathbb{Q},\chi_5)=W(N/k,\phi)W(N/\mathbb{Q},\chi_6)^{-1}W(N/\mathbb{Q},\overline{\chi}_6)^{-1}\enspace.
\end{equation}
%where $k$ is the subfield of $N$ which is fixed by $H_8$.

We first show that $W(N/k,\phi)=-1$. Since $k/\Q$ is cyclic of degree $3$, it is totally real with three real places and $\Gal(N/k)=H_8$, the unique Sylow $2$-subgroup of $\tilde{A}_4$. We will show that 
$$
W(N_{\mathcal{P}}/k_P),\phi_{_{D_\PP}})=\left\{\begin{array}{ll}-1& \textrm{if $P$ is a real place of $k$}\\1&\textrm{otherwise}\end{array}\right.
$$
which implies, by (\ref{locdec}), that $W(N/k,\phi)=-1$ (recall that $\phi_{D_\PP}$ denotes the restriction of $\phi$ at $\PP$).
If $P$ is a finite prime of $k$, then one shows the triviality of $W(N_{\mathcal{P}}/k_P,\phi_{_{D_\PP}})$ as in the proof of Proposition \ref{infnonram} (case (a)). Suppose that $P$ is a real place of $k$, then the decomposition subgroup $D_\PP$ of a place $\mathcal{P}$ of $N$ above $P$ is cyclic of order $2$, since $N$ is totally imaginary, and therefore $D_\PP=Z(\tilde{A}_4)=\langle\alpha^3\rangle$ since the center is the only subgroup of order $2$ of $\tilde{A}_4$. In particular, by kemma \ref{quatrescyclic}, we must have  $\phi_{_{D_\PP}}=\nu+\overline{\nu}$ where $\nu:D_\PP\to \{\pm 1\}$ is a character (thus in fact $\nu=\overline{\nu}$ and $\phi_{_{D_\PP}}=2\nu$). Actually, since $\phi(\alpha^3)=-2$ (as remarked in the proof of kemma \ref{indphi}), $\nu$ must be the sign character, \textit{i.e.} the only nontrivial character of $D_\PP$. We therefore obtain  
$$W(N_{\mathcal{P}}/k_P,\phi_{_{D_\PP}})=W(N_{\mathcal{P}}/k_P,\nu)W(N_{\mathcal{P}}/k_P,\overline{\nu})=\mathrm{det}_{\nu}(-1)=\nu\circ r_P(-1)=-1,$$
where as usual $r_P: k_P^\times=\mathbb{R}^\times\to D_\PP$ is the local reciprocity map at $P$ (which is nontrivial on $-1$ precisely because $P$ is ramified in $N/k$).   

As for the factor $W(N/\mathbb{Q},\chi_6)^{-1}W(N/\mathbb{Q},\overline{\chi}_6)^{-1}$ in (\ref{rndec}), using (\ref{locdec}) as above, we are reduced to the local setting (here local means corresponding to a place of $\mathbb{Q}$). Again, if $\PP$ is a place of $N$, either finite or archimedean,
$$W(N/\mathbb{Q},(\chi_6)_{_{D_\PP}})W(N/\mathbb{Q},(\overline{\chi}_6)_{_{D_\PP}})=\det{}_{(\chi_{_6})_{_{D_\PP}}}(-1)\enspace.$$
Observe that the determinant of $\chi_6$ is trivial on $H_8$ (which is the commutator subgroup of $\tilde{A}_4$). If $\PP$ is archimedean, $D_\PP$ is cyclic of order $2$ and therefore $D_\PP\subset H_8$, since $H_8$ is the Sylow $2$-subgroup of $\tilde{A}_4$. In particular $\det{}_{(\chi_{_6})_{_{D_\PP}}}=1$. If instead $\PP$ lies above a finite rational prime $p$, then, as in the proof of Proposition \ref{infnonram}, the reciprocity map $r_p:\Q_p^\times\to D_\PP$ is trivial on $-1$, since $r_p(-1)$ is of order dividing $2$ and belongs to the inertia subgroup $I_\PP$ which has odd order.  In particular $\det{}_{(\chi_{_6})_{_{D_\PP}}}(-1)=1$ also in this case.
\end{proof}

Concerning the map $t_{\tilde{A}_4}$, we have the following result.

\begin{lemma}\label{tiso}
The map
$$t_{\tilde{A}_4}:\mathrm{Hom}_{\Omega_\mathbb{Q}}(S_{\tilde{A}_4},\{\pm 1\})\to\Cl(\mathbb{Z}[\tilde{A}_4])$$ 
is an isomorphism between groups of order $2$.
\end{lemma}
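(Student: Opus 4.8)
\textbf{Proof proposal for Lemma \ref{tiso}.}

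The plan is to compute both sides explicitly and check that $t_{\tilde A_4}$ is injective; since the target is then forced to have order $2$ as well, the map is an isomorphism. First I would determine the domain $\mathrm{Hom}_{\Omega_\Q}(S_{\tilde A_4},\{\pm1\})$. By Proposition \ref{chi5} the group $S_{\tilde A_4}$ of symplectic characters is free of rank $1$, generated by $\chi_5$, which is real-valued and hence $\Omega_\Q$-fixed; so any $f\in\mathrm{Hom}_{\Omega_\Q}(S_{\tilde A_4},\{\pm1\})$ is determined by $f(\chi_5)\in\{\pm1\}$ with no further constraint, giving a group of order $2$. Second I would identify $\Cl(\Z[\tilde A_4])$: this is classical (it is cyclic of order $2$, see e.g. the computations of Fröhlich--Taylor for $SL_2(\F_3)$, or one can cite \cite{Frohlich-Alg_numb}); alternatively, since $t_{\tilde A_4}$ will be shown injective and its image lands in the subgroup generated by symplectic root number classes, and Taylor's theorem together with the example already constructed shows the image is nontrivial, one gets $\#\Cl(\Z[\tilde A_4])\geq 2$, and the classical computation pins it at exactly $2$.

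The heart of the matter is therefore injectivity of $t_{\tilde A_4}$, equivalently: the nontrivial element $f$ with $f(\chi_5)=-1$ is not killed by the projection $\mathrm{Hom}_{\Omega_\Q}(R_{\tilde A_4},J(L'))\to\Cl(\Z[\tilde A_4])$, i.e. $t'_{\tilde A_4}(f)$ does not lie in $\mathrm{Hom}_{\Omega_\Q}(R_{\tilde A_4},(L')^\times)\Det(\UU(\Z[\tilde A_4]))$. The cleanest route is to invoke the example of \S\ref{explicitexample}: by Proposition \ref{WNQ} the extension $N/\Q$ there has $W_{N/\Q}=f$ (the nontrivial homomorphism), and by Corollary \ref{A=tW} we have $(\A_{N/\Q})=t_{\tilde A_4}W_{N/\Q}=t_{\tilde A_4}(f)$ in $\Cl(\Z[\tilde A_4])$. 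Thus once Theorem \ref{surprise} is established, namely $(\A_{N/\Q})\neq 1$, we get $t_{\tilde A_4}(f)\neq 1$, so $t_{\tilde A_4}$ is injective; being injective from a group of order $2$ into a group of order $2$, it is an isomorphism. If one prefers to avoid this circularity (the present lemma is a step toward Theorem \ref{surprise}), the alternative is the direct representation-theoretic computation: the image of $t_{\tilde A_4}$ is, by Fröhlich's description, the class of the symplectic root number homomorphism, and one checks using Martinet's induction theorem (Theorem \ref{indsym}) and the structure of $\tilde A_4$ (every proper subgroup cyclic except $H_8$) that this class coincides with the nontrivial class produced by the Swan-module-type generator of $\Cl(\Z[\tilde A_4])$.

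The main obstacle I anticipate is pinning down $\#\Cl(\Z[\tilde A_4])=2$ independently of the example, so that one genuinely knows the target is order $2$ rather than merely containing $t_{\tilde A_4}(f)$. This requires either quoting the known computation of the locally free class group of $\Z[SL_2(\F_3)]$ or running the Hom-description machinery for $\tilde A_4$ directly: computing $\Det(\UU(\Z[\tilde A_4]))$ inside $\mathrm{Hom}_{\Omega_\Q}(R_{\tilde A_4},J(L'))$, which hinges on understanding the local units at $2$ and $3$ and the single symplectic character $\chi_5$. Given the surrounding machinery already developed (Proposition \ref{prop:Det}, the Hom-description of \S\ref{hdcg}, and the quaternionic analysis of \S\ref{quaternion}), this is routine but bookkeeping-heavy, so I would present it by citation and reserve the hands-on argument for the injectivity statement, which is exactly what feeds into Theorem \ref{surprise}.
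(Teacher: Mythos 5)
Your computation of the domain is fine, but the heart of the lemma is injectivity, and neither of your two routes establishes it. The primary route is circular, as you yourself note: Theorem \ref{surprise} (that $(\A_{N/\Q})\neq 1$) is \emph{deduced} in the paper from this very lemma together with Proposition \ref{WNQ} and Corollary \ref{A=tW}, so it cannot be used as input here. The fallback you offer in its place is not an argument: saying that ``one checks using Martinet's induction theorem and the structure of $\tilde A_4$ that this class coincides with the nontrivial class produced by the Swan-module-type generator'' leaves the actual verification undone, and the guiding idea is moreover suspect --- Swan classes lie in the kernel subgroup $D(\Z[G])$, which vanishes for $H_8$ (and the nontrivial class of $\Cl(\Z[H_8])$, hence the relevant class for $\tilde A_4$, is detected precisely by the symplectic character, not by Swan modules), so there is no ``Swan-module-type generator'' to compare with. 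Likewise, your determination of $\#\Cl(\Z[\tilde A_4])=2$ is delegated to a citation without pinning down which result you mean, whereas the lemma needs both the order of the target and the injectivity of $t_{\tilde A_4}$ to be established in a non-circular way.

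The paper's proof does exactly this by reducing to the quaternion subgroup. One forms the commutative square whose vertical arrows are the canonical isomorphism $\varphi:\Hom_{\Omega_\Q}(S_{\tilde A_4},\{\pm1\})\to\Hom_{\Omega_\Q}(S_{H_8},\{\pm1\})$ (both groups have order $2$, since $\chi_5$ and $\phi$ are the unique irreducible symplectic characters) and the restriction-of-scalars map $res:\Cl(\Z[\tilde A_4])\to\Cl(\Z[H_8])$, and whose horizontal arrows are $t_{\tilde A_4}$ and $t_{H_8}$. Commutativity is checked on the level of the idelic lifts $t'$ using \cite[Theorem 12]{Frohlich-Alg_numb} (restriction on class groups corresponds to precomposition with induction of characters) together with Lemma \ref{indphi}, which gives $\mathrm{Ind}_{H_8}^{\tilde A_4}\phi=\chi_5+\chi_6+\chi_7$, so the symplectic part survives restriction. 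Then Fr\"ohlich's result \cite[I, Proposition 7.2]{Frohlich-Alg_numb} says $t_{H_8}$ is an isomorphism of groups of order $2$, and Swan's theorem \cite[Theorem 14.1]{SwanPMOBPG} says $res$ is an isomorphism; chasing the square gives that $t_{\tilde A_4}$ is an isomorphism and that $\Cl(\Z[\tilde A_4])$ has order $2$, with no appeal to the arithmetic example. If you want to salvage your proposal, this reduction to $H_8$ is the missing idea: it simultaneously supplies the order of $\Cl(\Z[\tilde A_4])$ and the injectivity you left unproven.
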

\begin{proof}
Consider the following diagram
\begin{equation}\label{tres}
\begin{CD}
\mathrm{Hom}_{\Omega_\mathbb{Q}}(S_{\tilde{A}_4},\{\pm 1\})@>t_{\tilde{A}_4}>>\Cl(\mathbb{Z}[\tilde{A}_4])\\
@V\varphi VV @VVresV\\
\mathrm{Hom}_{\Omega_\mathbb{Q}}(S_{H_8},\{\pm 1\})@>t_{H_8}>>\Cl(\mathbb{Z}[H_8])\\
\end{CD}
\end{equation}
where $res$ is induced by restriction of scalars from $\Z[\tilde{A}_4]$ to $\Z[H_8]$. To define $\varphi$, observe that each of $\tilde{A}_4$ and $H_8$ has precisely one irreducible symplectic representation ($\chi_5$ and $\phi$, respectively). This means that $S_{\tilde{A}_4}$ and $S_{H_8}$ both have $\Z$-rank $1$ and $\Omega_{\Q}$ acts trivially on them. Of course $\Omega_{\Q}$ acts trivially on $\{\pm1\}$ too, so that $\mathrm{Hom}_{\Omega_\mathbb{Q}}(S_{\tilde{A}_4},\{\pm 1\})$ and $\mathrm{Hom}_{\Omega_\mathbb{Q}}(S_{H_8},\{\pm 1\})$ both have order $2$. Then we let $\varphi$ be the only isomorphism between $\mathrm{Hom}_{\Omega_\mathbb{Q}}(S_{\tilde{A}_4},\{\pm 1\})$ and $\mathrm{Hom}_{\Omega_\mathbb{Q}}(S_{H_8},\{\pm 1\})$. In particular
$$\varphi(f)(\phi)=f(\chi_5)$$  
for $f\in\mathrm{Hom}_{\Omega_\mathbb{Q}}(S_{\tilde{A}_4},\{\pm 1\})$.

We claim that the above diagram is commutative. First observe that, thanks to \cite[Theorem 12]{Frohlich-Alg_numb}, we have a commutative diagram 
$$
\begin{CD}
\mathrm{Hom}_{\Omega_\mathbb{Q}}(R_{\tilde{A}_4},J(L'))@>>>\Cl(\Z[\tilde{A}_4])\\
@Vres VV @VVresV\\
\mathrm{Hom}_{\Omega_\mathbb{Q}}(R_{H_8},J(L'))@>>>\Cl(\Z[H_8])\\
\end{CD}
$$
where $L'$ is a large enough number field, the horizontal arrows are the projections induced by the Hom-description and the map $res$ on the left satisfies $res(g)(\chi)=g(\mathrm{Ind}_{H_8}^{\tilde{A}_4}\chi)$ for any $g\in \mathrm{Hom}_{\Omega_\mathbb{Q}}(R_{\tilde{A}_4},J(L'))$ and any character $\chi$ of $H_8$.
Thus it is sufficient to prove that the diagram
$$
\begin{CD}
\mathrm{Hom}_{\Omega_\mathbb{Q}}(S_{\tilde{A}_4},\{\pm 1\})@>t'_{\tilde{A}_4}>>\mathrm{Hom}_{\Omega_\mathbb{Q}}(R_{\tilde{A}_4},J(L'))\\
@V\varphi VV @VVresV\\
\mathrm{Hom}_{\Omega_\mathbb{Q}}(S_{H_8},\{\pm 1\})@>t'_{H_8}>>\mathrm{Hom}_{\Omega_\mathbb{Q}}(R_{H_8},J(L'))\\
\end{CD}
$$ 
is commutative (the definition of $t'_{\tilde{A}_4}$ and $t'_{H_8}$ is recalled in \S\ref{sympltG}). Take $f\in\mathrm{Hom}_{\Omega_\mathbb{Q}}(S_{\tilde{A}_4},\{\pm 1\})$, then, on the one hand, by Lemma \ref{indphi} we have
$$res(t'_{\tilde{A}_4}(f))(\chi)=t'_{\tilde{A}_4}(f)(\mathrm{Ind}_{H_8}^{\tilde{A}_4}\chi)=\left\{\begin{array}{ll}t'_{\tilde{A}_4}(f)(\chi_5+\chi_6+\chi_7)&\textrm{if $\chi=\phi$}\\
t'_{\tilde{A}_4}(f)(\chi_1+\chi_2+\chi_3)&\textrm{if $\chi=\psi_1$}\\
t'_{\tilde{A}_4}(f)(\chi_4)&\textrm{otherwise,}\end{array}\right.$$ 
for any irreducible character $\chi$ of $H_8$. In particular, using the definition of $t'_{\tilde{A}_4}$ and Proposition \ref{chi5}, we get, for every place $\mathfrak{l}$ of $L'$,
$$res(t'_{\tilde{A}_4}(f))(\chi)_\mathfrak{l}=\left\{\begin{array}{ll}f(\chi_5)&\textrm{if $\mathfrak{l}$ is finite and $\chi=\phi$}\\
1&\textrm{otherwise.}\end{array}\right.$$
On the other hand we have
\begin{eqnarray*}
t'_{H_8}(\varphi(f))(\chi)_\mathfrak{l}&=&\left\{\begin{array}{ll}\varphi(f)(\chi)&\textrm{if $\mathfrak{l}$ is finite and $\chi=\phi$}\\
1&\textrm{otherwise}\end{array}\right.\\&=&\left\{\begin{array}{ll}f(\chi_5)&\textrm{if $\mathfrak{l}$ is finite and $\chi=\phi$}\\
1&\textrm{otherwise},\end{array}\right.
\end{eqnarray*}
and we have proved our claim.

Now the bottom horizontal arrow of (\ref{tres}) is an isomorphism, by a result of Fr\"ohlich (see \cite[I, Proposition 7.2]{Frohlich-Alg_numb}). Swan showed that the right-hand vertical arrow of (\ref{tres}) is also an isomorphism  (see \cite[Theorem 14.1]{SwanPMOBPG}). Thus $t_{\tilde{A}_4}$ is an isomorphism and all groups have order $2$.
\end{proof}

Combining the above lemma (in fact just the injectivity of $t_{\tilde{A}_4}$) with Proposition \ref{WNQ} and Corollary \ref{A=tW}, we get the main result of this section and prove Theorem \ref{surprise} of the Introduction.

\begin{theorem}
For the above $\tilde{A}_4$-extension $N/\mathbb{Q}$ one has $t_{\tilde{A}_4}W_{N/\Q}\ne 1$ in $\Cl(\Z[\tilde{A}_4])$. In particular the classes of $\A_{N/\mathbb{Q}}$ and $\OO_{N}$ are both equal to the nontrivial element in $\Cl(\Z[\tilde{A}_4])$. 
\end{theorem}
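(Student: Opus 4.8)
The plan is to assemble the final theorem from the three pieces that have just been established. First I would invoke Corollary \ref{A=tW}, which applies because the extension $N/\Q$ constructed in the previous subsection is tame, locally abelian (in fact locally cyclic, as shown), and has $\C_{N/\Q}$ a square; this gives the identity $(\A_{N/\Q}) = t_{\tilde{A}_4}W_{N/\Q}$ in $\Cl(\Z[\tilde{A}_4])$, and of course Taylor's theorem (Theorem \ref{pearl}) simultaneously gives $(\OO_N) = t_{\tilde{A}_4}W_{N/\Q}$. So it remains only to show that $t_{\tilde{A}_4}W_{N/\Q}\neq 1$.

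Next I would use Proposition \ref{WNQ}, which tells us that $W(N/\Q,\chi_5) = -1$, so that $W_{N/\Q}\in\Hom_{\Omega_\Q}(S_{\tilde{A}_4},\{\pm 1\})$ is the nontrivial element of this group; here one uses Proposition \ref{chi5} that $\chi_5$ is the unique irreducible symplectic character of $\tilde{A}_4$, so $S_{\tilde{A}_4}$ has rank one and $\Hom_{\Omega_\Q}(S_{\tilde{A}_4},\{\pm 1\})$ has order $2$. Then Lemma \ref{tiso} (only the injectivity of $t_{\tilde{A}_4}$ is needed) transports this nontriviality across: $t_{\tilde{A}_4}W_{N/\Q}\neq 1$ in $\Cl(\Z[\tilde{A}_4])$. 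Combining with the two identities from the first paragraph yields that both $(\A_{N/\Q})$ and $(\OO_N)$ equal $t_{\tilde{A}_4}W_{N/\Q}$, hence both are nontrivial; and since by Lemma \ref{tiso} the target group $\Cl(\Z[\tilde{A}_4])$ also has order $2$, they are each equal to its unique nontrivial element.

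There is essentially no obstacle here — the theorem is a three-line corollary of material already in place. If anything, the only point meriting a sentence of care is checking that the hypotheses of Corollary \ref{A=tW} really are met by the explicit $N/\Q$: tameness and the square condition on $\C_{N/\Q}$ were verified in the construction (discriminant $163^8$ with ramification index $3$ at the primes above $163$, unramified elsewhere), and local abelianness follows from the stronger local cyclicity proved for tame $\tilde{A}_4$-extensions with $\C_{N/\Q}$ a square. One could also remark, for emphasis, that this shows the hypothesis on real places in Proposition \ref{infnonram} is genuinely necessary, since the archimedean place of $\Q$ ramifies in $N$.
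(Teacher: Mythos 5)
Your proof is correct and follows essentially the same route as the paper: it combines Corollary \ref{A=tW} (plus Taylor's theorem) with Proposition \ref{WNQ} and the injectivity of $t_{\tilde{A}_4}$ from Lemma \ref{tiso}, using the order-two statement of that lemma to identify both classes with the unique nontrivial element of $\Cl(\Z[\tilde{A}_4])$. Your check that the explicit extension satisfies the hypotheses of Corollary \ref{A=tW} is the same verification made in the construction of $N/\Q$ in the paper.
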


\begin{remark}
%Let $k$ be the subextension of $N$ which has degree $3$ over $\Q$. Then 
Recall that $N/k$ is unramified at every finite place and that $\mathrm{Gal}(N/k)=H_8$. Therefore $\A_{N/k}=\OO_N$ and in particular $(\A_{N/k})=(\OO_N)$ in $\Cl(\Z[H_8])$. Note that, in the proof of Proposition \ref{chi5}, we have shown that $W_{N/k}$ is nontrivial. Moreover, since $t_{H_8}$ is an isomorphism by \cite[I, Proposition 7.2]{Frohlich-Alg_numb}, we get that $t_{H_8}W_{N/k}$ is the nontrivial element of $\Cl(\Z[H_8])$. Thus $N/k$ gives another example of a tame (in fact unramified at finite places) Galois extension whose codifferent is a square and the square root of the codifferent has nontrivial class in the locally free class group. Anyway, the case of $N/\Q$ is perhaps more suggestive, since $\A_{N/\mathbb{Q}}\not=\OO_{N}$.
\end{remark}

%\begin{proposition}
%Let $N/E$ be a tame locally abelian $G$-Galois extension of number fields. Suppose moreover that the inverse different of $N/E$ is a square. Then $W(N/E,\chi)=1$ for every real-valued complex character of $G$.  
%\end{proposition}
%\begin{proof} 
%Let $D$ be a subgroup of $G$ and let $\theta:D\to \mathbb{C}$ be any character of $D$. Then, since $\chi$ is real valued, 
%$$<\mathrm{Res}^G_D\chi,\theta>=<\mathrm{Res}^G_D\overline{\chi},\theta>=<\overline{\mathrm{Res}^G_D\chi},\theta>=<\mathrm{Res}^G_D\chi,\overline{\theta}>.$$
%This shows that we have a decomposition 
%$$\mathrm{Res}^G_D\chi=\sum_{j=1}^kr_j\rho_j+\sum_{i=1}^nm_i(\theta_i+\overline{\theta}_i)$$
%where $\theta_i:D\to \mathbb{C}$ (resp. $\rho_j:D\to \mathbb{C}$) are irreducible characters of $D$ with $\theta_j(D)\not\subseteq \mathbb{R}$ (resp. $\rho_j(D)\subseteq \mathbb{R}$) and $r_j,m_i\in\mathbb{Z}$. 
%
%Now, by (\ref{locdec}), we only have to show that, for every place $v$ of $E$, we have $W(N_{w_v}/E_v,\chi_v)=1$ where $\chi_v$ is the restriction of $v$ to the (abelian) decomposition group $D_v$ of $w_v$. By the above remark, it is enough to show that $W(N_{w_v}/E_v,\theta_v+\overline{\theta}_v)=1$, for every character $\theta_v:D_v\to \mathbb{C}^\times$. But thanks to (\ref{conjdet}), this is equivalent to show that $\theta_v(-1)=1$. 
%\end{proof}

\bibliography{bib}

\begin{thebibliography}{10}

\bibitem{BachocKwon}
C.~Bachoc and S.-H. Kwon.
\newblock Sur les extensions de groupe de {G}alois {$\tilde{A}_4$}.
\newblock {\em Acta Arith.}, 62(1):1--10, 1992.

\bibitem{BPS}
J.~Buchmann, M.~Pohst, and J.~von Schmettow.
\newblock On the computation of unit groups and class groups of totally real
  quartic fields.
\newblock {\em Math. Comp.}, 53(187):387--397, 1989.

\bibitem{BurnsARC}
D.~Burns.
\newblock On arithmetically realizable classes.
\newblock {\em Math. Proc. Cambridge Philos. Soc.}, 118(3):383--392, 1995.

\bibitem{Burns-Chinburg}
D.~Burns and T.~Chinburg.
\newblock Adams operations and integral {H}ermitian-{G}alois representations.
\newblock {\em Amer. J. Math.}, 118(5):925--962, 1996.

\bibitem{ChaseTors}
S.~U. Chase.
\newblock Ramification invariants and torsion {G}alois module structure in
  number fields.
\newblock {\em J. Algebra}, 91(1):207--257, 1984.

\bibitem{Chinburg-deRham}
T.~Chinburg.
\newblock Galois structure of de {R}ham cohomology of tame covers of schemes.
\newblock {\em Ann. of Math. (2)}, 139(2):443--490, 1994.

\bibitem{Cougnard}
J.~Cougnard.
\newblock {Propri\'et\'es galoisiennes des anneaux d'entiers des p-extensions.}
\newblock {\em Compos. Math.}, 33:303--336, 1976.

\bibitem{CR1}
C.~W. Curtis and I.~Reiner.
\newblock {\em Methods of representation theory. {V}ol. {I}}.
\newblock John Wiley \& Sons Inc., New York, 1981.

\bibitem{CR2}
C.~W. Curtis and I.~Reiner.
\newblock {\em Methods of representation theory. {V}ol. {II}}.
\newblock John Wiley \& Sons Inc., New York, 1987.

\bibitem{Erez-thesis}
B.~Erez.
\newblock {\em Structure galoisienne et forme trace dans les corps de nombres}.
\newblock Thèse de doctorat, Université de Genève, 1987.

\bibitem{Erez}
B.~Erez.
\newblock The {G}alois {S}tructure of the {T}race {F}orm in {E}xtensions of
  {O}dd {P}rime {D}egree.
\newblock {\em J. Algebra}, 118:438--446, 1988.

\bibitem{Erez2}
B.~Erez.
\newblock The {G}alois {S}tructure of the {S}quare {R}oot of the {I}nverse
  {D}ifferent.
\newblock {\em Math. Z.}, 208:$\,$239--255, 1991.

\bibitem{Erez-survey}
B.~Erez.
\newblock A survey of recent work on the square root of the inverse different.
\newblock {\em Ast\'erisque}, (198-200):133--152 (1992), 1991.
\newblock Journ{\'e}es Arithm{\'e}tiques, 1989 (Luminy, 1989).

\bibitem{FrohlichAGMSTE}
A.~Fr{\"o}hlich.
\newblock Arithmetic and {G}alois module structure for tame extensions.
\newblock {\em J. Reine Angew. Math.}, 286/287:380--440, 1976.

\bibitem{Frohlich-Alg_numb}
A.~Fr{\"o}hlich.
\newblock {\em Galois module structure of algebraic integers}.
\newblock Springer-Verlag, Berlin, 1983.

\bibitem{Ireland-Rosen}
K.~Ireland and M.~Rosen.
\newblock {\em A classical introduction to modern number theory}.
\newblock Springer-Verlag, New York, second edition, 1990.

\bibitem{LamAEFG}
T.-Y. Lam.
\newblock Artin exponent of finite groups.
\newblock {\em J. Algebra}, 9:94--119, 1968.

\bibitem{Martinet-diedral}
J.~Martinet.
\newblock {Sur l'arithm\'etique des extensions galoisiennes \`a groupe de
  Galois di\'edral d'ordre $2p$.}
\newblock {\em Ann. Inst. Fourier}, 19(1):1--80, 1969.

\bibitem{Martinet}
J.~Martinet.
\newblock Character theory and {A}rtin {$L$}-functions.
\newblock In {\em Algebraic number fields: {$L$}-functions and {G}alois
  properties ({P}roc. {S}ympos., {U}niv. {D}urham, {D}urham, 1975)}, pages
  1--87. Academic Press, London, 1977.

\bibitem{McCullohCNFEAGR}
L.~R. McCulloh.
\newblock A class number formula for elementary-abelian-group rings.
\newblock {\em J. Algebra}, 68(2):443--452, 1981.

\bibitem{McCullohGMSAE}
L.~R. McCulloh.
\newblock Galois module structure of abelian extensions.
\newblock {\em J. Reine Angew. Math.}, 375/376:259--306, 1987.

\bibitem{MilneCFT}
J.S. Milne.
\newblock Class field theory (v4.02), 2013.
\newblock Available at www.jmilne.org/math/.

\bibitem{Pickettvinatier}
E.~J. Pickett and S.~Vinatier.
\newblock {S}elf-{D}ual {I}ntegral {N}ormal {B}ases and {G}alois {M}odule
  {S}tructure.
\newblock {\em Compos. Math.}, 149(7):1175--1202, 2013.

\bibitem{Reiner}
I.~Reiner.
\newblock {\em Maximal Orders}.
\newblock Academic Press, London, 1975.

\bibitem{RoseCFG}
H.~E. Rose.
\newblock {\em A course on finite groups}.
\newblock Springer-Verlag London Ltd., London, 2009.

\bibitem{Serre}
J.-P. Serre.
\newblock {\em Corps Locaux}.
\newblock Hermann, Paris, 1968.

\bibitem{Serre-reprlin}
J.-P. Serre.
\newblock {\em Repr\'esentations lin\'eaires des groupes finis}.
\newblock Hermann, Paris, 1978.

\bibitem{SuzukiGT1}
M.~Suzuki.
\newblock {\em Group theory {I}}.
\newblock Springer-Verlag, Berlin, 1982.

\bibitem{SwanPRFG}
R.~G. Swan.
\newblock Periodic resolution for finite groups.
\newblock {\em Ann. of Math. (2)}, 72:267--291, 1960.

\bibitem{SwanPMOBPG}
R.~G. Swan.
\newblock Projective modules over binary polyhedral groups.
\newblock {\em J. Reine Angew. Math.}, 342:66--172, 1983.

\bibitem{TaylorGMSIRA}
M.~J. Taylor.
\newblock Galois module structure of integers of relative abelian extensions.
\newblock {\em J. Reine Angew. Math.}, 303/304:97--101, 1978.

\bibitem{tay1}
M.~J. Taylor.
\newblock On the self-duality of the ring of integers as a galois module.
\newblock {\em Invent. Math.}, 46(2):173--178, 1978.

\bibitem{tay2}
M.~J. Taylor.
\newblock On {F}r\"ohlich's conjecture for rings of integers of tame
  extensions.
\newblock {\em Invent. Math.}, 63(1):41--79, 1981.

\bibitem{PARI2}
{The PARI~Group}, Bordeaux.
\newblock {\em {PARI/GP, version {\tt 2.5.3}}}, 2012.
\newblock available from \url{http://pari.math.u-bordeaux.fr/}.

\bibitem{Ullom-cohomology}
S.~Ullom.
\newblock Galois {C}ohomology of {A}mbiguous {I}deals.
\newblock {\em J. Number Theroy}, (1):11--15, 1969.

\bibitem{Ullom-normal_bases}
S.~Ullom.
\newblock Normal {B}ass in {G}alois extensions of number fields.
\newblock {\em Nagoya Math J.}, (34):153--167, 1969.

\bibitem{Ullom-representations}
S.~Ullom.
\newblock Integral representations afforded by ambiguous ideals in some abelian
  extensions.
\newblock {\em J. Number Theory}, 6:32--49, 1974.

\bibitem{Vinatier-Surla}
S.~Vinatier.
\newblock Sur la {R}acine {C}arr\'{e}e de la {C}odiff\'{e}rente.
\newblock {\em J. Th\'{e}or. Nombres Bordeaux}, 15:393--410, 2003.

\bibitem{Vinatier-3}
S.~Vinatier.
\newblock Galois {M}odule {S}tructure in {W}eakly {R}amified 3-{E}xtensions.
\newblock {\em Acta Arith.}, 119(2):171--186, 2005.

\end{thebibliography}
\end{document}